\newtheorem{thm}{Theorem}
\newtheorem{lemma}[thm]{Lemma}
\newtheorem{example}[thm]{Example}
\newtheorem{prop}[thm]{Proposition}
\newtheorem{cor}[thm]{Corollary}
\DeclareMathOperator*{\essinf}{ess\,inf}
\DeclareMathOperator*{\esssup}{ess\,sup}
\begin{document}

\begin{frontmatter}

% "Title of the paper"
\title{Optimal rates for independence testing via $U$-statistic permutation tests}
\runtitle{Independence testing via permutation tests}

\begin{aug}
\author[A]{\fnms{Thomas} B. \snm{Berrett}\thanksref{t1}\ead[label=e1]{tom.berrett@warwick.ac.uk}}
\author[B]{\fnms{Ioannis} \snm{Kontoyiannis}\ead[label=e2]{yiannis@maths.cam.ac.uk}} \\
\and
\author[B]{\fnms{Richard} J. \snm{Samworth}\thanksref{t2}\ead[label=e3]{r.samworth@statslab.cam.ac.uk}}
%\ead[label=u1,url]{http://www.statslab.cam.ac.uk/\~{}rjs57}
%\ead[label=u2,url]{https://thomasberrett.github.io}
\thankstext{t1}{Financial support from the French National Research Agency (ANR) under the grants Labex Ecodec (ANR-11-LABEX-0047 and ANR-17-CE40-0003.}
\thankstext{t2}{Research supported by Engineering and Physical Sciences Reseach Council (EPSRC) Programme grant EP/N031938/1 and EPSRC Fellowship EP/P031447/1.}

\runauthor{T. B. Berrett, I. Kontoyiannis and R. J. Samworth}

 % \affiliation{CREST, ENSAE, Institut Polytechnique de Paris\thanksmark{m1} \\ University of Cambridge\thanksmark{m2}}

  \address[A]{Department of Statistics, University of Warwick, Coventry, CV4 7AL, United Kingdom\\ 
          \printead{e1}}

  \address[B]{Statistical Laboratory, Centre for Mathematical Sciences, Wilberforce Road, Cambridge, CB3 0WB, United Kingdom\\ \printead{e2}, \printead{e3}}
        
%  \address[C]{Statistical Laboratory, Centre for Mathematical Sciences, Wilberforce Road, Cambridge, CB3 0WB, United Kingdom\\ \printead{e3}}

%\runauthor{T. B. Berrett and R. J. Samworth}
%\affiliation{CREST, ENSAE, Institut Polytechnique de Paris\thanksmark{m1} and} 
%\affiliation{Statistical Laboratory, University of Cambridge\thanksmark{m2}}

%\address[A]{CREST, ENSAE, Institut Polytechnique de Paris \\ 5, avenue Henry Le Chatelier \\ 91120 Palaiseau \\ \printead{e2} \\ \printead{u2} }

%\address[A]{CREST, ENSAE, Institut Polytechnique de Paris \\ \printead{e2}}

%\address[B]{Statistical Laboratory \\ Wilberforce Road \\ Cambridge \\ CB3 0WB \\ United Kingdom\\ 
%          \printead{e1}\\ \printead{u1}\\ }
%\address[B]{Statistical Laboratory, University of Cambridge \\ \printead{e1}}
\end{aug}

% indicate corresponding author with \corref{}
% \author{\fnms{John} \snm{Smith}\corref{}\ead[label=e1]{smith@foo.com}\thanksref{t1}}
% \thankstext{t1}{Thanks to somebody} 
% \address{line 1\\ line 2\\ printead{e1}}
% \affiliation{Some University}

%\author{\fnms{Thomas B.} \snm{Berrett}\ead[label=e1]{thomas.berrett@ensae.fr}}
%\address{\printead{e1}}
%\and
%\author{\fnms{???} \snm{???}\ead[label=e2]{???}}
%\address{\printead{e2}}
%\affiliation{???}

%\runauthor{???}

\begin{abstract}
We study the problem of independence testing given independent and identically distributed pairs taking values in a $\sigma$-finite, separable measure space.  Defining a natural measure of dependence $D(f)$ as the squared $L^2$-distance between a joint density $f$ and the product of its marginals, we first show that there is no valid test of independence that is uniformly consistent against alternatives of the form $\{f: D(f) \geq \rho^2 \}$. We therefore restrict attention to alternatives that impose additional Sobolev-type smoothness constraints, and define a permutation test based on a basis expansion and a $U$-statistic estimator of $D(f)$ that we prove is minimax optimal in terms of its separation rates in many instances.  Finally, for the case of a Fourier basis on $[0,1]^2$, we provide an approximation to the power function that offers several additional insights.  Our methodology is implemented in the \texttt{R} package \texttt{USP}.
\end{abstract}

%\begin{keyword}[class=MSC]
%\kwd[Primary ]{}
%\kwd{}
%\kwd[; secondary ]{}
%\end{keyword}

%\begin{keyword}
%\kwd{}
%\kwd{}
%\end{keyword}

\end{frontmatter}

\section{Introduction}

Independence is a fundamental concept in both probability and statistics; it distinguishes the former from a mere branch of measure theory, and underpins both statistical theory and the way practitioners think about modelling.  For statisticians, it is frequently important to ascertain whether or not assumptions of independence are realistic, both to determine whether certain theoretical properties of procedures can be expected to hold, and to assess the goodness-of-fit of a statistical model.

Classical approaches to independence testing have focused on the simple setting of univariate Euclidean data, and have often only had power against restricted classes of alternatives.  These include tests based on Pearson's correlation \citep[e.g.][]{Pearson1920}, Spearman's rank correlation coefficient \citep{Spearman04}, Kendall's tau \citep{Kendall38} and Hoeffding's D \citep{Hoeffding48}.  However, motivated by a desire to handle the more general data types that are ubiquitous in modern-day practice, as well as to have power against broader classes of alternatives, the topic of independence testing has undergone a renaissance in recent years.  Since, in settings of interest, no uniformly most powerful test exists (see Theorem~\ref{Thm:Hardness} below and the surrounding discussion), several different perspectives and new tests have emerged, such as those based on the Hilbert--Schmidt independence criterion \citep{Gretton05,Pfister17,LiYuan2019,MALM2019}, distance covariance \citep{Szekely07,Sejdinovic13}, optimal transport and multivariate ranks \citep{DebSen2019,SDH2019}, copula transforms \citep{Kojadinovic2009}, sample space partitioning \citep{Heller16} and nearest neighbour methods \citep{BS2019}.  For practical studies with discrete data, Pearson's chi-squared independence test remains ubiquitous in the scientific literature, despite the drawback that its size guarantees rely on pointwise asymptotic arguments that may fail to control the Type I error in finite samples; see Section~\ref{Sec:Simulations} below.  Independence tests for continuous data are also common in applications such as linguistics \citep{NguyenEisenstein2017}, genetics \citep{Steuer2002} and public health \citep{Reshef2011}, and have also been applied to functional data arising from credit card activity and geomagnetic records \citep{GabrysKokoszka2007}. 

This plethora of approaches gives rise to natural theoretical questions about the fundamental statistical difficulty of independence testing.  In the setting where the marginal distributions are both univariate, early asymptotic results on minimax separation rates over certain classes of alternatives are given in \citet{Ingster1989}, \citet{Ermakov1990} and \citet{Ingster1996}.  There has been recent work on multivariate settings \citep{LiYuan2019,MALM2019}, but many open questions remain.  
%asymptotic results are given that establish the minimax separation rates for independence testing over certain classes of alternatives in 

Another issue with several of the tests mentioned above is that the asymptotic distribution of the test statistic under the null hypothesis of independence depends on unknown features of the relevant marginal distributions, so it is difficult to obtain an appropriate critical value.  An attractive approach, therefore, is to use a permutation test, which uses permutations to mimic the null behaviour of the test statistic.  Though the principle has been known for many decades \citep[e.g.][Chapter~21]{Pitman38,Fisher1935}, permutation tests are becoming increasingly popular in modern statistics and machine learning (e.g.~A/B testing), due to their ease of use and their guaranteed finite-sample Type I error control across the entire null hypothesis parameter space, assuming only that the data are exchangeable under the null. Besides (unconditional) independence testing, they have also been studied in problems such as conditional independence testing \citep{BWBS2019}, two-sample testing \citep{ChungRomano2013} and changepoint analysis \citep{Antoch2001}.  We also highlight the work of \citet{ChungRomano2016}, who show how a permutation test based on a $U$-statistic can extend the scope of the two-sample Wilcoxon test to null hypotheses of the form $\theta(P,Q) = \theta_0$ (where $P$ and $Q$ are the two underlying distributions), providing pointwise asymptotic size guarantees in general, and exact size guarantees when $P=Q$.  For an overview of the study of permutation tests, see, for example, \citet{LehmannRomano2005} and \citet{PesarinSalmaso2010}. %Theoretical results on the power of permutation tests often use ideas developed in \citet{Hoeffding52}.%\blue{Despite their popularity and size guarantees, however, the power properties of permutation tests remain poorly understood.

In the context of permutation tests for independence, \citet{Romano1989} considered a class of plug-in test statistics of the form $T_n = n^{1/2} \delta( \hat{P}_n, \hat{P}_n^X \hat{P}_n^Y)$, where $\delta(P,Q) = \sup_{V \in \mathcal{V}} |P(V) - Q(V)|$ for a Vapnik--Chervonenkis class of sets $\mathcal{V}$, and where $\hat{P}_n$, $\hat{P}_n^X$ and~$\hat{P}_n^Y$ are the empirical distributions of the data pairs and their marginals, respectively. Fixing a sequence of alternatives $(P_n)$, he showed that, under the condition that $\mathbb{P}_{P_n}(T_n \leq t) \rightarrow H(t)$ for some continuous function $H$, the asymptotic power of his permutation test coincides with that of the test that uses the true critical value. In the case of univariate marginals, \citet[][Chapter 4]{AlbertThesis15} provides upper bounds on the minimax separation over Besov spaces using a test based on aggregrating many permutation tests. See also \citet{Albert15} and \citet{BS2019}. Despite these aforementioned works, however, there remains great interest in understanding better the power properties of permutation tests in the context of nonparametric independence testing.  Indeed, shortly after an earlier version of this paper was made publicly available, \citet{KBW2020} posted a complementary study of the power properties of permutation tests, with a greater focus on concentration inequalities for the test statistics as opposed to distributional results.   

In this paper, we study the problem of independence testing in a general framework, where our data consist of independent copies of a pair $(X,Y)$ taking values in a separable measure space $\mathcal{X} \times \mathcal{Y}$, equipped with a $\sigma$-finite measure $\mu$.  Assuming that the joint distribution of $(X,Y)$ has a density $f$ with respect to $\mu$, we may define a measure of dependence $D(f)$, given by the squared $L^2(\mu)$ distance between the joint density and the product of its marginal densities.  This satisfies the natural requirement that $D(f) = 0$ if and only if $X$ and $Y$ are independent.  In fact, however, our hardness result in Theorem~\ref{Thm:Hardness} reveals that it is not possible to construct a valid independence test with non-trivial power against all alternatives satisfying a lower bound on $D(f)$.  This motivates us to introduce classes satisfying an additional Sobolev-type smoothness condition as well as boundedness conditions on the joint and marginal densities.   

The first main goal of this work is to determine the minimax separation rate of independence testing over these classes, and to this end, we define a new permutation test of independence based on a $U$-statistic estimator of $D(f)$.  We refer to this test hereafter as the USP test, short for $U$-Statistic Permutation test.  Theorem~\ref{Thm:UpperBound} in Section~\ref{Sec:MainResults} provides a very general upper bound on the separation rate of independence testing; the framework is broad enough to include both discrete and absolutely continuous data, as well as data that may take values in infinite-dimensional spaces, for instance.  We show how the bound can be simplified in many special cases of interest, and, in Section~\ref{Sec:Adaptation}, how to construct adaptive versions of our tests that incur only a small loss in effective sample size.  Moreover, in Section~\ref{Sec:LowerBounds}, we go on to provide matching lower bounds in several instances, allowing us to conclude that our USP test attains the minimax optimal separation rate for independence testing in such settings.  In Section~\ref{Sec:PowerFunction}, we elucidate an approximation to the power function of our test at local alternatives, thereby providing a very detailed description of its properties.  Numerical properties of our procedure are studied in Section~\ref{Sec:Simulations}: we first show how an alternative representation of our test statistic dramatically reduces the computational complexity of our procedure, and then present a simulation study that reveals the strong empirical performance of our test in different settings.  Section~\ref{Sec:Discussion} provides further discussion.  Proofs of some of our main results are given in Section~\ref{Sec:Proofs}; for other results, designated with (BKS(2020)), the proofs appear in the supplementary material, where auxiliary results (labelled with an `S' prefix) are also given.  Our methodology is implemented in the \texttt{R} package \texttt{USP} \citep{BKS2020b}.
%while the others, together with auxiliary results, are deferred to the supplementary material \citep{BKS2020}.

%As mentioned previously, \blue{despite recent work on consistency \citep{Albert15},} theoretical studies of power properties of permutation tests in the context of nonparametric independence testing are limited.  
Further contributions of this paper are to introduce new sets of tools for studying both permutation tests and $U$-statistics; we believe both will find application beyond the scope of this work, in particular because many popular measures of dependence, such as distance covariance and the Hilbert--Schmidt independence criterion, can be estimated using $U$-statistics. 
%One of the contributions of this paper, then, is to introduce a new set of tools for studying permutation tests  that we believe will find application beyond the scope of this work.  Moreover, many popular measures of dependence, such as distance covariance and  and the Hilbert--Schmidt independence criterion, can be estimated using $U$-statistics and it is likely that our results will be useful outside of the current work. 
Specifically, in the proof of Theorem~\ref{Thm:UpperBound}, we develop moment bounds for $U$-statistics computed on permuted data sets. Moreover, Proposition~\ref{Lemma:PermutedNormality} provides normal approximation error bounds in Wasserstein distance for degenerate $U$-statistics computed on permuted data sets (using Stein's method, and extending earlier results for unpermuted data, e.g., \citet{deJong1990,RinottRotar1997,DoblerPeccati2019}), and is the basis for our local power function result (Theorem~\ref{Thm:PowerFunction}). Finally, our minimax lower bound (Lemma~\ref{Lemma:LowerBound}) may also be of independent interest, in that it provides a general approach to constructing priors over the alternative hypothesis class whose distance from the null can be explicitly bounded.\\

\emph{Notation:} We write $\mathbb{N}=\{1,2,3,\ldots\}$, $\mathbb{N}_0=\mathbb{N} \cup \{0\}$ and, for $n \in \mathbb{N}$, let $[n]:=\{1,\ldots,n\}$. We also write $[\infty]:=\mathbb{N}$.  We write $a \lesssim b$ if there exists a universal constant $C > 0$ such that $a \leq Cb$, and write, e.g., $a \lesssim_x b$ if there exists $C > 0$, depending only on $x$, such that $a \leq Cb$.  We similarly define $a \gtrsim b$ and $a \gtrsim_x b$, and write $a \asymp b$ if $a \lesssim b$ and $a \gtrsim b$, as well as $a \asymp_x b$ if $a \lesssim_x b$ and $a \gtrsim_x b$.

Let $\mathcal{S}_n$ denote the set of permutations of $[n]$. For a measure space $(\mathcal{Z}, \mathcal{C}, \nu)$ define $L^2(\nu):= \{ f: \mathcal{Z} \rightarrow \mathbb{R} : \int_{\mathcal{Z}} f^2 \,d\nu < \infty\}$, with corresponding inner product $\langle f,g \rangle_{L^2(\nu)} := \int_{\mathcal{Z}} fg \, d\nu$ and norm $\|f\|_{L^2(\nu)} := \langle f,f \rangle_{L^2(\nu)}^{1/2}$. For a function $f : \mathcal{Z} \rightarrow \mathbb{R}$ we write $\|f\|_\infty:=\sup_{z \in \mathcal{Z}} |f(z)| \in [0,\infty]$; if it is also $\mathcal{C}$-measurable, we write $\essinf_{z \in \mathcal{Z}} f(z) := \sup\bigl\{y \in \mathbb{R}: \nu\bigl(f^{-1}(-\infty,y)\bigr) = 0\bigr\}$. 

Let $\Phi$ denote the standard normal distribution function and let $\bar{\Phi} := 1 - \Phi$.  Given a sample of independent and identically distributed random variables $(X_1,Y_1),\ldots,(X_n,Y_n)$ and a $\sigma(X_1,Y_1,\ldots,X_n,Y_n)$-measurable random variable $W$, we write $\mathbb{E}_P(W)$ or $\mathbb{E}_f(W)$ for the expectation of $W$ when $(X_1,Y_1)$ has distribution $P$ or density function $f$.  Given probability measures $\mu$ and $\nu$ on $\mathcal{Z}$, we write $d_\mathrm{TV}(\mu,\nu):= \sup_{C \in \mathcal{C}} |\mu(C) - \nu(C)|$ for their total variation distance and, if both $\mu$ and $\nu$ are absolutely continuous with respect to another measure $\lambda$, then we write $d_{\chi^2}(\mu,\nu)= \bigl\{ \int_{\mathcal{Z}} \frac{(d \mu/d\lambda)^2}{d\nu/d\lambda} d\lambda - 1\bigr\}^{1/2}$ for the square root of their $\chi^2$-divergence.  If $\mathcal{Z} = \mathbb{R}$, then we write
\[
  d_{\mathrm{W}}(\mu,\nu) := \inf_{(X,Y) \sim (\mu,\nu)} \mathbb{E}|X-Y|
\]
for the Wasserstein distance between $\mu$ and $\nu$, where the infimum is taken over all pairs $(X,Y)$ defined on the same probability space with $X \sim \mu$ and $Y \sim \nu$.  When $\mathcal{Z}=\mathbb{R}$ we will also write
\[
	d_\mathrm{K}(\mu,\nu):= \sup_{z \in \mathbb{R}} \bigl| \mu \bigl( (-\infty, z] \bigr) -  \nu \bigl( (-\infty, z] \bigr) \bigr|
\]
for the Kolmogorov distance between $\mu$ and $\nu$.  If $X \sim \mu$ and $Y \sim \nu$, we sometimes write $d_\mathrm{W}(X,Y)$ and $d_\mathrm{K}(X,Y)$ as shorthand for $d_\mathrm{W}(\mu,\nu)$ and $d_\mathrm{K}(\mu,\nu)$ respectively.  We use $\triangle$ to denote the symmetric difference operation on sets, so that $A \triangle B := (A \cap B^c) \cup (A^c \cap B)$.

Finally, for $x=(x_1,\ldots,x_d) \in \mathbb{R}^d$ and $q \in [1,\infty)$, we let $\|x\|_q := \bigl(\sum_{j=1}^d |x_j|^q\bigr)^{1/q}$, with the shorthand $\|x\| := \|x\|_2$, and for a matrix $A \in \mathbb{R}^{d_1 \times d_2}$, we let $\|A\|_{\mathrm{op}} := \sup_{x:\|x\| \leq 1} \|Ax\|$ and $\|A\|_{\mathrm{F}} := \bigl\{\sum_{j=1}^{d_1}\sum_{k=1}^{d_2} A_{jk}^2\bigr\}^{1/2}$ denote its operator and Frobenius norms respectively.

\section{Problem set-up and preliminaries}
\label{Sec:SetUp}

Let $(\mathcal{X}, \mathcal{A}, \mu_X)$ and $(\mathcal{Y},\mathcal{B},\mu_Y)$ be separable\footnote{Recall that we say a measure space $(\mathcal{Z}, \mathcal{C},\nu)$ is \emph{separable} if, when equipped with the pseudo-metric $d(A,B) := \nu(A \triangle B)$, it has a countable dense subset.}, $\sigma$-finite measure spaces.  In discrete settings, i.e.~when $\mathcal{X}$ is countable, $\mu_X$ would typically be counting measure on $\mathcal{X}$; more generally, it may be the relevant Lebesgue measure when $\mathcal{X}$ is a Euclidean space, or an appropriate measure on basis coefficients in infinite-dimensional examples such as Example~\ref{Ex:InfDim} below.  Both $L^2(\mu_X)$ and $L^2(\mu_Y)$ are then separable Hilbert spaces\footnote{Since we were unable to find this precise statement in the literature, we provide a proof in Lemma~\ref{Lemma:Separability}.}, so there exist orthonormal bases $(p_j^X)_{j \in \mathcal{J}}$ and $(p_k^Y)_{k \in \mathcal{K} }$ of $L^2(\mu_X)$ and $L^2(\mu_Y)$ respectively, where $\mathcal{J}$ and $\mathcal{K}$ are countable indexing sets. Writing $\mu:=\mu_X \otimes \mu_Y$ for the product measure on $\mathcal{X} \times \mathcal{Y}$, the product space $L^2(\mu)$ is also a separable Hilbert space\footnote{Likewise, we prove this statement in Lemma~\ref{Lemma:Separability2}.}, and has an orthonormal basis given by $(p_{jk})_{j \in \mathcal{J}, k \in \mathcal{K}}$, where $p_{jk}(\cdot,\ast):=p_j^X(\cdot)p_k^Y(\ast)$. 

We may now define the subset $\mathcal{F}$ of $L^2(\mu)$ that consists of all density functions, that is
\[
	\mathcal{F}:= \biggl\{ f \in L^2(\mu): \essinf_{(x,y) \in \mathcal{X} \times \mathcal{Y}} f(x,y) \geq 0, \int_{\mathcal{X} \times \mathcal{Y}} f \,d \mu =1 \biggr\}.
\]
Given $f \in \mathcal{F}$, we may define the marginal density $f_X$ by
\[
	f_X(x):= \int_{\mathcal{Y}} f(x,y) \,d\mu_Y(y), %\left\{ \begin{array}{ll} \langle f(x, \cdot) , p_0^Y \rangle_{L^2(\mu_Y)} & \text{if} \int_\mathcal{Y} f(x,y)^2 \,d\mu_Y(y) < \infty \\ 0 & \text{otherwise}, \end{array} \right.
\]
%Since $f \in L^2(\mu)$ we have $f_X(x)=\langle f(x, \cdot) , p_0^Y \rangle_{L^2(\mu_Y)} = \int_\mathcal{Y} f(x,y) \,d\mu_Y(y)$ for $\mu_X$-almost all $x$.
and we may analogously define $f_Y$. From now on we will work over the restricted space $\mathcal{F}^*:=\{f \in \mathcal{F} : f_X \in L^2(\mu_X), f_Y \in L^2(\mu_Y) \}$, though we note that when $\mu_X$ and $\mu_Y$ are finite measures, we have $\mathcal{F}^* = \mathcal{F}$. 
%Note that here it is important that we are working over probability spaces; we require the constant functions $p_0^X \in L^2(\mu_X)$ and $p_0^Y \in L^2(\mu_Y)$ to make these definitions.
For $f \in \mathcal{F}^*$, $j \in \mathcal{J}$ and $k \in \mathcal{K}$ we may define the coefficients
\[
	a_{jk}(f):= \! \int_{\mathcal{X} \times \mathcal{Y}} f p_{jk} \,d\mu, \! \quad a_{j\bullet}(f):= \! \int_{\mathcal{X}} f_X p_j^X \,d\mu_X, \! \quad a_{\bullet k}(f):= \! \int_{\mathcal{Y}} f_Y p_k^Y \, d \mu_Y.
      \]
Then
        \[
          f =  \sum_{j \in \mathcal{J}} \sum_{k \in \mathcal{K}} a_{jk}(f)p_{jk}, \quad f_X =  \sum_{j \in \mathcal{J}} a_{j\bullet}(f)p_j^X, \quad f_Y =  \sum_{k \in \mathcal{K}} a_{\bullet k}(f)p_k^Y.
        \]
%Writing $f= \sum_{j \in \mathcal{J}, k \in \mathcal{K}} a_{jk}(f) p_{jk}$, if $f_X \in L_2(\mu_X)$ then we may also define $a_{j\bullet}:= \int f_X p_j^X \,d \mu_X$ so that
%\[
%	f_X(x) =  \langle f(x, \cdot) , p_0^Y \rangle_{L^2(\mu_Y)} = \sum_{j,k=0}^\infty a_{jk}(f) p_j^X(x) \langle p_k^Y, p_0^Y \rangle_{L^2(\mu_Y)} = \sum_{j=0}^\infty a_{j\bullet}(f) p_j^X(x),
%\]
%and similarly $f_Y(y)= \sum_{k=0}^\infty a_{\bullet k}(f)p_k^Y(y)$ for $\mu_Y$-almost all $y$. 
We may therefore define the measure of dependence
\begin{align*}
  D(f) &:= \int_{\mathcal{X} \times \mathcal{Y}} \bigl\{f(x,y)-f_X(x)f_Y(y)\bigr\}^2 \, d \mu(x,y) \\
	& \phantom{:}= \sum_{j \in \mathcal{J}, k\in \mathcal{K}} \bigl\{ a_{jk}(f)-a_{j\bullet}(f)a_{\bullet k}(f)\bigr\}^2,
\end{align*}
which, for $(X,Y) \sim f$, has the property that $D(f)=0$ if and only if $X \perp \!\!\! \perp Y$.

Given a sample $(X_1,Y_1),\ldots,(X_n,Y_n)$ of independent and identically distributed copies of the pair $(X,Y)$, we wish to test the null hypothesis $H_0: X \perp \!\!\! \perp Y$ of independence.  A randomised independence test is measurable function $\psi: (\mathcal{X} \times \mathcal{Y})^n \rightarrow [0,1]$, with the interpretation that, after observing $(X_1,Y_1,\ldots,X_n,Y_n)=(x_1,y_1,\ldots,x_n,y_n)$, we reject $H_0$ with probability $\psi(x_1,y_1,\ldots,x_n,y_n)$.  We write $\Psi$ for the set of all such randomised independence tests.  Further, define the null space $\mathcal{P}_0$ as the set of all distributions on $\mathcal{X} \times \mathcal{Y}$ of pairs $(X,Y)$ such that $X \perp \!\!\! \perp Y$, and, for a given $\alpha \in (0,1)$, define the set of valid size-$\alpha $ independence tests
\begin{equation}
\label{Eq:psialpha}
	\Psi(\alpha):= \biggl\{ \psi \in \Psi: \sup_{P \in \mathcal{P}_0} \mathbb{E}_P(\psi) \leq \alpha \biggr\}.
\end{equation}

% Our first result indicates that in general we will need to impose additional structure on the problem in order to achieve uniform separation rates (though see Section~\ref{Sec:Discrete} for an exception).
%Motivated by the definition of smoothness ellipsoids in the nonparametric statistics literature \citep[e.g.][]{Laurent1996}, for an array $\theta=(\theta_{jk})_{j \in \mathcal{J}, k \in \mathcal{K}} \in [0,\infty]^{\mathcal{J} \times \mathcal{K}}$, we consider the measure of smoothness given by
%\[
%	S_\theta(f):= \sum_{j \in \mathcal{J}} \sum_{k \in \mathcal{K}} \theta_{jk}^2 \bigl\{a_{jk}(f)-a_{j\bullet}(f)a_{\bullet k}(f)\bigr\}^2.
%      \]
The first part of Theorem~\ref{Thm:Hardness} below provides a preliminary result on the hardness of the independence testing problem when the alternative hypothesis $H_1$ consists of all densities $f \in \mathcal{F}^*$ of $(X,Y)$ that satisfy a lower bound constraint on $D(f)$.  In fact, the result can be stated more generally, allowing in addition for the possibility of a constraint on the smoothness of the alternatives that we consider.  To this end, for an array $\theta=(\theta_{jk})_{j \in \mathcal{J}, k \in \mathcal{K}} \in [0,\infty]^{\mathcal{J} \times \mathcal{K}}$, we define
\[
	S_\theta(f):= \sum_{j \in \mathcal{J}} \sum_{k \in \mathcal{K}} \theta_{jk}^2 \bigl\{a_{jk}(f)-a_{j\bullet}(f)a_{\bullet k}(f)\bigr\}^2.
      \]
Observe that when $\theta = 0_{\mathcal{J} \times \mathcal{K}}$, any non-negative upper bound on $S_\theta(f)$ becomes vacuous, so that no smoothness constraint is imposed. This definition of smoothness is motivated by the nonparametric statistics literature \citep[e.g.][]{Laurent1996}.  An attractive feature is that, in contrast to some prior literature, smoothness is only imposed on the difference between the joint density and the product of the marginals, rather than on the individual densities themselves; \citet{MALM2019} also adopt a similar approach to ours in this respect. 
% and $S_\theta(f)$, and as we will see below, has important consequences for the classes of alternatives we subsequently consider.  
 At a high level, the first part of Theorem~\ref{Thm:Hardness} is inspired by the work of \citet{Janssen00} and \citet{ShahPeters2020} on the hardness of goodness-of-fit testing and conditional independence testing respectively, though the proofs are completely different.  The second part complements the first, as discussed below.  Note that when $\mu_X$ is a probability measure, the constant function~$1$ belongs to $L^2(\mu_X)$, so can be included in our basis (as below).       
\begin{thm}
\label{Thm:Hardness}
Suppose that $\mu_X$ and $\mu_Y$ are probability measures and that there exist $j_0\in \mathcal{J}$ and $k_0 \in \mathcal{K}$ such that $p_{j_0}^X(x)=p_{k_0}^Y(y)=1$ for all $x \in \mathcal{X}$ and $y \in \mathcal{Y}$. Let $n \in \mathbb{N}$ and $\alpha \in [0,1]$, and let $\psi \in \Psi$ be such that $\mathbb{E}_{p_{j_0 k_0}}(\psi) \leq \alpha$. Let $\theta =(\theta_{jk})_{j \in \mathcal{J}, k \in \mathcal{K}} \in [0,\infty)^{\mathcal{J} \times \mathcal{K}}$ be given and, for $T \in [0,\infty)$, define
\[
	\mathcal{M}_\theta(T):= \bigl\{ (j,k) \in (\mathcal{J} \setminus \{j_0\}) \times (\mathcal{K} \setminus \{k_0\}): \theta_{jk} \leq T\bigr\}.
\]
Let $\underline{\theta}:=\inf_{j \in \mathcal{J}, k \in \mathcal{K}} \theta_{jk}$.  Then, for any $\epsilon>0$, any $\rho \in (0, 1/\sup_{j,k} \|p_{jk}\|_\infty]$ and any $r \in (\underline{\theta}\rho,\infty)$, there exists $f^* \in \mathcal{F}$ with $S_\theta(f^*) \leq r^2$ and $D(f^*)=\rho^2$ such that
\[
	\mathbb{E}_{f^*}(\psi) \leq \alpha + \epsilon + \biggl[ \frac{\{(1+\rho^2)^n -1\}\alpha}{|\mathcal{M}_{\theta}(r/\rho)|} \biggr]^{1/2}.
\]
Moreover, there exists a permutation test $\psi_{f^*} \in \Psi(\alpha)$ such that given any $\beta \in (0,1-\alpha)$, we can find $C = C(\alpha,\beta) > 0$ with the property that $\mathbb{E}_{f^*}(\psi_{f^*}) \geq 1- \beta$ whenever $n > C/\rho^2$.
\end{thm}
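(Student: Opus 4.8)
The plan is to take for $\psi_{f^*}$ a one-sided permutation test based on a single basis coefficient, exploiting the explicit form of the alternative furnished by the first part. The construction there produces $f^*$ as one member of the family $\{1 + \rho\, p_{jk} : (j,k) \in \mathcal{M}_\theta(r/\rho)\}$: these are precisely the perturbations of the uniform density $p_{j_0 k_0} \equiv 1$ with $D = \rho^2$ and $S_\theta \le r^2$, and their perturbation directions $\{p_{jk}\}$ are orthonormal in $L^2(\mu)$, which is exactly what produces the $\chi^2$-type bound with $|\mathcal{M}_\theta(r/\rho)|$ in the denominator. So fix $(j_1,k_1) \in \mathcal{M}_\theta(r/\rho)$ with $f^* = 1 + \rho\, p_{j_1 k_1}$; in particular $j_1 \ne j_0$, $k_1 \ne k_0$, and $f^*_X \equiv f^*_Y \equiv 1$. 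Set $T_n := n^{-1}\sum_{i=1}^n p_{j_1}^X(X_i)\, p_{k_1}^Y(Y_i)$, let $T_n^\pi := n^{-1}\sum_{i=1}^n p_{j_1}^X(X_i)\, p_{k_1}^Y(Y_{\pi(i)})$ for $\pi \in \mathcal{S}_n$, and let $\psi_{f^*}$ reject precisely when the permutation $p$-value $\hat p := |\mathcal{S}_n|^{-1}\sum_{\pi \in \mathcal{S}_n} \mathbbm{1}\{T_n^\pi \ge T_n\}$ satisfies $\hat p \le \alpha$. Since $(Y_{\pi(1)},\dots,Y_{\pi(n)})_{\pi \in \mathcal{S}_n}$ is exchangeable under any $P \in \mathcal{P}_0$, $\hat p$ is a valid $p$-value, so $\psi_{f^*} \in \Psi(\alpha)$.

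The first step of the power analysis is to separate $T_n$ from its null behaviour under $f^*$. Since $j_1 \ne j_0$ gives $\langle p_{j_1 k_1}, 1\rangle_{L^2(\mu)} = 0$, we get $\mathbb{E}_{f^*}(T_n) = a_{j_1 k_1}(f^*) = \langle p_{j_1 k_1}, 1 + \rho\, p_{j_1 k_1}\rangle_{L^2(\mu)} = \rho$; and $\Var_{f^*}(T_n) \le n^{-1}\mathbb{E}_{f^*}\{p_{j_1 k_1}(X,Y)^2\} = n^{-1}\bigl(1 + \rho\int p_{j_1 k_1}^3\, d\mu\bigr) \le 2/n$, where the last inequality uses $\rho\|p_{j_1 k_1}\|_\infty \le \rho\sup_{j,k}\|p_{jk}\|_\infty \le 1$ from the hypothesis on $\rho$ (which also forces $\rho \le 1$, since $\sup_{j,k}\|p_{jk}\|_\infty \ge \|p_{j_1 k_1}\|_{L^2(\mu)} = 1$). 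Chebyshev's inequality then gives $\mathbb{P}_{f^*}(T_n \le \rho/2) \le 8/(n\rho^2)$.

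The second and more delicate step is to show that the permutation distribution of $T_n$ concentrates well below $\rho/2$. Conditionally on the data, $T_n^\pi$ is a linear statistic in a uniform random permutation, so the classical formulae for its first two moments give $\mathbb{E}_\pi(T_n^\pi) = \bar T := \bar a\,\bar b$ and $\Var_\pi(T_n^\pi) = (n-1)^{-1}s_X^2 s_Y^2$, where $\bar a, \bar b$ and $s_X^2, s_Y^2$ denote the sample means and variances (with $n^{-1}$ normalisation) of $\{p_{j_1}^X(X_i)\}_{i=1}^n$ and $\{p_{k_1}^Y(Y_i)\}_{i=1}^n$. Because $f^*_X \equiv f^*_Y \equiv 1$, the variables $p_{j_1}^X(X)$ and $p_{k_1}^Y(Y)$ have $f^*$-mean $0$ and $f^*$-variance $1$, so $\mathbb{E}_{f^*}(\bar a^2) = \mathbb{E}_{f^*}(\bar b^2) = 1/n$, while $s_X^2 \le n^{-1}\sum_i p_{j_1}^X(X_i)^2$, $s_Y^2 \le n^{-1}\sum_i p_{k_1}^Y(Y_i)^2$ and $\mathbb{E}_{f^*}\{p_{j_1 k_1}(X,Y)^2\} \le 2$ give $\mathbb{E}_{f^*}(s_X^2 s_Y^2) \le 2$. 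Two applications of Markov's inequality (and $\rho \le 1$) then produce an event $E$ with $\mathbb{P}_{f^*}(E^c) \le 8/(n\rho^2) + 32/\{\alpha\rho^2(n-1)\}$ on which $|\bar T| \le \rho/4$ and $(n-1)^{-1}s_X^2 s_Y^2 < \alpha\rho^2/16$ both hold; and on $E$, Chebyshev's inequality applied to the conditional permutation measure yields $|\mathcal{S}_n|^{-1}\#\{\pi \in \mathcal{S}_n : T_n^\pi \ge \rho/2\} \le 16 s_X^2 s_Y^2/\{\rho^2(n-1)\} < \alpha$.

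Combining the two steps, on $E \cap \{T_n > \rho/2\}$ we have $\hat p \le |\mathcal{S}_n|^{-1}\#\{\pi \in \mathcal{S}_n : T_n^\pi \ge \rho/2\} < \alpha$, so $\psi_{f^*}$ rejects; hence $\mathbb{E}_{f^*}(\psi_{f^*}) \ge 1 - \mathbb{P}_{f^*}(T_n \le \rho/2) - \mathbb{P}_{f^*}(E^c) \ge 1 - C_0/(\alpha n\rho^2)$ for an absolute constant $C_0$, which is at least $1-\beta$ as soon as $n > C_0/(\alpha\beta\rho^2) =: C(\alpha,\beta)/\rho^2$. I expect the permutation-variance bound in the third paragraph to be the main obstacle, and to be where the rank-one structure of $f^*$ is genuinely used: it is precisely the bound $\|p_{j_1 k_1}\|_\infty \le 1/\rho$ that keeps the relevant second moments of constant order, whereas for a ``spread-out'' alternative with the same value of $D(f^*)$ the same argument would only deliver power once $n \gtrsim |\mathcal{M}_\theta(r/\rho)|^{1/2}/\rho^2$. (In a situation where the natural analogue of $T_n$ were genuinely quadratic one would instead invoke the moment bounds for $U$-statistics on permuted data sets developed elsewhere in the paper; for the present linear statistic the classical identities suffice.)
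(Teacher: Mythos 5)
Your argument for the second claim is correct, and it takes a genuinely different route from the paper's: there, $\psi_{f^*}$ is obtained by applying Theorem~\ref{Thm:UpperBound} with the singleton truncation set $\mathcal{M}=\{(j,k)\}$, so that $\psi_{f^*}$ is the fourth-order $U$-statistic permutation test of Section~\ref{Sec:MainResults}, whereas you build a one-sided permutation test from the linear statistic $T_n$ and control it via the classical mean and variance formulae for a linear statistic of a uniformly random permutation together with Chebyshev's inequality. Your computations check out ($f^*_X\equiv f^*_Y\equiv 1$, $\mathbb{E}_{f^*}(T_n)=\rho$, $\Var_{f^*}(T_n)\leq 2/n$ using $\rho\|p_{j_1k_1}\|_\infty\leq 1$, $\mathbb{E}_{f^*}(s_X^2s_Y^2)\leq 1+1/n$, and the conditional Chebyshev bound on the permutation distribution), and the resulting constant depends only on $\alpha$ and $\beta$. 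This is more elementary and self-contained than the paper's one-line appeal to Theorem~\ref{Thm:UpperBound}, at the cost of introducing a second statistic rather than reusing existing machinery; both are legitimate.

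The genuine gap is that you never prove the first, and harder, claim: the existence of $f^*$ with $\mathbb{E}_{f^*}(\psi)\leq \alpha+\epsilon+\bigl[\{(1+\rho^2)^n-1\}\alpha/|\mathcal{M}_\theta(r/\rho)|\bigr]^{1/2}$. You treat it as "furnished" and gesture at orthonormality, but the bound requires a real argument: one expands $\mathbb{E}_{f^*}(\psi)=\bigl\langle \psi,\bigotimes_{i=1}^n(p_{j_0k_0}+\rho\,p_{jk})\bigr\rangle$ over subsets $I\subseteq[n]$ to obtain $\mathbb{E}_{p_{j_0k_0}}(\psi)+\sum_{\emptyset\neq I}\rho^{|I|}b_{jk}^I$, applies Cauchy--Schwarz to extract the factor $\{(1+\rho^2)^n-1\}^{1/2}$, uses Bessel's inequality over the orthonormal family indexed by $(j,k,I)$ together with $\psi^2\leq\psi$ to get $\sum_{(j,k)\in\mathcal{M}_\theta(r/\rho)}\sum_{\emptyset \neq I}(b_{jk}^I)^2\leq\alpha$, and then averages over $(j,k)$ to find one pair with $\sum_{\emptyset \neq I}(b_{jk}^I)^2\leq\alpha/|\mathcal{M}_\theta(r/\rho)|+\eta$. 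This pigeonhole step is where both the denominator $|\mathcal{M}_\theta(r/\rho)|$ and the slack $\epsilon$ come from, and it is also what determines which index pair you are allowed to use: the admissible $f^*$ depends on $\psi$, so you cannot simply "fix $(j_1,k_1)\in\mathcal{M}_\theta(r/\rho)$" at the outset. Fortunately your power argument is uniform over $(j,k)\in\mathcal{M}_\theta(r/\rho)$, so once the first part is supplied the two halves are compatible; as written, however, the proof of the theorem is incomplete.
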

As a first conclusion we can draw from Theorem~\ref{Thm:Hardness}, consider taking $\theta=0_{\mathcal{J} \times \mathcal{K}}$, so that $|\mathcal{M}_{\theta}(r/\rho)| = (|\mathcal{J}|-1)(|\mathcal{K}|-1)$.  In this case, Theorem~\ref{Thm:Hardness} shows that in infinite-dimensional problems (where $|\mathcal{J} \times \mathcal{K}| = \infty$) with probability measures as base measures, there are no valid tests of independence that have uniformly non-trivial power against alternatives of the form $\{f \in \mathcal{F} : D(f) \geq \rho^2\}$, at least for $\rho>0$ sufficiently small.  The second part of the theorem then implies that in this setting there is no uniformly most powerful test.  Thus, to develop a theory of minimax separation rates for independence testing, it is necessary to make additional assumptions about the structure of the alternative hypothesis.  More generally, under the conditions of Theorem~\ref{Thm:Hardness}, whenever the set $\mathcal{M}_\theta(r/\rho)$ is infinite, there are no valid uniformly non-trivial independence tests against alternatives $f \in \mathcal{F}$ with $S_\theta(f) \leq r^2$ and $D(f) \geq \rho^2$.  We will therefore assume the following in much of our subsequent work:
\begin{description}
\item[\textbf{(A1)}] The sets $\{(j,k) \in \mathcal{J} \times \mathcal{K} : \theta_{jk} \leq T\}$ are finite for each $T \in (0,\infty)$.
\end{description}
Motivated by Theorem~\ref{Thm:Hardness} above, for $\Xi := [0,\infty]^{\mathcal{J} \times \mathcal{K}} \times (0,\infty) \times [1,\infty)$, for $\xi=(\theta,r,A) \in \Xi$ and for $\rho > 0$, we will consider the space of alternatives given by
\[
	\mathcal{F}_\xi(\rho) := \Bigl\{ f \in \mathcal{F} : D(f) \geq \rho^2, S_\theta(f) \leq r^2, \max(\|f\|_\infty, \|f_X\|_\infty, \|f_Y\|_\infty) \leq \! A \Bigr\}.
\]
%To this end, for $f \in \mathcal{F}$, it is convenient to define
%\[
%  S(f) = S_{s_X,s_Y}(f) := \sum_{j,k=1}^\infty (j^{2s_X} \vee k^{2s_Y}) \{a_{jk}(f)-a_{j\bullet}(f)a_{\bullet k}(f)\}^2.
%\]
%We will assume that our alternatives have densities that are smooth in a Sobolev sense and bounded, namely for $\theta=(s_X,s_Y,r,A) \in (0,\infty)^4$ and $\rho \in (0,r]$ we will consider the alternative space
%\[
%	\mathcal{F}_\theta(\rho) := \Bigl\{ f \in \mathcal{F} : D(f) \geq \rho^2, \|f\|_\infty \leq A, S(f) \leq r^2 \Bigr\}. 
%      \]
%The restriction on the range of $\rho$ arises from the fact that for any $f \in \mathcal{F}$, we must have $D(f) \leq S(f)$.  
Although we make assumptions about the smoothness of our alternatives, we will not make any assumptions about the null distributions, and the fact that we are using a permutation test will guarantee uniform, non-asymptotic control of the probability of Type I error.  In other words, we will prove that our test $\psi$ belongs to $\Psi(\alpha)$ in~\eqref{Eq:psialpha}.%Define the null space $\mathcal{P}_0$ as the set of all distributions on $\mathcal{X} \times \mathcal{Y}$ of pairs $(X,Y)$ such that $X \perp \!\!\! \perp Y$, and, for a given $\alpha \in (0,1)$, define the set of valid independence tests
%\[
%	\Psi(\alpha):= \biggl\{ \psi \in \Psi: \sup_{P \in \mathcal{P}_0} \mathbb{E}_P(\psi) \leq \alpha \biggr\}.
%\]

Given $n \in \mathbb{N}$, $\alpha \in (0,1)$, $\xi = (\theta,r,A) \in \Xi$ and $\rho > 0$ we define the minimax risk with respect to $\mathcal{F}_\xi(\rho)$ as 
\[
	\mathcal{R}(n,\alpha,\xi,\rho) := \alpha + \inf_{\psi \in \Psi(\alpha)} \sup_{f \in \mathcal{F}_\xi(\rho)} \mathbb{E}_f(1-\psi),
\]
with the convention that $\mathcal{R}(n,\alpha,\xi,\rho) := \alpha$ if $\mathcal{F}_\xi(\rho) = \emptyset$.  If we are also given a desired probability of Type II error $\beta \in (0,1-\alpha)$, then we can consider the minimax separation radius
\[
	\rho^*(n,\alpha,\beta,\xi):= \inf \bigl\{ \rho > 0 : \mathcal{R}(n,\alpha,\xi,\rho) \leq \alpha + \beta \bigr\}.
\]

\section{Upper bounds}
\label{Sec:MainResults}

%We have the following results characterising the minimax separation rate under different conditions on the basis $(p_{jk})$.
%\begin{thm}
%\label{Thm:Main1}
%Fix $\alpha,\beta \in (0,1)$ such that $\alpha+\beta <1$ and $\theta=(s_X,s_Y,r,A) \in (0,\infty)^4$ such that $s:=2/(s_X^{-1}+s_Y^{-1})>1$. Moreover, assume that $\sup_{j,k} \|p_{jk}\|_\infty < \infty$. Then there exist $0<c<C<\infty$ depending only on $\alpha,\beta,\theta$ and $\sup_{j,k} \|p_{jk}\|_\infty$ such that
%\[
%	c n^{-\frac{s}{2s+1}} \leq \rho^*(n, \alpha, \beta, \theta) \leq C n^{-\frac{s}{2s+1}}.
%\]
%\end{thm}
%Theorem~\ref{Thm:Main1} applies in situations in which the harmonic mean smoothness $s>1$. For $s \in (0,1]$, under different conditions that allow us to control a certain metric entropy, we have the following result.
%\begin{thm}
%\label{Thm:Main2}
%Fix $\alpha,\beta \in (0,1)$ such that $\alpha+\beta <1$ and $\theta=(s_X,s_Y,r,A) \in (0,\infty)^4$. Then, assuming \textbf{conditions on basis}, there exist $0<c<C<\infty$ depending only on $\alpha,\beta,\theta$ and \textbf{??} such that
%\[
%	c n^{-\frac{s}{2s+1}} \leq \rho^*(n, \alpha, \beta, \theta) \leq C n^{-\frac{s}{2s+1}}.
%\]
%\end{thm}

%In fact, as Theorem~\ref{Thm:UpperBound} below shows, we have $\rho^*(n, \alpha, \beta, \theta) \leq C n^{-\frac{s}{2s+1}}$ for any value of $s \in (0,\infty)$, and without any restrictions on the basis $(p_{jk})$. These additional assumptions ensure that our lower bounds, given in Theorem~\ref{Thm:LowerBound1} and Theorem~\ref{Thm:LowerBound2} below, hold.

We now introduce our USP test that will allow us to establish upper bounds on the minimax separation $\rho^*$. This is based on a $U$-statistic estimator of $D(f)$ with kernel
\begin{align}
\label{Eq:h}
	h \bigl( &(x_1,y_1), (x_2,y_2), (x_3,y_3), (x_4,y_4) \bigr) \nonumber \\
	&:= \sum_{(j,k) \in \mathcal{M}} \{ p_{jk}(x_1,y_1)p_{jk}(x_2,y_2) - 2p_{jk}(x_1,y_1) p_{jk}(x_2,y_3) +p_{jk}(x_1,y_2)p_{jk}(x_3,y_4) \bigr\},
\end{align} 
where $\mathcal{M} \subseteq \mathcal{J} \times \mathcal{K}$ is a truncation set to be chosen later.  The motivation for this definition comes from the observation that for any $f \in \mathcal{F}^*$ and when $\mathcal{M} = \mathcal{J} \times \mathcal{K}$, we have
\begin{equation}
\label{Eq:DEstimate}
\mathbb{E}_f\bigl\{h\bigl( (X_1,Y_1), (X_2,Y_2), (X_3,Y_3), (X_4,Y_4) \bigr)\bigr\} = D(f);
\end{equation}
moreover, as we will see in the proof of Theorem~\ref{Thm:UpperBound} below, whenever $\Pi$ is a uniformly random element of $\mathcal{S}_n$ that is independent of the data, we have 
\[
\mathbb{E}_f\bigl\{h\bigl( (X_1,Y_{\Pi(1)}), (X_2,Y_{\Pi(2)}), (X_3,Y_{\Pi(3)}), (X_4,Y_{\Pi(4)}) \bigr)\bigr\} = 0.
\]
To reduce the effects of noise accumulation in the estimation of the summands, it will typically be necessary to choose $\mathcal{M}$ in~\eqref{Eq:h} to be a proper subset of $\mathcal{J} \times \mathcal{K}$.  The equality in~\eqref{Eq:DEstimate} then no longer holds exactly for every $f \in \mathcal{F}^*$, but an appropriate choice of $\mathcal{M}$ allows us to control the bias-variance trade-off. 

For $m \geq 2$, let $\mathcal{I}_m:= \{(i_1,\ldots,i_m) \in [n]^m: i_1,\ldots,i_m \text{ all distinct}\}$. For $x = (x_1,\ldots,x_n) \in \mathcal{X}^n$ and $y = (y_1,\ldots,y_n) \in \mathcal{Y}^n$, it is convenient to define
\[
  \mathcal{T}_{x,y} := \{(x_i,y_i):i \in [n]\},
\]
and for $\sigma \in \mathcal{S}_n$, set $\mathcal{T}_{x,y}^{(\sigma)} := \{(x_i,y_{\sigma(i)}):i \in [n]\}$.  Given independent pairs $\mathcal{T}_{X,Y} := \{(X_i,Y_i):i=1,\ldots,n\}$ with $n \geq 4$, we consider the test statistic
\begin{align*}
	\hat{D}_n = \hat{D}_n^{\mathcal{M}}\bigl(\mathcal{T}_{X,Y}\bigr) := \frac{1}{4! \binom{n}{4}} \sum_{(i_1,\ldots,i_4) \in \mathcal{I}_4} h \bigl( (X_{i_1},Y_{i_1}), \ldots, (X_{i_4},Y_{i_4}) \bigr). %\biggl\{ \frac{1}{2 \binom{n}{2}} \sum_{(i_1,i_2) \in \mathcal{I}_2} p_{jk}(X_{i_1},Y_{i_1}) p_{jk}(X_{i_2},Y_{i_2}) \\
%  &\hspace{2cm}- \frac{2}{3! \binom{n}{3}} \sum_{(i_1,i_2,i_3) \in \mathcal{I}_3} p_{jk}(X_{i_1},Y_{i_1}) p_{jk}(X_{i_2},Y_{i_3}) \\
%	& \hspace{2cm} + \frac{1}{4! \binom{n}{4}} \sum_{(i_1,\ldots,i_4) \in \mathcal{I}_4} p_{jk}(X_{i_1},Y_{i_2}) p_{jk}(X_{i_3},Y_{i_4}) \biggr\}.
\end{align*}
To define the critical value for our test, let $B \in \mathbb{N}$ and generate an independent sequence of uniform random permutations $\Pi_1,\ldots,\Pi_B$ taking values in $\mathcal{S}_n$, independently of $\mathcal{T}_{X,Y}$.  It is important to note that we can typically choose $B$ to be much smaller than $n!$ (the number of distinct permutations in $\mathcal{S}_n)$; indeed, the choice $B=99$ is common for permutation tests.  For each $b \in [B]$, we construct the null statistics
\begin{equation}
\label{Eq:Dnhat}
	\hat{D}_n^{(b)}:= \hat{D}_n^{\mathcal{M}} \bigl(\mathcal{T}_{X,Y}^{(\Pi_b)}\bigr).
\end{equation}
Finally, we can define the p-value
\begin{equation}
\label{Eq:pvalue}
	P:= \frac{1 + \sum_{b=1}^B \mathbbm{1}_{\{ \hat{D}_n \leq \hat{D}_n^{(b)} \}}}{1+B},
\end{equation}
and reject the null hypothesis if $P \leq \alpha$.  Formally, this corresponds to the randomised test $\psi_\alpha \in \Psi$, given by
\[
	\psi_\alpha \bigl(\mathcal{T}_{x,y} \bigr):= \mathbb{P}\biggl( 1 + \sum_{b=1}^B \mathbbm{1}_{\{ \hat{D}_n^{\mathcal{M}}(\mathcal{T}_{x,y}) \leq \hat{D}_n^{\mathcal{M}}(\mathcal{T}_{x,y}^{(\Pi_b)})\}} \leq (1+B)\alpha \biggr),
\]
where the only randomness here is in the permutations $\Pi_1,\ldots,\Pi_B$. Then, on observing $\mathcal{T}_{X,Y}$, we do indeed reject $H_0$ with probability $\psi_\alpha(\mathcal{T}_{X,Y})$. Under the null hypothesis, the sequence of data sets $\mathcal{T}_{X,Y},\mathcal{T}_{X,Y}^{(\Pi_1)},\ldots,\mathcal{T}_{X,Y}^{(\Pi_B)}$ is exchangeable, so every ordering of the components of $\bigl(\hat{D}_n^{\mathcal{M}}(\mathcal{T}_{X,Y}),\hat{D}_n^{\mathcal{M}}(\mathcal{T}_{X,Y}^{(\Pi_1)}),\ldots,\hat{D}_n^{\mathcal{M}}(\mathcal{T}_{X,Y}^{(\Pi_B)})\bigr)$ is equally likely if we break ties uniformly at random.  In particular, the rank of $\hat{D}_n^{\mathcal{M}}(\mathcal{T}_{X,Y})$ among these $B+1$ observations, which is a lower bound on the numerator in~\eqref{Eq:pvalue}, is uniformly distributed on $\{1,\ldots,B+1\}$, so $\psi_\alpha \in \Psi(\alpha)$.  

% \blue{In general, the computational complexity of our test is $O(n^4B|\mathcal{M}|)$, though it is dramatically reduced in discrete settings; see Section~\ref{Sec:Discrete} below.  To reduce the computational time, it may also be convenient in some cases to work with an \emph{incomplete} version of our $U$-statistic \citep{Janson1984}.}
A naive implementation of the test has computational complexity $O(n^4 B |\mathcal{M}|)$, due to the need to calculate fourth order $U$-statistics. However, using an alternative representation of our test statistic inspired by \citet{Song2012}, we can reduce the complexity to $O\bigl(n^2B( |\mathcal{J}_0 | + |\mathcal{K}_0|\bigr)$ when $\mathcal{M}=\mathcal{J}_0 \times \mathcal{K}_0$.  See Section~\ref{Sec:CompTrick} for further details. 

The following theorem provides a general upper bound on the minimax separation rate, and is obtained using the above test.
\begin{thm}
\label{Thm:UpperBound}
Fix $\alpha,\beta \in (0,1)$ such that $\alpha+\beta<1$ and let $\xi=(\theta,r,A) \in \Xi$. Then there exists $C = C(\alpha,\beta,A) > 0$ such that when $n \geq 16$, we have
\begin{align*}
	\rho^*(n, \alpha, \beta, \xi) \leq C \inf_{\mathcal{M} \subseteq \mathcal{J} \times \mathcal{K}} \max \biggl\{ \frac{r}{\inf \{\theta_{jk} : (j,k) \not\in \mathcal{M} \} }, \frac{\min(\|h\|_\infty^{1/2}, |\mathcal{M}|^{1/4})}{n^{1/2}}, \frac{1}{n^{1/2}} \biggr\}.
\end{align*}
\end{thm}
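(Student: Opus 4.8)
The plan is to analyse the USP test $\psi_\alpha$ directly for a fixed truncation set $\mathcal{M} \subseteq \mathcal{J} \times \mathcal{K}$ chosen to attain the infimum on the right-hand side up to a factor of $2$ (if that infimum is $+\infty$ there is nothing to prove). Since $\psi_\alpha \in \Psi(\alpha)$ has already been established via the exchangeability of $\mathcal{T}_{X,Y}, \mathcal{T}_{X,Y}^{(\Pi_1)}, \ldots, \mathcal{T}_{X,Y}^{(\Pi_B)}$ under $H_0$, it remains to bound the worst-case Type~II error: I would show that for a suitably large $C = C(\alpha,\beta,A)$, if $\rho \geq C \max\{\cdots\}$ then $\mathbb{E}_f(1 - \psi_\alpha) \leq \beta$ for every $f \in \mathcal{F}_\xi(\rho)$, whence $\rho^*(n,\alpha,\beta,\xi) \leq \rho$; taking the infimum over $\mathcal{M}$ then gives the theorem. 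Fix such an $f$, and write $D_{\mathcal{M}}(f) := \sum_{(j,k) \in \mathcal{M}} \{a_{jk}(f) - a_{j\bullet}(f)a_{\bullet k}(f)\}^2$ and $\theta_{\min} := \inf\{\theta_{jk} : (j,k) \notin \mathcal{M}\}$.

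The first two ingredients are a bias calculation and a variance bound for $\hat{D}_n$. For the bias, the basis expansion of $D$ together with $S_\theta(f) \leq r^2$ gives $\mathbb{E}_f(\hat{D}_n) = D_{\mathcal{M}}(f) = D(f) - \sum_{(j,k) \notin \mathcal{M}} \{a_{jk}(f) - a_{j\bullet}(f)a_{\bullet k}(f)\}^2 \geq \rho^2 - r^2/\theta_{\min}^2$, so $D_{\mathcal{M}}(f) \geq \rho^2/2$ once $\rho \geq 2r/\theta_{\min}$; this disposes of the first term in the maximum and, combined with the same estimate, also yields $D(f) \leq \tfrac{3}{2} D_{\mathcal{M}}(f)$. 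For the variance, I would pass through the Hoeffding decomposition $\Var_f(\hat{D}_n) \lesssim \sum_{c=1}^4 n^{-c}\zeta_c$, where $\zeta_c$ is the variance of the $c$-th projection of $h$. The two estimates that matter are (i) the \emph{linear} bound $\zeta_1 \lesssim_A D_{\mathcal{M}}(f)$, obtained by Cauchy--Schwarz in the sum over $(j,k) \in \mathcal{M}$ using the orthonormality of $(p_{jk})$ and the uniform bounds $\max(\|f\|_\infty, \|f_X\|_\infty, \|f_Y\|_\infty) \leq A$ to control the second moments of products of basis functions; and (ii) the crude bounds $\zeta_c \lesssim_A \min(\|h\|_\infty^2, |\mathcal{M}|)$ for $c \geq 2$, the $|\mathcal{M}|$ option coming from expanding $\mathbb{E}_f(h^2)$ and invoking orthonormality. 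These give $\Var_f(\hat{D}_n) \lesssim_A D_{\mathcal{M}}(f)/n + \min(\|h\|_\infty^2, |\mathcal{M}|)/n^2$; since the last two terms of the maximum force $D_{\mathcal{M}}(f) \geq \rho^2/2 \gtrsim \max\{1/n,\, \min(\|h\|_\infty, |\mathcal{M}|^{1/2})/n\}$ up to constants, enlarging $C$ makes $\Var_f(\hat{D}_n) \leq \eta\, D_{\mathcal{M}}(f)^2$ for any prescribed $\eta > 0$, and a Chebyshev bound gives $\hat{D}_n \geq D_{\mathcal{M}}(f)/2$ with probability at least $1 - \beta/3$.

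The crux --- and the step I expect to be the main obstacle --- is to show that the permuted statistics $\hat{D}_n^{(1)}, \ldots, \hat{D}_n^{(B)}$ are simultaneously small. The identity $\mathbb{E}_f\{h((X_1,Y_{\Pi(1)}), \ldots, (X_4,Y_{\Pi(4)}))\} = 0$ for uniform $\Pi$, noted before the theorem, makes it plausible that $\hat{D}_n^{(\Pi)}$ concentrates near $0$, but the genuine difficulty is that the permuted sample $\mathcal{T}_{X,Y}^{(\Pi)}$ is \emph{not} i.i.d.\ under $f$ --- for instance the pairs $(X_1,Y_2)$ and $(X_2,Y_1)$ are dependent unless $X$ and $Y$ are independent --- so the classical variance bounds for $U$-statistics do not apply. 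I would instead develop a combinatorial analogue of the Hoeffding decomposition for $U$-statistics on permuted data: expand $\mathbb{E}_f\mathbb{E}_\Pi[\{\hat{D}_n^{(\Pi)}\}^2]$ as a sum over pairs of tuples from $\mathcal{I}_4$, classify the summands by the pattern of coincidences among the $X$-indices, the $Y$-indices and their images under $\Pi$, and use the exchangeability of $\Pi$ with the algebraic structure of $h$ to annihilate the leading terms. I expect this to produce a bound of the same shape as for $\Var_f(\hat{D}_n)$, namely $\mathbb{E}_f\mathbb{E}_\Pi[\{\hat{D}_n^{(\Pi)}\}^2] \lesssim_A D_{\mathcal{M}}(f)/n + \min(\|h\|_\infty^2, |\mathcal{M}|)/n^2$ (using $D(f) \leq \tfrac{3}{2}D_{\mathcal{M}}(f)$), hence $\leq \eta'\, D_{\mathcal{M}}(f)^2$ after enlarging $C$; this is precisely where the moment bounds for permuted $U$-statistics advertised in the introduction are needed.

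Granted that bound, the remainder is routine. By Markov's inequality the data-measurable quantity $\mathbb{E}_\Pi[\{\hat{D}_n^{(\Pi)}\}^2 \mid \mathcal{T}_{X,Y}]$ is at most a fixed multiple of $\eta' D_{\mathcal{M}}(f)^2$ with probability at least $1 - \beta/3$, and on that event a further conditional Markov bound gives $\mathbb{P}_\Pi(\hat{D}_n^{(\Pi)} \geq D_{\mathcal{M}}(f)/2 \mid \mathcal{T}_{X,Y}) \leq \eta''$, with $\eta''$ as small as desired. As $\Pi_1, \ldots, \Pi_B$ are i.i.d., the count $\sum_{b=1}^B \mathbbm{1}\{\hat{D}_n^{(b)} \geq D_{\mathcal{M}}(f)/2\}$ is then conditionally dominated by a $\mathrm{Bin}(B,\eta'')$ variable, and --- distinguishing the cases of $B$ close to, and far above, the minimal value $\lceil 1/\alpha\rceil - 1$ at which $(1+B)\alpha - 1 \geq 0$ --- a Markov/Chernoff bound shows it is at most $(1+B)\alpha - 1$ with conditional probability at least $1 - \beta/3$, for a value of $\eta''$ depending only on $\alpha$ and $\beta$. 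On the intersection of the three good events, which has probability at least $1 - \beta$, the inclusion $\{\hat{D}_n \leq \hat{D}_n^{(b)}\} \subseteq \{\hat{D}_n^{(b)} \geq D_{\mathcal{M}}(f)/2\}$ (valid because $\hat{D}_n \geq D_{\mathcal{M}}(f)/2$ there) yields $1 + \sum_{b=1}^B \mathbbm{1}\{\hat{D}_n \leq \hat{D}_n^{(b)}\} \leq (1+B)\alpha$, i.e.\ $P \leq \alpha$, so the test rejects. Hence $\mathbb{E}_f(1 - \psi_\alpha) \leq \beta$ for all $f \in \mathcal{F}_\xi(\rho)$, and optimising over $\mathcal{M}$ completes the proof.
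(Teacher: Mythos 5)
Your proposal is correct and follows essentially the same route as the paper's proof: the same USP test with a fixed truncation set, the same bias bound via the Sobolev constraint, the same Hoeffding decomposition with $\zeta_1 \lesssim_A D_{\mathcal{M}}(f)$ and $\zeta_c \lesssim_A \min(\|h\|_\infty^2,|\mathcal{M}|)$ for the unpermuted statistic, and, for the step you rightly identify as the crux, exactly the argument the paper uses --- a Hoeffding-type covariance decomposition of $\mathbb{E}_f\mathbb{E}_\Pi[\{\hat{D}_n^{(\Pi)}\}^2]$ by index-coincidence patterns, with the degeneracy of the symmetrised kernel on fully distinct permuted indices annihilating the low-overlap terms and combinatorial bounds of order $1/n$ and $1/n^2$ controlling the probability of coincidences under $\Pi$. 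The only (cosmetic) divergence is the endgame: the paper converts $\{P \leq \alpha\}$ into a Type~II bound with a single Markov inequality on the count $\sum_b \mathbbm{1}_{\{\hat{D}_n \leq \hat{D}_n^{(b)}\}}$ followed by Chebyshev applied to $\hat{D}_n - \hat{D}_n^{(1)}$, whereas you use a three-event union bound with conditional binomial domination, which works equally well.
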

An explicit upper bound showing the dependence of $C$ on its arguments is given in~\eqref{Eq:FullDetail} in the proof of Theorem~\ref{Thm:UpperBound}.  To give a heuristic explanation of the terms in the bound in Theorem~\ref{Thm:UpperBound}, observe that in order for our test to have high power, we want $\rho^2$ to dominate the sum of the bias of the test statistic and its standard deviation under the null.  The first term represents this bias, which is induced by truncating the sum in~\eqref{Eq:h} to indices that lie in $\mathcal{M}$.  The second term arises from bounding the variance of our $U$-statistic in terms of the symmetrised kernel $\bar{h}$, defined formally in~\eqref{Eq:barh} below.  More precisely, under the null, our test statistic is a degenerate $U$-statistic, i.e.~$\mathbb{E}\bigl\{\bar{h}\bigl((x,y),(X_2,Y_2),(X_3,Y_3),(X_4,Y_4)\bigr)\bigr\} = 0$ for all $x \in \mathcal{X}, y \in \mathcal{Y}$, so its variance can be bounded above by a constant multiple of $n^{-2}\mathrm{Var} \bigl\{\bar{h}\bigl((X_1,Y_1),(X_2,Y_2),(X_3,Y_3),(X_4,Y_4)\bigr)\bigr\}$.  This latter expression can in turn be bounded by $\min\bigl(\|h\|_\infty^2,|\mathcal{M}|\bigr)/n^2$.  The final term in the maximum represents the parametric rate of convergence, and is generally unavoidable.

\subsection{Discrete case}
\label{Sec:Discrete}

As a first application of Theorem~\ref{Thm:UpperBound}, consider the relatively simple problem of testing independence with discrete data, where for some $J,K \in \mathbb{N} \cup \{\infty\}$ we have $\mathcal{X} = [J]$ and  $\mathcal{Y}=[K]$ and we take $\mu_X$ and $\mu_Y$ to be the counting measures on $\mathcal{X}$ and $\mathcal{Y}$ respectively.  For $j,x \in [J]$ and $ k,y \in [K]$ we can define the basis functions $p_j^X(x) := \mathbbm{1}_{\{x=j\}}$ and $p_k^Y(y):=\mathbbm{1}_{\{y=k\}}$. In this case we have $\|h\|_\infty \leq 2$ independently of $\mathcal{M}$, and we may take $\mathcal{M}=[J] \times [K]$ so that there is in fact no truncation and our test statistic is an unbiased estimator of $D$. Note here that, since $\mu_X$ and $\mu_Y$ are not probability measures, Theorem~\ref{Thm:Hardness} does not apply, and we will see that no structural assumptions are necessary on the alternative hypothesis.  Indeed, we take $\xi=(0_{[J] \times [K]},1,1) \in \Xi$, so that our alternative hypothesis class is simply
\[
	\mathcal{F}_\xi(\rho) = \biggl\{ f \in \mathcal{F} : \sum_{j \in [J], k \in [K]} \{f(j,k) - f_X(j)f_Y(k)\}^2 \geq \rho^2 \biggr\}.
      \]
The following result is a straightforward corollary of Theorem~\ref{Thm:UpperBound}, noting that the cases where $n < 16$ can be handled using the fact that $\rho^*(n,\alpha,\beta,\xi) \leq 2^{1/2}$ for all $n$.
\begin{cor}
\label{Cor:Discrete}
Fix $\alpha,\beta \in (0,1)$ such that $\alpha+\beta < 1$. Then there exists $C=C(\alpha,\beta) \in (0,\infty)$ such that 
\[
	\rho^*(n,\alpha,\beta,\xi) \leq Cn^{-1/2}.
      \]
\end{cor}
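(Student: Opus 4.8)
The plan is to deduce Corollary~\ref{Cor:Discrete} directly from Theorem~\ref{Thm:UpperBound} by making the simplest possible choice of truncation set and checking that each of the three terms in the maximum is $O(n^{-1/2})$. First I would restrict attention to $n \geq 16$, so that Theorem~\ref{Thm:UpperBound} applies, and handle the finitely-understood range $n < 16$ separately at the end. With $\xi = (0_{[J]\times[K]}, 1, 1)$, I would take $\mathcal{M} = [J] \times [K]$ in the infimum over truncation sets, which is the natural choice since there is no smoothness constraint to exploit and (as noted in Section~\ref{Sec:Discrete}) this makes $\hat D_n$ an unbiased estimator of $D$.

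With this choice, the first term $r / \inf\{\theta_{jk} : (j,k) \notin \mathcal{M}\}$ requires care: the infimum is over the empty set. Here $r = 1$ and, under the convention $\inf \emptyset = +\infty$ (or equivalently, reading the bias term as $0$ when $\mathcal{M}$ is the whole index set, consistent with the heuristic that the bias vanishes with no truncation), this term is $0$ and hence certainly $O(n^{-1/2})$. For the second term, I would invoke the fact, already recorded in Section~\ref{Sec:Discrete}, that for the indicator basis $\|h\|_\infty \leq 2$ regardless of $\mathcal{M}$; therefore $\min(\|h\|_\infty^{1/2}, |\mathcal{M}|^{1/4}) \leq \|h\|_\infty^{1/2} \leq 2^{1/2}$, and the second term is at most $2^{1/2} n^{-1/2}$. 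The third term is exactly $n^{-1/2}$. Taking the maximum, Theorem~\ref{Thm:UpperBound} gives $\rho^*(n,\alpha,\beta,\xi) \leq C(\alpha,\beta,1) \cdot 2^{1/2} n^{-1/2}$ for $n \geq 16$, and since $A = 1$ is fixed, the constant depends only on $\alpha$ and $\beta$.

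For $n < 16$, I would use the elementary bound $\rho^*(n,\alpha,\beta,\xi) \leq 2^{1/2}$ stated in the lead-in to the corollary: for any $f \in \mathcal{F}$ one has $D(f) = \|f - f_X f_Y\|_{L^2(\mu)}^2 \leq 2(\|f\|_{L^2(\mu)}^2 + \|f_X f_Y\|_{L^2(\mu)}^2) \leq 2$ in the discrete counting-measure setting (using that $\sum_{j,k} f(j,k) = 1$ with $f \geq 0$ forces $\sum_{j,k} f(j,k)^2 \leq 1$, and similarly for the product of marginals), so $\mathcal{F}_\xi(\rho) = \emptyset$ once $\rho > 2^{1/2}$, whence $\mathcal{R}(n,\alpha,\xi,\rho) = \alpha$ and $\rho^*(n,\alpha,\beta,\xi) \leq 2^{1/2}$. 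Then for $n < 16$ we trivially have $\rho^*(n,\alpha,\beta,\xi) \leq 2^{1/2} \leq 2^{1/2} \cdot 16^{1/2} \cdot n^{-1/2} = 8 n^{-1/2}$, so enlarging $C$ if necessary covers all $n$.

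The only genuine subtlety — and the thing I would be most careful to state cleanly — is the interpretation of the bias term $r/\inf\{\theta_{jk}:(j,k)\notin\mathcal{M}\}$ when $\mathcal{M} = \mathcal{J}\times\mathcal{K}$, i.e.\ when the infimum is over an empty set; this should be read as $0$, matching both the $\inf\emptyset = \infty$ convention and the fact established in Section~\ref{Sec:Discrete} that $\hat D_n$ is then exactly unbiased. Everything else is a one-line substitution into Theorem~\ref{Thm:UpperBound} together with the uniform bound $\|h\|_\infty \leq 2$, so there is no real obstacle beyond bookkeeping the constants and the small-$n$ range.
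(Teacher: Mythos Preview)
Your approach is essentially identical to the paper's: apply Theorem~\ref{Thm:UpperBound} with $\mathcal{M}=[J]\times[K]$, use $\|h\|_\infty\le 2$, and cover $n<16$ via the trivial bound $\rho^*\le 2^{1/2}$. Two minor slips worth fixing: $2^{1/2}\cdot 16^{1/2}=4\sqrt{2}$, not $8$; and your inequality $2(\|f\|^2+\|f_Xf_Y\|^2)\le 2$ is off by a factor of two---the clean route is $D(f)=\|f\|^2-2\langle f,f_Xf_Y\rangle+\|f_Xf_Y\|^2\le \|f\|^2+\|f_Xf_Y\|^2\le 2$ since the cross term is nonnegative.
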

This behaviour should be contrasted with that found in \citet{Diakonikolas2016}, where the strength of the dependence is measured by the $L_1$ distance rather than the $L_2$ distance, and where the minimax optimal separation rates depend on the alphabet sizes; in fact, they are given by $\frac{J^{1/4}K^{1/4}}{n^{1/2}} \max\bigl(1,J^{1/4}/n^{1/4},K^{1/4}/n^{1/4}\bigr)$.  

In fact, in this discrete setting, we can give a relatively simple, explicit form for the test.  To this end, for $j \in [J]$, $k \in [K]$, let $N_{jk} := \sum_{i=1}^n \mathbbm{1}_{\{X_i = j,Y_i=k\}}$, let $N_{j+} := \sum_{k =1}^K N_{jk}$, and let $N_{+k} := \sum_{j =1}^J N_{jk}$.  Then, omitting terms that only depend on $N_{j+}$ and $N_{+k}$ (and hence remain fixed under permutation, so are irrelevant for the test), our test statistic becomes
\[
\hat{T}_n := \frac{1}{n(n-3)}\sum_{j=1}^J \sum_{k=1}^K \biggl(N_{jk} - \frac{N_{j+}N_{+k}}{n}\biggr)^2 - \frac{4}{n^2(n-2)(n-3)}\sum_{j=1}^J\sum_{k=1}^K N_{jk}N_{j+}N_{+k}.
\]
Thus, the test statistic can be computed using only the contingency table counts, as opposed to the original data.  Moreover, the permutated data sets may also be generated using only these counts: indeed, writing $N_{jk}^{(1)}$ for the $(j,k)$th cell count under an independent, uniformly random permutation of the original data, we have
\[
\mathbb{P}\bigl((N_{jk}^{(1)}) = (n_{jk})|\mathcal{T}_{X,Y}\bigr) = \frac{\bigl(\prod_{j =1}^J N_{j+}!\bigr) \bigl(\prod_{k=1}^K N_{+k}!\bigr)}{n!\prod_{j=1}^J\prod_{k=1}^K n_{jk}!},
\]
whenever $(n_{jk})$ is such that $\sum_{k=1}^K n_{jk} = N_{j+}$ for all $j \in [J]$ and $\sum_{j=1}^J n_{jk} = N_{+k}$ for all $k \in [K]$.  This formula simplifies the computation of the permuted data sets, and one can sample from this distribution using Patefield's algorithm \citep{Patefield1981}, which is implemented in the \texttt{R} function \texttt{r2dtable}.  

\subsection{Sobolev and infinite-dimensional examples}
\label{SubSec:Sobolev}

To apply Theorem~\ref{Thm:UpperBound} in general, when a useful bound on $\|h\|_\infty$ is not available, we instead control the right-hand side by controlling $|\mathcal{M}|$. We remark that, when there exist $j_0 \in \mathcal{J}$ and $k_0 \in \mathcal{K}$ such that $p_{j_0}^X(x)=p_{k_0}^Y(y)=1$ for all $x,y$, then $a_{j_0k}=a_{\bullet k},a_{jk_0}=a_{j\bullet},a_{j_0 \bullet}=1,a_{\bullet k_0}=1$, so the $j=j_0$ and $k=k_0$ terms do not contribute to the value of $D(\cdot)$ and $S_\theta(\cdot)$ does not depend on $(\theta_{j_0k})_{k \in \mathcal{K}}$ or $(\theta_{jk_0})_{j \in \mathcal{J}}$.  Thus the choice of $\mathcal{M}$ in the definition of $\hat{D}_n^\mathcal{M}$ does not need to contain any $(j,k)$ with $j=j_0$ or $k=k_0$. For notational convenience, we will adopt the convention that, in such cases, $\theta_{jk}=\infty$ if either $j=j_0$ or $k=k_0$.  When \textbf{(A1)} holds it is possible to arrange $\{\theta_{jk}:\theta_{jk} < \infty\}$ in increasing order, so that there exists a bijection $\omega : \mathbb{N} \rightarrow \{(j,k): \theta_{jk} < \infty\}$ such that $\theta_{\omega(1)} \leq \theta_{\omega(2)} \leq \ldots$. Given $t \in (0,\infty)$, define\footnote{Here and throughout, if $\omega(m) = (j,k)$, we interpret $\theta_{\omega(m)}$ as $\theta_{jk}$ and $p_{\omega(m)}$ as $p_{jk}$.}
\[
	m_0(t) := \min \bigl\{ m \in \mathbb{N} : m^{1/2} \theta_{\omega(m)}^2 > t \bigr\}.
      \]
We can now simplify the conclusion of Theorem~\ref{Thm:UpperBound} under \textbf{(A1)}:
%With this notation it now follows from Theorem~\ref{Thm:UpperBound} that 
\begin{cor}
\label{Cor:Main}
Fix $\alpha,\beta \in (0,1)$ such that $\alpha+\beta<1$ and let $\xi=(\theta,r,A) \in \Xi$.  Assume \textbf{(A1)}.  Then there exists $C = C(\alpha,\beta,A) > 0$ such that when $n \geq 16$, we have
\begin{equation}
\label{Eq:m0bound}
\rho^*(n, \alpha, \beta, \xi) \leq C \inf_{m \in \mathbb{N}} \max \biggl\{ \frac{r}{\theta_{\omega(m)}}, \frac{m^{1/4}}{n^{1/2}} \biggr\} \leq \frac{C m_0^{1/4}(nr^2)}{n^{1/2}}.
\end{equation}
\end{cor}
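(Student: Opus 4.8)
The plan is to derive both inequalities directly from Theorem~\ref{Thm:UpperBound}: the first by specialising its infimum over truncation sets $\mathcal{M}$ to a nested family indexed by a single integer, and the second by then choosing that integer to be $m_0(nr^2)$.

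For the first inequality, recall that under \textbf{(A1)} we have the ordering $\theta_{\omega(1)} \le \theta_{\omega(2)} \le \cdots$, and that every $(j,k)$ with $\theta_{jk} = \infty$ satisfies $\theta_{jk} \ge \theta_{\omega(m)}$ for all $m$. Hence, for each $m \in \mathbb{N}$, taking $\mathcal{M} = \mathcal{M}_m := \{\omega(1),\ldots,\omega(m-1)\}$ (with $\mathcal{M}_1 := \emptyset$) gives $\inf\{\theta_{jk} : (j,k) \notin \mathcal{M}_m\} = \theta_{\omega(m)}$ and $|\mathcal{M}_m| = m - 1 \le m$, so that $\min(\|h\|_\infty^{1/2}, |\mathcal{M}_m|^{1/4}) \le m^{1/4}$ and, since $m \ge 1$, also $1 \le m^{1/4}$. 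Restricting the infimum in Theorem~\ref{Thm:UpperBound} to this subfamily and bounding both the second and third terms of the maximum by $m^{1/4}/n^{1/2}$ yields
\[
\rho^*(n,\alpha,\beta,\xi) \le C \inf_{m \in \mathbb{N}} \max\biggl\{ \frac{r}{\theta_{\omega(m)}}, \frac{m^{1/4}}{n^{1/2}} \biggr\},
\]
which is the first claimed bound, with the same constant $C = C(\alpha,\beta,A)$.

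For the second inequality, note that \textbf{(A1)} forces $\theta_{\omega(m)} \to \infty$ (only finitely many indices have $\theta_{jk} \le T$ for any $T \in (0,\infty)$), so $m^{1/2}\theta_{\omega(m)}^2 \to \infty$ and, since $nr^2 \in (0,\infty)$, the quantity $m_0 := m_0(nr^2)$ is a well-defined finite positive integer. By definition of $m_0$, we have $m_0^{1/2}\theta_{\omega(m_0)}^2 > nr^2 > 0$; this shows $\theta_{\omega(m_0)} > 0$ and rearranges to $r/\theta_{\omega(m_0)} < m_0^{1/4}/n^{1/2}$. Thus the maximum in the first inequality, evaluated at $m = m_0$, equals $m_0^{1/4}/n^{1/2}$, and taking the infimum over $m$ can only make the bound smaller, giving $\inf_{m \in \mathbb{N}} \max\{ r/\theta_{\omega(m)}, m^{1/4}/n^{1/2}\} \le m_0^{1/4}(nr^2)/n^{1/2}$. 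Combining the two displays completes the proof.

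I do not anticipate a genuine obstacle here: the corollary simply repackages the bound of Theorem~\ref{Thm:UpperBound} in terms of the single effective-dimension quantity $m_0(nr^2)$. The two points that require a little care are checking that restricting to the nested sets $\mathcal{M}_m$ loses nothing relevant---i.e. that the infimum of $\theta_{jk}$ over the complement of $\mathcal{M}_m$ is attained at $\omega(m)$---and verifying that $m_0(nr^2) < \infty$, which is exactly where the finiteness in \textbf{(A1)} is used.
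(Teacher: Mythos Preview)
Your proof is correct and follows exactly the route the paper intends: the corollary is presented there as an immediate specialisation of Theorem~\ref{Thm:UpperBound} with no separate proof, and your argument---restricting to the nested sets $\mathcal{M}_m = \{\omega(1),\ldots,\omega(m-1)\}$ and then evaluating at $m = m_0(nr^2)$---is precisely the intended derivation. The only cosmetic difference is that the paper's surrounding discussion (e.g.\ after Corollary~\ref{Cor:Sobolev}) phrases the truncation as $\mathcal{M} = \{\omega(1),\ldots,\omega(m)\}$, which after the obvious re-indexing is equivalent to your choice.
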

We now further specialise our upper bound by making a specific choice of $\mathcal{J}$, $\mathcal{K}$ and weights $(\theta_{jk}:j \in \mathcal{J},k \in \mathcal{K})$; such a choice yields a concrete upper bound on the minimax rate of independence testing for densities lying in a Sobolev space, as we illustrate in the example that follows.  See Example~\ref{Ex:Sobolev2} and Proposition~\ref{Thm:FourierLowerBound} below for a discussion of optimality of this bound.  
%illustrate this upper bound with an example that applies our theory to a Sobolev space.
\begin{cor}
\label{Cor:Sobolev}
Fix $\alpha,\beta \in (0,1)$ such that $\alpha+\beta<1$, fix $d_X,d_Y \in \mathbb{N}$ and $s_X,s_Y,r,A >0$. Writing $\mathcal{J}=\mathbb{N}_0^{d_X}$, $\mathcal{K}=\mathbb{N}_0^{d_Y}$, set $\theta_{jk}=\|j\|_1^{s_X} \vee \|k\|_1^{s_Y}$ whenever $j \neq 0_{[d_X]}$ and $k \neq 0_{[d_Y]}$ and $\theta_{jk} = \infty$ otherwise.  Then, with $\theta = \{\theta_{jk}:j \in \mathcal{J},k \in \mathcal{K}\}$, there exists $C=C(d_X,d_Y,\alpha,\beta,A)>0$ such that if $n \geq 16$ and $nr^2 \geq 1$, then
\[
	\rho^*(n,\alpha,\beta,\xi) \leq C \Bigl( \frac{r^d}{n^{2s}} \Bigr)^{1/(4s+d)},
\]
where $d:=d_X+d_Y$, $s:=d/(d_X/s_X+d_Y/s_Y)$ and $\xi = (\theta,r,A)$.
\end{cor}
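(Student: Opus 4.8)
The plan is to deduce the bound from Corollary~\ref{Cor:Main} by evaluating $m_0(nr^2)$ for the specific weight array $\theta$ given in the statement. First I would verify that $\theta$ satisfies \textbf{(A1)}: for $T \in (0,\infty)$ the set $\{(j,k): \theta_{jk}\leq T\}$ equals $\{j\in\mathbb{N}_0^{d_X}\setminus\{0_{[d_X]}\}: \|j\|_1\leq T^{1/s_X}\}\times\{k\in\mathbb{N}_0^{d_Y}\setminus\{0_{[d_Y]}\}: \|k\|_1\leq T^{1/s_Y}\}$, a product of two finite lattice balls, hence finite. Thus Corollary~\ref{Cor:Main} applies and yields $\rho^*(n,\alpha,\beta,\xi)\leq C m_0^{1/4}(nr^2)/n^{1/2}$ with $C=C(d_X,d_Y,\alpha,\beta,A)$; here I would also record the elementary observation that, since $j\neq 0_{[d_X]}$ and $k\neq 0_{[d_Y]}$ force $\|j\|_1\geq 1$ and $\|k\|_1\geq 1$, every finite $\theta_{jk}$ satisfies $\theta_{jk}\geq 1$.

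The core of the argument is a two-sided estimate for $m_0(nr^2)$ via lattice-point counting. For $R\geq 1$, the number of $j\in\mathbb{N}_0^{d_X}$ with $\|j\|_1\leq R$ equals $\binom{\lfloor R\rfloor+d_X}{d_X}\asymp_{d_X}R^{d_X}$, and removing the zero vector changes this by at most a $d_X$-dependent factor; for $R<1$ there are no such nonzero $j$. Multiplying the contributions from $j$ and $k$, the quantity $N_\theta(T):=|\{(j,k):\theta_{jk}\leq T\}|$ satisfies $N_\theta(T)\asymp_{d_X,d_Y}T^{d_X/s_X+d_Y/s_Y}=T^{d/s}$ for all $T\geq 1$. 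Inverting this relation — using that $N_\theta(\theta_{\omega(m)})\geq m$ while $N_\theta(T)<m$ for every $T<\theta_{\omega(m)}$, together with $\theta_{\omega(m)}\geq 1$ — gives $\theta_{\omega(m)}^2\asymp_{d_X,d_Y}m^{2s/d}$ for all $m$ exceeding some constant $m_1=m_1(d_X,d_Y)$. Hence $m^{1/2}\theta_{\omega(m)}^2\asymp_{d_X,d_Y}m^{(4s+d)/(2d)}$ for $m\geq m_1$, so the defining inequality $m^{1/2}\theta_{\omega(m)}^2>t$ holds once $m\gtrsim_{d_X,d_Y}t^{2d/(4s+d)}$; this shows $m_0(t)\lesssim_{d_X,d_Y}1+t^{2d/(4s+d)}$ for all $t>0$, and in particular, since $nr^2\geq 1$ so that $(nr^2)^{2d/(4s+d)}\geq 1$, we obtain $m_0(nr^2)\lesssim_{d_X,d_Y}(nr^2)^{2d/(4s+d)}$.

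Substituting this into the bound from Corollary~\ref{Cor:Main} and simplifying the exponents gives
\[
\rho^*(n,\alpha,\beta,\xi)\lesssim_{d_X,d_Y,\alpha,\beta,A}\frac{(nr^2)^{d/(2(4s+d))}}{n^{1/2}}=n^{\frac{d}{2(4s+d)}-\frac12}\,r^{\frac{d}{4s+d}}=n^{-\frac{2s}{4s+d}}\,r^{\frac{d}{4s+d}}=\Bigl(\frac{r^d}{n^{2s}}\Bigr)^{1/(4s+d)},
\]
as claimed. I expect the only non-routine point to be carrying the $(d_X,d_Y)$-dependent constants cleanly through the lattice-point count and its inversion — in particular, establishing $\theta_{\omega(m)}^2\asymp m^{2s/d}$ with absolute constants for all sufficiently large $m$ — and then checking the edge case where $nr^2$ is of constant order: there $m_0(nr^2)$ is merely bounded by a constant, but the assumption $nr^2\geq 1$ still forces $m_0^{1/4}(nr^2)/n^{1/2}\lesssim n^{-1/2}\lesssim(r^d/n^{2s})^{1/(4s+d)}$, since $nr^2\geq 1$ implies $r^d/n^{2s}\geq n^{-(d/2+2s)}$, whose $(4s+d)$-th root is exactly $n^{-1/2}$.
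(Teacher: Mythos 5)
Your proposal is correct and follows essentially the same route as the paper: both reduce to Corollary~\ref{Cor:Main}, bound the counting function $|\{(j,k):\theta_{jk}\leq T\}|\lesssim_{d_X,d_Y}(T\vee 1)^{d/s}$ via a lattice-point estimate, invert to obtain $m_0(nr^2)\lesssim_{d_X,d_Y}(nr^2)^{2d/(4s+d)}$, and simplify the exponents. The only cosmetic difference is that you establish a two-sided estimate $\theta_{\omega(m)}^2\asymp m^{2s/d}$ where only the lower bound on $\theta_{\omega(m)}$ is needed, and your edge-case check using $nr^2\geq 1$ is exactly the observation implicit in the paper's use of $\{(nr^2)\vee 1\}^{2d/(4s+d)}$.
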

The upper bound in Corollary~\ref{Cor:Sobolev} is obtained using our $U$-statistic permutation test.  Here, $\theta_{\omega(m)} \asymp_{s_X,s_Y,d_X,d_Y} m^{s/d}$, so we can balance the two terms in the maximum in Corollary~\ref{Cor:Main} by taking $\mathcal{M} = \bigl\{\omega(1),\ldots,\omega(m)\bigr\}$ with $m \asymp_{s_X,s_Y,d_X,d_Y}(nr^2)^{2d/(4s+d)}$.  A natural application of (a minor variant of) this corollary is to absolutely continuous data, which for simplicity we restrict to lie in $[0,1]^{d_X} \times [0,1]^{d_Y}$.  In this setting, the Fourier basis functions are an obvious choice. 
\begin{example}
  \label{Ex:Sobolev}
  Let $\mathcal{X} = [0,1]^{d_X}$ and $\mathcal{Y} = [0,1]^{d_Y}$, equipped with $d_X$-dimensional Lebesgue measure $\mu_X$ and $d_Y$-dimensional Lebesgue measure $\mu_Y$ respectively.  Taking $\mathcal{J} := \bigl\{(a,m): a \in \{0,1\},m \in \mathbb{N}_0^{d_X}\bigr\} \setminus \{(1,0_{[d_X]})\}$ and $\mathcal{K} := \bigl\{(a,m): a \in \{0,1\},m \in \mathbb{N}_0^{d_Y}\bigr\} \setminus \{(1,0_{[d_Y]})\}$, we can define the orthonormal Fourier basis functions\footnote{The fact that these functions form an orthonormal basis for $L^2\bigl([0,1]^{d_X}\bigr)$ follows from a very similar (in fact, slightly simpler) argument to that given in Lemma~\ref{Lemma:StoneWeierstrass}, which relates to Example~\ref{Ex:InfDim} below.  The main difference is that in this example our functions are defined on finite-dimensional spaces.} for $L^2\bigl([0,1]^{d_X}\bigr)$ given by $p_{0,0}^X := 1$ and for $m = (m_1,\ldots,m_{d_X}) \neq 0_{[d_X]}$,
  \begin{equation}
    \label{Eq:Fourier}
 p_{a,m}^X(x_1,\ldots,x_{d_X}) :=  2^{1/2} \mathrm{Re} \biggl( e^{-a\pi i/2} \prod_{\ell=1}^{d_X} e^{-2\pi i m_\ell x_\ell} \biggr).
\end{equation}
The Fourier basis functions $\{p_{a,m}^Y:(a,m) \in \mathcal{K}\}$ for $L^2\bigl([0,1]^{d_Y}\bigr)$ are defined similarly, but with $d_Y$ replacing $d_X$.  For $j = (a_X,m_X) \in \mathcal{J}$, $k = (a_Y,m_Y) \in \mathcal{K}$ and $s_X,s_Y > 0$, we can then take $\theta_{jk} = \|m_X\|_1^{s_X} \vee \|m_Y\|_1^{s_Y}$, $\theta = \{\theta_{jk}: j \in \mathcal{J},k \in \mathcal{K}\}$ and $\xi = (\theta,r,A) \in \Xi$ to conclude from Corollary~\ref{Cor:Main} that $\rho^*(n,\alpha,\beta,\xi) \leq C \bigl(r^d/n^{2s}\bigr)^{1/(4s+d)}$ when $n \geq 16$ and $nr^2 \geq 1$, as in Corollary~\ref{Cor:Sobolev}.
\end{example}
We mention here that \citet{LiYuan2019} and \citet{MALM2019} consider Gaussian kernel-based Hilbert--Schmidt Independence Criterion tests of independence in similar Sobolev settings to that in Example~\ref{Ex:Sobolev}.  Assuming the same level of Sobolev smoothness $s$ for both the joint and marginal distributions, \citet{LiYuan2019} show that the critical consistency level is of order $n^{-2s/(4s+d)}$ over tests that have asymptotically nominal size.  \citet{MALM2019} obtain the same rate in a non-asymptotic setting and only impose smoothness conditions on the difference between the joint and marginal distributions, at the expense of restricting the smoothness $s$ to be at most $2$, and having bounded null densities.

In fact, Corollary~\ref{Cor:Main} also provides explicit upper bounds for certain infinite-dimensional models.  Corollary~\ref{Cor:InfDim} below illustrates this for a particular choice of $\mathcal{J}$, $\mathcal{K}$ and weights $(\theta_{jk}:j \in \mathcal{J},k \in \mathcal{K})$.
\begin{cor}[BKS(2020)]
  \label{Cor:InfDim}
Let $\mathbb{N}_0^{< \infty}:= \{m = (m_1,m_2,\ldots) \in \mathbb{N}_0^{\mathbb{N}} : \sum_{\ell=1}^\infty \mathbbm{1}_{\{m_\ell \neq 0\}} < \infty \}$, and let $\mathcal{J}=\mathcal{K}:=\{(a,m): a \in \{0,1\}, m \in \mathbb{N}_0^{< \infty}\} \setminus \{(1,0)\}$.  For $m = (m_1,m_2,\ldots) \in \mathbb{N}_0^{< \infty}$, write $|m| := \max_{\ell \in \mathbb{N}} \ell^2 m_{\ell}$, and if $j=(a,m) \in \mathcal{J}$, write $|j|:=|m|$.  For $j \in \mathcal{J}, k \in \mathcal{K}$ with $|j| \wedge |k| > 0$, and $s_X,s_Y>0$, set
\[
	\theta_{jk} = \exp( s_X |j|^{1/2}) \vee \exp (s_Y |k|^{1/2}),
\]
and if either $|j|=0$ or $|k|=0$ then set $\theta_{jk}=\infty$.  %We will see from Corollary~\ref{Cor:InfDim} below together with the corresponding lower bound in Proposition~\ref{Prop:InfDimLowerBound} that the minimax separation rate of independence testing can be expressed in terms of the increasing function
Define the increasing function $M:[0,\infty) \rightarrow [0,\infty)$ by
\[
	M(t):= \exp\biggl( \sum_{\ell=1}^\infty \log\Bigl( 1+ \Bigl\lfloor \frac{t}{\ell^2} \Bigr\rfloor \Bigr) \biggr)-1
\]
and write
\[
	m_{0,s_X,s_Y}(t):= \min\biggl\{ m \in \mathbb{N} :  M\biggl( \frac{\log^2(t/m^{1/2})}{4s_X^2} \biggr) M\biggl( \frac{\log^2(t/m^{1/2})}{4s_Y^2} \biggr)  < \frac{m}{4} \biggr\}.
\]
(i) Fix $\alpha,\beta \in (0,1)$ such that $\alpha+\beta<1$ and fix $r,s_X,s_Y,A >0$.  Then, with $\xi=(\theta,r,A) \in \Xi$ there exists $C=C(\alpha,\beta,s_X,s_Y,A)>0$ such that when $n \geq 16$ and $nr^2 \geq C$ we have
\[
	\rho^*(n, \alpha, \beta, \xi) \leq \frac{Cm_{0,s_X,s_Y}^{1/4}(nr^2)}{n^{1/2}}.
\]
(ii) Writing $s:=2/(s_X^{-1}+s_Y^{-1})$ and given $\epsilon \in (0,4s)$, there exists $C'=C'(s_X,s_Y,\epsilon) > 0$ such that when $t \geq C'$ we have 
\[
	t^{\frac{2c_0-\epsilon}{2s+c_0}} \leq m_{0,s_X,s_Y}(t) \leq t^{\frac{2c_0+\epsilon}{2s+c_0}},
\]
where $c_0:=\sum_{\ell=1}^\infty \{\ell^{-1/2}-(\ell+1)^{-1/2}\} \log(1+\ell)=1.65\ldots$.
\end{cor}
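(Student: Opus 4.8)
The plan is to deduce part~(i) directly from Corollary~\ref{Cor:Main} after identifying the implicitly defined quantity $m_0(\cdot)$ appearing there with $m_{0,s_X,s_Y}(\cdot)$, and to deduce part~(ii) by a monotone self-consistency argument once the growth rate of $M$ has been pinned down. For part~(i), recall that Corollary~\ref{Cor:Main} gives $\rho^*(n,\alpha,\beta,\xi) \le C m_0^{1/4}(nr^2)/n^{1/2}$, where $m_0(t) = \min\{m\in\mathbb{N}: m^{1/2}\theta_{\omega(m)}^2 > t\}$, so it suffices to show $m_0(nr^2) = m_{0,s_X,s_Y}(nr^2)$. First I would compute the counting function $N(c) := |\{(j,k)\in\mathcal{J}\times\mathcal{K}: \theta_{jk}\le c\}|$ for $c\ge1$: since $\theta_{jk} = \exp(s_X|j|^{1/2})\vee\exp(s_Y|k|^{1/2})$ when $|j|\wedge|k|>0$ and $\theta_{jk}=\infty$ otherwise, $\theta_{jk}\le c$ holds exactly when $|j|\le\log^2 c/s_X^2$ and $|k|\le\log^2 c/s_Y^2$. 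The number of $m\in\mathbb{N}_0^{<\infty}$ with $|m| = \max_\ell \ell^2 m_\ell \le u$ is the finite product $\prod_{\ell=1}^\infty(1+\lfloor u/\ell^2\rfloor)$, so after allowing for the binary index $a\in\{0,1\}$ and discarding $m=0$ one finds $|\{j\in\mathcal{J}: 0<|j|\le u\}| = 2M(u)$, and likewise for $k$; hence $N(c) = 4M(\log^2 c/s_X^2)M(\log^2 c/s_Y^2)$. Since $\theta_{\omega(m)}>c \Leftrightarrow N(c)<m$, the inequality $m^{1/2}\theta_{\omega(m)}^2>t$ is equivalent to $N(t^{1/2}/m^{1/4})<m$, and the identity $\log^2(t^{1/2}/m^{1/4}) = \tfrac14\log^2(t/m^{1/2})$ turns this into precisely the inequality defining $m_{0,s_X,s_Y}(t)$. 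Provided $nr^2$ exceeds a constant (so that $t^{1/2}/m^{1/4}\ge1$ on the relevant range, which holds because, by part~(ii), $m_{0,s_X,s_Y}(nr^2)$ is of order $(nr^2)^{a}$ with $a<2$), we get $m_0(nr^2) = m_{0,s_X,s_Y}(nr^2)$ and part~(i) follows.

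\emph{Growth of $M$.} The engine behind part~(ii) is the estimate $\log M(u) = c_0 u^{1/2} + O(\log u)$ as $u\to\infty$, equivalently: for each $\delta>0$ there is $u_0(\delta)$ with $(c_0-\delta)u^{1/2}\le\log M(u)\le(c_0+\delta)u^{1/2}$ whenever $u\ge u_0(\delta)$. I would prove this from the double-counting identity
\[
\log\bigl(M(u)+1\bigr) \;=\; \sum_{\ell=1}^\infty \log\Bigl(1+\Bigl\lfloor\tfrac{u}{\ell^2}\Bigr\rfloor\Bigr) \;=\; \sum_{j=1}^\infty \Bigl\lfloor\bigl(\tfrac uj\bigr)^{1/2}\Bigr\rfloor\log\bigl(1+\tfrac1j\bigr),
\]
which follows by writing $\log(1+n)=\sum_{j=1}^n\log(1+1/j)$ and swapping the order of summation (the constraint $j\le\lfloor u/\ell^2\rfloor$ being the same as $\ell\le\lfloor(u/j)^{1/2}\rfloor$). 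Sandwiching $\lfloor(u/j)^{1/2}\rfloor$ between $(u/j)^{1/2}-1$ and $(u/j)^{1/2}$, and using that $\sum_{j\ge1}j^{-1/2}\log(1+1/j)=c_0<\infty$ (the summand is $\asymp j^{-3/2}$, and the series equals $\sum_\ell\{\ell^{-1/2}-(\ell+1)^{-1/2}\}\log(1+\ell)$ after an Abel summation), together with $\sum_{j>u}j^{-3/2}=O(u^{-1/2})$ and $\sum_{j=1}^{\lfloor u\rfloor}\log(1+1/j)=\log(\lfloor u\rfloor+1)=O(\log u)$, yields the two-sided bound.

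\emph{Part (ii).} Write $v_m := \log(t/m^{1/2})$, so that $(v_m^2/(4s_X^2))^{1/2} = v_m/(2s_X)$ when $v_m\ge0$. The previous step then gives, for $m$ with $v_m$ large, $\log\{4M(v_m^2/(4s_X^2))M(v_m^2/(4s_Y^2))\} = \tfrac{c_0 v_m}{2}(s_X^{-1}+s_Y^{-1}) + O(\log\log t) = c_0 v_m/s + O(\log\log t)$, so the inequality $M(\cdot)M(\cdot)<m/4$ defining $m_{0,s_X,s_Y}(t)$ reads, to leading order, $c_0(\log t-\tfrac12\log m)/s < \log m$, i.e.\ $\log m\gtrsim\tfrac{2c_0}{2s+c_0}\log t$. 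To turn this into the claimed bounds I would use that $m\mapsto M(v_m^2/(4s_X^2))M(v_m^2/(4s_Y^2))$ is nonincreasing while $m\mapsto m/4$ is increasing on $\{m\le t^2\}$, so the valid $m$ form an up-set and it suffices to test the two candidate boundary values. For the upper bound take $a_+:=(2c_0+\epsilon)/(2s+c_0)$ --- which lies in $(0,2)$ exactly because $\epsilon<4s$ --- and $m_+:=\lceil t^{a_+}\rceil$: substituting $v_{m_+}\le(1-a_+/2)\log t$ and the upper bound for $\log M$ with $\delta$ small, the decisive inequality reduces for large $t$ to $c_0(1-a_+/2) < a_+(2s+c_0)/2 = c_0+\epsilon/2$, true with slack $\epsilon/2$. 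Symmetrically, with $a_-:=(2c_0-\epsilon)/(2s+c_0)$ (vacuous if $a_-\le0$) and $m_-:=\lfloor t^{a_-}\rfloor$, substituting $v_{m_-}\ge(1-a_-/2)\log t$ and the lower bound for $\log M$ shows $4M(\cdot)M(\cdot)\ge m_-$, hence $m_{0,s_X,s_Y}(t)\ge m_-+1 > t^{a_-}$. All the ``$t$ large'' thresholds, and the demand that $v_{m_\pm}^2/(4s_X^2),v_{m_\pm}^2/(4s_Y^2)\ge u_0(\delta)$, are absorbed into $C'(s_X,s_Y,\epsilon)$.

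The step I expect to be the main obstacle is the growth estimate for $M$: although the double-counting identity makes the relevant sum explicit, one must control the two floor operations and the series truncation simultaneously, so that the remainder is genuinely $O(\log u)$ and the two-sided bounds remain clean enough to be propagated through the implicit definition of $m_{0,s_X,s_Y}$. The remaining ingredients --- the counting argument for part~(i) and the monotone self-consistency for part~(ii) --- are conceptually routine but require care so that the $\epsilon/2$ slack is not consumed by lower-order terms.
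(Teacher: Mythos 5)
Your proof is correct and follows essentially the same route as the paper's: for (i) you identify the counting function $|\{(j,k):\theta_{jk}\le c\}|=4M(s_X^{-2}\log^2 c)\,M(s_Y^{-2}\log^2 c)$ so that $m_0=m_{0,s_X,s_Y}$ and invoke Corollary~\ref{Cor:Main}, and for (ii) you rearrange $\sum_\ell\log(1+\lfloor u/\ell^2\rfloor)=\sum_j\lfloor(u/j)^{1/2}\rfloor\log(1+1/j)$ to get $\log(1+M(u))=c_0u^{1/2}+o(u^{1/2})$ and then test $m=t^{a_\pm}$, exactly as in the paper (which truncates its partial summation at $L=\lfloor t^{1/3}\rfloor$ and settles for the cruder but sufficient error $O(t^{1/3}\log t)$, where your full swap gives $O(\log u)$). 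The one blemish is the displayed chain ``$c_0(1-a_+/2)<a_+(2s+c_0)/2$'': the decisive inequality is $c_0(1-a_+/2)<a_+s$, which is \emph{equivalent to} (not the same expression as) $c_0<a_+(2s+c_0)/2=c_0+\epsilon/2$, so your conclusion and the $\epsilon/2$ slack are right but the left-hand side of that chain should read $c_0$.
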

We will see in Proposition~\ref{Prop:InfDimLowerBound} below that the rate given in the first part of Corollary~\ref{Cor:InfDim} is optimal in regimes of $n$ and $r$ of interest in the context of Example~\ref{Ex:InfDim} below. The second part of the corollary shows that, if we ignore subpolynomial factors in $nr^2$, then we have $\rho^*(n,\alpha,\beta,\xi) \lesssim_{\alpha,\beta,s_X,s_Y,A} (r^{c_0}/n^{s})^{1/(2s+c_0)}$. By comparison with Corollary~\ref{Cor:Sobolev}, we can therefore interpret $c_0$ as the `effective dimension' of each of $\mathcal{X}$ and $\mathcal{Y}$, when $\theta$ is selected in this way.
\begin{example}
  \label{Ex:InfDim}
  As an application of Corollary~\ref{Cor:InfDim}, consider the infinite-dimensional setting where $\mathcal{X} = \mathcal{Y} = [0,1]^{\mathbb{N}} :=\{(x_1,x_2,\ldots) : x_\ell \in [0,1] \text{ for all } \ell \in \mathbb{N} \}$, equipped with the Borel $\sigma$-algebra in the product topology, and where $\mu_X=\mu_Y$ is the distribution of an infinite sequence $(U_1,U_2,\ldots)$ of $\mathrm{Unif}[0,1]$ random variables.  It follows from an application of the Stone--Weierstrass theorem (see Lemma~\ref{Lemma:StoneWeierstrass}) that an orthonormal basis for $L^2(\mu_X)$ is then given by $\{ p_{a,m}^X(\cdot) : (a,m) \in \mathcal{J}\}$, where $p_{0,0}^X:=1$ and for $m \neq 0_{\mathbb{N}}$,
\[
	p_{a,m}^X(x_1,x_2,\ldots) :=  2^{1/2} \mathrm{Re} \biggl(e^{-a\pi i/2} \prod_{\ell=1}^\infty e^{-2\pi i m_\ell x_\ell} \biggr).
\]
We may take the same basis for $L^2(\mu_Y)$, so that $p_{a,m}^Y=p_{a,m}^X$ for all $a\in \{0,1\}$ and $m\in \mathbb{N}_0^{<\infty}$.  Then Corollary~\ref{Cor:InfDim} provides an upper bound on the minimax separation rate of independence testing in this example.
\end{example}

\section{Adaptation}
\label{Sec:Adaptation}

The practical implementation of our USP tests requires a choice of the truncation set $\mathcal{M}$.  The optimal choice of $\mathcal{M}$, which yields the separation rates described in the previous section, typically depends on both $\theta$ and $r$, which  may be unknown in practice.  In this section, we therefore describe adaptive versions of our tests, that do not require knowledge of any unknown parameters and whose minimax risk can be shown in many cases to be only slightly inflated compared with the optimal tests.  Our initial setting is rather general, but assumes that $\mathcal{J} \times \mathcal{K}$ has an ordering that is respected by every $\theta$ considered.  Since this assumption does not hold in the setting of Corollary~\ref{Cor:Sobolev} unless $s_X = s_Y$ (as the relative magnitudes of $s_X$ and $s_Y$ affect the ordering of $\theta$), we also illustrate the way in which this assumption can be relaxed, so that it remains possible to adapt to both of the unknown parameters separately in this Sobolev example.

To describe this initial setting, let $\omega: \mathbb{N} \rightarrow \mathcal{J} \times \mathcal{K}$ be injective, and, for a given $\theta_0>0$, let $\Theta(\omega,\theta_0) \subseteq [0,\infty]^{\mathcal{J} \times \mathcal{K}}$ denote the set of all $\theta = (\theta_{jk})_{j \in \mathcal{J}, k \in \mathcal{K}}$ such that $\omega$ is a bijection from $\mathbb{N}$ to $\{(j,k) \in \mathcal{J} \times \mathcal{K} : \theta_{jk} < \infty\}$ and
\[
	\theta_0 \leq \theta_{\omega(1)} \leq \theta_{\omega(2)} \leq \ldots.
\]
Here $\omega$ denotes an ordering of $\mathcal{J} \times \mathcal{K}$ that ranks the importance of departures from independence in each direction. In our Sobolev example with $s_X=s_Y$, we could take $\omega$ to be any ordering of $(\mathbb{N}_0^{d_X} \setminus \{0_{[d_X]}\} ) \times (\mathbb{N}_0^{d_Y} \setminus \{0_{[d_Y]}\} )$ such that, writing $(j_m,k_m) := \omega(m)$, we have that $\max(\|j_1\|_1,\|k_1\|_1) \leq \max(\|j_2\|_1,\|k_2\|_1) \leq \ldots$. Taking $\gamma := \lceil 2 \log_2 n \rceil$, let $K_* := \{ 2^j : j\in [\gamma]\}$. Our adaptive procedure can now be described as follows.  Given a desired Type II error probability $\beta \in (0,1-\alpha)$, for each $m \in K_*$, carry out the permutation test from Section~\ref{Sec:MainResults} with $\mathcal{M}=\{ \omega(1), \ldots, \omega(m)\}$ and $B \geq 2( \frac{\gamma}{\alpha \beta} -1)$ to yield p-values $p^{(1)},\ldots,p^{(\gamma)}$. If $\min_{i \in [\gamma]} p^{(i)} < \alpha/\gamma$, then we reject $H_0$. As we have applied a standard Bonferroni correction, the Type I error of this omnibus test is controlled at the level~$\alpha$. The following result concerns its power.

\begin{prop}
\label{Prop:GeneralAdapt}
Let $\omega$ and $\theta_0>0$ be as above, and suppose that $\alpha \in (0,1), \beta \in (0,1-\alpha), R_0 >0$ and $A \geq 1$. Assume further that $f \in \mathcal{F}_\xi(\rho)$ for some $\xi=(\theta,r,A) \in \Xi$ with $ \theta \in \Theta(\omega,\theta_0)$ and $r \in (0,R_0]$. Then there exists $C=C(\alpha,\beta,R_0,\theta_0,A)>0$ such that we reject~$H_0$ with probability at least $1-\beta$ whenever $n \geq C$ and
\[
	\rho \geq C \max \biggl\{ \frac{\log^{1/4}n}{n^{1/2}} m_0^{1/4} \biggl( \frac{n r^2}{ \log^{1/2}n} \biggr), \frac{\log^{1/2} n}{n^{1/2}} \biggr\}.
\]
\end{prop}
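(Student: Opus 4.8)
\emph{Proof proposal.} The plan is to reduce the analysis of the omnibus test to that of a single, carefully chosen member of the family. Since that test rejects whenever $\min_{i\in[\gamma]}p^{(i)}<\alpha/\gamma$, it suffices to exhibit one $m\in K_*$ for which the USP test of Section~\ref{Sec:MainResults}, run with truncation set $\mathcal{M}_m:=\{\omega(1),\ldots,\omega(m)\}$, at level $\alpha':=\alpha/\gamma$, and with $B\geq 2(1/(\alpha'\beta)-1)=2(\gamma/(\alpha\beta)-1)$ permutations, rejects $H_0$ with probability at least $1-\beta$. For this I would use a non-asymptotic power bound for a \emph{fixed} truncation set, of the kind established inside the proof of Theorem~\ref{Thm:UpperBound}: for $f\in\mathcal{F}_\xi(\rho)$ with $\theta\in\Theta(\omega,\theta_0)$, the USP test with truncation $\mathcal{M}_m$, level $\alpha'$ and $B\geq 2(1/(\alpha'\beta')-1)$ has power at least $1-\beta'$ once
\[
\rho^2\;\geq\;C_1(A)\max\!\left\{\;\frac{r^2}{\theta_{\omega(m+1)}^2},\;\;\frac{1}{(\alpha'\beta')^{1/2}}\cdot\frac{m^{1/2}}{n},\;\;\frac{C_2(A,\beta')}{\alpha'\,n}\;\right\}.
\]
Here the first term is the truncation bias $D(f)-\sum_{(j,k)\in\mathcal{M}_m}\{a_{jk}(f)-a_{j\bullet}(f)a_{\bullet k}(f)\}^2=\sum_{(j,k)\notin\mathcal{M}_m}\{a_{jk}(f)-a_{j\bullet}(f)a_{\bullet k}(f)\}^2\leq\theta_{\omega(m+1)}^{-2}S_\theta(f)\leq r^2/\theta_{\omega(m+1)}^2$, using that $\theta_{jk}\geq\theta_{\omega(m+1)}$ off $\mathcal{M}_m$; the second comes from bounding the conditional permutation null via the degenerate $U$-statistic variance estimate $\mathrm{Var}(\hat D_n^{(b)}\mid\mathcal{T}_{X,Y})\lesssim_A m/n^2$ together with a Chebyshev/Markov quantile argument; and the third controls the fluctuation of $\hat D_n$ about its mean $D^{\mathcal{M}_m}(f)\geq\rho^2-r^2/\theta_{\omega(m+1)}^2$.

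Take $t:=nr^2/\log^{1/2}n$ and $m_\ast:=m_0(t)=\min\{m\in\mathbb{N}:m^{1/2}\theta_{\omega(m)}^2>t\}$, which is finite since $\theta_{\omega(m)}\geq\theta_0$ gives $m_\ast\leq(t/\theta_0^2)^2+1$. Let $\bar m$ be the smallest element of $K_*=\{2^j:j\in[\gamma]\}$ with $\bar m\geq m_\ast$; then $m_\ast\leq\bar m\leq 2m_\ast$. One must first check that $\bar m\in K_*$, i.e. $m_\ast\leq 2^{\gamma-1}$. Since $f\in\mathcal{F}_\xi(\rho)$ forces $\rho^2\leq D(f)\leq\bigl(\|f\|_\infty^{1/2}+\|f_X\|_\infty^{1/2}\|f_Y\|_\infty^{1/2}\bigr)^2\leq 4A^2$, the hypothesised lower bound $\rho\geq C\log^{1/4}(n)\,m_\ast^{1/4}/n^{1/2}$ yields $m_\ast\leq 16A^4n^2/(C^4\log n)$; as $\gamma\geq 2\log_2 n$ gives $2^{\gamma-1}\geq n^2/2$, we obtain $m_\ast\leq 2^{\gamma-1}$, and hence $\bar m\leq 2m_\ast\leq 2^\gamma$, provided $C$ and $n$ are large enough in terms of $A$ (and $R_0,\theta_0$).

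I would then apply the fixed-truncation bound with $\mathcal{M}=\mathcal{M}_{\bar m}$, $\alpha'=\alpha/\gamma$ and $\beta'=\beta$, so that the displayed $B$-condition is exactly the hypothesis (and also $(1+B)\alpha/\gamma\geq 1$). The three terms are then estimated routinely. Since $\bar m+1>m_\ast$ and $m\mapsto\theta_{\omega(m)}$ is nondecreasing, the defining inequality $m_\ast^{1/2}\theta_{\omega(m_\ast)}^2>t$ gives $\theta_{\omega(\bar m+1)}^2\geq\theta_{\omega(m_\ast)}^2>t/m_\ast^{1/2}$, so the bias is at most $r^2m_\ast^{1/2}/t=\log^{1/2}(n)\,m_\ast^{1/2}/n$; the second term, using $\bar m\leq 2m_\ast$, $\alpha'=\alpha/\gamma$ and $\gamma\leq 3\log_2 n$, is $\lesssim_{\alpha,\beta}\log^{1/2}(n)\,m_\ast^{1/2}/n$; and the third, after the substitution $\alpha\mapsto\alpha/\gamma$, is $\lesssim_{\alpha,\beta,A}\log(n)/n$. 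Since $m_\ast\geq 1$, the hypothesised bound gives both $\rho^2\geq C^2\log^{1/2}(n)\,m_\ast^{1/2}/n$ (the first term of the stated maximum) and $\rho^2\geq C^2\log(n)/n$ (the second), so each of the three quantities above is at most $C^{-2}$ times $\rho^2$; taking $C$ large in terms of $\alpha,\beta,A$ therefore makes $\rho^2\geq C_1(A)\max\{\cdots\}$ hold, whence the USP test with truncation $\mathcal{M}_{\bar m}$ — and hence the omnibus test — rejects with probability at least $1-\beta$. The size is at most $\alpha$ by a union bound over the $\gamma$ events $\{p^{(i)}\leq\alpha/\gamma\}$, each of probability at most $\alpha/\gamma$ under $H_0$ by validity of the individual permutation tests.

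The main obstacles are twofold. First, one must extract from the proof of Theorem~\ref{Thm:UpperBound} the fixed-truncation power inequality above \emph{with its dependence on the testing level $\alpha'$ and target error $\beta'$ made explicit}; this is precisely what reveals that the net effect of the Bonferroni correction is the single substitution $\alpha\mapsto\alpha/\gamma$, which inflates the permutation-spread and parametric terms by polylogarithmic factors in $n$ and shifts the balancing truncation size from $m_0(nr^2)$ to $m_0(nr^2/\log^{1/2}n)$. Second, one needs the a priori bound $D(f)\leq 4A^2$ to rule out the degenerate possibility that the (essentially bias-optimal) truncation level $\bar m$ lies beyond the available range $K_*$; granted these two points, the remaining estimates are a routine chase of constants.
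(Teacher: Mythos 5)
Your proposal is correct and follows essentially the same route as the paper's proof: both invoke the explicit power condition~\eqref{Eq:FullDetail} from the proof of Theorem~\ref{Thm:UpperBound} with the Bonferroni-corrected level $\alpha/\gamma$, select a dyadic truncation level in $K_*$ adjacent to $m_0(nr^2/\log^{1/2}n)$, and verify that this level lies within the available range (the paper checks this via $m_0(t)\leq t^2/\theta_0^4+1$ and $r\leq R_0$, while you additionally use $D(f)\leq 4A^2$; either works). The remaining term-by-term estimates match the paper's up to presentation.
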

Comparing this result with the upper bound on the optimal separation in Corollary~\ref{Cor:Main}, we see that the price we pay for adaptation is that our effective sample size is reduced from $n$ to $n/ \log^{1/2} n$, at least provided that $m_0(n r^2 / \log^{1/2} n) \gtrsim \log n$.

As mentioned above, in some applications, the set $\mathcal{J} \times \mathcal{K}$ will not be naturally ordered. Nevertheless, it may be the case that $\mathcal{J}$ and $\mathcal{K}$ are ordered separately, and in these cases it is still possible to adapt to unknown parameters. Consider the setting of Corollary~\ref{Cor:Sobolev}, and define $\gamma_X := \lceil (2/d_X) \log_2n\rceil$ and $K_X:= \{ 2^j : j \in [\gamma_X]\}$ (with $\gamma_Y$ and $K_Y$ defined similarly).  Similarly to before, given a desired Type II error probability $\beta \in (0,1-\alpha)$, for each $(m_X,m_Y) \in K_X \times K_Y$, carry out the permutation test from Section~\ref{Sec:MainResults} with $\mathcal{M} \equiv \mathcal{M}_{m_X,m_Y} = \{(j,k) \in \mathbb{N}_0^{d_X}  \times \mathbb{N}_0^{d_Y} : 1 \leq \|j\|_1 \leq m_X, 1 \leq \|k\|_1 \leq m_Y \}$ and $B \geq 2( \frac{\gamma_X \gamma_Y}{\alpha \beta} -1)$ to yield p-values $\bigl\{p^{(m_X m_Y)}:(m_X,m_Y) \in K_X \times K_Y\bigr\}$.  This test again controls the Type I error at level $\alpha$, and the following result shows that the critical separation radius is inflated by at most a logarithmic factor in $n$.
\begin{prop}
\label{Prop:SobolevAdapt}
Assume the setting of Corollary~\ref{Cor:Sobolev}. Given $R_0 >0$, suppose that $r \leq R_0$. Then there exists $C=C(\alpha,\beta,R_0, s_X,s_Y,d_X,d_Y,A)>0$ such that we reject $H_0$ with probability at least $1-\beta$ whenever $n \geq C$ and
\begin{equation}
\label{Eq:SobolevAdaptBound}
	\rho \geq C  \biggl\{ \frac{r^d}{(n/\log n)^{2s}} \biggr\}^{1/(4s+d)}.
\end{equation}
\end{prop}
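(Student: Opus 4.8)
The plan is to reduce the power analysis of the omnibus test to that of a single, judiciously chosen member of the family of USP tests indexed by $(m_X,m_Y)\in K_X\times K_Y$, and then to apply the single-test power bound underlying Theorem~\ref{Thm:UpperBound} at the Bonferroni-adjusted level $\alpha':=\alpha/(\gamma_X\gamma_Y)$. First, we may assume $\mathcal{F}_\xi(\rho)\neq\emptyset$, since otherwise there is nothing to prove; and since $\theta_{jk}\geq 1$ whenever $\theta_{jk}<\infty$, while the indices with $j=0_{[d_X]}$ or $k=0_{[d_Y]}$ do not contribute to $D(\cdot)$, any $f\in\mathcal{F}_\xi(\rho)$ satisfies $\rho^2\leq D(f)\leq S_\theta(f)\leq r^2$. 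Combined with the hypothesis $\rho\geq C\{r^d/(n/\log n)^{2s}\}^{1/(4s+d)}$, this forces $r(n/\log n)^{1/2}\gtrsim 1$ once $C$ is large enough, which is what rules out degenerate small-$r$ behaviour. It then suffices to exhibit one pair $(m_X^\dagger,m_Y^\dagger)\in K_X\times K_Y$ for which the corresponding sub-test has $p$-value below $\alpha'$ with probability at least $1-\beta$ under $f$; the statistician need not identify this pair. The requirement $B\geq 2(\gamma_X\gamma_Y/(\alpha\beta)-1)$ is exactly what is needed, as in the proof of Proposition~\ref{Prop:GeneralAdapt}, to ensure that the Monte Carlo error from using finitely many permutations to form this sub-test's $p$-value reduces the power by at most a further fraction of $\beta$.

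Next I would extract from the proof of Theorem~\ref{Thm:UpperBound} (specifically, the bound in~\eqref{Eq:FullDetail}) the power guarantee for a fixed truncation set $\mathcal{M}$ run at a fixed level, keeping explicit track of the level-dependence. Under the alternative, $\hat{D}_n^{\mathcal{M}}$ has null standard deviation of order $|\mathcal{M}|^{1/2}/n$ (via the degenerate-$U$-statistic variance bound $\Var(\bar h)\leq\min(\|h\|_\infty^2,|\mathcal{M}|)$ recalled after Theorem~\ref{Thm:UpperBound}), and a Chebyshev bound on the permutation critical value at level $\alpha'$ inflates this by the factor $(\alpha')^{-1/2}=(\gamma_X\gamma_Y/\alpha)^{1/2}$, which is of order $\log n$ (up to a factor depending only on $\alpha$, $d_X$ and $d_Y$). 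Hence the sub-test with $\mathcal{M}=\mathcal{M}_{m_X,m_Y}$, thresholded at level $\alpha'$, rejects with probability at least $1-\beta$ as soon as
\[
	\rho^2\gtrsim_{\alpha,\beta,A}\max\biggl\{\frac{r^2}{\min(m_X^{2s_X},m_Y^{2s_Y})},\;\frac{\log n\cdot(m_X^{d_X}m_Y^{d_Y})^{1/2}}{n},\;\frac{\log n}{n}\biggr\},
\]
where the first term bounds the truncation bias, using that any $(j,k)\notin\mathcal{M}_{m_X,m_Y}$ with $\theta_{jk}<\infty$ has $\theta_{jk}>\min(m_X^{s_X},m_Y^{s_Y})$, and that $|\mathcal{M}_{m_X,m_Y}|\asymp m_X^{d_X}m_Y^{d_Y}$. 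Thus the price of the Bonferroni correction over $\gamma_X\gamma_Y\asymp\log^2 n$ tests is precisely the replacement of the effective sample size $n$ by $n/\log n$.

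It then remains to choose the dyadic resolutions so as to balance the bias and variance terms. Set the target scale $\tau:=(r(n/\log n)^{1/2})^{4s/(4s+d)}$, take $m_X^\dagger$ to be the smallest element of $K_X$ that is at least $\tau^{1/s_X}$ (with $m_X^\dagger:=2$ if $\tau^{1/s_X}<2$), and define $m_Y^\dagger$ analogously. Then $\min\bigl((m_X^\dagger)^{2s_X},(m_Y^\dagger)^{2s_Y}\bigr)\geq\tau^2$ and $(m_X^\dagger)^{d_X}(m_Y^\dagger)^{d_Y}\lesssim_{d_X,d_Y}\tau^{d_X/s_X+d_Y/s_Y}+1=\tau^{d/s}+1$, so, since $r(n/\log n)^{1/2}\gtrsim 1$, all three terms in the display above are $\lesssim r^2/\tau^2=\{r^d/(n/\log n)^{2s}\}^{2/(4s+d)}$, which is the square of the asserted bound. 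To confirm that $(m_X^\dagger,m_Y^\dagger)$ indeed lies in $K_X\times K_Y$, one checks that $\tau^{1/s_X}\leq n^{2/d_X}\leq 2^{\gamma_X}$ for $n\geq C$: using $r\leq R_0$ this reduces to the exponent inequality $sd_X\leq s_X(4s+d)$, which follows from $sd_X<s_Xd$, itself immediate from $s=d/(d_X/s_X+d_Y/s_Y)$; the analogous bound holds in the $Y$-direction, and the cases $\tau^{1/s_X}<2$ or $\tau\lesssim 1$ are absorbed into the constants using $n\geq C$.

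The step I expect to be the main obstacle is the second one: one must read off from the proof of Theorem~\ref{Thm:UpperBound} the precise dependence of the required separation on the testing level, so as to confirm that passing from $\alpha$ to $\alpha'\asymp\alpha/\log^2 n$ costs only a factor of $\log n$ in effective sample size (rather than $\log^2 n$), and then check that this is compatible with the Monte Carlo analysis governed by the prescribed lower bound on $B$. By comparison, the combinatorial verification in the third step---that $K_X\times K_Y$ covers every bias--variance trade-off point that can arise for $r\in(0,R_0]$---is routine, resting only on the elementary inequalities $sd_X<s_Xd$ and $sd_Y<s_Yd$.
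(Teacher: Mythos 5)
Your argument is correct and follows essentially the same route as the paper's proof: both apply the explicit power condition~\eqref{Eq:FullDetail} at the Bonferroni-adjusted level, select $(m_X,m_Y)\in K_X\times K_Y$ near the balance point $m_X^{s_X}\asymp m_Y^{s_Y}\asymp(nr^2/\log n)^{2s/(4s+d)}$ (your $\tau$), and verify via $r\le R_0$ and the exponent inequality $sd_X<s_Xd$ that this point lies on the dyadic grid. The only imprecision is in the parametric term, where the Bonferroni correction turns $1/(n\alpha\beta)$ into $\gamma_X\gamma_Y/(n\alpha\beta)\asymp\log^2 n/n$ rather than your stated $\log n/n$ --- but this is a point the paper's own proof likewise elides, so your write-up matches it in both substance and level of detail.
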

We note that a similar procedure could be applied in the setting of Corollary~\ref{Cor:InfDim} to obtain an adaptive test there too.  Finally in this section, we remark that in a more restricted setting it may be possible to improve the $\log n$ dependence to $\log \log n$ dependence using the very recent concentration results of \citet{KBW2020}.

\section{Lower bounds}
\label{Sec:LowerBounds}

The goal of this section is to provide lower bounds to allow us to study the optimality of our USP test in different contexts.  Slightly more precisely, we wish to determine the maximal departure from independence (measured in terms of our quantity $D(\cdot)$) that no valid independence test could reliably detect; equivalently, we seek the minimal separation level at which a valid independence test could have non-trivial power, uniformly over the alternatives in our classes.  To this end, we first prove a general lemma (Lemma~\ref{Lemma:LowerBound} below), and then illustrate how it can be applied in different settings of interest.

Our lower bound results actually apply to a weaker notion of minimax risk, and will hold in settings where our base measures on $\mathcal{X}$ and $\mathcal{Y}$ are probability measures, and where our orthonormal bases contain the constant function 1, so that there exist $j_0 \in \mathcal{J}$ and $k_0 \in \mathcal{K}$ such that $p_{j_0}^X(x) = 1$ and $p_{k_0}(y) = 1$ for all $x \in \mathcal{X}$ and $y \in \mathcal{Y}$. Define
\[
	\tilde{\mathcal{R}}(n,\xi,\rho):= \inf_{\psi \in \Psi(1)} \biggl\{ \mathbb{E}_{p_{j_0 k_0}}(\psi) + \sup_{f \in \mathcal{F}_\xi(\rho)} \mathbb{E}_f(1-\psi) \biggr\},
\]
which only controls the sum of the error probabilities, and only considers a simple null, and further define
\[
	\tilde{\rho}^*(n,\gamma,\xi):= \inf \bigl\{ \rho > 0 : \tilde{\mathcal{R}}(n,\xi,\rho) \leq \gamma \bigr\}.
\]
Then, for any $n \in \mathbb{N}, \xi \in \Xi$, $\alpha,\beta \in (0,1)$ with $\alpha + \beta < 1$, and $\rho \in(0,\infty)$, we have that $\tilde{\mathcal{R}}(n, \xi, \rho) \leq \mathcal{R}(n,\alpha,\xi,\rho)$, and therefore also that $\tilde{\rho}^*(n, \alpha+\beta,\xi) \leq \rho^*(n,\alpha,\beta,\xi)$. When our upper and lower bounds match, in terms of the separation rates, the problems of independence testing with simple and composite nulls are equivalent, and we have the same rates of convergence if we control the sum of error probabilities or if we control the error probabilities separately.

We are now in a position to state our main, general lower bound lemma.  Recall that a Rademacher random variable $\xi$ takes values $1$ and $-1$, each with probability $1/2$.
\begin{lemma}
\label{Lemma:LowerBound}
Suppose that $\mu_X$ and $\mu_Y$ are probability measures and that there exist $j_0\in \mathcal{J}$ and $k_0 \in \mathcal{K}$ such that $p_{j_0}^X(x)=p_{k_0}^Y(y)=1$ for all $x \in \mathcal{X}$ and $y \in \mathcal{Y}$. Let $(a_{jk})_{j \in \mathcal{J} \setminus \{j_0\}, k \in \mathcal{K} \setminus \{k_0\}}$ be a deterministic square-summable array of real numbers, let $(\xi_{jk})_{j \in \mathcal{J} \setminus \{j_0\}, k \in \mathcal{K} \setminus \{k_0\}}$ be an independent and identically distributed array of Rademacher random variables, and define a random element of $L^2(\mu)$ by
\[
	p := p_{j_0k_0} + \sum_{j \in \mathcal{J} \setminus \{j_0\}, k \in \mathcal{K} \setminus \{k_0\}} a_{jk} \xi_{jk} p_{jk}.
\]
Assume $\{p \in \mathcal{F}\}$ is an event, and define $f$ to be a random element of $\mathcal{F}$ that has the same distribution as $p | \{p \in \mathcal{F}\}$. Writing $\mathbb{E} \mathbb{P}_f^{\otimes n}$ for the resulting mixture distribution on $(\mathcal{X} \times \mathcal{Y})^n$ and $\mathbb{P}_{p_{j_0k_0}}$ for the distribution on $\mathcal{X} \times \mathcal{Y}$ with density $p_{j_0k_0}$, we have that
\[
	d_{\mathrm{TV}}^2 \bigl( \mathbb{P}_{p_{j_0 k_0}}^{\otimes n}, \mathbb{E} \mathbb{P}_f^{\otimes n} \bigr) \leq \frac{\exp\bigl( \frac{(n+1)^2}{2} \sum_{j \in \mathcal{J} \setminus \{j_0\}, k \in \mathcal{K} \setminus \{k_0\}} a_{jk}^4\bigr)}{4\mathbb{P}(p \in \mathcal{F})^2} - \frac{1}{4}.
\]
\end{lemma}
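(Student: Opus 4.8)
The plan is to bound the total variation distance by a $\chi^2$-type quantity and then exploit the mixture-against-product structure; throughout, sums and products indexed by $j,k$ run over $j \in \mathcal{J}\setminus\{j_0\}$ and $k \in \mathcal{K}\setminus\{k_0\}$ unless stated otherwise. Recall that for probability measures $Q \ll P_0$ one has $d_{\mathrm{TV}}^2(Q,P_0) \le \tfrac14\bigl\{\int (dQ/dP_0)^2\,dP_0 - 1\bigr\}$. I would apply this with $P_0 = \mathbb{P}_{p_{j_0k_0}}^{\otimes n}$ and $Q = \mathbb{E}\mathbb{P}_f^{\otimes n}$; since $p_{j_0}^X \equiv 1$ and $p_{k_0}^Y \equiv 1$ force $p_{j_0k_0} \equiv 1$, we have $\mathbb{P}_{p_{j_0k_0}} = \mu$, every $\mathbb{P}_f$ with $f \in \mathcal{F}$ satisfies $d\mathbb{P}_f/d\mathbb{P}_{p_{j_0k_0}} = f$, and hence the required absolute continuity holds. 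Writing the mixture density as $\mathbb{E}_f\bigl(d\mathbb{P}_f^{\otimes n}/dP_0\bigr)$, squaring, using Tonelli's theorem and introducing an independent copy $f'$ of the random density $f$, one obtains
\[
\int \Bigl(\frac{dQ}{dP_0}\Bigr)^2 dP_0 = \mathbb{E}_{f,f'}\biggl\{\Bigl(\int \frac{ff'}{p_{j_0k_0}}\,d\mu\Bigr)^n\biggr\} = \mathbb{E}_{f,f'}\bigl\{\langle f,f'\rangle_{L^2(\mu)}^n\bigr\},
\]
so that $d_{\mathrm{TV}}^2\bigl(\mathbb{P}_{p_{j_0k_0}}^{\otimes n},\,\mathbb{E}\mathbb{P}_f^{\otimes n}\bigr) \le \tfrac14\bigl\{\mathbb{E}_{f,f'}\langle f,f'\rangle^n - 1\bigr\}$.

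Next I would undo the conditioning defining $f$. With $p,p'$ independent copies of the unconditioned random element in the statement, $\mathcal{E} := \{p \in \mathcal{F}\}$ and $\mathcal{E}'$ its analogue for $p'$, the change of measure gives $\mathbb{E}_{f,f'}\langle f,f'\rangle^n = \mathbb{P}(\mathcal{E})^{-2}\,\mathbb{E}_{p,p'}\bigl\{\langle p,p'\rangle^n\mathbbm{1}_{\mathcal{E}}\mathbbm{1}_{\mathcal{E}'}\bigr\}$. Since the coefficient of $p_{j_0k_0}$ in $p$ equals $1$, orthonormality of $(p_{jk})$ (together with square-summability of $(a_{jk})$, which legitimises exchanging the sum and the inner product) yields $\langle p,p'\rangle = 1 + Z$, where $Z := \sum_{j,k} a_{jk}^2\,\xi_{jk}\xi_{jk}'$ and the products $\xi_{jk}\xi_{jk}'$ form an i.i.d.\ Rademacher array. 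On $\mathcal{E}\cap\mathcal{E}'$ both $p$ and $p'$ are genuine densities, so $\langle p,p'\rangle = \int pp'\,d\mu \ge 0$; hence $\langle p,p'\rangle^n\mathbbm{1}_{\mathcal{E}}\mathbbm{1}_{\mathcal{E}'} \le \bigl\{(1+Z)_+\bigr\}^n \le e^{nZ}$, the last bound using $(1+x)_+ \le e^x$ for all $x \in \mathbb{R}$.

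Finally, by independence of the $\xi_{jk}\xi_{jk}'$ and the elementary inequality $\cosh t \le e^{t^2/2}$,
\[
\mathbb{E}\bigl(e^{nZ}\bigr) = \prod_{j,k} \cosh\bigl(na_{jk}^2\bigr) \le \exp\Bigl(\tfrac{n^2}{2}\textstyle\sum_{j,k} a_{jk}^4\Bigr),
\]
the product converging because $\sum_{j,k} a_{jk}^4 \le \bigl(\sum_{j,k} a_{jk}^2\bigr)^2 < \infty$. Combining the three displays and using $n^2 \le (n+1)^2$ gives exactly the stated bound. The one genuinely delicate point is the removal of the conditioning: one cannot simply discard $\mathbbm{1}_{\mathcal{E}}\mathbbm{1}_{\mathcal{E}'}$, because for odd $n$ the integrand $\langle p,p'\rangle^n$ may be negative; it is precisely the nonnegativity of $\langle p,p'\rangle$ on $\mathcal{E}\cap\mathcal{E}'$, combined with the pointwise bound $(1+x)_+ \le e^x$, that controls the truncated expectation. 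Minor additional care is needed to justify the Tonelli interchange and the $L^2(\mu)$-convergence of the basis expansion of $p$ before inner products are taken.
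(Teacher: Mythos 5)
Your proof is correct, and it diverges from the paper's argument at the two technical pinch points in an instructive way. First, the removal of the conditioning: the paper drops the indicator $\mathbbm{1}_{\{p^{(1)},p^{(2)} \in \mathcal{F}\}}$ outright, which is only legitimate when $\langle p^{(1)},p^{(2)}\rangle^n$ is guaranteed nonnegative, i.e.\ for even $n$; odd $n$ is then handled via $d_\mathrm{TV}(\nu_1^{\otimes n},\nu_2^{\otimes n}) \leq d_\mathrm{TV}(\nu_1^{\otimes (n+1)},\nu_2^{\otimes (n+1)})$, which is exactly where the $(n+1)^2$ in the statement comes from. You instead observe that $\langle p,p'\rangle \geq 0$ on $\mathcal{E} \cap \mathcal{E}'$ (both are densities there) and pass to $\{(1+Z)_+\}^n \leq e^{nZ}$, which handles all $n$ uniformly and in fact yields the slightly sharper constant $n^2/2$ in the exponent. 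Second, the moment bound: the paper expands $(1+Y)^n$ binomially and controls $\mathbb{E}(Y^{2m})$ by a combinatorial argument with double factorials and multinomial coefficients, whereas you replace the polynomial by the exponential and invoke the Rademacher MGF bound $\cosh t \leq e^{t^2/2}$ factorwise, arriving at the same exponent $\tfrac{n^2}{2}\sum_{j,k} a_{jk}^4$ in a few lines. The trade-off is that the paper's polynomial expansion keeps exact even moments available (which could in principle be exploited for refinements), while your exponential bound is shorter, avoids the parity case split entirely, and still implies the stated inequality since $n^2 \leq (n+1)^2$. Your flagged measure-theoretic points (Tonelli, convergence of the basis expansion, and the infinite product of MGFs, justified by $|Z| \leq \sum_{j,k} a_{jk}^2 < \infty$ and bounded convergence) are all routine and correctly identified.
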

Suppose that the $f$ defined in Lemma~\ref{Lemma:LowerBound} takes values in $\mathcal{F}_{\xi}(\rho)$ with probability one. Then we have that
\[
	\tilde{\mathcal{R}}(n,\xi,\rho) \geq \inf_{\psi \in \Psi(1)} \Bigl\{ \mathbb{E}_{p_{j_0 k_0}}(\psi) + \mathbb{E}\mathbb{P}_f (1-\psi) \Bigr\} \geq 1 - d_\mathrm{TV} \bigl( \mathbb{P}_{p_{j_0 k_0}}^{\otimes n}, \mathbb{E} \mathbb{P}_f^{\otimes n} \bigr),
\]
which reduces the problem of finding lower bounds for the minimax risk $\tilde{\mathcal{R}}(n,\xi,\rho)$ to the choice of an appropriate separation $\rho$ and prior distribution over $\mathcal{F}_\xi(\rho)$.

The main challenge in applying Lemma~\ref{Lemma:LowerBound} is in finding a suitable upper bound for $\mathbb{P}(p \not\in \mathcal{F})$.  Provided $\bar{p}:= \sup_{j \in \mathcal{J},k \in \mathcal{K}} \|p_{jk}\|_\infty < \infty$, we can ensure that $\mathbb{P}(p \not\in \mathcal{F})=0$ by simply imposing the constraint that $\sum_{j \in \mathcal{J} \setminus \{j_0\}, k \in \mathcal{K} \setminus \{k_0\}} |a_{jk}| \leq 1/\bar{p}$. If we do this then we can prove the lower bound in Theorem~\ref{Thm:LowerBound1} below. 

\begin{thm}
\label{Thm:LowerBound1}
Suppose that $\mu_X$ and $\mu_Y$ are probability measures and that there exist $j_0\in \mathcal{J}$ and $k_0 \in \mathcal{K}$ such that $p_{j_0}^X(x)=p_{k_0}^Y(y)=1$ for all $x \in \mathcal{X}$ and $y \in \mathcal{Y}$.  Assume that $\bar{p} < \infty$, and fix $\gamma \in (0,1)$ and $\xi=(\theta,r,A) \in \Xi$ such that \textbf{(A1)} holds. Then there exists $c=c(\gamma,A) \in(0,\infty)$ such that
%Fix $\theta=(s_X,s_Y,r,A) \in (0,\infty)^4$ such that $s > 1$ and $\gamma \in (0,1)$. Moreover, assume that $\bar{p}:= \sup_{j,k} \|p_{jk}\|_\infty < \infty$. Then there exists $c_1=c_1(\gamma,s_X,s_Y,A,\bar{p}) \in (0,\infty)$, $C_1=C_1(\gamma,s_X,s_Y,A,\bar{p}) \in (0,\infty)$ and $c_2=c_2(\gamma,s_X,s_Y,A,\bar{p})$ such that if $c_1 n^{-1/2} \leq r \leq C_1 n^{\frac{s-1}{3}}$ then
\begin{align*}
	\tilde{\rho}^*(n, \gamma, \xi) &\geq c \sup_{m \in \mathbb{N}} \min \biggl( \frac{r}{\theta_{\omega(m)}}, \frac{m^{1/4}}{n^{1/2}}, \frac{1}{m^{1/2} \bar{p}} \biggr).
\end{align*}
%Moreover, there exists $c'=c'(\gamma,A,\bar{p}) \in (0,\infty)$ such that if $nr^2 < c'\underline{\theta}^2$ then $\tilde{\rho}^*(n,\gamma,\xi) = \infty$.
\end{thm}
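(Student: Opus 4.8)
The plan is to apply Lemma~\ref{Lemma:LowerBound} to a one-parameter Rademacher prior and to optimise over the number of active basis coefficients. Since $\bar p < \infty$ by hypothesis, all quantities below are well defined. Because $\rho \mapsto \tilde{\mathcal R}(n,\xi,\rho)$ is non-increasing (enlarging $\rho$ shrinks the alternative class $\mathcal F_\xi(\rho)$), it suffices to produce a constant $c = c(\gamma,A) \in (0,\infty)$ with $c \le \min(1,A-1)$ such that, for \emph{every} $m \in \mathbb N$, writing
\[
  \rho_m := c \min\Bigl( \frac{r}{\theta_{\omega(m)}},\ \frac{m^{1/4}}{n^{1/2}},\ \frac{1}{m^{1/2}\bar p} \Bigr),
\]
one has $\tilde{\mathcal R}(n,\xi,\rho_m) > \gamma$; taking the supremum over $m$ then gives $\tilde\rho^*(n,\gamma,\xi) \ge \sup_m \rho_m$, which is the claim. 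By the reduction recorded just after Lemma~\ref{Lemma:LowerBound}, for this it is enough, for each fixed $m$, to construct a random $f$ of the form appearing there that lies in $\mathcal F_\xi(\rho_m)$ almost surely and for which $d_{\mathrm{TV}}\bigl( \mathbb P_{p_{j_0 k_0}}^{\otimes n}, \mathbb E\mathbb P_f^{\otimes n} \bigr) < 1 - \gamma$.

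\textbf{Construction.} Fix $m \in \mathbb N$, set $a := m^{-1/2}\rho_m$, and let $a_{jk} := a$ for $(j,k) \in \{\omega(1),\ldots,\omega(m)\}$ and $a_{jk} := 0$ otherwise; by the convention $\theta_{jk} = \infty$ when $j = j_0$ or $k = k_0$, every $\omega(i)$ has both coordinates distinct from $j_0$ and $k_0$, so $(a_{jk})$ is supported away from those indices, as required in Lemma~\ref{Lemma:LowerBound}. Form $p := p_{j_0 k_0} + \sum_{j \neq j_0,\, k \neq k_0} a_{jk}\xi_{jk} p_{jk}$. The three computations I would carry out, using orthonormality of the bases and $p_{j_0 k_0} \equiv 1$, are: (i) $\int_{\mathcal Y} p_{jk}(x,y)\,d\mu_Y(y) = p_j^X(x)\mathbbm 1_{\{k = k_0\}}$ and symmetrically in $x$, so that the marginals of $p$ (hence of $f$) are \emph{exactly} the constant function $1$; therefore $a_{j\bullet}(f) = \mathbbm 1_{\{j=j_0\}}$, $a_{\bullet k}(f) = \mathbbm 1_{\{k=k_0\}}$, and $a_{jk}(f) - a_{j\bullet}(f)a_{\bullet k}(f)$ equals $a_{jk}\xi_{jk}$ when $j \neq j_0, k \neq k_0$ and $0$ otherwise; (ii) consequently $D(f) = \sum_{j,k} a_{jk}^2 = m a^2$ and $S_\theta(f) = \sum_{j,k} \theta_{jk}^2 a_{jk}^2$, both independently of the signs; (iii) $\|f\|_\infty \le 1 + \bar p \sum_{j,k} |a_{jk}| = 1 + \bar p\, m^{1/2}\rho_m$, again independently of the signs.

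\textbf{Verification and total variation bound.} From (ii) and $\theta_{\omega(1)} \le \theta_{\omega(2)} \le \cdots$ we get $D(f) = m a^2 = \rho_m^2$ and $S_\theta(f) \le \theta_{\omega(m)}^2 \rho_m^2 \le r^2$, the last inequality since $\rho_m \le r/\theta_{\omega(m)}$. Since $m^{1/2}\rho_m \le c/\bar p \le 1/\bar p$, the constraint $\sum_{j,k}|a_{jk}| \le 1/\bar p$ of the remark preceding the theorem holds, forcing $\mathbb P(p \in \mathcal F) = 1$ (so $f \stackrel{d}{=} p$); and by (iii), $\|f\|_\infty \le 1 + c \le A$ as $c \le A-1$, while $\|f_X\|_\infty = \|f_Y\|_\infty = 1 \le A$. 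Hence $f \in \mathcal F_\xi(\rho_m)$ almost surely. Lemma~\ref{Lemma:LowerBound} with $\mathbb P(p \in \mathcal F) = 1$ then gives
\[
  d_{\mathrm{TV}}^2\bigl( \mathbb P_{p_{j_0 k_0}}^{\otimes n}, \mathbb E\mathbb P_f^{\otimes n} \bigr) \le \tfrac14\Bigl\{ \exp\Bigl( \tfrac{(n+1)^2}{2}\,\textstyle\sum_{j,k} a_{jk}^4 \Bigr) - 1 \Bigr\},
\]
and $\sum_{j,k} a_{jk}^4 = m a^4 = \rho_m^4/m \le (c m^{1/4}/n^{1/2})^4/m = c^4/n^2$, so the exponent is at most $\tfrac{(n+1)^2}{2n^2}c^4 \le 2c^4$. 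Choosing $c$ moreover small enough that $2c^4 < \log\bigl( 1 + 4(1-\gamma)^2 \bigr)$ yields $d_{\mathrm{TV}} < 1 - \gamma$, hence $\tilde{\mathcal R}(n,\xi,\rho_m) > \gamma$, completing the argument after taking the supremum over $m$.

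\textbf{Main obstacle.} The delicate point is the simultaneous bookkeeping through the single scale $a$: it must make $D(f) = \sum a_{jk}^2$ of order $\rho_m^2$ while keeping $\sum|a_{jk}|$ small enough for nonnegativity and the sup-norm bound (this competition produces the term $1/(m^{1/2}\bar p)$), and simultaneously keeping $\sum a_{jk}^4$ of order $n^{-2}$ (its competition with $\sum a_{jk}^2 \asymp \rho_m^2$ produces $m^{1/4}/n^{1/2}$), while the first term $r/\theta_{\omega(m)}$ comes for free from monotonicity of $\theta$ along $\omega$. What makes all of this controllable by one parameter is the exact cancellation in step (i): the perturbation leaves both marginals untouched, so $D$ and $S_\theta$ collapse to clean quadratic forms in $(a_{jk})$ regardless of the signs. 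Everything else is routine.
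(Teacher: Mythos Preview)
Your proof is correct and follows essentially the same approach as the paper's: both construct a Rademacher prior supported on the first $m$ basis directions $\omega(1),\ldots,\omega(m)$ with equal amplitudes, verify membership in $\mathcal F_\xi(\rho)$ via the three competing constraints (smoothness, $\ell^4$-control for the TV bound, and $\ell^1$-control for nonnegativity and the sup-norm), and invoke Lemma~\ref{Lemma:LowerBound} with $\mathbb P(p\in\mathcal F)=1$. The only cosmetic difference is parametrisation: the paper packages the three constraints into a single quantity $c_m$ (playing the role of your $\rho_m^2$) with explicit constants inside the minimum, whereas you factor out a single small constant $c=c(\gamma,A)$ up front; your explicit verification in step~(i) that the marginals of $p$ are identically~$1$ is a detail the paper leaves implicit.
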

Thinking of $\gamma = \alpha + \beta$, this lower bound matches the upper bound in Theorem~\ref{Thm:UpperBound} in certain cases, up to terms depending only on $\alpha,\beta$ and $A$, as we now explain.  Suppose that $nr^2 \geq \underline{\theta}^2$, which means that $m_0(nr^2) \geq 2$, so we only rule out the case where the sample size is so small that the optimal truncation level is to include only one basis function.   Suppose further that $m_0(nr^2) \leq Cn^{2/3}/\bar{p}^{4/3}$ for some $C=C(\alpha,\beta,A)$, which amounts to asking that the optimal truncation level does not grow too fast, or equivalently, that our alternatives are not too rough.  Then
\begin{align}
  \label{Eq:SimpleLower}
	\sup_{m \in \mathbb{N}} \min \biggl( \frac{r}{\theta_{\omega(m)}}, \frac{m^{1/4}}{n^{1/4}}, \frac{1}{m^{1/2} \bar{p}} \biggr) &\geq \min \biggl( \frac{\{m_0(nr^2)-1\}^{1/4}}{n^{1/2}}, \frac{1}{\{m_0(nr^2) -1 \}^{1/2}\bar{p}} \biggr) \nonumber \\
	& \geq \frac{m_0(nr^2)^{1/4}}{n^{1/2}} \min \bigl( 2^{-1/4}, C^{-3/4} \bigr).
\end{align}
A comparison of Corollary~\ref{Cor:Main} and~\eqref{Eq:SimpleLower} allows us to conclude that our $U$-statistic permutation test attains the minimax optimal separation rate in wide generality (i.e.~with few restrictions on the underlying spaces and the sequence $\theta$), provided that $nr^2$ is sufficiently large and $m_0(nr^2) \leq Cn^{2/3}/\bar{p}^{4/3}$.  The following example illustrates this latter condition in a specific case.
%The crucial point here is that $(m \bar{p}^2)^{-1} \gtrsim_{\alpha,\beta,A} \min(r^2/\theta_{\omega(m)}^2, m^{1/2}/n)$ at the optimal value of~$m$ so that the last term in the minimum does not appear when $m_0(nr^2) \lesssim_{\alpha,\beta,A} n^{2/3}/\bar{p}^{4/3}$.  Intuitively, this latter bound holds when our alternatives considered are smooth enough that the optimal truncation level is not too large.  The condition $m_0(nr^2) \geq 2$ ensures that the sequences $(r^2/\theta_{\omega(m)}^2)_m$ and $(m^{1/2}/n)_m$ cross so that 
%\[
%	\sup_{m \in \mathbb{N}} \min \biggl( \frac{r^2}{\theta_{\omega(m)}^2}, \frac{m^{1/2}}{n} \biggr) \asymp_{\alpha,\beta,A} \inf_{m \in \mathbb{N}} \max \biggl( \frac{r^2}{\theta_{\omega(m)}^2}, \frac{m^{1/2}}{n} \biggr).
%\]
\begin{example}
  \label{Ex:Sobolev2}
Write $\zeta=(s_X,s_Y,d_X,d_Y,\alpha,\beta,A)$. In our $d$-dimensional Sobolev setting of Example~\ref{Ex:Sobolev}, when $t \geq 1$, we have $m_0(t) \asymp_\zeta t^{2d/(4s+d)}$ and hence when $n^{2s-d} \gtrsim_\zeta r^{3d}$ we have that $m_0(nr^2) \lesssim_\zeta n^{2/3}$.  Since we may take $\bar{p} = 2^{1/2}$, it therefore follows that when $nr^2 \geq 1$ and $n^{2s-d} \gtrsim_\zeta r^{3d}$, the lower bound~\eqref{Eq:SimpleLower} holds, and this matches the upper bound from Corollary~\ref{Cor:Main}.
\end{example}
Despite the attractive conclusions that can be drawn from Theorem~\ref{Thm:LowerBound1}, it remains desirable to weaken further the smoothness requirements on our alternatives.  It turns out that in certain settings, we can use empirical process techniques to lower bound the $\mathbb{P}(p \in \mathcal{F})$ term in Lemma~\ref{Lemma:LowerBound} without a bound on $\sum_{j \in \mathcal{J} \setminus \{j_0\}, k \in \mathcal{K} \setminus \{k_0\}} |a_{jk}|$.  This allows us to substantially widen the range of smoothnesses under which our upper and lower bounds match. We first illustrate this approach in our Sobolev example.
%and infinite-dimensional examples, we can use empirical process techniques to remove our previous constraint on $m_0(nr^2)$ using a more refined analysis of the $\mathbb{P}(p \in \mathcal{F})$ term in Lemma~\ref{Lemma:LowerBound}. 
%The bound in Theorem~\ref{Thm:LowerBound1} can sometimes be improved with a more subtle analysis of $\mathbb{P}(p \in \mathcal{F})$. This requires different conditions on the space $\mathcal{X} \times \mathcal{Y}$ and its basis $(p_{jk})$ so that we may use empirical process techniques. The following result concerns our Sobolev example.
\begin{prop}
\label{Thm:FourierLowerBound}
In the context of Example~\ref{Ex:Sobolev}, fix $\gamma \in (0,1)$.  Then there exist $c_1, c_2 \in (0,\infty)$, each depending only on $d_X,d_Y,\gamma,s_X,s_Y$ and $A$, such that if $nr^2 \geq 2$ and $(r^{d}/n^{2s})^{1/(4s+d)} \leq c_1/ \log^{1/2}(nr^2)$, then
\[
	\tilde{\rho}^*(n, \gamma, \xi) \geq c_2 \biggl( \frac{r^{d}}{n^{2s}} \biggr)^{1/(4s+d)}.
\]
%and if $nr^2<c_3$ then $\tilde{\rho}^*(n, \gamma, \xi) = \infty$.
\end{prop}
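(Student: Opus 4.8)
The plan is to apply Lemma~\ref{Lemma:LowerBound} with a carefully chosen Rademacher-randomised prior, but — unlike in the derivation of Theorem~\ref{Thm:LowerBound1} — without imposing the crude $\ell^1$-constraint $\sum |a_{jk}| \leq 1/\bar p$, which would force the nonzero coefficients to be too small and cost us an extra polynomial factor in the smoothness range. Instead I would choose a truncation level $m \asymp (nr^2)^{2d/(4s+d)}$, take $\mathcal{M}$ to be the set of the $m$ smallest-$\theta$ indices (equivalently a Sobolev shell $\{(j,k): 1 \le \|j\|_1 \vee \|k\|_1 \lesssim m^{s/d}\}$ in the Fourier indexing of Example~\ref{Ex:Sobolev}), and set $a_{jk} = a$ constant on $\mathcal{M}$ and $0$ elsewhere, with $a \asymp m^{-1/4} n^{-1/2}$ chosen so that $D(f) = \sum_{\mathcal{M}} a^2 \asymp m^{1/2}/n \asymp \rho^2$ for the target radius $\rho \asymp (r^d/n^{2s})^{1/(4s+d)}$, while simultaneously $S_\theta(f) = \sum_{\mathcal{M}} \theta_{jk}^2 a^2 \lesssim m^{s/d \cdot 2} \cdot m a^2 \asymp r^2$. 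With these choices one checks $\sum_{\mathcal{M}} a^4 = m a^4 \asymp 1/n^2$, so the exponent $\tfrac{(n+1)^2}{2}\sum a_{jk}^4$ in Lemma~\ref{Lemma:LowerBound} is $O(1)$; provided $\mathbb{P}(p \in \mathcal{F})$ is bounded away from $0$, the total variation distance is bounded away from $1$ and the lower bound on $\tilde\rho^*$ follows.

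The crux, and the step I expect to be the main obstacle, is controlling $\mathbb{P}(p \notin \mathcal{F})$, i.e.\ showing that the random Fourier partial sum
\[
	p = 1 + a\sum_{(j,k)\in\mathcal{M}} \xi_{jk}\, p_{jk}
\]
is nonnegative on $[0,1]^d$ with probability bounded below (it automatically integrates to $1$ and is measurable). Here the magnitude of the perturbation is $\|p - 1\|_\infty \le a \sum_{\mathcal{M}} \|p_{jk}\|_\infty \asymp a \cdot m \asymp m^{3/4} n^{-1/2}$, which is \emph{not} $o(1)$ in the regime of interest — this is exactly why the naive argument fails. I would instead use an empirical-process / chaining bound: $p - 1$ is (conditionally on nothing) a Rademacher chaos of order $1$ indexed by the point $x \in [0,1]^d$, and its supremum concentrates around its expectation. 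By Dudley's entropy bound (or a direct Bernstein-type argument exploiting that each $|p_{jk}(x)| \le \bar p = 2^{1/2}$ and that the index set has metric entropy controlled by $m$ through the smoothness of the Fourier kernel), one gets $\mathbb{E}\sup_x |p(x)-1| \lesssim a\, m^{1/2}\log^{1/2} m \asymp (nr^2)^{-1/2} \cdot (\text{poly-log})$ — wait, more carefully, $\mathbb{E}\sup_x|p(x)-1| \lesssim a (\sum_{\mathcal{M}} \|p_{jk}\|_\infty^2)^{1/2}\log^{1/2}(\#\mathcal M) \asymp a m^{1/2}\log^{1/2} m$, and with $a\asymp m^{-1/4}n^{-1/2}$ this is $\asymp m^{1/4}\log^{1/2}m/n^{1/2} \asymp (r^d/n^{2s})^{1/(4s+d)}\log^{1/2}(nr^2)$. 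The hypothesis $(r^d/n^{2s})^{1/(4s+d)} \le c_1/\log^{1/2}(nr^2)$ is precisely what makes this quantity at most $1/2$, say, for a suitable $c_1$; a concentration inequality (bounded differences, since flipping one $\xi_{jk}$ changes $\sup_x|p-1|$ by at most $2a\bar p$, and $m a^2 \bar p^2 \asymp 1/n$) then upgrades this to $\mathbb{P}(\sup_x|p(x)-1| \ge 1) \le 1/2$, giving $\mathbb{P}(p \in \mathcal{F}) \ge 1/2$.

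Assembling the pieces: with $\mathbb{P}(p\in\mathcal F)\ge 1/2$ and $\sum_{\mathcal{M}} a^4 \asymp n^{-2}$, Lemma~\ref{Lemma:LowerBound} yields $d_{\mathrm{TV}}^2(\mathbb P_{p_{j_0k_0}}^{\otimes n}, \mathbb{E}\mathbb P_f^{\otimes n}) \le e^{C}-\tfrac14$ for an absolute $C$; by rescaling $a$ down by a constant factor (which only improves all the constraints) we can make this at most $(1-\gamma)^2$, so that $\tilde{\mathcal R}(n,\xi,\rho) \ge 1 - d_{\mathrm{TV}} \ge \gamma$, hence $\tilde\rho^*(n,\gamma,\xi) \ge \rho \asymp (r^d/n^{2s})^{1/(4s+d)}$ as claimed, with $c_2$ absorbing all the implied constants (depending only on $d_X,d_Y,s_X,s_Y,\gamma,A$). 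One should double-check that $f$ indeed lands in $\mathcal F_\xi(\rho)$ almost surely: the boundedness constraint $\max(\|f\|_\infty,\|f_X\|_\infty,\|f_Y\|_\infty)\le A$ holds on the event $\{p\in\mathcal F\}$ up to the event just controlled (since $\|f\|_\infty \le 1 + \sup_x|p(x)-1| \le 2 \le A$ there, and $f_X \equiv 1, f_Y\equiv 1$ because the perturbation has mean zero in each coordinate and $\mathcal M$ excludes $j_0,k_0$), and the $D(f)\ge\rho^2$ and $S_\theta(f)\le r^2$ constraints hold deterministically by our choice of $a$ and $m$; conditioning $p$ on $\{p\in\mathcal F\}$ as in the lemma then makes $f$ a bona fide prior on $\mathcal F_\xi(\rho)$.
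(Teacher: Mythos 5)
Your proposal follows essentially the same route as the paper: the same prior (constant Rademacher coefficients of size $\asymp m^{-1/4}n^{-1/2}$ on the $m \asymp (nr^2)^{2d/(4s+d)}$ lowest-frequency indices), the same application of Lemma~\ref{Lemma:LowerBound}, and the same key step of bounding $\mathbb{P}(p\notin\mathcal{F})$ by combining an entropy/chaining bound on $\mathbb{E}\sup_{x}|p(x)-1| \lesssim \rho\log^{1/2}(nr^2)$ with sub-Gaussian concentration of the supremum, the hypothesis on $c_1$ entering exactly where you place it. The only parts you leave unexecuted are the metric-entropy computation justifying the $\log^{1/2}$ factor (the paper does this via a Lipschitz bound on the canonical pseudo-metric and Dudley's integral, rather than a grid count over $\mathcal{M}$), and there is a harmless slip in that the variance proxy $ma^2\bar p^2$ is $\asymp\rho^2$ rather than $1/n$ — still small enough under your hypothesis for the concentration step to go through.
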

Thus, the lower bound of Proposition~\ref{Thm:FourierLowerBound} matches the upper bound of Example~\ref{Ex:Sobolev} when $(r^{d}/n^{2s})^{1/(4s+d)} \leq c_1/ \log^{1/2}(nr^2)$, or equivalently when $m_0(nr^2) \lesssim_{\zeta} n^2/\log^2(nr^2)$.  This condition is rather weak, and holds whenever the minimax separation rate is polynomially decreasing in $r^d/n^{2s}$.  Compared with Example~\ref{Ex:Sobolev2}, Proposition~\ref{Thm:FourierLowerBound} extends the parameter regime over which the lower bound on the minimax separation rate for independence testing matches the upper bound of Example~\ref{Ex:Sobolev}, by also covering lower smoothness cases where $n^{2s-d} \ll r^{3d}$.

We remark that Proposition~\ref{Thm:FourierLowerBound} generalises to more abstract settings.  Assume that $\mathcal{X}$ and $\mathcal{Y}$ are equipped with metrics $\tau_{\mathcal{X}}$ and $\tau_{\mathcal{Y}}$ respectively, and write $H(\cdot,\mathcal{X})$ and $H(\cdot,\mathcal{Y})$ for the corresponding metric entropies.  Suppose that there exist $\kappa_1,\kappa_2 \geq 0$ and functions $\ell_1,\ell_2:(0,\infty) \rightarrow (0,\infty)$ that are slowly varying at infinity such that $H(u,\mathcal{X}) = u^{-2 \kappa_1} \ell_1(1/u)$ and $H(u,\mathcal{Y}) = u^{-2 \kappa_2} \ell_2(1/u)$; thus, if $\mathcal{X} = [0,1]^{d_X}$, then we may take $\kappa_1 = 0$ and $\ell_1(u) = d_X \log u$.  Suppose further that there exist $\alpha_1,\alpha_2,\beta_1,\beta_2 >0$ such that
\[
	|p_{jk}(x,y) - p_{jk}(x',y')| \lesssim_\zeta \|j\|_1^{\alpha_1}\tau_{\mathcal{X}}(x,x')^{\beta_1} + \|k\|_1^{\alpha_2}\tau_{\mathcal{Y}}(y,y')^{\beta_2}
\]
for all $x,x' \in \mathcal{X}, y,y' \in \mathcal{Y}, j \in \mathcal{J}, k \in \mathcal{K}$, where $\zeta$ does not depend on $n,r,x,x',y,y',j,k$.  In our Sobolev example, then, we may take $\alpha_1 = \alpha_2 = \beta_1 = \beta_2 = 1$. %= (\kappa_1,\kappa_2,d_X,d_Y,\alpha_1,\alpha_2,\beta_1,\beta_2,\gamma,\tau_{\mathcal{X}},\tau_{\mathcal{Y}},\ell_1,\ell_2,s_X,s_Y,A,\bar{p})$.
Finally assume that $\bar{p} < \infty$.  Then, taking $\xi = (\theta,r,A) \in \Xi$ and $\gamma \in (0,1)$, writing $\gamma_1 :=\frac{\kappa_1}{\beta_1 ((s_X/\alpha_1) \wedge 1)}$ and $\gamma_2:=\frac{\kappa_2}{\beta_2 ((s_Y/\alpha_2) \wedge 1)}$,  and setting $s = d(d_X/s_X + d_Y/s_Y)^{-1}$, similar calculations to those in the proof of Proposition~\ref{Thm:FourierLowerBound} reveal that
\[
	\tilde{\rho}^*(n, \gamma, \xi) \gtrsim_\zeta \biggl( \frac{r^d}{n^{2s}} \biggr)^{1/(4s+d)}
\]
whenever $\max(\gamma_1,\gamma_2)<1$ and $r \lesssim_{\zeta,\epsilon} \min \bigl( n^{\frac{2s(1-\gamma_1)}{d+4s\gamma_1}- \epsilon}, n^{\frac{2s(1-\gamma_2)}{d+4s\gamma_2} - \epsilon} \bigr)$ for some $\epsilon>0$.  Thus, we match the upper bound of Corollary~\ref{Cor:Sobolev} even in this more general setting.

Our final lower bound applies similar empirical process techniques to show that the rate found by applying the first part of Corollary~\ref{Cor:InfDim} to Example~\ref{Ex:InfDim} for our infinite-dimensional example is optimal in certain regimes of $(n,r)$.
\begin{prop}[BKS(2020)]
\label{Prop:InfDimLowerBound}
Let $\mathcal{X},\mathcal{Y},\mu_X,\mu_Y,(p_{jk}),\mathcal{J}$ and $\mathcal{K}$ be as in Corollary~\ref{Cor:InfDim} and Example~\ref{Ex:InfDim}. Fix $\alpha,\beta \in (0,1)$ such that $\alpha+\beta<1$ and $r,s_X,s_Y,A >0$.  For $j\in \mathcal{J}, k \in \mathcal{K}$ let $\theta_{jk}=\exp( s_X |j|^{1/2}) \vee \exp (s_Y |k|^{1/2})$, and let $\xi=(\theta,r,A) \in \Xi$.  Recalling the definitions of $s$ and $c_0$ from Corollary~\ref{Cor:InfDim}, suppose that $r^2 \leq n^{s/(s+c_0) - \epsilon}$ for some $\epsilon>0$. Then there exist $C=C(\alpha,\beta,s_X,s_Y,A,\epsilon)>0$ and $C' = C'(\alpha,\beta,s_X,s_Y,A,\epsilon)>0$ such that when $\min(n,nr^2) \geq C'$ we have
\[
	\rho^*(n, \alpha, \beta, \xi) \geq \frac{C m_{0,s_X,s_Y}^{1/4}(nr^2)}{n^{1/2}}.
\]
\end{prop}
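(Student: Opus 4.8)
The plan is to apply a minor variant of Lemma~\ref{Lemma:LowerBound}, following the strategy of the proof of Proposition~\ref{Thm:FourierLowerBound} but adapted to the infinite-dimensional geometry of Example~\ref{Ex:InfDim}. Write $t := nr^2$, $m := m_{0,s_X,s_Y}(t)$ and $\tau := t^{1/2}/m^{1/4}$. Since $\exp(s_X|j|^{1/2}) \vee \exp(s_Y|k|^{1/2}) \le \tau$ exactly when $|j| \le \log^2(t/m^{1/2})/(4s_X^2)$ and $|k| \le \log^2(t/m^{1/2})/(4s_Y^2)$, the truncation set $\mathcal{M} := \{(j,k) \in \mathcal{J}\times\mathcal{K} : \theta_{jk} \le \tau\}$ is a product $\mathcal{J}_0\times\mathcal{K}_0$ of level sets of $|j|$ and $|k|$, and by the definition of $m_{0,s_X,s_Y}$ it satisfies $|\mathcal{M}| \asymp_{s_X,s_Y} m$; this is, up to constants, the truncation level that is optimal in Corollary~\ref{Cor:InfDim}. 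I would take the array in Lemma~\ref{Lemma:LowerBound} to be $a_{jk} = a\,\mathbbm{1}_{\{(j,k)\in\mathcal{M}\}}$ for a constant $a>0$ to be tuned, and set $p := p_{j_0k_0} + a\sum_{(j,k)\in\mathcal{M}}\xi_{jk}p_{jk}$ as there.

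First I would carry out the (deterministic) bookkeeping. Since $\mathcal{M}$ contains no index with $j = j_0$ or $k = k_0$, integrating out one variable gives $p_X \equiv p_Y \equiv 1$, so $\|p_X\|_\infty = \|p_Y\|_\infty = 1 \le A$, while $D(p) = a^2|\mathcal{M}|$ and $S_\theta(p) = a^2\sum_{(j,k)\in\mathcal{M}}\theta_{jk}^2 \le a^2|\mathcal{M}|\tau^2 \asymp a^2|\mathcal{M}|^{1/2}t$ — all independent of the realisation of $(\xi_{jk})$. Choosing $a^2 \asymp_\gamma (n+1)^{-1}|\mathcal{M}|^{-1/2}$ (where $\gamma := \alpha+\beta$) makes $(n+1)^2\sum_{(j,k)\in\mathcal{M}}a_{jk}^4 = (n+1)^2|\mathcal{M}|a^4$ a small constant depending only on $\gamma$; then $\rho^2 := D(p) = a^2|\mathcal{M}| \asymp_{\gamma,s_X,s_Y} |\mathcal{M}|^{1/2}/n \asymp_{\gamma,s_X,s_Y} m_{0,s_X,s_Y}^{1/2}(nr^2)/n$, which is exactly the target rate, and $S_\theta(p) \lesssim_\gamma r^2$, so after a further universal rescaling of $a$ we get $S_\theta(p) \le r^2$ (changing $\rho^2$ only by a universal constant). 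Hence on the event $E := \{\|p - p_{j_0k_0}\|_\infty \le 1\wedge(A-1)\}$, on which $0 \le p \le A$ pointwise (and so, given $p_X = p_Y = 1$ and $\int p\,d\mu = 1$, $p \in \mathcal{F}$), the conditioned density $f := p\mid E$ lies in $\mathcal{F}_\xi(\rho)$ almost surely. Applying the version of Lemma~\ref{Lemma:LowerBound} in which one conditions on a general high-probability event rather than on $\{p\in\mathcal{F}\}$ (as in the proof of Proposition~\ref{Thm:FourierLowerBound}), it then suffices to show $\mathbb{P}(E)$ is bounded below by a constant depending only on $\alpha,\beta,s_X,s_Y,A$ and $\epsilon$: combined with the bound on $(n+1)^2\sum a_{jk}^4$ this yields $d_\mathrm{TV}(\mathbb{P}_{p_{j_0k_0}}^{\otimes n}, \mathbb{E}\mathbb{P}_f^{\otimes n}) \le 1-\gamma$, whence $\rho^*(n,\alpha,\beta,\xi) \ge \tilde\rho^*(n,\gamma,\xi) \ge \rho$. (As in the proof of Corollary~\ref{Cor:InfDim}, one also uses $m_0(nr^2) \asymp_{s_X,s_Y} m_{0,s_X,s_Y}(nr^2)$ to match the stated form.)

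The main obstacle is this control of $\mathbb{P}(E)$, i.e.\ showing that the Rademacher process $G(x,y) := a\sum_{(j,k)\in\mathcal{M}}\xi_{jk}p_{jk}(x,y)$ has $\sup_{(x,y)}|G(x,y)|$ small with high probability. The key structural facts are that every $p_{jk}$ with $(j,k)\in\mathcal{M}$ depends only on the first $\lesssim_{s_X,s_Y}\log(nr^2)$ coordinates of $x$ and of $y$ — because $|j| \le u_X \asymp_{s_X}\log^2(nr^2)$ forces $m_\ell = 0$ for $\ell > u_X^{1/2}$ — and is Lipschitz in those coordinates with constant $\lesssim_{s_X,s_Y}\log^2(nr^2)$, since the Fourier frequencies obey $\|m\|_1 \le |m|\sum_\ell \ell^{-2} \lesssim \log^2(nr^2)$. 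Using the product structure $\mathcal{M} = \mathcal{J}_0\times\mathcal{K}_0$ together with $\|p_j^X\|_\infty, \|p_k^Y\|_\infty \le 2^{1/2}$, one checks that $G$ has sub-Gaussian increments with respect to a metric of diameter $\lesssim a|\mathcal{M}|^{1/2} \asymp \rho$ and covering numbers $\log N(\eta) \lesssim_{s_X,s_Y}\log(nr^2)\,\log\bigl(\rho\log(nr^2)/\eta\bigr)$ for small $\eta$; Dudley's entropy integral then gives $\mathbb{E}\sup_{(x,y)}|G(x,y)| \lesssim_{s_X,s_Y}\rho\sqrt{\log(nr^2)}$ times a lower-order logarithmic factor, and a bounded-differences inequality (altering one $\xi_{jk}$ perturbs $\sup|G|$ by at most $\lesssim a$) upgrades this to a high-probability bound. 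The hypothesis $r^2 \le n^{s/(s+c_0)-\epsilon}$ enters precisely here: combined with the polynomial bound $m_{0,s_X,s_Y}(nr^2) \le (nr^2)^{(2c_0+\epsilon')/(2s+c_0)}$ from Corollary~\ref{Cor:InfDim}(ii), it guarantees that $m_{0,s_X,s_Y}(nr^2)$, and hence $\rho = m_{0,s_X,s_Y}^{1/4}(nr^2)/n^{1/2}$, is polynomially small in $n$, so that $\rho$ times any fixed power of $\log(nr^2)$ is at most $\tfrac12\bigl(1\wedge(A-1)\bigr)$ once $\min(n,nr^2) \ge C'$ for a suitable $C'=C'(\alpha,\beta,s_X,s_Y,A,\epsilon)$, giving $\mathbb{P}(E) \ge 1-\delta_0(\gamma)$ and completing the proof. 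I expect the chaining estimate — in particular exploiting the finite-coordinate truncation so that the $\log^2(nr^2)$ Lipschitz constant enters only inside a logarithm rather than multiplicatively — to be the most delicate step; the remaining estimates parallel those in the proof of Proposition~\ref{Thm:FourierLowerBound}.
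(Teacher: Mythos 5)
Your overall strategy coincides with the paper's: the same prior (i.i.d.\ Rademacher coefficients of common magnitude $\asymp n^{-1}m^{-1/2}$ spread over $\asymp m_{0,s_X,s_Y}(nr^2)$ low-frequency indices), the same reduction via Lemma~\ref{Lemma:LowerBound} to lower-bounding the probability that the perturbation $p$ is a genuine density, and the same sub-Gaussian concentration step at the end. Where you genuinely diverge is in the chaining/entropy bound for $\sup F$. The paper works on the full infinite product $[0,1]^{\mathbb{N}}$ equipped with the weighted norm $\|x\| = \sum_\ell \ell^{-2}|x_\ell|$, shows the process metric is Lipschitz in this norm with constant $\lesssim r$, proves the polynomial entropy bound $H(u,\mathcal{X}\times\mathcal{Y}) \lesssim r/u$ by an explicit covering construction, and obtains $\mathbb{E}\esssup F \lesssim (r d_m^{1/2})^{1/2}$; the hypothesis $r^2 \leq n^{s/(s+c_0)-\epsilon}$ then makes $r d_m^{1/2}$ polynomially small. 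You instead exploit the fact that every basis function in the support of the prior depends only on the first $O(\log(nr^2))$ coordinates (since $|m|\leq u_X$ forces $m_\ell = 0$ for $\ell > u_X^{1/2}$) and is Lipschitz with constant $O(\log^2(nr^2))$, reduce to a cube of logarithmic dimension, and get logarithmic covering numbers, hence $\mathbb{E}\esssup F \lesssim \rho\,\mathrm{polylog}(nr^2)$. The same hypothesis on $r$ makes $\rho$ polynomially small in $n$, which absorbs the polylogarithmic factors. Your bound on the expected supremum is in fact sharper than the paper's, and the argument is a legitimate alternative.

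There is one claim you should not wave through: that $|\mathcal{M}| \asymp_{s_X,s_Y} m$ ``by the definition of $m_{0,s_X,s_Y}$'', where $\mathcal{M}$ is the level set at threshold $\tau = t^{1/2}/m^{1/4}$. The definition only gives $|\mathcal{M}| = 4M(u_X)M(u_Y) < m$ at $m = m_{0,s_X,s_Y}(t)$, together with a lower bound of $m-1$ on the level set at the \emph{slightly larger} threshold $\tau(m-1)$. The counting function $M$ is a step function whose jumps can be as large as a factor $2^{O(u^{1/2})} = (nr^2)^{O(1)}$ (many floors $\lfloor u/\ell^2\rfloor$ can increment at the same value of $u$), so in principle the level set at $\tau(m)$ could be polynomially smaller than $m$, which would degrade your rate from $m^{1/4}/n^{1/2}$ to $|\mathcal{M}|^{1/4}/n^{1/2}$. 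The fix is exactly what the paper does: support the prior on the first $m_{0,s_X,s_Y}(nr^2)-1$ indices in the increasing ordering $\omega$ of $\theta$, which has the right cardinality by fiat and satisfies the Sobolev constraint because $\theta_{\omega(\ell)} \leq \theta_{\omega(m)} \leq \tau(m)$ for $\ell \leq m$. All of your structural facts (finite-coordinate dependence, Lipschitz constants, product bounds on $\|p_{jk}\|_\infty$) persist for this index set, since it is contained in a level set at a comparable threshold, and your chaining argument never actually uses the exact product structure $\mathcal{M} = \mathcal{J}_0\times\mathcal{K}_0$. With that substitution the proof goes through.
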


\section{Power function}
\label{Sec:PowerFunction}

In this section we provide an approximation to the power function of our USP test from Section~\ref{Sec:MainResults}. For simplicity of exposition we will restrict attention to the case where the $\mathcal{X}=\mathcal{Y}=[0,1]$, and work with the Fourier basis~\eqref{Eq:Fourier} with respect to the respective Lebesgue base measures $\mu_X$ and $\mu_Y$.  Recall that in this case, $\mathcal{J}=\mathcal{K} = \bigl( \{0,1\} \times \mathbb{N}_0 \bigl) \setminus \{(1,0)\}$.
%and write $p_j^X=p_j^Y=p_j$ for simplicity. Write $p_0(x):=1$ and, for $l \in \mathbb{Z}$, 
%\[
%	p_{1l}(x):= 2^{1/2} \sin(2 \pi l x) \quad \text{ and } p_{2l}(x):= 2^{1/2} \cos(2 \pi l x).
%\]
We will consider test statistics $\hat{D}_n$ with
\[
	\mathcal{M} = \bigl( \{0,1\} \times [M] \bigr) \times \bigl( \{0,1\} \times [M] \bigr)
\]
for a tuning parameter $M \in \mathbb{N}$ which will typically be large so that $\hat{D}_n$ is approximately normally distributed.  When $M$ is large and the dependence between $X$ and $Y$ is weak, we will see that the variance of $\hat{D}_n$ can be approximately expressed in terms of
\begin{align*}
  \sigma_{M,X}^2 \equiv \sigma_{M,X}^2(f) &:= 2M+1 +  \sum_{m=1}^{2M} (2M+1-m)\bigl\{a_{(0,m)\bullet}(f)^2 + a_{(1,m)\bullet}(f)^2\bigr\} \\
  &\phantom{:}\asymp M \|f_X\|_{L^2(\mu_X)}^2
\end{align*}
as $M \rightarrow \infty$, and the corresponding quantity $\sigma_{M,Y}^2$, in which $f_X$ and $\mu_X$ above are replaced with $f_Y$ and $\mu_Y$ respectively and $a_{j\bullet}(f)$ for $j \in \mathcal{J}$ is replaced with $a_{\bullet k}(f)$ for $k \in \mathcal{K}$.

Define $A_{M,X} \equiv A_{M,X}(f) := 1 + \sum_{m=1}^{2M} (|a_{(0,m)\bullet}(f)| + |a_{(1,m)\bullet}(f)|)$, with the corresponding definition of $A_{M,Y}$.  We will see that the quantities $A_{M,X}$ and $A_{M,Y}$, which when $f \in \mathcal{F}$ are both $o(M^{1/2})$ as $M \rightarrow \infty$ by Lemma~\ref{Prop:SquareSummableSequences} in the supplement, will play a role in controlling the normal approximation error of our test statistic and the corresponding null statistics.  %We see by Proposition~\ref{Prop:SquareSummableSequences} below that, for any fixed $g \in L_2(\mu_X)$ we have $A_M(g) = o(M^{1/2})$ as $M \rightarrow \infty$.

\begin{thm}[BKS(2020)]
\label{Thm:PowerFunction}
In the above setting, let $f \in \mathcal{F}$ with $\|f\|_\infty<\infty$, let $\alpha \in (0,1)$ and let $B \in \mathbb{N}$.  Write
\[
	\Delta_f:= \frac{\binom{n}{2}^{1/2} \sum_{(j,k) \in \mathcal{M}} \bigl\{a_{jk}(f)-a_{j\bullet}(f)a_{\bullet k}(f)\bigr\}^2}{ \sigma_{M,X}\sigma_{M,Y}}
\]
and, with $s=\lceil \alpha(B+1) \rceil -1$, let $\mathrm{B}_{B-s,s+1} \sim \mathrm{Beta}(B-s,s+1)$.  Let
\[
  \delta_* := \max\biggl\{ \frac{\Delta_f^{1/2}}{M^{1/2}}, \frac{1}{M^{1/2}}, D(f)^{1/4}, \Bigl( \frac{M^2}{n} \Bigr)^{1/2}, \frac{A_{M,X} A_{M,Y}}{M} \biggr\}^{1/3}.
  \]
Then there exists $C=C(\|f\|_\infty,\alpha)>0$ such that the p-value $P$ in~\eqref{Eq:pvalue} satisfies
\begin{align*}
	\bigl| \mathbb{P}_f(P \leq \alpha) - \mathbb{E} \bar{\Phi} \bigl( \Phi^{-1}( \mathrm{B}_{B-s,s+1}) &- \Delta_f \bigr) \bigr| \leq C \min\bigl\{B^{4/3}\delta_*,(B^{-1/3} \vee \delta_*^{1/3})\bigr\}.
\end{align*}
%Moreover, by increasing $C$ if necessary, we also have that
%\begin{align*}
%	\bigl| \mathbb{P}_f(P \leq \alpha) - \mathbb{E} \bar{\Phi} \bigl( \Phi^{-1}( 1-\alpha) &- \Delta_f \bigr) \bigr| \\
%	& \leq C \max\biggl\{ B^{-3}, D^{1/4}, \Bigl( \frac{M^2}{n} \Bigr)^{1/2}, \frac{A_{M,X} A_{M,Y}}{M} \biggr\}^{1/9} .
%\end{align*}
\end{thm}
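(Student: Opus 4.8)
The plan is to condition on the data, convert $\{P \le \alpha\}$ into a comparison between $\hat D_n$ and the conditional distribution function $\hat G_n$ of a single permuted statistic $\hat D_n^{\mathcal M}(\mathcal T_{X,Y}^{(\Pi_1)})$ given $\mathcal T_{X,Y}$, and then substitute two normal approximations: one for $\hat G_n$, supplied by Proposition~\ref{Lemma:PermutedNormality}, and one for the unconditional law of $\hat D_n$ under $f$. For the reduction, observe from \eqref{Eq:pvalue} that $\{P \le \alpha\}$ coincides, up to a tie-breaking correction of order $1/B$, with $\{W \le s\}$ for $W := \sum_{b=1}^{B} \mathbbm{1}\{\hat D_n \le \hat D_n^{(b)}\}$ and $s = \lceil\alpha(B+1)\rceil - 1$. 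Conditionally on $\mathcal T_{X,Y}$ the permutations $\Pi_1,\dots,\Pi_B$ are independent and uniform, so $\hat D_n^{(1)},\dots,\hat D_n^{(B)}$ are conditionally i.i.d.\ with distribution function $\hat G_n$, whence $W \mid \mathcal T_{X,Y} \sim \mathrm{Bin}(B,q_n)$ with $q_n := \mathbb{P}(\hat D_n^{(1)} \ge \hat D_n \mid \mathcal T_{X,Y})$ (modulo the atom of $\hat G_n$ at $\hat D_n$). The Beta--Binomial identity $\mathbb{P}(\mathrm{Bin}(B,q) \le s) = \mathbb{P}(\mathrm B_{B-s,s+1} \le 1-q)$ then gives
\[
\mathbb{P}_f(P \le \alpha) = \mathbb{E}_f\bigl[ \mathbb{P}\bigl( \mathrm B_{B-s,s+1} \le \hat G_n(\hat D_n) \mid \mathcal T_{X,Y}\bigr)\bigr] + O(1/B),
\]
with $\mathrm B_{B-s,s+1}$ independent of $\mathcal T_{X,Y}$.

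Next I would establish the two normal approximations. Proposition~\ref{Lemma:PermutedNormality}, applied to the degenerate fourth-order $U$-statistic $\hat D_n^{\mathcal M}(\mathcal T_{X,Y}^{(\Pi)})$ on the permuted data, bounds $d_{\mathrm W}\bigl(\hat G_n, N(\hat\mu_n, \hat\sigma_n^2)\bigr)$ by a data-dependent quantity whose typical size, after rescaling by $\hat\sigma_n^{-1}$, is governed by $A_{M,X}A_{M,Y}/M$ and $(M^2/n)^{1/2}$, where $\hat\mu_n$ and $\hat\sigma_n^2$ denote the conditional mean and variance; a separate concentration argument shows that $\hat\mu_n$ is negligible relative to $\hat\sigma_n$ and that $\hat\sigma_n^2$ lies, with high probability, within a factor $1 + O\bigl(M^{-1/2} + (M^2/n)^{1/2}\bigr)$ of $\binom{n}{2}^{-1}\sigma_{M,X}^2\sigma_{M,Y}^2$. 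For the law of $\hat D_n$ under $f$ I would use its Hoeffding decomposition: the second-order (degenerate) component has variance $\asymp \binom{n}{2}^{-1}\sigma_{M,X}^2\sigma_{M,Y}^2$ and is approximately normal by the unpermuted analogue of Proposition~\ref{Lemma:PermutedNormality}; the genuine first-order component, a sum of i.i.d.\ terms, is approximately normal by the ordinary central limit theorem, and, since $f$ is close to the null, its variance is negligible relative to that of the degenerate part, the discrepancy being controlled by $D(f)$; the higher-order components are of smaller order still. Combining these and using that $\mathbb{E}_f(\hat D_n)\binom{n}{2}^{1/2}/(\sigma_{M,X}\sigma_{M,Y}) = \Delta_f$, one obtains that $\hat D_n \binom{n}{2}^{1/2}/(\sigma_{M,X}\sigma_{M,Y})$ is close in distribution to $N(\Delta_f, 1)$, with error governed by $\Delta_f^{1/2}/M^{1/2}$, $M^{-1/2}$, $D(f)^{1/4}$ and $(M^2/n)^{1/2}$.

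Assembling the pieces, I would substitute $\hat G_n(\hat D_n) \approx \Phi\bigl(\hat D_n \binom{n}{2}^{1/2}/(\sigma_{M,X}\sigma_{M,Y})\bigr)$ into the conditional probability above and use the independence of $\mathrm B_{B-s,s+1}$ from $\mathcal T_{X,Y}$ to obtain
\[
\mathbb{P}_f(P \le \alpha) \approx \mathbb{P}\Bigl( \Phi^{-1}(\mathrm B_{B-s,s+1}) \le \hat D_n \binom{n}{2}^{1/2}/(\sigma_{M,X}\sigma_{M,Y})\Bigr) \approx \mathbb{E}\, \bar\Phi\bigl( \Phi^{-1}(\mathrm B_{B-s,s+1}) - \Delta_f \bigr),
\]
the final step using the $N(\Delta_f,1)$ approximation. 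To make this rigorous one converts the Wasserstein bounds of the previous step into Kolmogorov-type bounds — costing a square root, via the boundedness of the Gaussian density — handles the anti-concentration of $\Phi^{-1}(\mathrm B_{B-s,s+1})$ and of $\hat D_n\binom{n}{2}^{1/2}/(\sigma_{M,X}\sigma_{M,Y})$, and smooths the non-smooth functional $t \mapsto \mathbb{P}(\mathrm B_{B-s,s+1} \le \Phi(t))$ at a scale that is subsequently optimized; collecting the resulting error sources into $\delta_*^3$ and balancing the contribution scaling as $B^{4/3}\delta_*$ (dominant when $B$ is small) against the one scaling as $B^{-1/3} \vee \delta_*^{1/3}$ (dominant when $B$ is large) yields the stated minimum.

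The hard part will be the second step: obtaining a normal approximation that is valid uniformly in the growing truncation level $M$, simultaneously for the permutation distribution $\hat G_n$ of the degenerate fourth-order $U$-statistic and for the law of $\hat D_n$ itself, with the precise dependence on $M$, $n$, $A_{M,X}$ and $A_{M,Y}$ — exactly the content Proposition~\ref{Lemma:PermutedNormality} is designed to deliver, but marrying it to the random normalization $\hat\sigma_n$ (whose concentration must be shown) and to the non-degenerate linear term present off the null requires care. A secondary difficulty is the passage from Wasserstein control to the Kolmogorov-type statements needed for the rank event, together with the bookkeeping that produces the particular $\min\{B^{4/3}\delta_*, B^{-1/3} \vee \delta_*^{1/3}\}$ form.
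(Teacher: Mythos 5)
Your overall architecture --- condition on the data, reduce $\{P\le\alpha\}$ to a Binomial event with success probability $q_n=\mathbb{P}(\hat D_n^{(1)}\ge \hat D_n\mid\mathcal{T}_{X,Y})$, and invoke the Beta--Binomial identity --- is an attractive and exact reduction (indeed, the $O(1/B)$ correction you allow for is not even needed, since $P\le\alpha$ is exactly $\{W\le s\}$), and it recovers the target expression $\mathbb{E}\bar\Phi(\Phi^{-1}(\mathrm{B}_{B-s,s+1})-\Delta_f)$ by a different mechanism than the paper, which instead obtains the Beta density from an explicit Gaussian order-statistics computation. However, there is a genuine gap at the heart of your plan: you need a \emph{conditional} normal approximation for the permutation distribution $\hat G_n(\cdot)=\mathbb{P}(\hat D_n^{(1)}\le\cdot\mid\mathcal{T}_{X,Y})$, uniformly over the data (or at least with high probability), whereas Proposition~\ref{Lemma:PermutedNormality} is an \emph{unconditional} statement: it bounds the Wasserstein distance between the law of $U$ --- averaged over both the sample and the permutation --- and $N(0,1)$. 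It does not bound $d_{\mathrm{W}}(\hat G_n, N(\hat\mu_n,\hat\sigma_n^2))$ for fixed data, and no such conditional bound is available from the tools in the paper. Establishing one (a conditional combinatorial CLT for a degenerate $U$-statistic of permuted pairs, with explicit dependence on $M$, $n$, $A_{M,X}$, $A_{M,Y}$, together with concentration of the conditional variance $\hat\sigma_n^2$) would be a substantial separate theorem, not a routine application of the cited proposition.

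The paper's proof is organized precisely to avoid ever needing this conditional CLT. For the $B^{4/3}\delta_*$ branch it works with the joint \emph{unconditional} law of the whole vector $(\hat D_n-\mathbb{E}\hat D_n,\hat D_n^{(1)},\dots,\hat D_n^{(B)})$: after projecting each statistic onto a degenerate second-order $U$-statistic, Lemma~\ref{Lemma:JointNormality} shows this vector is close in a smooth-function metric to $N_{B+1}(0,I_{B+1})$, and the rejection event~\eqref{Eq:Complicated} is then handled by smoothing its indicator, which is where the $(B^2 d_{\mathcal{G}_B})^{1/3}$ and hence $B^{4/3}\delta_*$ arise. For the $B^{-1/3}\vee\delta_*^{1/3}$ branch it follows Hoeffding (1952): it controls only the \emph{mean} of the empirical permutation distribution function $F_{n,B}(z)$ (via the unconditional Kolmogorov bound from Proposition~\ref{Lemma:PermutedNormality}) and its \emph{variance} (via the approximate pairwise independence of $T_n^{(1)},T_n^{(2)}$ from the joint lemma), then uses Chebyshev to locate the critical value $C_{n,B}$ near $z_{1-\alpha}$. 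If you want to keep your conditional route you would need to either prove the missing conditional CLT or, more economically, replace ``$\hat G_n$ is close to a Gaussian CDF'' by the weaker mean-and-variance control of $F_{n,B}$ that the paper uses, at which point your argument essentially becomes the paper's second argument and no longer yields the $B^{4/3}\delta_*$ branch on its own.
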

To understand the implications of this theorem, first consider the case where the null hypothesis holds, so that $\Delta_f = 0$, and further assume for simplicity that $\alpha(B+1)$ is an integer.  Then the conclusion states that
\[
  |\mathbb{P}_f(P \leq \alpha) - \alpha| \leq C \min\bigl\{B^{4/3}\delta_*,(B^{-1/3} \vee \delta_*^{1/3})\bigr\},
  \]
  though in fact, we already know that $\mathbb{P}_f(P \leq \alpha) = \alpha$ in this special case.  More generally, Theorem~\ref{Thm:PowerFunction} provides an approximation to the local power of our test when $D(f)$ is small and both $n$ and $M$ are large, with $M^2/n$ small.  It could be used by practitioners to guide the choice of $B$ in cases where computation is expensive: given an anticipated effect size~$\Delta_f$, one can compare $\mathbb{E} \bar{\Phi} \bigl( \Phi^{-1}( \mathrm{B}_{B-s,s+1}) - \Delta_f \bigr)$ to $\bar{\Phi} \bigl( \Phi^{-1}(1 - \alpha) - \Delta_f \bigr)$ to understand the trade-off between computation and power.  Note also that $\bar{\Phi} \bigl( \Phi^{-1}(1 - \alpha) - \Delta_f \bigr)$ is the limiting power of the oracle test that has access to the marginal distributions.

To illustrate Theorem~\ref{Thm:PowerFunction}, we conducted some simulations to verify the accuracy of the approximate power function. For a parameter $\rho \in [0,1/2]$, we considered independent copies of pairs $(X,Y)$ with density function 
\begin{equation}
  \label{Eq:frho}
	f_\rho(x,y) = 1 + 2 \rho \sin(2 \pi x) \sin(2 \pi y)	
\end{equation}
for $x,y \in [0,1]$, so that, marginally, $X,Y \sim U[0,1]$. For these densities we have $D(f_\rho)=\rho^2$ and $\sigma_{M,X}^2 = \sigma_{M,Y}^2 = 2M+1$. In our simulations we take $n=300, M=7, B=99$ and $\alpha=0.1$ so that 
\[
	\Delta_{f_\rho} = \binom{300}{2}^{1/2} \rho^2 / 15 = \rho^2 \times 14.1 \ldots.
\]
%The computational complexity of calculating a fourth-order $U$-statistic is $O(n^4)$, so exact computation with $n=300$ was infeasible with this number of repetitions. To circumvent this issue, we approximated our test statistics by incomplete $U$-statistics \citep[see, for example,][]{Janson1984}.  Specifically, with $N=4000000$ we generated independent, uniformly random subsets of $[n]$ of size four, denoted $I:=\{(I_{1,r},I_{2,r},I_{3,r},I_{4,r}), r \in [N]\}$, and computed the test statistic
%\[
%	\tilde{D}_{n,I}:= \frac{1}{N} \sum_{r=1}^N h \bigl( (X_{I_{1,r}},Y_{I_{1,r}}), \ldots , (X_{I_{4,r}},Y_{I_{4,r}}) \bigr).
%\]
%The same realisation of $I$ was also used for each of the test statistics calculated on permuted datasets, so that for $b \in [B]$, we also calculated
%\[
%	\tilde{D}_{n,I}^{(b)}:= \frac{1}{N} \sum_{r=1}^N h \bigl( (X_{I_{1,r}},Y_{\Pi_b(I_{1,r})}), \ldots, (X_{I_{4,r}},Y_{\Pi_b(I_{4,r})}) \bigr).
%\]
Figure~\ref{Fig:Simulation} plots the theoretical approximate power function, given by $\mathbb{E} \bar{\Phi} \bigl( \Phi^{-1}( \mathrm{B}_{B-s,s+1}) - \Delta_f \bigr)$, and the empirical power function, which was computed by averaging over 700 independent repetitions of the experiment for each value of $\rho$.  The simulations reveal a good agreement between our approximations and empirical performance.

\begin{figure}
\centering
\includegraphics[width=10cm]{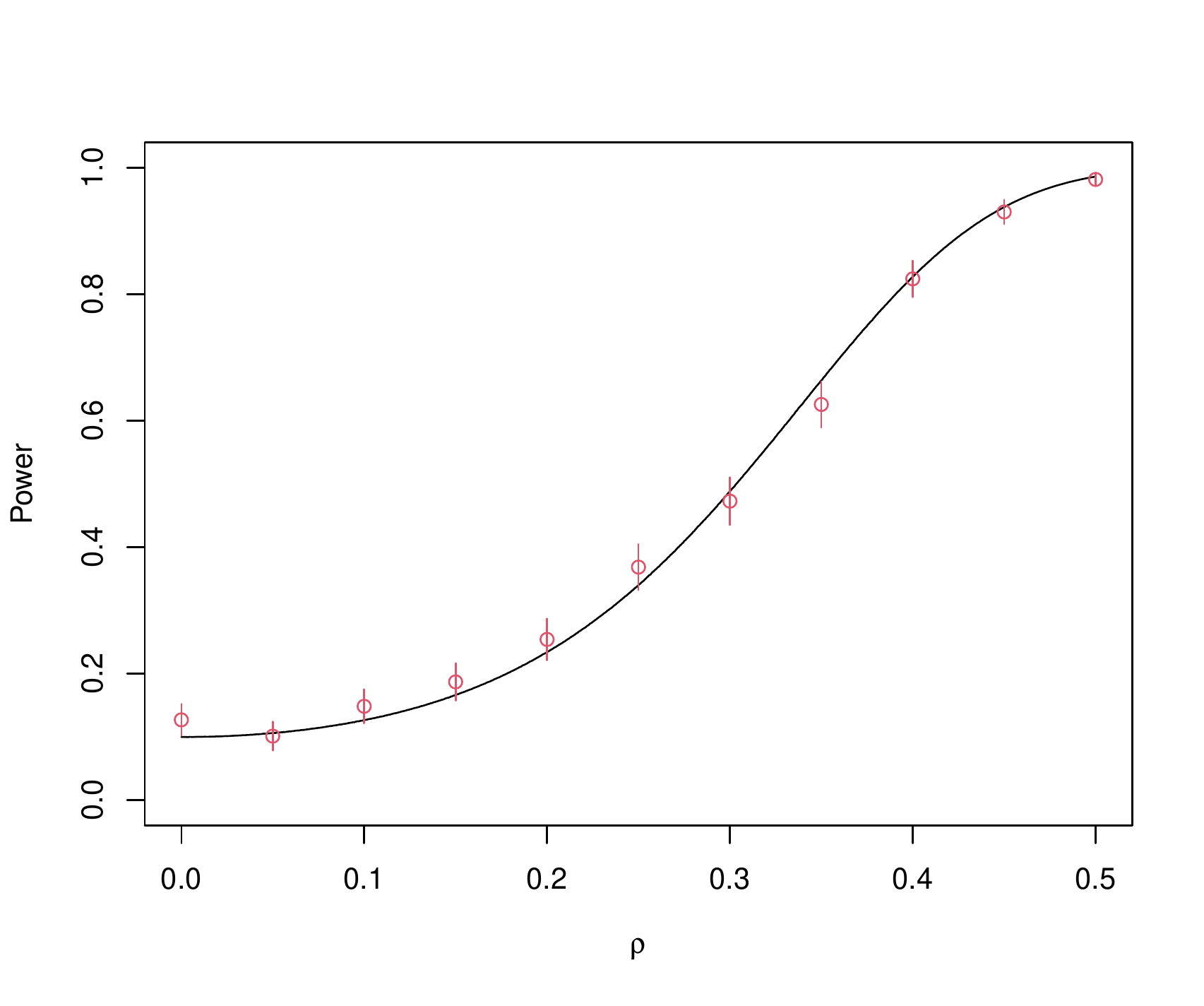}
\caption{\label{Fig:Simulation} The theoretical approximate power function from Theorem~\ref{Thm:PowerFunction} (black), and an empirical estimate of the true power function (red); error bars show two standard deviations. Here, the data were generated according to~\eqref{Eq:frho} with $n=300,B=99,\alpha=0.1,M=7$.}
\end{figure}

The proof of Theorem~\ref{Thm:PowerFunction} uses careful bounds for the error in normal approximations to degenerate $U$-statistics, as well as corresponding bounds in the case where the $U$-statistic is computed on a permuted data set.  In the unpermuted case, such bounds have been well studied, inspired by the work of \citet{Hall84} and \citet{deJong1990}, who  established asymptotic normality results for degenerate $U$-statistics.  This is interesting because, in the classical theory, the asymptotic distribution of a degenerate $U$-statistic of order 2, for a fixed $h$, is given by a weighted infinite sum of independent chi-squared random variables \citep[e.g.][p.~194]{Serfling1980}.  Indeed, from the form of the first term on the right-hand side of~\eqref{Eq:dWbound} below, it is not clear that a normal approximation error will be small.  However, if we allow $h$ to depend on the sample size $n$, then the weights in the infinite sum may become more diffuse, so that a normal approximation may be more appropriate.  In our setting, the truncation set $\mathcal{M}$ will typically depend on $n$, in which case we are in a situation where the $U$-statistic kernel depends on the sample size. %, and Proposition~\ref{Lemma:Normality} quantifies the finite-sample approximation error.
\citet{RinottRotar1997} derived error bounds in the normal approximation with respect to classes of probability integral metrics that include the Kolmogorov distance. \citet{DoblerPeccati2017,DoblerPeccati2019} extended these results in two directions, first by working with multivariate $U$-statistics, and second by controlling the normal approximation error in the $L_1$-Wasserstein distance.  We present a consequence of \citet[][Theorem~3.3]{DoblerPeccati2019} below, because it it will help to contextualise our (new) error bound in the permuted case, which appears as Proposition~\ref{Lemma:PermutedNormality}.
%To the best of our knowledge, Proposition~\ref{Lemma:PermutedNormality} below on the permuted case is new, but since it will help to contextualise our error bound, we first present a result for unpermuted data that follows from \citet[][Theorem~3.3]{DoblerPeccati2019}; it may be regarded as a finite-sample version of \citet[][Theorem~1]{Hall84}.
%such bounds have been well studied in the literature \citep[e.g.,][]{deJong1990,RinottRotar1997,DoblerPeccati2017,DoblerPeccati2019}, but to the best of our knowledge, the permuted case is new.  Nevertheless, since it will help to contextualise the error bound on the permuted data set, we first present the known result when the $U$-statistic is computed on the original data.  This may be regarded as a finite-sample version of \citet[][Theorem~1]{Hall84}, and follows from \citet[][Theorem~3.3]{DoblerPeccati2019}.
%Since we anticipate that these results will find application elsewhere and will be of independent interest, we state two such propositions here, deferring a further result that is crucial for the proof of Theorem~\ref{Thm:PowerFunction} to the supplement.  The first of our propositions may be regarded as a finite-sample version of \citet[][Theorem~1]{Hall84}, though in fact our proof is completely different, since it is based on Stein's method as opposed to the martingale central limit theorem.  
\begin{prop}[\citet{DoblerPeccati2019}, Theorem~3.3]
\label{Lemma:Normality}
For $n \geq 2$, let $Z_1,\ldots,Z_n$ be independent and identically distributed random elements in a measurable space $\mathcal{Z}$, and let $h: \mathcal{Z} \times \mathcal{Z} \rightarrow \mathbb{R}$ be a symmetric measurable function that satisfies $\mathbb{E} h(z,Z_1) = 0$ for all $z \in \mathcal{Z}$ and $\mathbb{E}\{ h(Z_1,Z_2)^2\} =1$. Write $g(x,y) := \mathbb{E}\{h(x,Z_1) h(y,Z_1)\}$ and $U:=\frac{1}{2}\binom{n}{2}^{-1/2} \sum_{i \in \mathcal{I}_2} h(Z_{i_1},Z_{i_2})$.  With $W \sim N(0,1)$, there exists a universal constant $C>0$ such that for $n \geq 2$ we have
\begin{equation}
  \label{Eq:dWbound}
	d_\mathrm{W}(U,W) \leq C \max \biggl[ \frac{\mathbb{E}^{1/2}\{ h^4(Z_1,Z_2)\}}{n^{1/2}},  \mathbb{E}^{1/2}\{ g^2(Z_1,Z_2) \} \biggr].
\end{equation}
\end{prop}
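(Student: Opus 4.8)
The plan is to read off the statement as the one-dimensional, order-two special case of \citet[][Theorem~3.3]{DoblerPeccati2019}, and then to bound the kernel-contraction norms appearing in their general bound by the two quantities on the right-hand side of~\eqref{Eq:dWbound}. First I would note that, since $h$ is symmetric, $U = \binom{n}{2}^{-1/2}\sum_{1 \le i_1 < i_2 \le n} h(Z_{i_1},Z_{i_2})$, and that the hypotheses $\mathbb{E} h(z,Z_1) = 0$ for all $z$ and $\mathbb{E}\{h(Z_1,Z_2)^2\} = 1$ say precisely that $h$ is a completely degenerate, symmetric kernel of order two, normalised so that its $L^2$ norm equals $1$; using degeneracy to annihilate the cross terms then gives $\mathbb{E}(U^2) = 1$. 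Hence $U$ is exactly an object to which \citet[][Theorem~3.3]{DoblerPeccati2019} applies, with $d = 1$, kernel order two, and Gaussian target $W \sim N(0,1)$. We may assume $\mathbb{E}\{h^4(Z_1,Z_2)\} < \infty$, since otherwise the claimed bound is trivial.

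Specialised to this case, the bound of \citet{DoblerPeccati2019} controls $d_\mathrm{W}(U,W)$ by a constant multiple of a finite sum of kernel-contraction norms, the only summand not carrying a factor $n^{-1/2}$ being $\|h\star_1^1 h\|$, where $\star_r^l$ denotes the usual contraction operators on kernels. The key identification is that
\[
  (h\star_1^1 h)(x,y) = \mathbb{E}\{h(x,Z_1)h(y,Z_1)\} = g(x,y),
\]
so that $\|h\star_1^1 h\| = \mathbb{E}^{1/2}\{g^2(Z_1,Z_2)\}$, which is precisely the second term in~\eqref{Eq:dWbound}. It then remains to check that every other summand is at most a constant multiple of $n^{-1/2}\mathbb{E}^{1/2}\{h^4(Z_1,Z_2)\}$. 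For the term built directly from $\|h\|_{L^4}^2 = \mathbb{E}^{1/2}\{h^4(Z_1,Z_2)\}$ this is immediate; for the remaining contractions $h\star_2^1 h$ and $h\star_1^0 h$, which each carry a factor $n^{-1/2}$, one has squared norms of the form $\mathbb{E}[\{\mathbb{E}(h^2(Z_1,Z_2)\mid Z_1)\}^2]$ and $\mathbb{E}[\{\mathbb{E}(h^2(Z_1,Z_3)\mid Z_3)\}^2]$, each at most $\mathbb{E}\{h^4(Z_1,Z_2)\}$ by Jensen's inequality. Assembling these estimates yields $d_\mathrm{W}(U,W) \lesssim n^{-1/2}\mathbb{E}^{1/2}\{h^4(Z_1,Z_2)\} + \mathbb{E}^{1/2}\{g^2(Z_1,Z_2)\}$, and absorbing the numerical factors into the universal constant converts the sum into the maximum in~\eqref{Eq:dWbound}.

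I expect the main obstacle to be the bookkeeping: extracting the correct one-dimensional, order-two specialisation of the multivariate, arbitrary-order bound in \citet{DoblerPeccati2019} and verifying that, apart from $\|h\star_1^1 h\|$, every term is accompanied by a factor $n^{-1/2}$ and is dominated by a moment of $h$ of order at most four (using, where mixed terms arise, that $\mathbb{E}\{g^2(Z_1,Z_2)\} \le \mathbb{E}\{h^2(Z_1,Z_2)\}^2 = 1 \le \mathbb{E}\{h^4(Z_1,Z_2)\}$). An alternative route that sidesteps the contraction notation is to invoke a Wasserstein quantitative de Jong inequality of the form $d_\mathrm{W}(U,W) \lesssim \{\mathbb{E}(U^4) - 3\}^{1/2} + (\max_i \mathrm{Inf}_i U)^{1/2}$, observe that $\max_i \mathrm{Inf}_i U = 2/n \le 2n^{-1}\mathbb{E}\{h^4(Z_1,Z_2)\}$, and expand $\mathbb{E}(U^4) - 3$ combinatorially: only the ``four-cycle'' index configuration contributes at order $1$, giving a constant multiple of $\mathbb{E}\{g^2(Z_1,Z_2)\}$, while every other configuration contributes $O\bigl(n^{-1}\mathbb{E}\{h^4(Z_1,Z_2)\}\bigr)$.
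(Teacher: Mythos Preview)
Your proposal is correct, but it takes a genuinely different route from the paper's proof. You derive the bound by specialising the general contraction-norm inequality of \citet{DoblerPeccati2019} and identifying $h\star_1^1 h=g$ while dominating the remaining contractions by $\mathbb{E}\{h^4(Z_1,Z_2)\}$; your alternative via the quantitative de Jong inequality is also valid. The paper, by contrast, does not read the result off from the cited theorem at all: it gives a self-contained Stein's-method argument, starting from $d_\mathrm{W}(U,W)\le\sup_{G\in\mathcal{G}}|\mathbb{E}\{G'(U)-UG(U)\}|$, introducing the leave-one-out pieces $V_i:=\binom{n}{2}^{-1/2}\sum_{i'\neq i}h(Z_i,Z_{i'})$ and $U_i:=U-V_i$, using degeneracy to obtain $\mathbb{E}\{V_iG(U_i)\}=0$, and then reducing the Stein remainder to $\{n\,\mathrm{Var}(V_1^2)+n(n-1)\,\mathrm{Cov}(V_1^2,V_2^2)\}^{1/2}+n\,\mathbb{E}|V_1|^3$ with explicit moment bounds.

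The trade-off is clear. Your argument is shorter and requires only checking that, in the order-two, one-dimensional specialisation, every term other than $\|h\star_1^1 h\|$ carries an $n^{-1/2}$ and is controlled by a fourth moment of $h$; this is exactly the bookkeeping you outline, and it goes through. The paper's approach is deliberately chosen because the same Stein decomposition, with $V_i$ replaced by a version that also removes the index $\Pi^{-1}(i)$, is the template for the new result on permuted data (Proposition~\ref{Lemma:PermutedNormality}) and for the joint normal approximation in Lemma~\ref{Lemma:JointNormality}. So your route buys economy for this statement in isolation, while the paper's route buys a reusable scaffold for the genuinely novel results that follow.
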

%In the classical theory, the asymptotic distribution of a degenerate $U$-statistic of order 2, for a fixed $h$, is given by a weighted infinite sum of independent chi-squared random variables \citep[e.g.][p.~194]{Serfling1980}.  Indeed, from the form of the first term on the right-hand side of~\eqref{Eq:dWbound}, it is not clear that a normal approximation error will be small.  However, if we allow $h$ to depend on the sample size $n$, then the weights in the infinite sum may become more diffuse, so that a normal approximation may be more appropriate, and Proposition~\ref{Lemma:Normality} quantifies the finite-sample approximation error.

As mentioned above, Proposition~\ref{Lemma:PermutedNormality} below extends Proposition~\ref{Lemma:Normality} to the case of a permuted data set, and therefore provides a useful stepping stone for analysing the power properties of permutation tests based on degenerate $U$-statistics.

%We further require the following lemma, which concerns the asymptotic normality of a degenerate $U$-statistic in which the data has been randomly permuted.

\begin{prop}[BKS(2020)]
\label{Lemma:PermutedNormality}
For $n \geq 4$, let $(X_1,Y_1),\ldots,(X_n,Y_n)$ be independent and identically distributed random elements in a product space $\mathcal{Z}=\mathcal{X} \times \mathcal{Y}$ and let $\Pi$ be a uniformly random element of $\mathcal{S}_n$, independent of $(X_i,Y_i)_{i=1}^n$.  Let $h: \mathcal{Z} \times \mathcal{Z} \rightarrow \mathbb{R}$ be a symmetric measurable function that satisfies
\[
	\mathbb{E} h\bigl( (x,y),(x',Y_1) \bigr) = \mathbb{E} h \bigl( (x,y) , (X_1,y') \bigr) = 0 
\]
for all $x,x' \in \mathcal{X}$ and $y,y' \in \mathcal{Y}$, and also satisfies $\mathbb{E} h^2\bigl((X_1,Y_2),(X_3,Y_4)\bigr) = 1$. Write $g((x,y),(x',y')) := \mathbb{E}\{ h((x,y),(X_1,Y_2)) h((x',y'),(X_1,Y_2)) \}$ and
\[
	U:=\frac{1}{2}\binom{n}{2}^{-1/2} \sum_{(i_1,i_2) \in \mathcal{I}_2} h\bigl((X_{i_1},Y_{\Pi(i_1)}),(X_{i_2},Y_{\Pi(i_2)}) \bigr).
\]
Then, with $W \sim N(0,1)$, there exists a universal constant $C>0$ such that
\begin{align}
\label{Eq:PermutedNormalityBound}
	d_\mathrm{W}(U,W) \leq C \max &\biggl[ \frac{1}{n^{1/2}} \max_{\sigma \in \mathcal{S}_4} \mathbb{E}^{1/2}\bigl\{ h^4 \bigl((X_1,Y_{\sigma(1)}),(X_2,Y_{\sigma(2)})\bigr)\bigr\}, \nonumber \\
	&\mathbb{E}^{1/2}\bigl\{ g^2 \bigl((X_1,Y_2),(X_3,Y_4)\bigr) \bigr\}, \mathbb{E} \bigl| \mathbb{E} \bigl\{ h \bigl( (X_1,Y_2), (X_3,Y_1) \bigr) | X_3,Y_2 \bigr\} \bigr|  \biggr]. %, \biggl( \frac{1}{n} \mathbb{E}\bigl\{ h((X_1,Y_{\sigma(1)}),(X_2,Y_{\sigma(2)}))^4\bigr\} \biggr)^2 \biggr\}  \biggr].
\end{align}
\end{prop}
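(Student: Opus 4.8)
The plan is to prove Proposition~\ref{Lemma:PermutedNormality} by Stein's method, extending to the permuted setting the Wasserstein normal-approximation bound of \citet{DoblerPeccati2019} for degenerate $U$-statistics; an exchangeable-pairs argument in the style of \citet{RinottRotar1997} would be an alternative route. Write $h_{ij} := h\bigl((X_i,Y_{\Pi(i)}),(X_j,Y_{\Pi(j)})\bigr)$, so that by symmetry of $h$ we have $U = \binom{n}{2}^{-1/2}\sum_{i<j} h_{ij}$. I would treat $U$ as a functional of a single independent family --- the $2n$ variables $X_1,\dots,X_n,Y_1,\dots,Y_n$ together with an independent family driving a sequential (Fisher--Yates-type) representation of $\Pi$ --- or, equivalently, condition on $\Pi$ and average the resulting bound at the end. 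The quantitative feature that makes Stein's method tractable is that each coordinate of this family has only a local influence on $U$: each $X_i$ or $Y_i$ appears in $O(n)$ of the $\binom{n}{2}$ summands $h_{ij}$, and perturbing one coordinate of the representation of $\Pi$ changes $\Pi$ at boundedly many positions, hence also affects only $O(n)$ summands.

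A preliminary point is that, in contrast to the i.i.d.\ case, $U$ is not exactly degenerate. The two degeneracy hypotheses on $h$ do force $\mathbb{E}[h_{ij}]=0$, and more generally annihilate the first-order Hoeffding projections of $U$, \emph{except} on the index configurations in which some original observation is used ``jointly'' --- as an $X$-coordinate in one effective pair and as a $Y$-coordinate in another --- which for $h_{ij}$ means essentially that $\{i,j\}$ is a union of fixed points of $\Pi$ or a two-cycle. A uniform $\Pi$ has only $O(1)$ such pairs in expectation (the fixed-point and two-cycle counts are approximately Poisson with all moments bounded), so the non-degenerate part of $U$ is negligible: $|\mathbb{E} U|$ is at most a constant times the first quantity on the right-hand side of~\eqref{Eq:PermutedNormalityBound}, and the contribution of these configurations to $\Var(U)$ is $O(n^{-2})$. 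It therefore suffices to bound $d_{\mathrm{W}}$ for the genuinely degenerate main part of $U$, to check that $\Var(U)=1$ up to the stated error, and --- when conditioning on $\Pi$ --- to verify that $\Var(U\mid\Pi)$ concentrates around its mean.

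The heart of the proof is then the degenerate-$U$-statistic normal-approximation estimate itself. Substituting the locality information into the Malliavin--Stein bound (or the exchangeable-pairs bound), expanding the relevant quadratic-variation and third-order terms, and organising the resulting multiple sums by the coincidence pattern of the original indices, one obtains contributions of exactly three types. First, ``self''/single-row terms, bounded by $n^{-1/2}$ times the fourth moments $\mathbb{E}^{1/2}\{h^4((X_1,Y_{\sigma(1)}),(X_2,Y_{\sigma(2)}))\}$ over $\sigma\in\mathcal{S}_4$ (the permutation $\sigma$ accounting for the possible overlaps of the two effective pairs with the original pairing). Second, ``two-row'' terms, in which two effective pairs share an observation in the usual way; these reproduce exactly the conditional-covariance kernel $g$ and are controlled by $\mathbb{E}^{1/2}\{g^2((X_1,Y_2),(X_3,Y_4))\}$, as in the unpermuted computation. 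Third --- and this is the genuinely new phenomenon --- terms in which two effective pairs are linked \emph{through the permutation} (say $j=\Pi(i)$, so observation $j$ occurs as $X_j$ in one pair and as $Y_{\Pi(i)}=Y_j$ in the other); integrating out that observation collapses such a term to the conditional expectation $\mathbb{E}\{h((X_1,Y_2),(X_3,Y_1))\mid X_3,Y_2\}$, and it is controlled by its first absolute moment. Summing the three types (a $\max$ up to constants), adding the negligible non-degenerate correction, and --- if conditioning --- averaging over $\Pi$, gives~\eqref{Eq:PermutedNormalityBound}.

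I expect the main obstacle to be the combination of the degeneracy bookkeeping with the permutation structure in this core estimate: one must carry out the degeneracy-driven cancellations exactly (the crude local-dependence Stein bound is useless precisely because it wastes them), while simultaneously tracking the extra cyclic dependency the permutation induces among the effective pairs, and then verify that this interaction produces \emph{only} the single new conditional expectation in~\eqref{Eq:PermutedNormalityBound} --- with no cross-terms of larger order. A secondary, more routine, difficulty is carrying the small mean and the fluctuation of $\Var(U\mid\Pi)$ through the argument; this is harmless because both are already dominated by the stated bound.
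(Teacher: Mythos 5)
Your high-level strategy is the same as the paper's: a Stein's-method Wasserstein bound, excision of the pathological permutation configurations, and three sources of error whose origins you diagnose correctly (in particular, that the new third term comes from two effective pairs linked through $\Pi$). However, the proposal stops short of the one construction on which the whole argument hinges: the leave-one-out variable. In the permuted setting, deleting observation $i$ removes not only the row $Z_i = \binom{n}{2}^{-1/2}\sum_{i'} h\bigl((X_i,Y_{\Pi(i)}),(X_{i'},Y_{\Pi(i')})\bigr)$ but also the row $Z_{\Pi^{-1}(i)}$, because $Y_i$ sits inside the effective pair $(X_{\Pi^{-1}(i)},Y_i)$; to obtain a $V_i$ that is $\sigma\bigl(\Pi,\{(X_{i'},Y_{i'}):i'\neq i\}\bigr)$-measurable one must set $V_i = V - Z_i - Z_{\Pi^{-1}(i)} + \binom{n}{2}^{-1/2}h\bigl((X_i,Y_{\Pi(i)}),(X_{\Pi^{-1}(i)},Y_i)\bigr)$. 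It is precisely this add-back summand whose conditional mean does not vanish under the two-sided degeneracy, and which, summed over $i$ and normalised by $n\binom{n}{2}^{-1/2}$, produces the term $\mathbb{E}\bigl|\mathbb{E}\{h((X_1,Y_2),(X_3,Y_1))\mid X_3,Y_2\}\bigr|$. You name the phenomenon but not the mechanism, and without the explicit coupling the claim that the permutation interaction produces \emph{only} this one new term is unverified — which is exactly the point you yourself flag as the main obstacle.

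A second concrete gap is that your truncation is too weak. Removing only fixed points and two-cycles does restore mean zero (your identification of the configurations on which degeneracy fails is correct), but it does not make the conditional moment identities clean: for $i$ in a $3$-, $4$- or $5$-cycle, the computations of $\mathbb{E}(Z_i^2\mid\Pi)$, $\mathbb{E}(Z_iZ_{\Pi^{-1}(i)}\mid\Pi)$ and $\mathbb{E}\{(V-V_i)^4\mid\Pi\}$ pick up extra coincidence patterns that would have to be tracked separately. The paper instead restricts to indices lying in cycles of length at least $6$, and controls the discarded part in $L^2$ via an explicit generating-function computation of the moments of the number of indices in short cycles (which is also what replaces your Poisson heuristic with a quantitative bound). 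Some version of this stronger truncation, together with the concentration of $\mathrm{Var}(V\mid\Pi)$ via the conditional covariance bounds organised by how far apart $i_1$ and $i_2$ sit along the cycles of $\Pi$, is needed to complete the argument.
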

Comparing the bounds in Propositions~\ref{Lemma:Normality} and~\ref{Lemma:PermutedNormality}, we see three differences caused by the permutation.  The first term in~\eqref{Eq:PermutedNormalityBound} is slightly inflated by the maximum over the 24 permutations in $\mathcal{S}_4$; the second term involves distinct indices, which is to be expected since most permutations of $\mathcal{S}_n$ have only a small number of fixed points; and finally, there is an additional third term, which vanishes if $X_1$ and $Y_1$ are independent. 

In fact, for a full description of the power properties of our permutation test, we require a multivariate normal approximation error bound for the random vector consisting of the original test statistic and the $B$ test statistics computed on the permuted data sets.  Since this statement is more complicated, we defer it to the online supplement~(Lemma~\ref{Lemma:JointNormality}).  Its main message for our purposes, however, is that these $B+1$ statistics are approximately independent, which is what facilitates the power function approximation in Theorem~\ref{Thm:PowerFunction}.

\section{Numerical results}
\label{Sec:Simulations}

In this section, we examine the empirical performance of our USP test, comparing it with alternative approaches where appropriate.  We consider discrete, absolutely continuous and infinite-dimensional settings, following the main examples given earlier.  First, however, we show how our test statistic can be computed much more efficiently than might initially appear to be the case.

\subsection{Computational trick}
\label{Sec:CompTrick}

Our test statistic $\hat{D}_n$ can be rewritten similarly to the test statistics in \citet{Song2012} to allow for quicker computation, in the case that $\mathcal{M}=\mathcal{J}_0 \times \mathcal{K}_0$ for some $\mathcal{J}_0 \subseteq \mathcal{J}$ and $\mathcal{K}_0 \subseteq \mathcal{K}$. Define matrices $J = (J_{i_1 i_2})_{i_1,i_2=1}^n, K = (K_{i_1 i_2})_{i_1,i_2=1}^n$ by
\[
	J_{i_1 i_2} := \sum_{j \in \mathcal{J}_0} p_j^X(X_{i_1}) p_j^X(X_{i_2}) \quad \text{and} \quad K_{i_1 i_2} := \sum_{k \in \mathcal{K}_0} p_k^Y(Y_{i_1}) p_k^Y(Y_{i_2}),
\]
and let $\tilde{J}$ and $\tilde{K}$ be the corresponding matrices with the diagonal entries set to zero. Then, writing $\mathbf{1} \in \mathbb{R}^n$ for the all-ones vector, we have that
\begin{align*}
	\hat{D}_n &= \frac{1}{n(n-1)} \sum_{(i_1,i_2) \in \mathcal{I}_2} J_{i_1 i_2}K_{i_1 i_2} - \frac{2}{n(n-1)(n-2)} \sum_{(i_1,i_2,i_3) \in \mathcal{I}_3} J_{i_1 i_2} K_{i_1 i_3} \\
	& \hspace{150pt} + \frac{1}{n(n-1)(n-2)(n-3)} \sum_{(i_1,i_2,i_3,i_4) \in \mathcal{I}_4} J_{i_1 i_3} K_{i_2 i_4} \\
	& = \frac{1}{n(n\!-\!1)} \sum_{i_1,i_2=1}^n \tilde{J}_{i_1 i_2} \tilde{K}_{i_1 i_2} \!-\! \frac{2}{n(n\!-\!1)(n\!-\!2)} \biggl( \sum_{i_1,i_2,i_3=1}^n \tilde{J}_{i_1i_2} \tilde{K}_{i_1 i_3} - \sum_{i_1,i_2=1}^n \tilde{J}_{i_1 i_2} \tilde{K}_{i_1 i_2} \biggr) \\
	&\hspace{1pt}+ \frac{1}{n(n\!-\!1)(n\!-\!2)(n\!-\!3)} \biggl( \sum_{i_1,i_2,i_3,i_4=1}^n \! \! \! \! \! \! \tilde{J}_{i_1 i_3} \tilde{K}_{i_2 i_4} - 4 \! \! \sum_{i_1,i_2,i_3=1}^n \! \! \! \! \tilde{J}_{i_1 i_2} \tilde{K}_{i_1 i_3} + 2 \! \! \sum_{i_1,i_2=1}^n \! \! \! \tilde{J}_{i_1 i_2} \tilde{K}_{i_1 i_2} \biggr) \\
	& = \biggl\{ \frac{1}{n(n-1)} + \frac{2}{n(n-1)(n-2)} + \frac{2}{n(n-1)(n-2)(n-3)} \biggr\} \mathrm{tr}( \tilde{J} \tilde{K}) \\
	&\hspace{5pt} - \biggl\{ \frac{2}{n(n-1)(n-2)} + \frac{4}{n(n-1)(n-2)(n-3)} \biggr\} \mathbf{1}^T \tilde{J} \tilde{K} \mathbf{1} + \frac{\mathbf{1}^T \tilde{J} \mathbf{1} \mathbf{1}^T \tilde{K} \mathbf{1}}{n(n-1)(n-2)(n-3)}  \\
	&= \frac{\mathrm{tr}( \tilde{J} \tilde{K})}{n(n-3)}  - \frac{2\mathbf{1}^T \tilde{J} \tilde{K} \mathbf{1}}{n(n-2)(n-3)} + \frac{\mathbf{1}^T \tilde{J} \mathbf{1} \mathbf{1}^T \tilde{K} \mathbf{1}}{n(n\!-\!1)(n\!-\!2)(n\!-\!3)}.
\end{align*}
From this final expression, we can see that $\hat{D}_n$ can be computed in $O\bigl(n^2(|\mathcal{J}_0| + |\mathcal{K}_0|)\bigr)$ operations, with the most time-consuming part being the computation of the matrices $\tilde{J}$ and~$\tilde{K}$.

\subsection{Discrete settings}

Here we study two different examples, to illustrate the effects of sparse and dense dependence.  The first is a $6 \times 6$ contingency table, so that $J = K = 6$, where the cell probabilities are of the form
\[
f(j,k) = \frac{2^{-(j+k)}}{(1-2^{-J})(1-2^{-K})} + \epsilon( \mathbbm{1}_{\{j=k=1\}} + \mathbbm{1}_{\{j=k=2\}}) - \epsilon( \mathbbm{1}_{\{j=1,k=2\}} + \mathbbm{1}_{\{j=2,k=1\}}),
\] 
for $j,k \in [6]$. Here, $\epsilon \geq 0$ measures the strength of the dependence; in fact, $D(f) = 4\epsilon^2$.  Our second example has $J = K = 8$ and cell probabilities of the form
\[
	f(j,k) = \frac{1}{JK} + (-1)^{j+k-1} \epsilon,
\]
for which $D(f)=JK \epsilon^2$.  Thus, the main difference between the examples is in the number of cells affected by the perturbation: in the first case, only the summands in $D(f)$ corresponding to $(j,k) \in \{1,2\} \times \{1,2\}$ are non-zero, whereas in the second example, all summands are non-zero.  

Figure~\ref{Fig:Discrete} plots estimates, computed as sample averages over 10000 repetitions, of the power of our USP test as a function of $\epsilon$ in the two examples, with $n=100$ in Figure~\ref{Fig:Discrete}(a) and $n=50$ in Figure~\ref{Fig:Discrete}(b).  In both cases, we set $\alpha = 0.05$ and $B = 99$.  For comparison, we also plot corresponding power estimates for two versions of Pearson's chi-squared test.  The first, corresponding to the more usual practice in applications, uses as a critical value for the test the $(1-\alpha)$th quantile of the chi-squared distribution with $(J-1)(K-1)$ degrees of freedom; the second computes the critical value using a permutation procedure similar to that employed for our USP test.  The advantage of the second approach is that it controls the Type I error at the nominal level.   In both cases, our USP test has greater power than both versions of Pearson's test, particularly in the first example, which is especially striking given that the chi-squared quantile version of Pearson's test is anti-conservative there.
%\begin{table}[hbtp]
%\begin{tabular}{cc} \vspace{0.2cm}
\begin{figure}
(a) \hspace{6cm} (b) \\ 
\includegraphics[width=0.45\textwidth]{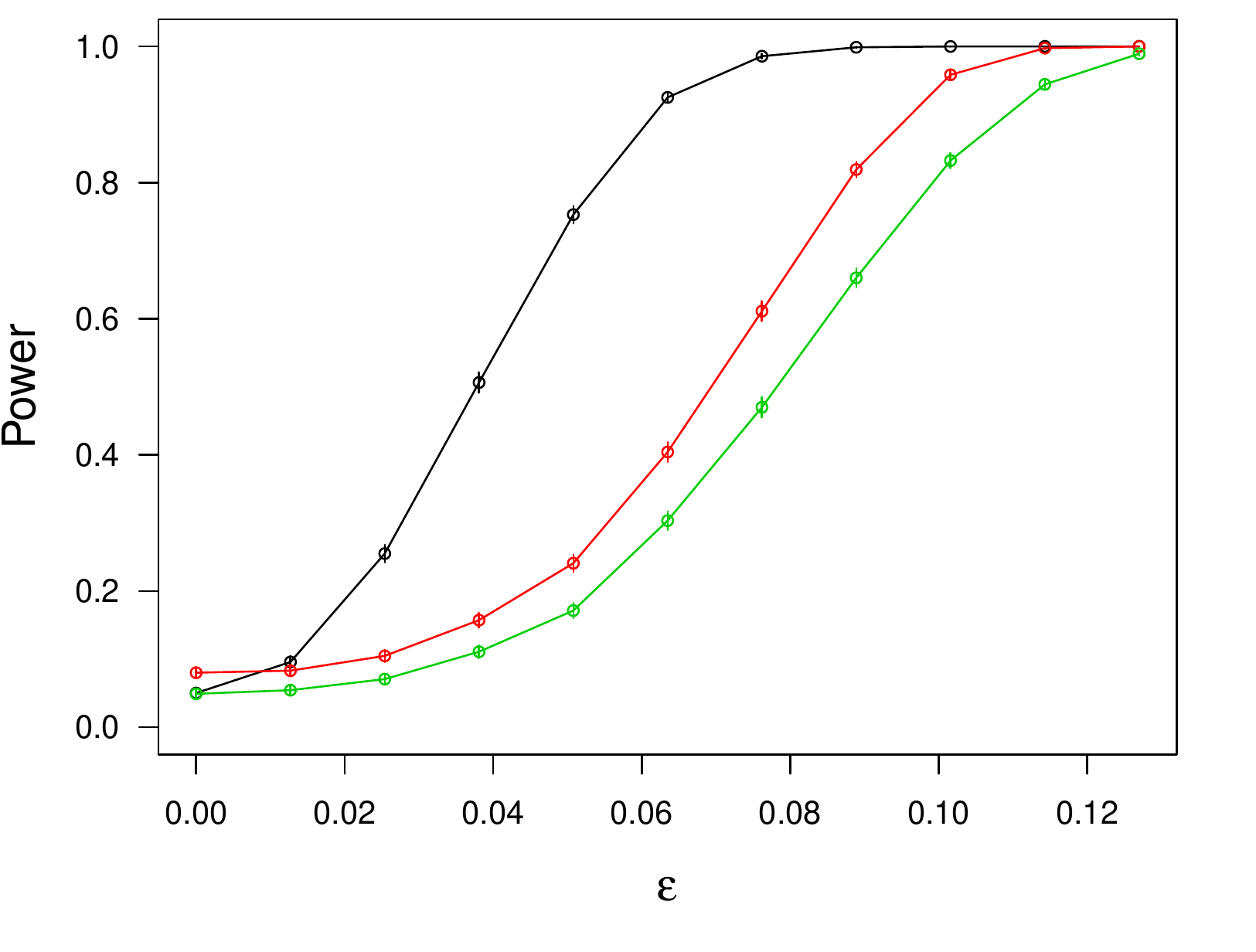}  \includegraphics[width=0.45\textwidth]{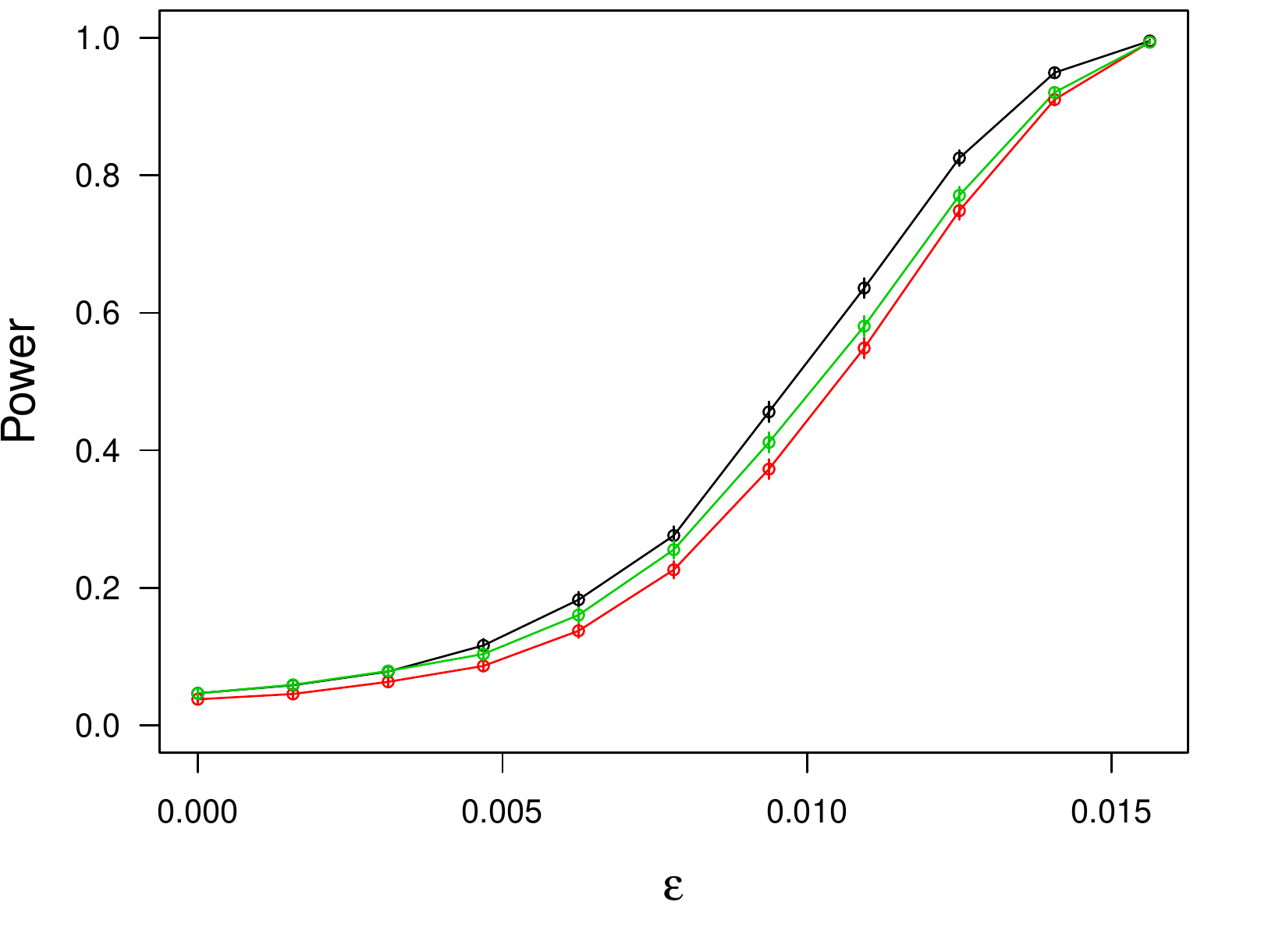}
%\end{tabular}
\caption{\label{Fig:Discrete}Estimated power functions in the two discrete settings for our $U$-statistic permutation test (black), as well as Pearson's chi-squared test with chi-squared quantile (red) and quantile obtained from permutations (green).  Error bars show three standard errors; other parameters: $\alpha = 0.05$, $B = 99$, $n=100$ (left), $n=50$ (right).}
%\end{table}
\end{figure}

\subsection{Sobolev example}

In this subsection, we consider a setting originally studied by \citet{Sejdinovic13}.  For $\omega \in \mathbb{N}$ and $(x,y) \in [0,1]^2$, define the density function
\[
	f_\omega(x,y) = 1+ \sin(2\pi \omega x)\sin(2\pi \omega y).
\]
\citet{BS2019} also consider this family of densities, and explain why it becomes increasingly difficult to detect the dependence as $\omega$ increases, despite the fact that the mutual information does not depend on $\omega$.  In fact, we also have $D(f_\omega) = 1/4$ for every $\omega \in \mathbb{N}$, so this measure of dependence does not depend on $\omega$ either.

In Figure~\ref{Fig:Sobolev}, we plot estimates of the power of our USP test, computed over 2000 repetitions with $n=100,200$. %; to reduce the computational burden here, we work with an incomplete version of our $U$-statistic (see Section~\ref{Sec:PowerFunction}) based on $N = 10^6$ subsamples.  
The choice of $\mathcal{M}$ is made as in Section~\ref{Sec:PowerFunction}, with $M=2,4$.  As alternative approaches, we also study the HSIC test of \citet{Gretton05}, which is implemented in the \texttt{R} package \texttt{dHSIC} \citep{PfisterPeters2017}, the MINTav test of \citet{BS2019}, implemented in the \text{R} package \texttt{IndepTest} \citep{BS2018} with $k \in [5]$, a test based on the empirical copula process described by \citet{Kojadinovic2009} and implemented in the \texttt{R} package \texttt{copula} \citep{HKMY2017} and a test based on distance covariance implemented in the \texttt{R} package \texttt{energy} \citep{RizzoSzekely2017}.  For these comparison methods, we used the default tuning parameter values recommended by the corresponding authors.  The fact that the departures in this example are aligned with a single basis function for each choice of $\omega$ means that the power of our USP test is constant for $\omega \leq M$, and it performs extremely well in these cases.  Once $\omega$ exceeds $M$, the test has no better than nominal power, as expected.  Thus, $M$ determines the number of directions of departure from independence that we can hope to detect with our USP test (we have $4M^2$ coefficients to estimate).  Increasing the value of $M$ would provide non-trivial power for larger values of $\omega$, but would sacrifice some power for smaller values of $\omega$.  
%\begin{figure}
%\begin{center}
%\includegraphics[width=0.7\textwidth]{Simulations/CtsComparison2} 
%\end{center}
%\caption{\label{Fig:Sobolev}Estimated power functions in the Sobolev example for our $U$-statistic permutation test with $M=4$ (black), HSIC (red), distance covariance (blue), copula (purple) and MINTav (green). Error bars show three standard errors; other parameters: $\alpha = 0.05$, $B = 99$, $n=200$.}
%\end{figure}
\begin{figure}
(a) \hspace{6cm} (b) \\ 
\includegraphics[width=0.45\textwidth]{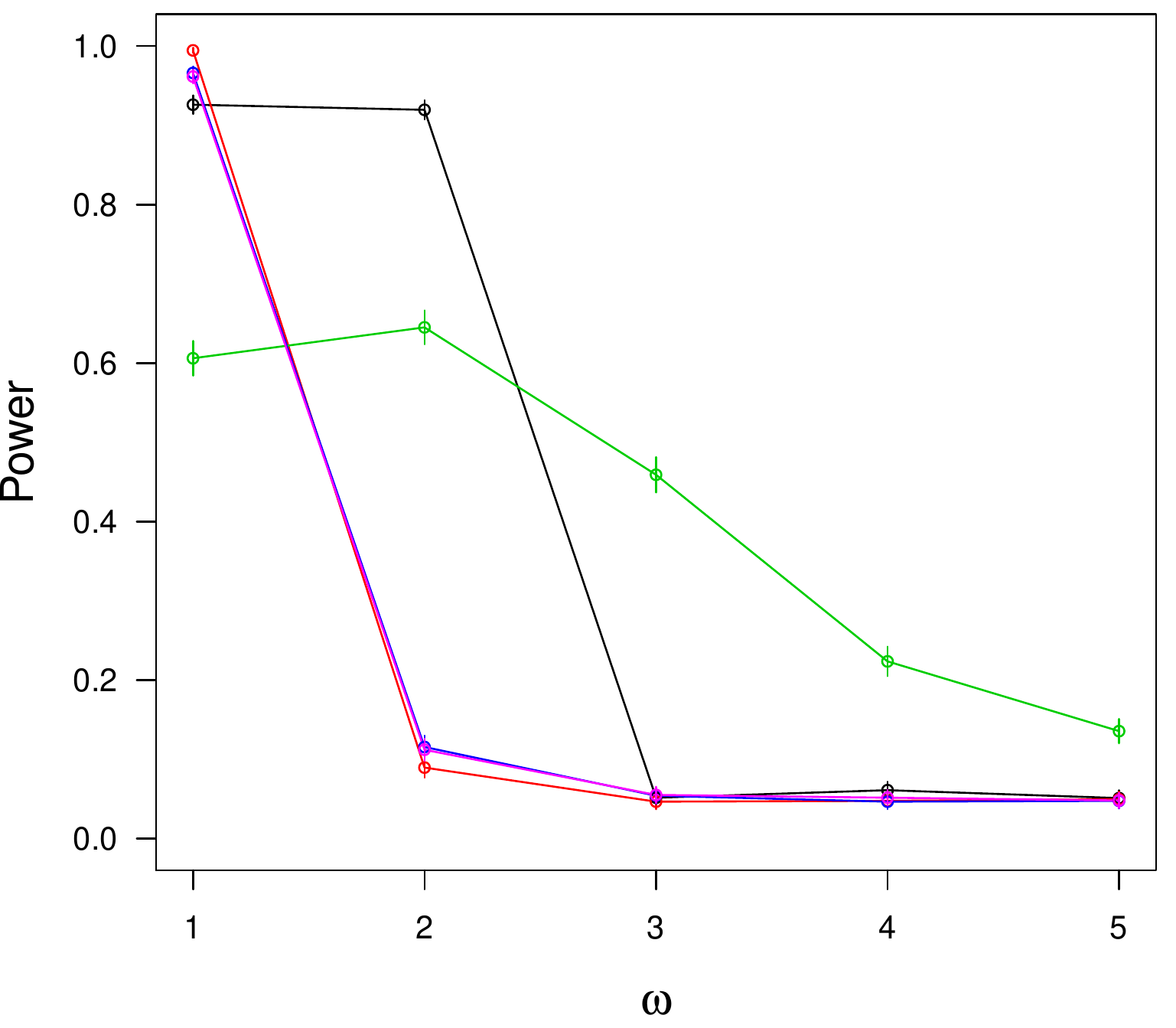}  \includegraphics[width=0.45\textwidth]{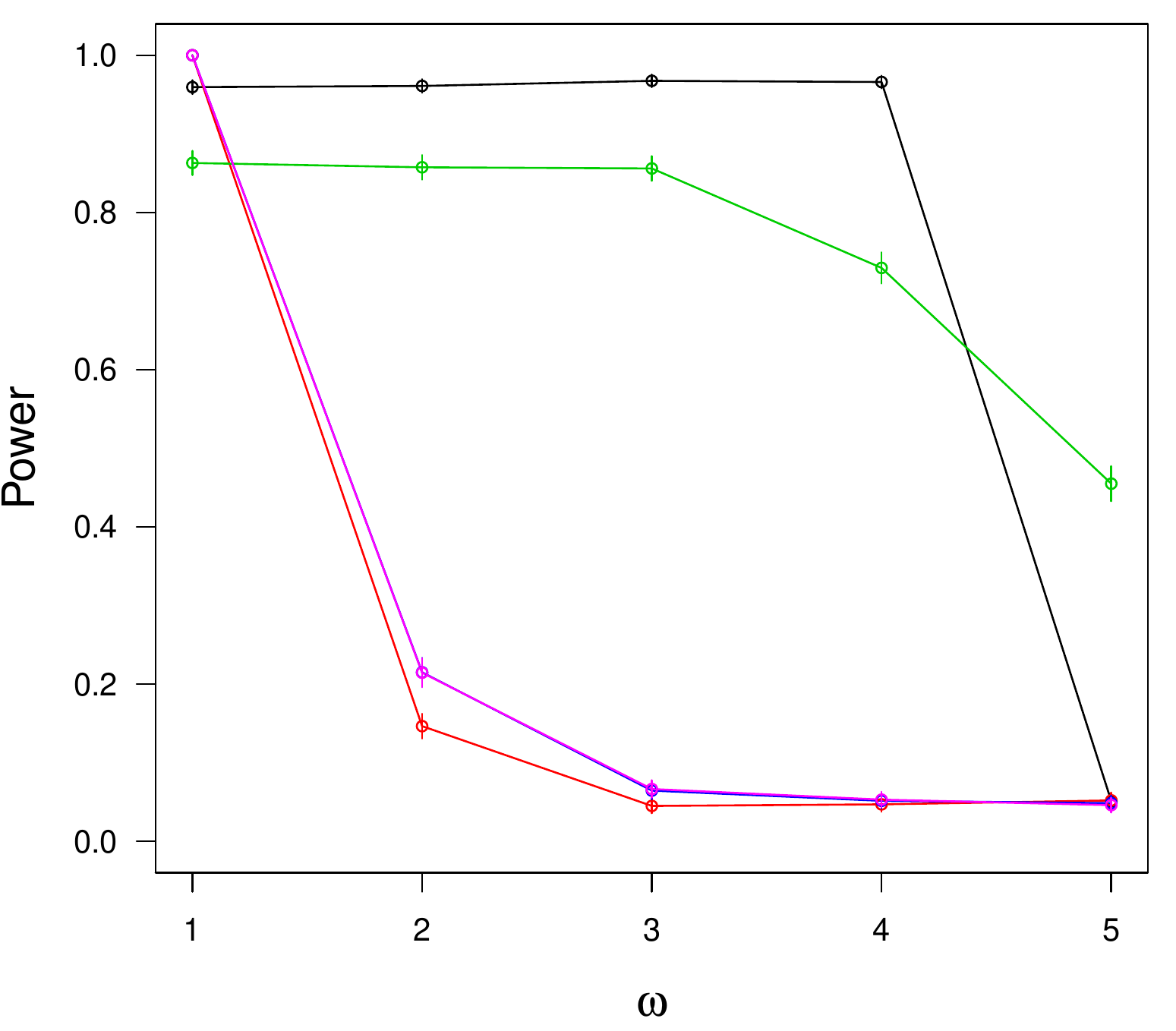}
%\end{tabular}
\caption{\label{Fig:Sobolev}Estimated power functions in the Sobolev example for our $U$-statistic permutation test (black) with $M=2, n=100$ (left) and $M=4, n=200$ (right), HSIC (red), distance covariance (blue), copula (purple) and MINTav (green). Error bars show two standard errors; other parameters: $\alpha = 0.05$, $B = 99$.}
%\end{table}
\end{figure}

\subsection{Infinite-dimensional example}

Our final example concerns potentially correlated Brownian motions on $[0,1]$, as an illustration of our USP test applied to functional data.  More precisely, our data come in the form of pairs $(X,Y)$, where $X = (X_t)_{t \in [0,1]}$ is a standard Brownian motion, and where, for some $r \in [0,1]$ and for another standard Brownian motion $Z = (Z_t)_{t \in [0,1]}$ that is independent of $X$, we have that $Y = (Y_t)_{t \in [0,1]}$ is given by
\[
Y_t = r X_t + (1-r^2)^{1/2} Z_t.
\]
Thus, marginally, $Y$ is also distributed as a standard Brownian motion.  

By the Wiener representation of Brownian motion \citep[e.g.,][]{Kahane1997}, we can write
\[
X_t = 2^{1/2} \sum_{\ell=1}^\infty \eta_\ell \frac{\sin \bigl((\ell-1/2)\pi t\bigr)}{(\ell-1/2)\pi},
\]
where $(\eta_\ell)_{\ell=1}^\infty$ is a sequence of independent, standard normal random variables.  For any $W = (W_t)_{t \in [0,1]} \in L^2[0,1]$, we can compute the transformed coefficients
\[
u_\ell(W) := \Phi\biggl(2^{1/2}(\ell-1/2)\pi \int_0^1 W_t \sin \bigl((\ell-1/2)\pi t\bigr) \, dt \biggr)
\]
for $\ell \in \mathbb{N}$.  We can therefore consider testing the independence of the random vectors $\bigl(u_1(X),\ldots,u_L(X)\bigr)$ and $\bigl(u_1(Y),\ldots,u_L(Y)\bigr)$, for some suitably chosen truncation level $L$.  For $\ell, m \in \mathbb{N}$ and $x \in L^2[0,1]$, let $p_{\ell m}^X(x) := 2^{1/2}\cos\bigl(2\pi m u_\ell(x)\bigr)$, and define $p_{\ell m}^Y(\cdot)$ similarly. 
The $U$-statistic kernel in this example can be written as
\begin{align*}
h\bigl((x_1,y_1),\ldots,(x_4,y_4)\bigr) &= \sum_{\ell_1,\ell_2=1}^L \sum_{m_1,m_2=1}^M \Bigl\{p_{\ell_1 m_1}^X(x_1)p_{\ell_2 m_2}^Y(y_1) p_{\ell_1 m_1}^X(x_2)p_{\ell_2 m_2}^Y(y_2) \\
&\hspace{3cm} - 2 p_{\ell_1 m_1}^X(x_1)p_{\ell_2 m_2}^Y(y_1) p_{\ell_1 m_1}^X(x_2)p_{\ell_2 m_2}^Y(y_3) \\
&\hspace{3cm} +  p_{\ell_1 m_1}^X(x_1)p_{\ell_2 m_2}^Y(y_2) p_{\ell_1 m_1}^X(x_3)p_{\ell_2 m_2}^Y(y_4)\Bigr\},
\end{align*}
where $L, M \in \mathbb{N}$.  In Figure~\ref{Fig:InfDim}, we plot the power functions of our USP test, estimated over 2000 repetitions, for three different sample sizes, namely $n \in \{50,100,200\}$, with $L=2$ and $M=1$.  As expected, the power of our test increases with both $r$ and~$n$.
\begin{figure}
\begin{center}
\includegraphics[width=0.7\textwidth]{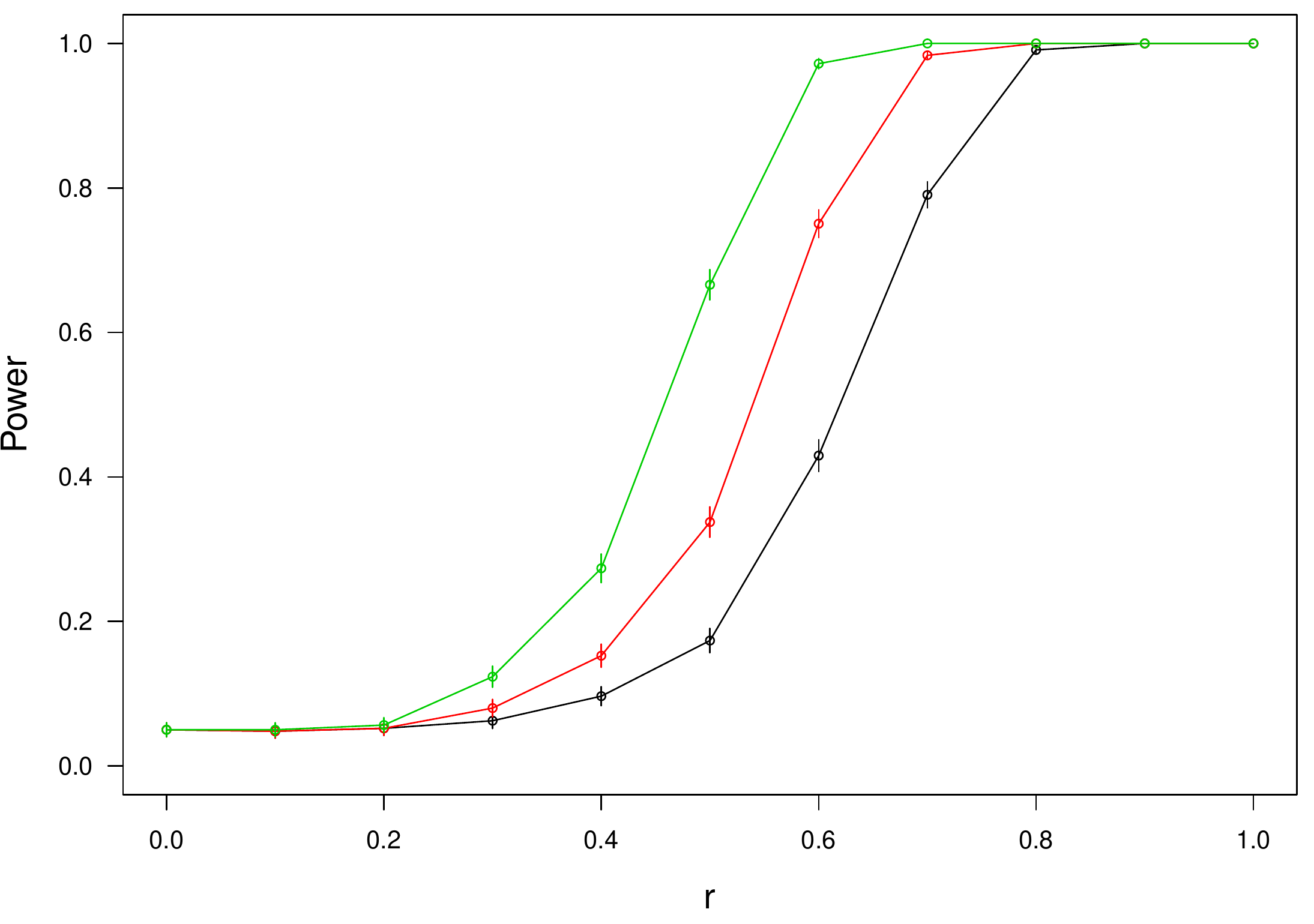} 
\end{center}
\caption{\label{Fig:InfDim}Estimated power functions for testing the independence of two Brownian motions with $n=50$ (black), $n=100$ (red) and $n=200$ (green). Error bars show two standard errors; other parameters: $\alpha = 0.05$, $B = 99$, $L=2$, $M=1$.}
\end{figure}

\section{Discussion and outlook}
\label{Sec:Discussion}

In this paper, we have introduced a new permutation test of independence based on a $U$-statistic estimator of the squared $L^2$-distance between a joint distribution and the product of its marginals.  Our methodology extends naturally to the problem of testing mutual independence of several random elements.  We have further demonstrated its minimax optimality in various settings; to the best of our knowledge, this is the first time that minimax optimality results have been established for such permutation tests.  We conclude by explaining how closely related ideas can be used to provide new goodness-of-fit tests and two-sample tests with desirable properties.

Consider $Z_1,\ldots,Z_n  \stackrel{\mathrm{iid}}{\sim} P \in \mathcal{P}$, where $\mathcal{P}$ is a dominated class of distributions on a separable, $\sigma$-finite measure space $(\mathcal{Z}, \mathcal{C}, \nu)$.  Suppose further that we wish to test $H_0:P = P_0$ against $H_1:P \neq P_0$, where $P_0 \in \mathcal{P}$.  Then, writing $f$ and $f_0$ respectively for the Radon--Nikodym derivatives of $P$ and $P_0$ with respect to $\nu$, we can construct a $U$-statistic estimator of the squared $L^2(\nu)$ distance between $f$ and $f_0$ in a very similar spirit to~\eqref{Eq:Dnhat}.  Since the null hypothesis is simple, there is no need for permutations, and we can obtain a critical value for the test by sampling from $P_0$.  

For two-sample tests, we can let $\mathcal{Y} = \{0,1\}$, so that testing the independence of $X$ and~$Y$ amounts to testing the equality of the distributions $X|\{Y=0\}$ and $X|\{Y=1\}$.  A small observation here is that the sample sizes from each conditional distribution are random (having a binomial distribution), whereas these are often treated as fixed in the usual two-sample testing formulation.  Our methodology and theory apply directly to this problem, therefore further extending its scope.

\section{Proofs of main results}
\label{Sec:Proofs}

\begin{proof}[Proof of Theorem~\ref{Thm:Hardness}]
Since $\psi$ is bounded, we have that $\psi \in L^2\bigl(\bigotimes_{i=1}^n \mu \bigr)$. Given $j \in \mathcal{J}$, $k \in \mathcal{K}$ and $I \subseteq [n]$ we write
\[
	b_{jk}^I := \bigg\langle \psi, \bigotimes_{i=1}^n \{ \mathbbm{1}_{\{i \in I\}} p_{jk} + \mathbbm{1}_{\{i \not\in I\}} p_{j_0 k_0}\} \bigg\rangle_{L^2(\bigotimes_{i=1}^n \mu)}.
\]
Since $r > \underline{\theta}\rho$, we have that $\mathcal{M}_\theta(r/\rho) \neq \emptyset$.  For $(j,k) \in \mathcal{M}_\theta(r/\rho)$ to be chosen later consider $f^* \equiv f_{jk}^* := p_{j_0 k_0} + \rho p_{jk} \in \mathcal{F}$, which satisfies $S_\theta(f^*) = \theta_{jk}^2 \rho^2 \leq r^2$ and $D(f^*) = \rho^2$. Then by Cauchy--Schwarz,
\begin{align}
\label{Eq:HardCalc}
	\mathbb{E}_{f^*}(\psi) = \bigg\langle \psi, \bigotimes_{i=1}^n f^* \bigg\rangle_{L^2(\bigotimes_{i=1}^n \mu)} &= \mathbb{E}_{p_{j_0 k_0}}(\psi) + \sum_{\emptyset \neq I \subseteq [n]}\rho^{|I|} b_{jk}^I \nonumber \\
                                                                                                                            &\leq  \alpha + \{(1+\rho^2)^n -1 \}^{1/2} \biggl\{ \sum_{\emptyset \neq I \subseteq [n]} (b_{jk}^I)^2 \biggr\}^{1/2}.
\end{align}
Now, observe that
\[
	\sum_{(j,k) \in \mathcal{M}_\theta(r/\rho)} \sum_{\emptyset \neq I \subseteq [n]} (b_{jk}^I)^2   \leq \| \psi \|_{L^2(\otimes_{i=1}^n \mu)}^2 = \mathbb{E}_{p_{j_0 k_0}} (\psi^2) \leq \alpha.
\]
Hence, for any $\eta>0$ we may choose $(j,k) \in \mathcal{M}_\theta(r/\rho)$ such that
\[
	\sum_{\emptyset \neq I \subseteq [n]} (b_{jk}^I)^2 \leq \frac{\alpha}{|\mathcal{M}_\theta(r/\rho)|} + \eta.
\]
The first claim of Theorem~\ref{Thm:Hardness} follows from this combined with~\eqref{Eq:HardCalc}.

For the second part, first note the definitions of $\Xi$, $\mathcal{F}_{\xi}(\rho)$ and $\rho^*(n,\alpha,\beta,\xi)$ immediately after \textbf{(A1)}.  For the choice of $j,k$ in the first part of the proof, let $\theta' = (\theta'_{j'k'})_{j' \in \mathcal{J},k' \in \mathcal{K}}$ be given by
\[
\theta'_{j'k'} := \left\{ \begin{array}{ll} 0 & \mbox{ if $j'=j$ and $k'=k$} \\
\infty & \mbox{ otherwise.} \end{array} \right. 
\]
Now $f^* \in \mathcal{F}_{(\theta',r',2)}(\rho)$ for any $r' > 0$.  Applying Theorem~\ref{Thm:UpperBound} with $\mathcal{M} = \{(j,k)\}$ then yields that there exists $C = C(\alpha,\beta) > 0$ such that $\rho^*(n,\alpha,\beta,\xi) \leq C^{1/2}/n^{1/2}$.  In other words, there exists $\psi_{f^*} \in \Psi(\alpha)$ such that $\mathbb{E}_{f^*}(\psi_{f^*}) \geq 1 - \beta$ whenever $n > C/\rho^2$.  Finally, the proof of Theorem~\ref{Thm:UpperBound} reveals that $\psi_{f^*}$ may be taken to be a permutation test (in fact the permutation test described in Section~\ref{Sec:MainResults} with $\mathcal{M} = \{(j,k)\}$), as required.
\end{proof}

\begin{proof}[Proof of Theorem~\ref{Thm:UpperBound}]
Consider the test of Section~\ref{Sec:MainResults}.  Choose $B \geq 2(\frac{1}{\alpha \beta} -1)$, and suppose $f \in \mathcal{F}^*$ were such that
\begin{equation}
\label{Eq:BiasVariance}
	D(f) \geq \max \biggl[ 2 \bigl| \mathbb{E}_f \bigl( \hat{D}_n - \hat{D}_n^{(1)} \bigr) - D(f) \bigr|, \Bigl\{ \frac{8}{\alpha \beta} \mathrm{Var}_f( \hat{D}_n - \hat{D}_n^{(1)} ) \Bigr\}^{1/2} \biggr].
\end{equation}
Then, by two applications of Markov's inequality, we would have that
\begin{align*}
	\mathbb{P}_f(P > \alpha) &= \mathbb{P}_f \biggl( 1+ \sum_{b=1}^B \mathbbm{1}_{\{\hat{D}_n \leq \hat{D}_n^{(b)}\}} > (1+B) \alpha \biggr) \leq \frac{1+ B \mathbb{P}_f(\hat{D}_n \leq \hat{D}_n^{(1)})}{(1+B)\alpha} \\
	& \leq \frac{1}{(1+B)\alpha} \biggl[ 1 + \frac{B \mathrm{Var}_f\bigl( \hat{D}_n - \hat{D}_n^{(1)}\bigr)}{\bigl\{ \mathbb{E}_f\bigl(\hat{D}_n - \hat{D}_n^{(1)}\bigr) \bigr\}^2} \biggr] \leq \frac{1}{(1+B)\alpha} \biggl( 1 + \frac{B \alpha \beta}{2} \biggr) \leq \beta.
\end{align*}
We may think of $\hat{D}_n - \hat{D}_n^{(1)}$ as an estimator of $D(f)$, so that~\eqref{Eq:BiasVariance} ensures that the strength of the dependence $D(f)$ outweighs the bias and standard deviation of the estimator so that we can detect the dependence using our test, up to the given probabilities of error. The remainder of the proof is dedicated to bounding the bias and variance for a given $\xi \in \Xi$, which enables us to choose $\rho$ so that~\eqref{Eq:BiasVariance} holds for all $f \in \mathcal{F}_\xi(\rho)$, and hence ensures that $\rho^*(n,\alpha,\beta,\xi) \leq \rho$.  Henceforth we will write $\Pi$ as shorthand for $\Pi_1$; moreover, for some $\rho > 0$ to be chosen later, we fix $f \in \mathcal{F}_{\xi}(\rho)$ and write $D, a_{jk}, a_{j\bullet}, a_{\bullet k}$ instead of $D(f), a_{jk}(f), a_{j\bullet}(f), a_{\bullet k}(f)$ respectively.

Given $(i_1, i_2) \in \mathcal{I}_2$ write $\sigma_{i_1 i_2} \in \mathcal{S}_n$ for the transposition of $i_1$ and $i_2$, and note that $\Pi \overset{d}{=} \Pi \circ \sigma_{i_1 i_2}$.  Thus $\bigl(\Pi(1),\Pi(2)\bigr) \stackrel{d}{=} \bigl(\Pi(1),\Pi(3)\bigr)$, so for every $(j,k) \in \mathcal{M}$ we have that 
\[
	p_{jk}(X_1,Y_{\Pi(1)})p_{jk}(X_2,Y_{\Pi(2)}) \overset{d}{=} p_{jk}(X_1,Y_{\Pi(1)})p_{jk}(X_2,Y_{\Pi(3)}).
\]
%By a very similar argument we also have 
Similarly, $p_{jk}(X_1,Y_{\Pi(1)})p_{jk}(X_2,Y_{\Pi(3)}) \overset{d}{=} p_{jk}(X_1,Y_{\Pi(2)})p_{jk}(X_3,Y_{\Pi(4)})$, so that
\begin{align*}
	\mathbb{E}(\hat{D}_n^{(1)}) &= \sum_{(j,k)\in \mathcal{M}} \mathbb{E} \bigl\{ p_{jk}(X_1,Y_{\Pi(1)})p_{jk}(X_2,Y_{\Pi(2)}) \\
	&\hspace{20pt}- 2p_{jk}(X_1,Y_{\Pi(1)})p_{jk}(X_2,Y_{\Pi(3)}) + p_{jk}(X_1,Y_{\Pi(2)})p_{jk}(X_3,Y_{\Pi(4)}) \bigr\} =0.
\end{align*}
Thus, using our Sobolev smoothness condition to bound the truncation error,
\begin{align}
\label{Eq:Bias}
	\bigl| \mathbb{E}( \hat{D}_n - \hat{D}_n^{(1)})  - D\bigr| &= \bigl| \mathbb{E} (\hat{D}_n) - D \bigr| = \biggl| \sum_{(j,k)\in \mathcal{M}} (a_{jk}-a_{j\bullet}a_{\bullet k})^2 - D \biggr| \nonumber \\
	&= \sum_{(j,k) \in (\mathcal{J} \times \mathcal{K}) \setminus \mathcal{M}} (a_{jk}-a_{j\bullet}a_{\bullet k})^2  \leq \frac{r^2}{\inf\{ \theta_{jk}^2 : (j,k) \not\in \mathcal{M} \}}.
\end{align}

We now turn to bounding $\mathrm{Var}(\hat{D}_n-\hat{D}_n^{(1)})$. First write $\bar{h}$ for the symmetrised version of $h$, given by
\begin{align}
  \label{Eq:barh}
  \bar{h} \bigl( (x_1,&y_1), \ldots , (x_4,y_4) \bigr) := \frac{1}{4!} \sum_{\sigma \in \mathcal{S}_4} h \bigl( (x_{\sigma(1)},y_{\sigma(1)}), \ldots , (x_{\sigma(4)},y_{\sigma(4)}) \bigr).
\end{align}
By, e.g. \citet[Lemma~A, p.~183]{Serfling1980}, we have that
\begin{align}
\label{Eq:StandardVariance}
	\mathrm{Var}(\hat{D}_n) &= \mathrm{Var}\biggl(\frac{1}{4!\binom{n}{4}} \sum_{(i_1,\ldots,i_4) \in \mathcal{I}_4} \bar{h}\bigl((X_{i_1},Y_{i_1}),\ldots,(X_{i_4},Y_{i_4})\bigr)\biggr) \nonumber \\
	&= \binom{n}{4}^{-1} \sum_{c=1}^4 \binom{4}{c} \binom{n-4}{4-c} \zeta_c,
\end{align}
where $\zeta_c:= \mathrm{Var} \bigl( \mathbb{E}\bigl\{\bar{h}\bigl((X_1,Y_1),\ldots,(X_4,Y_4)\bigr) \bigm| (X_1,Y_1), \ldots, (X_c,Y_c)\bigr\}\bigr)$, and moreover $\zeta_1 \leq \zeta_2 \leq \zeta_3 \leq \zeta_4$. For each $j \in \mathcal{J}$ write $\mathcal{K}_j^{\mathcal{M}} := \{k \in \mathcal{K}: (j,k) \in \mathcal{M}\}$ and for each $k \in \mathcal{K}$ write $\mathcal{J}_k^\mathcal{M}:=\{ j \in \mathcal{J} : (j,k) \in \mathcal{M}\}$. Then, using Cauchy--Schwarz,
\begin{align}
\label{Eq:zeta1bound}
	\zeta_1&=\mathrm{Var} \Bigl( \mathbb{E}\bigl\{\bar{h}\bigl((X_1,Y_1),\ldots,(X_4,Y_4)\bigr) \bigm| (X_1,Y_1)\bigr\} \Bigr) \nonumber \\
	&= \frac{1}{4} \mathrm{Var} \biggl( \sum_{(j,k)\in \mathcal{M}} (a_{jk}-a_{j\bullet}a_{\bullet k}) \bigl\{ p_{jk}(X_1,Y_1) - p_j^X(X_1)a_{\bullet k} - a_{j\bullet} p_k^Y(Y_1) \bigr\} \biggr) \nonumber \\
	%& \leq \frac{3}{4} \mathbb{E} \biggl[ \biggl\{ \sum_{(j,k)\in \mathcal{M}} (a_{jk}-a_{j\bullet}a_{\bullet k}) p_{jk}(X_1,Y_1) \biggr\}^2 + \biggl\{ \sum_{(j,k)\in \mathcal{M}} (a_{jk}-a_{j\bullet}a_{\bullet k}) p_{j}^X(X_1)a_{\bullet k} \biggr\}^2 \nonumber \\
	%&\hspace{150pt} +\biggl\{ \sum_{(j,k)\in \mathcal{M}} (a_{jk}-a_{j\bullet}a_{\bullet k}) a_{j\bullet}p_{k}^Y(Y_1) \biggr\}^2 \biggr] \nonumber \\
	& \leq \frac{3A}{4} \biggl\{ \biggl\| \sum_{(j,k)\in \mathcal{M}} \!\! (a_{jk}-a_{j\bullet}a_{\bullet k}) p_{jk} \biggr\|_{L^2(\mu)}^2 \!\! + \biggl\| \sum_{(j,k)\in \mathcal{M}} \!\! (a_{jk}-a_{j\bullet}a_{\bullet k}) p_{j}^X a_{\bullet k} \biggr\|_{L^2(\mu_X)}^2 \nonumber \\
	&  \hspace{150pt}+ \biggl\| \sum_{(j,k)\in \mathcal{M}} \!\! (a_{jk}-a_{j\bullet}a_{\bullet k}) a_{j\bullet} p_k^Y \biggr\|_{L^2(\mu_Y)}^2 \biggr\}  \nonumber \\
	& \leq \frac{3A}{4} \biggl[ D  +   \sum_{j \in \mathcal{J}} \biggl\{ \sum_{k \in \mathcal{K}_j^{\mathcal{M}}} \!\! (a_{jk}-a_{j\bullet}a_{\bullet k})a_{\bullet k} \biggr\}^2 \!\! + \!\! \sum_{k \in \mathcal{K}} \biggl\{ \sum_{j \in \mathcal{J}_k^{\mathcal{M}}} \!\! (a_{jk}-a_{j\bullet}a_{\bullet k})a_{j\bullet} \biggr\}^2 \biggr] \nonumber \\
	& \leq \frac{3AD}{4}( 1 + \|f_Y\|_{L^2(\mu_Y)} +  \|f_X\|_{L^2(\mu_X)} ) \leq \frac{9A^2D}{4}.
               %& \leq \frac{A}{4} \sum_{j,j'=1}^M \sum_{k,k'=1}^N (a_{jk}-a_{j\bullet}a_{\bullet k})(a_{j'k'}-a_{j'0}a_{0k'}) \\
  %&\hspace{4cm} \times \int_{\mathcal{X} \times \mathcal{Y}} (p_{jk}-p_j^Xa_{\bullet k}-a_{j\bullet}p_k^Y)(p_{j'k'}-p_{j'}^X a_{0k'} - a_{j'0} p_{k'}^Y) \, d\mu \\
   %            & = \frac{A}{4} \biggl[ \sum_{j=1}^M \sum_{k=1}^N (a_{jk}-a_{j\bullet}a_{\bullet k})^2 + \sum_{j=1}^M \biggl\{ \sum_{k=1}^N (a_{jk}-a_{j\bullet}a_{\bullet k})a_{\bullet k} \biggr\}^2 \\
  %&\hspace{8cm}+ \sum_{k=1}^N \biggl\{ \sum_{j=1}^M (a_{jk}-a_{j\bullet}a_{\bullet k})a_{j\bullet} \biggr\}^2 \biggr] \\
%	& \leq \frac{AD}{4} \bigl( 1 + \|f_Y\|_{L^2(\mu_Y)} + \|f_X\|_{L^2(\mu_X)} \bigr) \leq \frac{3A^2D}{4}.
\end{align}
Observe that we have 
\begin{align*}
	\zeta_4 = \mathrm{Var} \ \bar{h} \bigl((X_1,Y_1), \ldots , (X_4,Y_4) \bigr) \leq \mathrm{Var} \ h \bigl((X_1,Y_1), \ldots , (X_4,Y_4) \bigr).
\end{align*}
One possibility, therefore, is to simply apply the bound $\zeta_4 \leq \|h\|_\infty^2$. On the other hand, by Cauchy--Schwarz, we can say that
\begin{align}
\label{Eq:Zeta3}
  \zeta_4 & %\mathrm{Var} \ \bar{h} \bigl((X_1,Y_1), (X_2,Y_2), (X_3,Y_3), (X_4,Y_4) \bigr)  \nonumber \\
  %&\leq \mathrm{Var} \ h \bigl((X_1,Y_1), (X_2,Y_2), (X_3,Y_3), (X_4,Y_4) \bigr)  \nonumber \\
	\leq A^4 \int_{\mathcal{X} \times \mathcal{Y}} \ldots \int_{\mathcal{X} \times \mathcal{Y}} h^2 \bigl( (x_1,y_1), \ldots,(x_4,y_4) \bigr) \, d\mu(x_1,y_1) \ldots \, d\mu(x_4,y_4) \nonumber \\
	%\leq 3 \mathbb{E} \biggl\{ \biggl( \sum_{(j,k)\in \mathcal{M}} p_{jk}(X_1,Y_1) p_{jk}(X_2,Y_2) \biggr)^2 + 4 \biggl( \sum_{(j,k)\in \mathcal{M}} p_{jk}(X_1,Y_1) p_{jk}(X_2,Y_3) \biggr)^2 \nonumber \\
	%& +  \biggl( \sum_{(j,k)\in \mathcal{M}} p_{jk}(X_1,Y_2) p_{jk}(X_3,Y_4) \biggr)^2  \nonumber \\
	& \leq 18A^4 \int_{\mathcal{X} \times \mathcal{Y}} \int_{\mathcal{X} \times \mathcal{Y}} \biggl\{ \sum_{(j,k)\in \mathcal{M}} p_{jk}(x,y) p_{jk}(x',y') \biggr\}^2 \, d\mu(x,y) \,d\mu(x',y') \nonumber \\
	& \leq 18A^4 |\mathcal{M}|.
\end{align}
We therefore have that
\begin{equation}
\label{Eq:Var1}
	\mathrm{Var}(\hat{D}_n ) \leq \frac{16 \zeta_1}{n} + \frac{72 \zeta_4}{n(n-1)} \leq \frac{36A^2 D}{n} + \frac{72 \min(\|h\|_\infty^2, 18A^4 |\mathcal{M}|)}{n(n-1)}.
\end{equation}

Next, with the same functions $h$ and $\bar{h}$ as above, we may write
\[
	\hat{D}_n^{(1)} = \frac{1}{4! \binom{n}{4}} \sum_{(i_1,\ldots,i_4) \in \mathcal{I}_4 } \bar{h} \bigl( (X_{i_1},Y_{\Pi(i_1)}), \ldots ,(X_{i_4},Y_{\Pi(i_4)}) \bigr).
\]
A simplifying property of $\bar{h}$ is that for every $(x,y) \in \mathcal{X} \times \mathcal{Y}$,
\begin{align}
\label{Eq:Degeneracy}
	\mathbb{E} \bigl\{ \bar{h} \bigl( (x,y), (X_1,Y_2), (X_3,Y_4), (X_5,Y_6) \bigr) \bigr\}= 0.
\end{align}
Since we also have to deal with the uniformly random permutation $\Pi$, we cannot directly appeal to standard $U$-statistic theory for our bounds on $\mathrm{Var}(\hat{D}_n^{(1)})$. However, we can develop an analogue of~\eqref{Eq:StandardVariance} by writing
\begin{align}
  \label{Eq:VarDn1}
	\mathrm{Var}( \hat{D}_n^{(1)}) &= \frac{1}{4! \binom{n}{4}} \sum_{(i_1,\ldots,i_4) \in \mathcal{I}_4} \mathrm{Cov} \Bigl( \bar{h} \bigl( (X_{1},Y_{\Pi(1)}), \ldots , (X_4,Y_{\Pi(4)}) \bigr), \nonumber \\
	&\hspace{2cm}\bar{h} \bigl( (X_{i_1},Y_{\Pi(i_1)}), \ldots , (X_{i_4},Y_{\Pi(i_4)}) \bigr) \Bigr) \nonumber \\
	& =\frac{1}{\binom{n}{4}} \sum_{c=0}^4 \binom{4}{c} \binom{n-4}{4-c} \mathrm{Cov} \Bigl( \bar{h} \bigl( (X_{1},Y_{\Pi(1)}), \ldots , (X_4,Y_{\Pi(4)}) \bigr), \nonumber \\
	&\hspace{2cm} \bar{h} \bigl( (X_{1},Y_{\Pi(1)}), \ldots,  (X_{c},Y_{\Pi(c)}),(X_{5},Y_{\Pi(5)}),\ldots, (X_{8-c},Y_{\Pi(8-c)} ) \bigr) \Bigr) \nonumber \\
	& =: \frac{1}{\binom{n}{4}} \sum_{c=0}^4 \binom{4}{c} \binom{n-4}{4-c} \tilde{\zeta}_c.
\end{align}
For $c=2,3,4$ we will use the crude bound
\begin{align}
\label{Eq:CrudePerm}
	\max(\tilde{\zeta}_2,\tilde{\zeta}_3, \tilde{\zeta}_4) &\leq \max_{\sigma \in \mathcal{S}_n} \mathbb{E} \Bigl\{ h^2 \bigl( (X_{1},Y_{\sigma(1)}), \ldots, (X_4,Y_{\sigma(4)}) \bigr) \Bigr\} \nonumber \\
	& \leq \min( \|h\|_\infty^2, 18A^8 |\mathcal{M}|),
\end{align}
similarly to~\eqref{Eq:Zeta3}. To bound $\tilde{\zeta}_0$ and $\tilde{\zeta}_1$ we must first bound two combinatorial probabilities. First,
\begin{align*}
	\mathbb{P}\bigl( | [7]  \cap \{\Pi(1),\ldots, \Pi(7) \}| \geq 1\bigr) \leq 7 \mathbb{P}\bigl( \Pi(1) \in [7]\bigr) = \frac{49}{n}.
\end{align*}
Now, similarly,
\begin{align}
\label{Eq:Combo2}
	\mathbb{P}\bigl(  | [8]  \cap \{\Pi(1),\ldots,\Pi(8)\}| \geq 2\bigr) &\leq \binom{8}{2} \mathbb{P}\bigl( \Pi(1),\Pi(2) \in [8]\bigr) \nonumber \\
	& = 2\binom{8}{2}^2 \mathbb{P}\bigl(\Pi(1)=1, \Pi(2)=2\bigr) = \frac{1568}{n(n-1)}.
\end{align}
The first of these allows us to use~\eqref{Eq:Degeneracy}, Cauchy--Schwarz and~\eqref{Eq:CrudePerm} to write
\begin{align}
  \label{Eq:zetatilde1}
	\tilde{\zeta}_1 &= \mathrm{Cov} \Bigl( \bar{h} \bigl( (X_{1},Y_{\Pi(1)}), \ldots, (X_{4},Y_{\Pi(4)}) \bigr), \nonumber \\
	&\hspace{3cm}\bar{h} \bigl( (X_{1},Y_{\Pi(1)}), (X_{5},Y_{\Pi(5)}),(X_{6},Y_{\Pi(6)}), (X_{7},Y_{\Pi(7)}) \bigr) \Bigr) \nonumber \\
                        & \leq \mathbb{P}\bigl( [7]  \cap \{\Pi(1),\ldots, \Pi(7)\} \! = \! \emptyset\bigr) \mathbb{E}\bigl\{ \bar{h} \bigl( (X_{1},Y_{8}), (X_{2},Y_{9}),(X_{3},Y_{10}),(X_4,Y_{11}) \bigr) \nonumber \\
  &\hspace{3cm}\times \bar{h} \bigl( (X_{1},Y_{8}), (X_{5},Y_{12}),(X_{6},Y_{13}),(X_7,Y_{14}) \bigr) \bigr\} \nonumber \\
                        & \hspace{3cm} + \frac{49}{n} \max_{\sigma \in \mathcal{S}_n} \mathbb{E} \Bigl\{ h^2 \bigl( (X_{1},Y_{\sigma(1)}), \ldots , (X_4,Y_{\sigma(4)}) \bigr) \Bigr\} \nonumber \\
  &\leq \frac{49}{n} \min( \|h\|_\infty^2, 18A^8 |\mathcal{M}|).
\end{align}
Finally, we may now use~\eqref{Eq:Degeneracy}, Cauchy--Schwarz,~\eqref{Eq:CrudePerm} and~\eqref{Eq:Combo2} to similarly write
\begin{align}
  \label{Eq:zetatilde0}
  \tilde{\zeta}_0= \mathrm{Cov} \Bigl( \bar{h} \bigl( (X_{1},Y_{\Pi(1)}), \ldots ,(X_4,Y_{\Pi(4)}) \bigr), &\bar{h} \bigl( (X_{5},Y_{\Pi(5)}), \ldots ,(X_{8},Y_{\Pi(8)}) \bigr) \Bigr) \nonumber \\
	& \leq \frac{1568}{n(n-1)}\min( \|h\|_\infty^2, 18A^8 |\mathcal{M}|).
\end{align}
From~\eqref{Eq:VarDn1},~\eqref{Eq:CrudePerm},~\eqref{Eq:zetatilde1},~\eqref{Eq:zetatilde0} we have now established that
\begin{align}
\label{Eq:Var2}
	\mathrm{Var}(\hat{D}_n^{(1)}) & \leq  \tilde{\zeta}_0  + \frac{16}{n} \tilde{\zeta}_1 + \frac{72}{n(n-1)} \max(\tilde{\zeta}_2,\tilde{\zeta}_3,\tilde{\zeta}_4) \leq \frac{2424 \min( \|h\|_\infty^2, 18A^8 |\mathcal{M}|)}{n(n-1)}. %\frac{A^8MN}{n(n-1)} ( 28224 +16 \times 882 + 72 \times 18) = \frac{43632A^8 MN}{n(n-1)}.
	%&\leq 52 A^6 MN \binom{n}{3}^{-1} \biggl\{ \frac{225 \binom{n-3}{3}}{n(n-1)} + \frac{75 \binom{n-3}{2}}{n} + 3(n-3) + 1 \biggr\} \nonumber \\
	%& \leq \frac{29536 A^6}{n(n-1)}MN.
\end{align}
Thus, from~\eqref{Eq:Var1} and~\eqref{Eq:Var2} we deduce that
\begin{equation}
\label{Eq:Variance}
	\mathrm{Var}(\hat{D}_n - \hat{D}_n^{(1)}) \leq \frac{72A^2D}{n} + \frac{4992\min( \|h\|_\infty^2, 18A^8 |\mathcal{M}|)}{n(n-1)}. %\frac{24A^2D}{n} + \frac{60944A^6MN}{n(n-1)}.
\end{equation}

Now by substituting~\eqref{Eq:Bias} and~\eqref{Eq:Variance} into~\eqref{Eq:BiasVariance} we can see that if
\begin{align}
\label{Eq:FullDetail}
	D(f) & \geq \max \biggl\{ \frac{2r^2}{\inf \{ \theta_{jk}^2 : (j,k) \not\in \mathcal{M}\}}, \frac{1152A^2}{n \alpha \beta}, \frac{283 \min( \|h\|_\infty, 5 A^4 |\mathcal{M}|^{1/2})}{\{n(n-1)\alpha \beta\}^{1/2}} \biggr\},  %\\
	%&=: R(\mathcal{M}),
\end{align}
then we have controlled the error probabilities as required.  %To complete the proof it remains to check that there exists $\mathcal{M}$ with $R(\mathcal{M}) \leq r^2/\underline{\theta}^2$, so that $\rho^*(n,\alpha,\beta,\xi)<\infty$. Consider $\mathcal{M}= \{(j,k) : \theta_{jk}^2 \leq 2 \underline{\theta}^2\}$. Then
\end{proof}

\begin{proof}[Proof of Corollary~\ref{Cor:Sobolev}]
There exists $C=C(d_X,d_Y) \in (1,\infty)$ such that for any $T>0$ we have
\begin{align*}
	|\{(j,k) \in \mathcal{J} \times \mathcal{K} :  \theta_{jk} \leq T \}| &= |\{ j \in \mathcal{J}: \|j\|_1^{s_X} \leq T\}|  |\{ k \in \mathcal{K} : \|k\|_1^{s_Y} \leq T \}| \\
	& \leq (T^{1/s_X} + 1)^{d_X} (T^{1/s_Y} + 1)^{d_Y} < C(T \vee 1)^{d/s}.
\end{align*}
From this we can infer that if $m>C$ then $\theta_{\omega(m)} > (m/C)^{s/d}$, and so
\[
	m_0(nr^2) \leq \max\{C, (nr^2)^{2d/(4s+d)}C^{4s/(4s+d)} \} \leq C \{(nr^2) \vee 1\}^{2d/(4s+d)}.
\]
It now follows from~\eqref{Eq:m0bound} that there exists $C=C(d_X,d_Y,\alpha,\beta,A)>0$ such that if $n \geq 16$ and $nr^2 \geq 1$ then
\[
	\rho^*(n,\alpha,\beta,\xi) \leq C\Bigl( \frac{r^d}{n^{2s}} \Bigr)^{1/(4s+d)},
\]
as required.
\end{proof}

\begin{proof}[Proof of Proposition~\ref{Prop:GeneralAdapt}]
%We start by making the simple observation that
%\[
%	\mathbb{P}_f( \text{Reject} ) \geq \max_{m \in K_*} \mathbb{P}_f( m\text{-rest rejects} ).
%\]
By~\eqref{Eq:FullDetail} in the proof of Theorem~\ref{Thm:UpperBound}, we see that we reject $H_0$ with probability at least $1-\beta$, provided that $n \geq 16$ and
\[
	\rho^2 \geq \min_{m \in K_*} \max \biggl\{ \frac{2r^2}{\theta_{\omega(m+1)}^2}, \frac{1152A^2 \gamma}{n \alpha \beta}, \frac{1415 A^4 m^{1/2} \gamma^{1/2}}{ \{n(n-1) \alpha \beta\}^{1/2}} \biggr\}.
\]
Since $m_0(t) \leq t^2/ \theta_0^4 + 1$, there exists $n_0=n_0(R_0,\theta_0) \geq 16$ such that for all $n \geq n_0$ we have $m_0( n r^2/ \log^{1/2} n) \leq 2^\gamma +1$. But then, for $n \geq \max(n_0,e^3)$,
\begin{align*}
	\min_{m \in K_*} \max \biggl\{ &\frac{2r^2}{\theta_{\omega(m+1)}^2}, \frac{1152A^2 \gamma}{n \alpha \beta}, \frac{1415 A^4 m^{1/2} \gamma^{1/2}}{ \{n(n-1) \alpha \beta\}^{1/2}} \biggr\} \\
	& \leq 2^{1/2} \min_{m \in [2^\gamma] \setminus \{1\}} \max \biggl\{ \frac{2r^2}{\theta_{\omega(m+1)}^2}, \frac{1152A^2 \gamma}{n \alpha \beta}, \frac{1415 A^4 m^{1/2} \gamma^{1/2}}{ \{n(n-1) \alpha \beta\}^{1/2}} \biggr\} \\
	& \lesssim_{A,\alpha,\beta} \min_{m \in \{3,4,\ldots, 2^\gamma+1\}} \max \biggl\{ \frac{r^2}{\theta_{\omega(m)}^2}, \frac{\log n }{n}, \frac{m^{1/2} \log^{1/2} n}{n} \biggr\} \\
	&\leq \max \biggl\{ \frac{\log^{1/2}n}{n} m_0^{1/2} \biggl( \frac{n r^2}{ \log^{1/2}n} \biggr), \frac{\log n}{n} \biggr\},
\end{align*}
and the result follows.
\end{proof}

\begin{proof}[Proof of Proposition~\ref{Prop:SobolevAdapt}]
As in the proof of Proposition~\ref{Prop:GeneralAdapt}, by~\eqref{Eq:FullDetail} in the proof of Theorem~\ref{Thm:UpperBound}, we see that we reject $H_0$ with probability at least $1-\beta$ provided that $n \geq 16$ and 
\[
	\rho^2 \geq \min_{\substack{m_X \in K_X\\m_Y \in K_Y}} \max \biggl\{ \frac{2r^2}{m_X^{2s_X} \vee m_Y^{2s_Y} }, \frac{1152A^2 \gamma_X \gamma_Y}{n \alpha \beta}, \frac{1415 A^4 |\mathcal{M}_{m_X,m_Y}|^{1/2} (\gamma_X \gamma_Y)^{1/2} }{ \{n(n-1) \alpha \beta\}^{1/2}} \biggr\}.
\]
Since $|\mathcal{M}_{m_X,m_Y}| \asymp_{d_X,d_Y} m_X^{d_X} m_Y^{d_Y}$, if $(m_X,m_Y)$ were not restricted to lie in $K_X \times K_Y$, then we would maximise the right-hand side here by taking $m_X^{s_X} \asymp m_Y^{s_Y} \asymp_{\alpha,\beta,d_X,d_Y,A} (nr^2 / \log n)^{ \frac{2s}{4s+d}}$. In fact, recalling that $d/s=d_X/s_X+d_Y/s_Y$, we have that
\[
	(nr^2 / \log n)^{ \frac{2s}{s_X(4s+d)}} \lesssim_{R_0,s_X,s_Y,d_X,d_Y} \Bigl( \frac{n}{ \log n} \Bigr)^\frac{2}{4s_X+d_X+d_Y s_X/s_Y} \ll n^{2/d_X} \leq 2^{\gamma_X}.
\]
As in the proof of Proposition~\ref{Prop:GeneralAdapt}, then, we may choose $(m_X,m_Y) \in K_X \times K_Y$ so as to ensure that the separation in~\eqref{Eq:SobolevAdaptBound} suffices to guarantee power at least $1 - \beta$.
\end{proof}

\begin{proof}[Proof of Lemma~\ref{Lemma:LowerBound}]
We will prove that
\[
d_{\mathrm{TV}}^2 \bigl( \mathbb{P}_{p_{j_0k_0}}^{\otimes n}, \mathbb{E} \mathbb{P}_f^{\otimes n} \bigr) \leq \frac{\exp( \frac{n^2}{2} \sum_{j \in \mathcal{J} \setminus \{j_0\},k \in \mathcal{K} \setminus \{k_0\}} a_{jk}^4)}{4\mathbb{P}(p \in \mathcal{F})^2} - \frac{1}{4}
\]
in the case that $n$ is even. If, on the other hand, $n$ is odd then we will use the fact that $d_\mathrm{TV}(\nu_1^{\otimes n},\nu_2^{\otimes n}) \leq d_\mathrm{TV}(\nu_1^{\otimes (n+1)},\nu_2^{\otimes (n+1)})$ for any probability measures $\nu_1,\nu_2$ to complete the proof.

Let $f^{(1)},f^{(2)}$ be independent copies of $f$ and let $p^{(1)},p^{(2)}$ be independent copies of~$p$. Then we have that
\begin{align*}
	\frac{1}{4} &+ d_{\mathrm{TV}}^2\bigl( \mathbb{P}_{p_{j_0k_0}}^{\otimes n}, \mathbb{E} \mathbb{P}_f^{\otimes n} \bigr) \leq \frac{1}{4} + \frac{1}{4}d_{\chi^2}^2 \bigl( \mathbb{P}_{p_{j_0k_0}}^{\otimes n}, \mathbb{E} \mathbb{P}_f^{\otimes n} \bigr) \\
	&= \frac{1}{4}\int_{\mathcal{X} \times \mathcal{Y}} \ldots \int_{\mathcal{X} \times \mathcal{Y}} \bigl( \mathbb{E} \{f(x_1,y_1) \ldots f(x_n,y_n)\} \bigr)^2 \,d\mu(x_n,y_n) \ldots \,d \mu(x_1,y_1) \\
	&  = \frac{\mathbb{E}\bigl\{ \langle p^{(1)}, p^{(2)} \rangle_{L^2(\mu)}^n \mathbbm{1}_{\{p^{(1)},p^{(2)} \in \mathcal{F}\}}\bigr\}}{4\mathbb{P}(p^{(1)},p^{(2)} \in \mathcal{F})} \leq \frac{\mathbb{E}\bigl\{ \langle p^{(1)}, p^{(2)} \rangle_{L^2(\mu)}^n\bigr\}}{4\mathbb{P}(p \in \mathcal{F})^2},
\end{align*}
and all that remains is to bound the numerator in this final expression. Let $(\xi_{jk}^{(1)}),(\xi_{jk}^{(2)})$ be independent copies of $(\xi_{jk})$ and write
\[
  Y := \sum_{j \in \mathcal{J} \setminus \{j_0\}, k \in \mathcal{K} \setminus \{k_0\}} a_{jk}^2 \xi_{jk}^{(1)} \xi_{jk}^{(2)} \overset{d}{=} \sum_{j \in \mathcal{J} \setminus \{j_0\}, k \in \mathcal{K} \setminus \{k_0\}} a_{jk}^2 \xi_{jk}.
\]
The random variable $Y$ has a distribution that is symmetric about the origin, so for odd $m$ we have $\mathbb{E}(Y^m)=0$. For $m,r \in \mathbb{N}$ with $r \leq m$ write $A_{m,r}:=\{ \alpha=(\alpha_1,\ldots,\alpha_r) \in \mathbb{N}^r : \alpha_1+\ldots+\alpha_r =m\}$ and $(2m-1)!!=(2m-1)(2m-3)\ldots3 = \frac{(2m)!}{m! 2^m}$ for the double factorial.  It is also convenient to define the multinomial coefficient: for $N \in \mathbb{N}$ and $m_1,\ldots,m_r \in \mathbb{N}_0$ with $m_1+\ldots+m_r = N$, we set
\[
\binom{N}{m_1,m_2,\ldots,m_r} := \frac{N!}{m_1!m_2!\ldots m_r!}.
\]
  Then, for every $m \in \{0,1,\ldots,n/2\}$, we have 
\begin{align*}
\mathbb{E}(Y^{2m}) &= \sum_{\substack{j_1,\ldots,j_m \in \mathcal{J} \setminus \{j_0\} \\ k_1,\ldots,k_{2m} \in \mathcal{K} \setminus \{k_0\}}} a_{j_1k_1}^2 \ldots a_{j_{2m}k_{2m}}^2 \mathbb{E}( \xi_{j_1k_1} \ldots \xi_{j_{2m} k_{2m}} ) \\
	& = \sum_{r=1}^m \sum_{\alpha \in A_{m,r}} \sum_{\substack{(j_1,k_1),\ldots,(j_r,k_r) \\ \text{distinct}}}  a_{j_1 k_1}^{4 \alpha_1} \ldots a_{j_rk_r}^{4 \alpha_r} \times \frac{1}{r!}\binom{2m}{2\alpha_1,2\alpha_2,\ldots,2\alpha_r} \\
  &= \sum_{r=1}^m \sum_{\alpha \in A_{m,r}} \sum_{\substack{(j_1,k_1),\ldots,(j_r,k_r) \\ \text{distinct}}}  a_{j_1 k_1}^{4 \alpha_1} \ldots a_{j_rk_r}^{4 \alpha_r} \times \frac{(2m-1)!!  \binom{m}{\alpha_1,\ldots,\alpha_r}}{r! (2\alpha_1-1)!! \ldots (2 \alpha_r-1)!!}\\
  &\leq \sum_{r=1}^m \sum_{\alpha \in A_{m,r}} \sum_{\substack{(j_1,k_1),\ldots,(j_r,k_r) \\ \text{distinct}}}  a_{j_1 k_1}^{4 \alpha_1} \ldots a_{j_rk_r}^{4 \alpha_r} \times \frac{(2m-1)!! \binom{m}{\alpha_1,\ldots,\alpha_r}}{r!}  \\
	& = (2m-1)!! \biggl( \sum_{j \in \mathcal{J} \setminus \{j_0\}, k \in \mathcal{K} \setminus \{k_0\}} a_{jk}^4 \biggr)^m. %\leq \sum_{m=0}^{n/2} \frac{1}{m!} \biggl( \frac{n^2}{2} \sum_{j,k=1}^\infty a_{jk}^4 \biggr)^m \leq \exp \biggl(  \frac{n^2}{2} \sum_{j,k=1}^\infty a_{jk}^4 \biggr).
\end{align*}
It therefore follows that 
\begin{align*}
	&\mathbb{E}\bigl\{ \langle p^{(1)}, p^{(2)} \rangle_{L^2(\mu)}^n \bigr\} = \mathbb{E}\bigl\{ (1+Y)^n\bigr\} = \sum_{m=0}^{n/2} \binom{n}{2m} \mathbb{E}(Y^{2m}) \\
                                                                                &\leq \sum_{m=0}^{n/2} \frac{1}{m!} \biggl( \frac{n^2}{2} \sum_{j \in \mathcal{J} \setminus \{j_0\}, k \in \mathcal{K} \setminus \{k_0\}} a_{jk}^4 \biggr)^m \leq \exp \biggl(  \frac{n^2}{2} \sum_{j \in \mathcal{J} \setminus \{j_0\}, k \in \mathcal{K} \setminus \{k_0\}} a_{jk}^4 \biggr),
\end{align*}
as required.
\end{proof}

\begin{proof}[Proof of Theorem~\ref{Thm:LowerBound1}]
For $m \in \mathbb{N}$, set
\[
	c_m :=\min\biggl( \frac{r^2}{\theta_{\omega(m)}^2}, \frac{(2m)^{1/2}}{n+1} \log^{1/2}\bigl(1+(1-\gamma)^2\bigr), \frac{(A-1)^2 \wedge 1}{m \bar{p}^2} \biggr)
\]
and
\[
	a_{\omega(\ell)} := \left\{ \begin{array}{ll} c_m^{1/2}/m^{1/2} & \mbox{for $\ell \in [m]$} \\ 0 & \mbox{otherwise.} \end{array} \right. 
\]
Then, with the convention that $\infty.0 = 0$, we have
\begin{equation}
\label{Eq:thetaa}  
	\sum_{j \in \mathcal{J} \setminus \{j_0\},k \in \mathcal{K} \setminus \{k_0\}} \theta_{jk}^2 a_{jk}^2 = \frac{c_m}{m} \sum_{\ell=1}^m \theta_{\omega(\ell)}^2 \leq c_m \theta_{\omega(m)}^2 \leq r^2.
\end{equation}
Moreover,
\[
	\sum_{j\in \mathcal{J} \setminus \{j_0\},k \in \mathcal{K} \setminus \{k_0\}} a_{jk}^4 = \frac{c_m^2}{m} \leq \frac{2}{(n+1)^2}\log\bigl( 1+ (1-\gamma)^2\bigr),
\]
and
\begin{equation}
  \label{Eq:apbar}
	\sum_{j\in \mathcal{J} \setminus \{j_0\},k \in \mathcal{K} \setminus \{k_0\}} a_{jk} \|p_{jk}\|_\infty = m^{1/2}c_m^{1/2} \bar{p} \leq (A-1) \wedge 1.
      \end{equation}
      Now, writing $\rho = \bigl\{\sum_{\mathcal{J} \setminus \{j_0\},k \in \mathcal{K} \setminus \{k_0\}} a_{jk}^2\bigr\}^{1/2}$, observe that the random element $p$ of $L^2(\mu)$ defined in Lemma~\ref{Lemma:LowerBound} has $D(p) = \rho^2$ with probability one.  Furthermore, from~\eqref{Eq:thetaa} and~\eqref{Eq:apbar}, we have with probability one that $p \in \mathcal{F}_{\xi}(\rho)$.  Since only finitely many elements of the set $\bigl\{a_{jk}:j \in \mathcal{J} \setminus \{j_0\},k \in \mathcal{K} \setminus \{k_0\}\bigr\}$ are non-zero, $\{p \in \mathcal{F}\}$ is an event, so by Lemma~\ref{Lemma:LowerBound} and the discussion immediately following it, we have
\[
	\tilde{\rho}(n,\gamma,\xi)^2 \geq \sup_{m \in \mathbb{N}} \sum_{j\in \mathcal{J} \setminus \{j_0\},k \in \mathcal{K} \setminus \{k_0\}} a_{jk}^2 = \sup_{m \in \mathbb{N}} c_m,
\]
and the result follows.
%
%Now set 
%\[
%c := \min\biggl\{ \log^{1/2}\bigl(1+(1-\gamma)^2\bigr), \frac{(A-1) \wedge 1}{\bar{p}}\biggr\},
%\]
%and suppose that $r < c\underline{\theta} n^{-1/2}$. We may consider $f=p_{j_0k_0} + r p_{\omega(1)} /\underline{\theta}$, for which we have $D(f)= r^2/\underline{\theta}^2$, $S_\theta(f) = \theta_{\omega(1)}^2 r^2/\underline{\theta}^2 = r^2$ and $\|f\|_\infty \leq 1 + r \bar{p} /\underline{\theta} \leq A$. However,
%\begin{align*}
%	d_\mathrm{TV}( \mathbb{P}_{p_{j_0k_0}}^{\otimes n}, \mathbb{P}_f^{\otimes n}) &= \frac{1}{2}\int_{\mathcal{X} \times \mathcal{Y}} \ldots \int_{\mathcal{X} \times \mathcal{Y}} \! \! |f(x_1,y_1)\ldots f(x_n,y_n) -1| \,d\mu(x_1,y_1) \ldots
%d \mu(x_n,y_n) \\
%	& \leq \frac{1}{2}\bigl( \|f\|_{L^2(\mu)}^{2n} -1 \bigr)^{1/2} = \frac{1}{2}\bigl\{ ( 1 + r^2/\underline{\theta}^2 )^n - 1 \bigr\}^{1/2} \leq \frac{1-\gamma}{2}.
%\end{align*}
%We therefore have 
%\[
%	\tilde{\mathcal{R}}(n,\xi,r/\underline{\theta}) \geq \inf_{\psi \in \Psi(1)} \Bigl\{ \mathbb{E}_{p_{j_0 k_0}}(\psi) + \mathbb{P}_f (1-\psi) \Bigr\} \geq 1 - d_\mathrm{TV} \bigl( \mathbb{P}_{p_{j_0 k_0}}^{\otimes n}, \mathbb{P}_f^{\otimes n} \bigr) > \gamma,
%\]
%so $\tilde{\rho}^*(n, \gamma, \xi) = \infty$.
\end{proof}

\begin{proof}[Proof of Proposition~\ref{Thm:FourierLowerBound}]
For $m =\lceil n r^2 \rceil^{2d/(4s+d)}$, we set
\[
	d_m=\min\biggl( \frac{r^2}{\theta_{\omega(m)}^2}, \frac{(2m)^{1/2}}{n+1}\log^{1/2}\bigl(1+(1-\gamma)^2\bigr)\biggr) \asymp_{s_X,s_Y,d_X,d_Y,\gamma} \Bigl( \frac{r^d}{n^{2s}} \Bigr)^{2/(4s+d)}
\]
and
\[
	a_{\omega(\ell)}:= \left\{ \begin{array}{ll} d_m^{1/2}/m^{1/2} & \mbox{for $\ell \in [m]$} \\ 0 & \mbox{otherwise.} \end{array} \right. 
\]
The rest of this proof is dedicated to showing that, for the $p$ constructed in the statement of Lemma~\ref{Lemma:LowerBound}, we have
\[
	\mathbb{P}(p \not\in \mathcal{F}) = \mathbb{P}\biggl( \essinf_{x \in \mathcal{X},y \in \mathcal{Y}} p(x,y) < 0\biggr) < 1 - \sqrt{\frac{1+(1-\gamma)^2}{1 + 4(1-\gamma)^2}},%\frac{3(1-\gamma)^2/4}{1+(1-\gamma)^2},
\]
from which the result will follow from Lemma~\ref{Lemma:LowerBound}. We define the random function
\[
	F(x,y):= 1- p(x,y) = - \sum_{j \in \mathcal{J} \setminus \{j_0\}, k \in \mathcal{K} \setminus \{k_0\}} a_{jk} \xi_{jk} p_{jk}(x,y)
\]
and aim to bound $\mathbb{P}\bigl( \esssup_{x\in \mathcal{X},y \in \mathcal{Y}} F(x,y) > 1\bigr)$. The space $\mathcal{X} \times \mathcal{Y}$ can be equipped with the pseudo-metric
\[
	\tau\bigl((x,y), (x',y') \bigr) := \biggl[\sum_{j \in \mathcal{J} \setminus \{j_0\}, k \in \mathcal{K} \setminus \{k_0\}} a_{jk}^2 \{p_{jk}(x,y) - p_{jk}(x',y')\}^2\biggr]^{1/2},
\]
which satisfies 
\[
\delta := \sup_{x\in \mathcal{X},y \in \mathcal{Y}} \tau\bigl((x,y),(x_0,y_0)\bigr) \leq 4 \biggl\{\sum_{j \in \mathcal{J} \setminus \{j_0\}, k \in \mathcal{K} \setminus \{k_0\}} a_{jk}^2\biggr\}^{1/2} = 4 d_m^{1/2}
\]
for any $(x_0,y_0) \in \mathcal{X} \times \mathcal{Y}$.  Now, for $m = (m_1,\ldots,m_{d_X}) \in \mathbb{N}_0^{d_X}$ and $x = (x_1,\ldots,x_{d_X}) \in \mathcal{X}$, we write $\langle m,x \rangle_{\mathcal{X}} := \sum_{\ell=1}^{d_X} m_\ell x_\ell$; similarly, for $m = (m_1,\ldots,m_{d_Y}) \in \mathbb{N}_0^{d_Y}$ and $y = (y_1,\ldots,y_{d_Y}) \in \mathcal{Y}$, we write $\langle m,y \rangle_{\mathcal{Y}} := \sum_{\ell=1}^{d_Y} m_\ell y_\ell$.   Then, for any $x,x' \in \mathcal{X}$ and $y,y' \in \mathcal{Y}$,
\begin{align}
\label{Eq:SmoothMetric}
	\tau\bigl((x,y),&(x',y')\bigr)^2 \leq 4\sum_{\substack{(a_X,m_X) \in \mathcal{J} \setminus \{j_0\} \\ (a_Y,m_Y) \in \mathcal{K} \setminus \{k_0\}}} \! a_{jk}^2 \bigl\{\bigl|e^{-2\pi i \langle m_X,x-x' \rangle_{\mathcal{X}}} - 1\bigr| + \bigl|e^{-2\pi i \langle m_Y,y-y' \rangle_{\mathcal{Y}}} - 1\bigr|\bigr\}^2 \nonumber \\ 
	&\leq 32 \pi^2 \sum_{\substack{(a_X,m_X) \in \mathcal{J} \setminus \{j_0\} \\ (a_Y,m_Y) \in \mathcal{K} \setminus \{k_0\}}} \hspace{-20pt} a_{jk}^2 \bigl( 1 \wedge \langle m_X, x-x' \rangle_{\mathcal{X}}^2 + 1 \wedge \langle m_Y, y-y' \rangle_{\mathcal{Y}}^2 \bigr) \nonumber \\
	& \leq 32 \pi^2 \sum_{\substack{(a_X,m_X) \in \mathcal{J} \setminus \{j_0\} \\ (a_Y,m_Y) \in \mathcal{K} \setminus \{k_0\}}}\hspace{-20pt}  a_{jk}^2 \bigl\{ (\|m_X\|_1 \|x-x'\|_\infty)^{2(s_X \wedge 1)} + (\|m_Y\|_1 \|y-y'\|_\infty)^{2(s_Y \wedge 1)} \bigr\} \nonumber \\
	& \leq 64 \pi^2 r^2 \max\bigl\{  \|x-x'\|_\infty^{2(s_X \wedge 1)}, \|y-y'\|_\infty^{2(s_Y \wedge 1)}\bigr\}.
\end{align}
For $u,v > 0$, let $H_{\infty}(u,\mathcal{X})$ and $H_\infty(v,\mathcal{Y})$ be the $u$- and $v$-metric entropies of $\mathcal{X}$ and $\mathcal{Y}$, respectively, with respect to the appropriate supremum metric; thus, for example, there exists $\mathcal{X}_N := \{x_1,\ldots,x_N\}$, where $\log N = H(u,\mathcal{X})$, such that given any $x \in \mathcal{X}$, there exists $x_{j^*} \in \mathcal{X}_N$ with $\|x-x_{j^*}\|_\infty \leq u$.  It follows from~\eqref{Eq:SmoothMetric} that, if $H(w,\mathcal{X} \times \mathcal{Y})$ is the $w$-metric entropy of $(\mathcal{X} \times \mathcal{Y},\tau)$ in the metric $\tau$, then
\begin{align*}
	H(w, \mathcal{X} \times \mathcal{Y}) &\leq H_\infty \Bigl( \Bigl( \frac{w}{8 \pi r}\Bigr)^{1/(s_X \wedge 1)}, \mathcal{X} \Bigr) + H_\infty \Bigl( \Bigl( \frac{w}{8 \pi r}\Bigr)^{1/(s_Y \wedge 1)}, \mathcal{Y} \Bigr) \\
	& \leq d_X \log\biggl( 1 + \Bigl( \frac{8 \pi r}{w} \Bigr)^{1/(s_X \wedge 1)} \biggr) + d_Y \log\biggl( 1 + \Bigl( \frac{8 \pi r}{w} \Bigr)^{1/(s_Y \wedge 1)} \biggr) \\
	& \leq \Bigl( \frac{d_X}{s_X \wedge 1} + \frac{d_Y}{s_Y \wedge 1} \Bigr) \log (1+8 \pi r/w).
\end{align*}
This choice of metric allows us to write, for any $\lambda \in \mathbb{R}, x,x' \in \mathcal{X}$ and $y,y' \in \mathcal{Y}$, 
\begin{align}
  \label{Eq:cgf}
	\log \mathbb{E}  &e^{\lambda \{F(x,y) - F(x',y')\}}  =\sum_{ \substack{ j \in \mathcal{J} \setminus \{j_0\} \\ k \in \mathcal{K} \setminus \{k_0\}}} \log \cosh \bigl( \lambda a_{jk} \{p_{jk}(x,y)-p_{jk}(x',y')\} \bigr) \nonumber \\
	& \hspace{50pt} \leq \frac{\lambda^2}{2} \sum_{ \substack{ j \in \mathcal{J} \setminus \{j_0\} \\ k \in \mathcal{K} \setminus \{k_0\}}} a_{jk}^2 \{p_{jk}(x,y)-p_{jk}(x',y')\}^2 = \frac{\lambda^2}{2} \tau\bigl((x,y), (x',y') \bigr)^2.
\end{align}
%Thus, for any $x,x' \in \mathcal{X}$ and $y,y' \in \mathcal{Y}$,
%\[
%\mathbb{E}\bigl[\{F(x,y) - F(x',y')\}^2\bigr] = \sum_{j \in \mathcal{J} \setminus \{j_0\}, k \in \mathcal{K} \setminus \{k_0\}} a_{jk}^2 \{p_{jk}(x,y)-p_{jk}(x',y')\}^2 = d\bigl((x,y), (x',y') \bigr)^2.
%\]
We now apply a chaining argument.  For each $t \in \mathbb{N}$, let $\delta_t := \delta 2^{-t}$, and let $\mathcal{Z}_t$ denote a $\delta_t$-net of $\mathcal{X} \times \mathcal{Y}$ with respect to the pseudo-metric $\tau$.  Let $z_0 = (x_0,y_0)$ be an arbitrary element of $\mathcal{X} \times \mathcal{Y}$ and $\mathcal{Z}_0 := \{z_0\}$.  Then, for each $t \in \mathbb{N}_0$, we can define a map $\Pi_t: \mathcal{X} \times \mathcal{Y} \rightarrow \mathcal{Z}_t$ such that $\tau\bigl(z,\Pi_t(z)\bigr) \leq \delta_t$.  Noting that $\mathbb{E}F(x_0,y_0) = 0$ and writing $F_t := F \circ \Pi_t$, we have for every $T \in \mathbb{N}$ that
\begin{align*}
  \mathbb{E}  \biggl(\esssup_{x \in \mathcal{X},y \in \mathcal{Y}} F(x,y)\biggr) &\leq \mathbb{E}\biggl(\esssup_{x \in \mathcal{X},y \in \mathcal{Y}} F_T(x,y) + \esssup_{x \in \mathcal{X},y \in \mathcal{Y}} |F(x,y) - F_T(x,y)|\biggr) \\
                                                                            &\leq \sum_{t=1}^T \mathbb{E}\biggl[\esssup_{x \in \mathcal{X},y \in \mathcal{Y}} \bigl\{F_t(x,y) - F_{t-1}(x,y)\bigr\}\biggr] \\
                                                                            &\hspace{2cm}+ \sum_{j \in \mathcal{J} \setminus \{j_0\}, k \in \mathcal{K} \setminus \{k_0\}} a_{jk}\bigl|p_{jk}(x,y) - p_{jk}\bigl(\Pi_T(x,y)\bigr)\bigr|.
\end{align*}
Now $\tau\bigl(\Pi_t(x,y),\Pi_{t-1}(x,y)\bigr) \leq 3\delta_t$ for all $x \in \mathcal{X}, y \in \mathcal{Y}$ and $t \in \mathbb{N}$.  Hence, by~\eqref{Eq:cgf} and a standard sub-Gaussian maximal inequality \citep[e.g.][Theorem~2.5]{BGM2013},
\begin{align*}
  \mathbb{E}\biggl(\esssup_{x \in \mathcal{X},y \in \mathcal{Y}} F(x,y)\biggr) &\leq 6\sum_{t=1}^T \delta_tH^{1/2}(\delta_t,\mathcal{X} \times \mathcal{Y}) + m\delta_T \\
  &\leq 12\int_0^{\delta/2} H^{1/2}(u,\mathcal{X} \times \mathcal{Y}) \, du + m\delta_T.
\end{align*}
Since this bound holds for every $T \in \mathbb{N}$, we conclude that
\begin{align*}
  \mathbb{E}\biggl(  \esssup_{x \in \mathcal{X},y \in \mathcal{Y}}  F(x,y)\biggr)& \leq 12\int_0^{\delta/2} H^{1/2}(u,\mathcal{X} \times \mathcal{Y}) \, du \\
	&\leq 96 \pi \Bigl( \frac{d_X}{s_X \wedge 1} + \frac{d_Y}{s_Y \wedge 1} \Bigr)^{1/2} r \int_0^{\frac{d_m^{1/2}}{2 \pi r}} \log^{1/2}(1+1/v) \,dv \\
                                                                     & \leq 24 d_m^{1/2} \Bigl( \frac{d_X}{s_X \wedge 1} \! + \! \frac{d_Y}{s_Y \wedge 1} \Bigr)^{1/2}\biggl\{\sqrt{\pi} + 2\sqrt{\log 2} + 2\sqrt{\log \biggl(\frac{4\pi r}{d_m^{1/2}}\biggr)}\biggr\}. %                                                                      \frac{\delta}{16 \pi} \Bigl\{ \log^{1/2}(8 \pi r/\delta) + 2 \log^{1/2}2 \Bigr\}.
\end{align*}
Now with $\zeta=(s_X,s_Y,d_X,d_Y,\gamma)$ we have $d_m^{1/2} \asymp_\zeta (r^d/n^{2s} )^{1/(4s+d)}$, so that $r/d_m^{1/2} \asymp_\zeta (nr^2)^{2s/(4s+d)}$ and hence there exists $c_1=c_1(\zeta) \in (0,\infty)$ such that if $(r^d/n^{2s})^{1/(4s+d)} \leq c_1/ \log^{1/2}(nr^2)$, then $\mathbb{E} \esssup_{x\in \mathcal{X},y \in \mathcal{Y}} F(x,y) \leq 1/2$.

Now, by e.g.~\citet[][Theorem~12.1]{BGM2013}, the random variable $\sup_{x\in \mathcal{X},y \in \mathcal{Y}}F(x,y)$ is sub-Gaussian with variance proxy
\[
  \sum_{j\in\mathcal{J} \setminus \{j_0\},k \in \mathcal{K}\setminus\{k_0\}} a_{jk}^2 \|p_{jk}\|_\infty^2 \leq 2d_m.
\]
By reducing $c_1 = c_1(\zeta) > 0$ if necessary, and since $nr^2 \geq 2$, we may assume that
\[
  d_m < -\frac{1}{16} \log\biggl(1 - \sqrt{\frac{1+(1-\gamma)^2}{1 + 4(1-\gamma)^2}}\biggr).
\]
Hence, by a standard sub-Gaussian tail bound \citep[e.g.][p.~25]{BGM2013}
\begin{align*}
  \mathbb{P}(p \notin \mathcal{F})  &\leq \mathbb{P}\Bigl( \esssup_{x\in \mathcal{X},y\in \mathcal{Y}} F(x,y) - \mathbb{E} \esssup_{x\in \mathcal{X},y\in \mathcal{Y}} F(x,y) > 1/2\Bigr) \\
  &\leq e^{-1/(16d_m)} < 1 - \sqrt{\frac{1+(1-\gamma)^2}{1 + 4(1-\gamma)^2}},
\end{align*}
as required.
\end{proof}

\medskip

\textbf{Acknowledgements:} We are very grateful to the anonymous reviewers, whose constructive feedback helped to improve the paper.  We would also like to thank Ilmun Kim for bringing the work of \citet{Song2012} to our attention; this inspired the computational improvements discussed in Section~\ref{Sec:CompTrick}.

\newpage

\setcounter{section}{0}
\setcounter{equation}{0}
%\numberwithin{equation}{section}
\setcounter{thm}{0}
\def\theequation{S\arabic{equation}}
\def\thesection{S.\arabic{section}}
\def\thethm{S\arabic{thm}}
\def\thedefn{S\arabic{thm}}
\def\theexample{S\arabic{thm}}
\def\theremark{S\arabic{thm}}

\begin{frontmatter}

% "Title of the paper"
\title{Supplementary material for `Optimal rates for independence testing via $U$-statistic permutation tests'}
\runtitle{Independence testing via permutation tests}

\begin{aug}
\author[A]{\fnms{Thomas} B. \snm{Berrett}\thanksref{t1}\ead[label=e1]{tom.berrett@warwick.ac.uk}}
\author[B]{\fnms{Ioannis} \snm{Kontoyiannis}\ead[label=e2]{yiannis@maths.cam.ac.uk}} \\
\and
\author[C]{\fnms{Richard} J. \snm{Samworth}\thanksref{t2}\ead[label=e3]{r.samworth@statslab.cam.ac.uk}}
%\ead[label=u1,url]{http://www.statslab.cam.ac.uk/\~{}rjs57}
%\ead[label=u2,url]{https://thomasberrett.github.io}
\thankstext{t1}{Financial support from the French National Research Agency (ANR) under the grants Labex Ecodec (ANR-11-LABEX-0047 and ANR-17-CE40-0003.}
\thankstext{t2}{Research supported by Engineering and Physical Sciences Reseach Council (EPSRC) Programme grant EP/N031938/1 and EPSRC Fellowship EP/P031447/1.}

\runauthor{T. B. Berrett, I. Kontoyiannis and R. J. Samworth}

 % \affiliation{CREST, ENSAE, Institut Polytechnique de Paris\thanksmark{m1} \\ University of Cambridge\thanksmark{m2}}

  \address[A]{Department of Statistics, University of Warwick, Coventry, CV4 7AL, United Kingdom\\ 
          \printead{e1}}

\address[B]{Statistical Laboratory, Centre for Mathematical Sciences, Wilberforce Road, Cambridge, CB3 0WB, United Kingdom\\ \printead{e2}, \printead{e3}}
        
% \address[C]{Statistical Laboratory, Centre for Mathematical Sciences, Wilberforce Road, Cambridge, CB3 0WB, United Kingdom\\ \printead{e3}}

%\runauthor{T. B. Berrett and R. J. Samworth}
%\affiliation{CREST, ENSAE, Institut Polytechnique de Paris\thanksmark{m1} and} 
%\affiliation{Statistical Laboratory, University of Cambridge\thanksmark{m2}}

%\address[A]{CREST, ENSAE, Institut Polytechnique de Paris \\ 5, avenue Henry Le Chatelier \\ 91120 Palaiseau \\ \printead{e2} \\ \printead{u2} }

%\address[A]{CREST, ENSAE, Institut Polytechnique de Paris \\ \printead{e2}}

%\address[B]{Statistical Laboratory \\ Wilberforce Road \\ Cambridge \\ CB3 0WB \\ United Kingdom\\ 
%          \printead{e1}\\ \printead{u1}\\ }
%\address[B]{Statistical Laboratory, University of Cambridge \\ \printead{e1}}
\end{aug}

% indicate corresponding author with \corref{}
% \author{\fnms{John} \snm{Smith}\corref{}\ead[label=e1]{smith@foo.com}\thanksref{t1}}
% \thankstext{t1}{Thanks to somebody} 
% \address{line 1\\ line 2\\ printead{e1}}
% \affiliation{Some University}

%\author{\fnms{Thomas B.} \snm{Berrett}\ead[label=e1]{thomas.berrett@ensae.fr}}
%\address{\printead{e1}}
%\and
%\author{\fnms{???} \snm{???}\ead[label=e2]{???}}
%\address{\printead{e2}}
%\affiliation{???}

%\runauthor{???}

%\begin{abstract}
%We study the problem of independence testing given independent and identically distributed pairs taking values in a $\sigma$-finite, separable measure space.  Defining a natural measure of dependence $D(f)$ as the squared $L_2$-distance between a joint density $f$ and the product of its marginals, we first show that there is no valid test of independence that is uniformly consistent against alternatives of the form $\{f: D(f) \geq \rho^2 \}$. We therefore restrict attention to alternatives that impose additional Sobolev-type smoothness constraints, and define a permutation test based on a basis expansion and a $U$-statistic estimator of $D(f)$ that we prove is minimax optimal in terms of its separation rates in many instances.  Finally, for the case of a Fourier basis on $[0,1]^2$, we provide an approximation to the power function that offers several additional insights.
%\end{abstract}

%\begin{keyword}[class=MSC]
%\kwd[Primary ]{}
%\kwd{}
%\kwd[; secondary ]{}
%\end{keyword}

%\begin{keyword}
%\kwd{}
%\kwd{}
%\end{keyword}

\end{frontmatter}

\section{Remaining proof from Section~\ref{Sec:MainResults}}

\begin{proof}[Proof of Corollary~\ref{Cor:InfDim}]
For $t \geq1$, we have that
\[
	M_{s_X,s_Y}(t):=| \{(j,k) \in \mathcal{J} \times \mathcal{K} : \theta_{jk} \leq t \}| = 4 M(s_X^{-2} \log^2 t) M(s_Y^{-2} \log^2 t).
\]
Thus,
\begin{align*}
	m_0(t) > m \iff m^{1/2} \theta_{\omega(m)}^2 \leq t  \iff M_{s_X,s_Y}(t^{1/2}/m^{1/4}) \geq m.
\end{align*}
If follows that $m_0(t) = \min\{ m \in \mathbb{N} : M_{s_X,s_Y}(t^{1/2}/m^{1/4}) < m \}$, and the result then follows from Theorem~\ref{Thm:UpperBound} in the main text.

We now turn to the proof of the second part of the result. For $t \geq 16$ set $L:=\lfloor t^{1/3} \rfloor$. We then have
\begin{align*}
	&\biggl| \sum_{\ell=1}^\infty  \log( 1 + \lfloor t/\ell^2 \rfloor) - c_0 t^\frac{1}{2} \biggr| \\
	& = \biggl| \sum_{\ell=1}^{\lfloor (t/L)^\frac{1}{2} \rfloor} \log(1+\lfloor t/\ell^2 \rfloor ) \! + \! \sum_{u=1}^{L-1} \Bigl( \Bigl\lfloor \Bigl( \frac{t}{u} \Bigr)^\frac{1}{2} \Bigr\rfloor \! -  \! \Bigl\lfloor \Bigl( \frac{t}{u+1} \Bigr)^\frac{1}{2} \Bigr\rfloor \Bigr) \log ( 1+u) - c_0 t^\frac{1}{2} \biggr| \\
	& \leq \frac{t^\frac{1}{2} \log(1+t)}{L^\frac{1}{2}} + 2L \log L + \biggl| \sum_{u=1}^{L-1} \Bigl\{ \Bigl( \frac{t}{u} \Bigr)^\frac{1}{2} - \Bigl( \frac{t}{u+1} \Bigr)^\frac{1}{2} \Bigr\} \log(1+u) - c_0 t^\frac{1}{2} \biggr| \\
	& = \frac{t^\frac{1}{2} \log(1+t)}{L^\frac{1}{2}} + 2 L \log L + t^\frac{1}{2} \sum_{u=L}^\infty \{u^{-1/2} - (u+1)^{-1/2}\} \log(1+u).
\end{align*}
Thus, given $\delta>0$, there exists $t_0=t_0(\delta)>0$ such that 
\[
	\bigl| t^{-1/2} \log\bigl( 1 + M(t) \bigr) - c_0 \bigr| \leq \delta
\]
whenever $t \geq t_0$.

Let $\epsilon \in (0,4s)$ be given, and consider $m=t^\frac{2c_0 + \epsilon}{2s+c_0}$ for $t$ sufficiently large that $t m^{-1/2} = t^\frac{2s-\epsilon/2}{2s+c_0} \geq \exp(2(s_X \vee s_Y)t_0(\epsilon/2)^{1/2})$. For such $t$ we have that
\begin{align*}
	&\frac{4}{m} M \biggl( \frac{\log^2(m^{-1/2}t)}{4s_X^2} \biggr) M \biggl( \frac{\log^2(m^{-1/2}t)}{4s_Y^2} \biggr) \\
	&\leq \frac{4}{m} \Bigl\{ \Bigl( \frac{t}{m^{1/2}} \Bigr)^\frac{c_0+\epsilon/2}{2s_X} -1 \Bigr\}\Bigl\{ \Bigl( \frac{t}{m^{1/2}} \Bigr)^\frac{c_0+\epsilon/2}{2s_Y} -1 \Bigr\} \leq \frac{4}{m} \Bigl( \frac{t}{m^{1/2}} \Bigr)^\frac{c_0+\epsilon/2}{s} <1
\end{align*}
for $t$ sufficiently large. Thus, when $t$ is large enough we have $m_{0,s_X,s_Y}(t) \leq t^{\frac{2c_0+\epsilon}{2s+c_0}}$. An analogous argument shows that we also have $m_{0,s_X,s_Y}(t)  \geq t^{\frac{2c_0-\epsilon}{2s+c_0}}$ when $t$ is sufficiently large, as required.
\end{proof}

\section{Remaining proof from Section~\ref{Sec:LowerBounds}}

\begin{proof}[Proof of Proposition~\ref{Prop:InfDimLowerBound}]
Take
\begin{align*}
  m:=m_0(nr^2)-1 = m_{0,s_X,s_Y}(nr^2) - 1.
\end{align*}
For this choice of $m$, set
\[
	d_m:= \frac{(2m)^{1/2}}{n+1} \log^{1/2}\bigl(1+(1-\gamma)^2/4\bigr),
\]
which by construction is bounded above by $r^2/\theta_{\omega(m)}^2$.  Further, set
\[
	a_{\omega(\ell)}:= \left\{ \begin{array}{ll} d_m^{1/2}/m^{1/2} & \text{for } \ell \in [m] \\ 0 & \text{otherwise.} \end{array} \right. 
\]
As in the proof of Proposition~\ref{Thm:FourierLowerBound} in the main text, our aim now is to give an upper bound for $\mathbb{P}\bigl( \esssup_{x\in \mathcal{X},y \in \mathcal{Y}} F(x,y) > 1\bigr)$, where
\[
	F(x,y) := - \sum_{j \in \mathcal{J} \setminus \{j_0\}, k \in \mathcal{K} \setminus \{k_0\}} a_{jk} \xi_{jk} p_{jk}(x,y).
\]
Again we take
\[
		\tau\bigl((x,y), (x',y') \bigr)^2 := \sum_{j \in \mathcal{J} \setminus \{j_0\}, k \in \mathcal{K} \setminus \{k_0\}} a_{jk}^2 \{p_{jk}(x,y) - p_{jk}(x',y')\}^2.
\]
The main difference with the proof of Proposition~\ref{Thm:FourierLowerBound} in the main text is in how we bound the metric entropy of the space $(\mathcal{X} \times \mathcal{Y}, \tau)$. For $m \in \mathbb{N}_0^{< \infty}$ and $x,x' \in \mathcal{X}$ we have
\begin{align*}
	\biggl| \prod_{\ell=1}^\infty e^{-2 \pi i m_\ell x_\ell} - \prod_{\ell=1}^\infty e^{-2 \pi i m_\ell x_{\ell}'} \biggr| \leq 2 \pi \sum_{\ell=1}^\infty |m_\ell| |x_\ell-x_\ell'| \leq 2 \pi |m| \sum_{\ell=1}^\infty \frac{|x_\ell-x_\ell'|}{\ell^2}.
\end{align*}
Thus, if we define the norm $\|\cdot\|$ on $\mathcal{X}=\mathcal{Y}=[0,1]^{\mathbb{N}}$ by $\|(x_1,x_2,\ldots)\|=\sum_{\ell=1}^\infty \ell^{-2} | x_\ell|$, then we may write, for any $x,x' \in \mathcal{X}$ and $y,y' \in \mathcal{Y}$ that
\begin{align*}
	\tau\bigl((x,y), (x',y') \bigr) &\leq 4\pi \biggl[ \sum_{j \in \mathcal{J} \setminus \{j_0\}, k \in \mathcal{K} \setminus \{k_0\}} a_{jk}^2 \bigl\{|j| \|x-x'\| +  |k| \|y-y'\| \bigr\}^2 \biggr]^{1/2} \\	
	& \leq 8 \pi ( \|x-x'\| \vee \|y-y'\|) \biggl[\sum_{j \in \mathcal{J} \setminus \{j_0\}, k \in \mathcal{K} \setminus \{k_0\}} a_{jk}^2 ( |j|^2 \vee |k|^2 ) \biggr]^{1/2} \\
  &\leq \frac{8 \pi r}{(s_X \wedge s_Y)^2} ( \|x-x'\| \vee \|y-y'\|).
\end{align*}
Here, in the final inequality, we have used the fact that $x^2 \leq e^x$ for $x \geq 0$.  We now bound the metric entropy of the space $(\mathcal{X}, \|\cdot \|)$. For $L \in \mathbb{N}$ define
\[
	\mathcal{X}^{(L)}:= \bigl\{ x = (x_1,x_2,\ldots) \in \mathcal{X}: x_\ell=0 \text{ for all } \ell \geq L+1 \bigr\}.
\]
For any $x = (x_1,x_2,\ldots) \in \mathcal{X}, L \in \mathbb{N}$ we have $\|x-(x_1,\ldots,x_L,0,0,\ldots)\| \leq \sum_{\ell=L+1}^\infty \ell^{-2} \leq 1/L$. Given $\delta >0$, set $L=\lfloor 2/ \delta \rfloor$ and define
\[
  \mathcal{I} := \{1,\ldots,\lfloor (\delta/4)^{-2} \rfloor\} \times \{1,\ldots,\lfloor (\delta/4)^{-2}/4 \rfloor\} \times \ldots \times \{1,\ldots,  \lfloor  (\delta/4)^{-2} /L^2 \rfloor \}.
\]
For every $i \in \mathcal{I}$, set
\[
	x^{(i)} := \bigl( i_1 (\delta/4)^2, 4i_2 (\delta/4)^2, \ldots, L^2 i_L (\delta/4)^2,0,0,\ldots\bigr) \in \mathcal{X}^{(L)}.
\]
We now show that the family $\{x^{(i)}\}_{i \in \mathcal{I}}$ is a $\delta$-covering set of $\mathcal{X}$. Let $x = (x_1,x_2,\ldots) \in \mathcal{X}$ be given, and for each $\ell=1,\ldots,L$, define the quantity $i_\ell^*:=\mathrm{argmin}_{i \in \{1,\ldots,\lfloor (\ell\delta/4)^{-2} \rfloor \}} |x_\ell - \ell^2 i (\delta/4)^2|$. Then, when $\delta \leq 1/2$ so that $ L \geq 8/(5 \delta)$ we have
\begin{align*}
	\|x-x^{(i^*)}\| &\leq 1/L + \|(x_1,\ldots,x_L,0,0,\ldots) - x^{(i^*)} \| \\
	& \leq 1/L + L \delta^2 /16 \leq 5 \delta /8 + \delta/8 < \delta.
\end{align*}
% The entropy of $(\mathcal{X},\|\cdot\|)$ may therefore be bounded by
Hence,
\[
	H(\delta, \mathcal{X}) \leq \sum_{\ell=1}^L \log \bigl(1 + (\ell \delta /4)^{-2} \bigr) \leq \int_0^{2/\delta} \log \bigl(1+ (x \delta /4)^{-2} \bigr) \,dx \leq \frac{8}{\delta}. 
\]
It follows that
\begin{align*}
	H(u,  \mathcal{X} \times \mathcal{Y}) \leq 2 H \Bigl( \frac{u(s_X \wedge s_Y)^2}{8 \pi r}, \mathcal{X} \Bigr) \leq \frac{64 \pi r}{ u (s_X \wedge s_Y)^2}.
\end{align*}
Write $\delta = 2^{3/2} d_m^{1/2}$ as in the proof of Proposition~\ref{Thm:FourierLowerBound} in the main text. Then after reducing $\epsilon > 0$ from the statement of Proposition~\ref{Prop:InfDimLowerBound} in the main text if necessary so that $\epsilon \in \bigl(0,4s/(s+c_0)\bigr)$, by the second part of Corollary~\ref{Cor:InfDim} in the main text, there exists $C'=C'\bigl(s_X,s_Y,\epsilon(s+c_0)\bigr)$ such that when $nr^2 \geq C'$ we have
\begin{align*}
	(r \delta)^2 &= 8d_m r^2 \leq \frac{8 r^2 m^{1/2}}{n} \leq \frac{8r^2}{n} (nr^2)^\frac{c_0+\epsilon(s+c_0)/2}{2s+c_0} \\
	&= 8 r^\frac{(4+\epsilon)(s+c_0)}{2s+c_0} n^{-\frac{2s-\epsilon(s+c_0)/2}{2s+c_0}} \leq 8 n^{ \frac{1}{2} (\frac{s}{s+c_0} - \epsilon) \frac{(4+\epsilon)(s+c_0)}{2s+c_0} -\frac{2s-\epsilon(s+c_0)/2}{2s+c_0}} \!\! \leq 8 n^{-\frac{\epsilon(s+c_0)}{2s+c_0}}.
\end{align*}
Hence, by the chaining argument in the proof of Proposition~\ref{Thm:FourierLowerBound} in the main text and by increasing $C' = C'\bigl(s_X,s_Y,\epsilon\bigr) > 0$ if necessary, we have when $\min(n,nr^2) \geq C'$ that
\begin{align*}
  \mathbb{E} \esssup_{x\in \mathcal{X},y\in \mathcal{Y}} F(x,y) &\leq \frac{12 \cdot 8 \pi^{1/2}}{s_X \wedge s_Y} \int_0^{\delta/2} \Bigl(\frac{r}{u} \Bigr)^{1/2} \,du= \frac{96 \cdot 2^{1/2} \pi^{1/2}}{s_X \wedge s_Y} (r\delta)^{1/2} \leq \frac{1}{2}.
\end{align*}
From the second part of Corollary~\ref{Cor:InfDim} in the main text, we see that by still further increasing $C' = C'\bigl(s_X,s_Y,\epsilon\bigr) > 0$ if necessary, we may assume that when $\min(n,nr^2) \geq C'$ we have $d_m < -\frac{1}{16} \log\bigl( 1 - \sqrt{\frac{1+(1-\gamma)^2}{1 + 4(1-\gamma)^2}}\bigr)$. Hence, as at the end of the proof of Proposition~\ref{Thm:FourierLowerBound} in the main text, we have
\begin{align*}
  \mathbb{P}\Bigl( \esssup_{x\in \mathcal{X},y\in \mathcal{Y}} F(x,y) > 1\Bigr) &\leq \mathbb{P}\Bigl( \esssup_{x\in \mathcal{X},y\in \mathcal{Y}}F(x,y) - \mathbb{E} \esssup_{x\in \mathcal{X},y\in \mathcal{Y}} F(x,y) \geq 1/2\Bigr) \\
  &\leq e^{-1/(16d_m)} < 1 - \sqrt{\frac{1+(1-\gamma)^2}{1 + 4(1-\gamma)^2}}.
\end{align*}
Finally, by Lemma~\ref{Lemma:LowerBound} in the main text, we conclude that, when $nr^2 \geq C'=C'(s_X,s_Y,\epsilon)$,
\begin{align*}
  \rho^*(n,\alpha,\beta,\xi)^2 \geq d_m &= \frac{(2m)^{1/2}}{n+1} \log^{1/2}\biggl(1+\frac{(1-\gamma)^2}{4}\biggr) \\
  &\geq \frac{m_0(nr^2)^{1/2}}{n+1} \log^{1/2}\biggl(1+\frac{(1-\gamma)^2}{4}\biggr),
\end{align*}
as required.
\end{proof}

\section{Remaining proofs from Section~\ref{Sec:PowerFunction}}

\begin{proof}[Proof of Theorem~\ref{Thm:PowerFunction}]
As in the proof of Theorem~\ref{Thm:UpperBound}, we write $D, a_{jk}, a_{j\bullet}, a_{\bullet k}$ in place of $D(f), a_{jk}(f), a_{j\bullet}(f), a_{\bullet k}(f)$ respectively.

The first step of the proof is to show that $\hat{D}_n,\hat{D}_n^{(1)},\ldots,\hat{D}_n^{(B)}$ can be approximated by appropriate second-order $U$-statistics that are degenerate.  We then apply Lemma~\ref{Lemma:JointNormality} to establish that they can be jointly be approximated in the $d_{\mathcal{G}_B}$ metric by a multivariate Gaussian distribution.

For $j \in \mathcal{J}, k \in \mathcal{K}, x,x_1,x_2 \in \mathcal{X}$ and $y,y_1,y_2 \in \mathcal{Y}$, it will be convenient to write $\tilde{p}_{jk}(x,y):=p_{jk}(x,y)-a_{j\bullet}p_k^Y(y)-a_{\bullet k}p_j^X(x) - a_{jk} + 2a_{j\bullet}a_{\bullet k}$ and
\begin{align*}
	h_2\bigl( (x_1,y_1),(x_2,y_2) \bigr):= \sum_{(j,k)\in \mathcal{M}} \tilde{p}_{jk}(x_1,y_1) \tilde{p}_{jk}(x_2,y_2).
\end{align*}
Recalling the definition of the symmetrised version $\bar{h}$ of $h$ in~\eqref{Eq:barh}, we begin by calculating
\begin{align}
\label{Eq:SecondOrderU}
	 \bar{g}&\bigl((x_1,y_1)(x_2,y_2) \bigr) := \mathbb{E} \bigl\{ \bar{h} \bigl((x_1,y_1),(x_2,y_2), (X_1,Y_1), (X_2,Y_2) \bigr) \bigr\} \nonumber \\
	&= \frac{1}{6} \sum_{(j,k)\in \mathcal{M}} \bigl\{p_{jk}(x_1,y_1)-a_{j\bullet}p_k^Y(y_1)-a_{\bullet k}p_j^X(x_1) - a_{jk} + 2a_{j\bullet}a_{\bullet k} \bigr\} \nonumber \\
	& \hspace{50pt} \times \bigl\{p_{jk}(x_2,y_2)-a_{j\bullet}p_k^Y(y_2)-a_{\bullet k}p_j^X(x_2) - a_{jk} + 2a_{j\bullet}a_{\bullet k} \bigr\} \nonumber \\
	& \hspace{10pt} + \frac{1}{6} \! \! \sum_{(j,k)\in \mathcal{M}} \! \! \! \! (a_{jk} -a_{j\bullet}a_{\bullet k})\bigl\{ 3p_{jk}(x_1,y_1) \!+\! 3 p_{jk}(x_2,y_2) \!-\! p_{jk}(x_1,y_2) \!-\! p_{jk}(x_2,y_1) \nonumber \\ 
	& \hspace{40pt}- 2p_j^X(x_1)a_{\bullet k} - 2p_j^X(x_2)a_{\bullet k} - 2p_k^Y(y_1)a_{j\bullet} - 2 p_k^Y(y_2)a_{j\bullet} + 4a_{j\bullet}a_{\bullet k} \bigr\} \nonumber \\
	&= \frac{1}{6} h_2 \bigl( (x_1,y_1), (x_2,y_2) \bigr) + \frac{2}{3} \sum_{(j,k) \in \mathcal{M}} (a_{jk} - a_{j \bullet} a_{\bullet k})^2  \nonumber \\
	& \hspace{10pt}+ \frac{1}{6} \!\!\! \sum_{(j,k) \in \mathcal{M}} \!\!\! (a_{jk} \!-\! a_{j \bullet} a_{\bullet k}) \bigl\{ 3 \tilde{p}_{jk}(x_1,y_1) \!+\! 3 \tilde{p}_{jk}(x_2,y_2) \!-\! \tilde{p}_{jk}(x_1,y_2)\! -\! \tilde{p}_{jk}(x_2,y_1) \bigr\}.
\end{align}
Moreover, we let
\begin{align*}
	\tilde{g}\bigl( &(x_1,y_1), \ldots, (x_4,y_4) \bigr) \\
	&:= \bar{h}  \bigl( (x_1,y_1), \ldots , (x_4,y_4) \bigr) - \frac{1}{4!} \sum_{\sigma \in \mathcal{S}_4} h_2 \bigl( (x_{\sigma(1)},y_{\sigma(1)}),(x_{\sigma(2)},y_{\sigma(2)}) \bigr).
\end{align*}
Then, by \citet[][Lemma~A, p.~183]{Serfling1980}, and calculations very similar to those in~\eqref{Eq:zeta1bound} and~\eqref{Eq:Zeta3} in the main text, we have that
\begin{align}
\label{Eq:DnhatApprox}
	\mathrm{Var} \biggl( & \hat{D}_n - \frac{1}{\binom{n}{2}} \sum_{i_1 < i_2} h_2 \bigl((X_{i_1},Y_{i_1}), (X_{i_2},Y_{i_2}) \bigr) \biggr) \nonumber \\
	&= \mathrm{Var} \biggl( \frac{1}{\binom{n}{4}} \sum_{i_1< i_2<i_3<i_4} \tilde{g} \bigl((X_{i_1},Y_{i_1}), \ldots ,(X_{i_4},Y_{i_4}) \bigr) \biggr) \nonumber \\
	& \lesssim \frac{1}{n} \mathrm{Var} \Bigl( \mathbb{E}\bigl\{ \tilde{g} \bigl((X_1,Y_1), \ldots,(X_4,Y_4) \bigr) | X_1,Y_1 \bigr\} \Bigr) \nonumber \\
	& \hspace{50pt} + \frac{1}{n^2}  \mathrm{Var} \Bigl( \mathbb{E}\bigl\{ \tilde{g} \bigl((X_1,Y_1), \ldots,(X_4,Y_4) \bigr) | X_1,Y_1,X_2,Y_2 \bigr\} \Bigr) \nonumber \\
	& \hspace{50pt} + \frac{1}{n^3} \mathrm{Var} \Bigl( \tilde{g} \bigl((X_1,Y_1), \ldots,(X_4,Y_4) \bigr) \Bigr)\nonumber \\
	&= \frac{1}{n}  \mathrm{Var} \biggl( \frac{1}{2} \sum_{(j,k) \in \mathcal{M}} (a_{jk} - a_{j \bullet} a_{\bullet k}) \tilde{p}_{jk}(X_1,Y_1) \biggr) \nonumber \\
	& \hspace{50pt} + \frac{1}{n^2} \mathrm{Var} \biggl( \bar{g} \bigl( (X_1,Y_1), (X_2,Y_2) \bigr) - \frac{1}{6} h_2 \bigl( (X_1,Y_1), (X_2,Y_2) \bigr) \biggr) \nonumber \\
	& \hspace{50pt} +  \frac{1}{n^3} \mathrm{Var} \Bigl( \tilde{g} \bigl((X_1,Y_1), \ldots,(X_4,Y_4) \bigr) \Bigr) \nonumber \\
	& \lesssim \frac{A^2}{n} \sum_{(j,k) \in \mathcal{M}} (a_{jk} - a_{j \bullet}a_{\bullet k})^2 +  \frac{A^4M^2}{n^3} \lesssim \frac{A^2 M \Delta_f}{n^2} + \frac{A^4 M^2}{n^3}.
\end{align}
%Let 
%\[
%\bar{g}\bigl((x_1,y_1),(x_2,y_2)\bigr) := \mathbb{E}\bigl\{\bar{h}\bigl((x_1,y_1),(x_2,y_2),(X_1,Y_1),(X_2,Y_2)\bigr)\bigr\}.
%\]
%Then
%\begin{align*}
%&\frac{1}{2\binom{n}{2}}\sum_{(i_1,i_2) \in \mathcal{I}_2} \bar{g}\bigl((x_{i_1},y_{i_1}),(x_{i_2},y_{i_2})\bigr) \\
%&= \frac{1}{4!\binom{n}{4}}\sum_{(i_1,i_2,i_3,i_4) \in \mathcal{I}_4} \! \! \mathbb{E}\bigl\{\bar{h}\bigl((X_{i_1},Y_{i_1}),(X_{i_2},Y_{i_2}),(X_{i_3},Y_{i_3}),(X_{i_4},Y_{i_4})\bigr)\bigm|X_{i_1},Y_{i_1},X_{i_2},Y_{i_2}\bigr\}.
%\end{align*}
%Further, write $\tilde{p}_{jk}(x,y):=p_{jk}(x,y)-a_{j\bullet}p_k(y)-a_{\bullet k}p_j(x) - a_{jk} + 2a_{j\bullet}a_{\bullet k}$ and
%\begin{align*}
%	h_2\bigl( (x_1,y_1),(x_2,y_2) \bigr):= \sum_{(j,k)\in \mathcal{M}} \tilde{p}_{jk}(x_1,y_1) \tilde{p}_{jk}(x_2,y_2).
%\end{align*}
%Then, by \citet[][Lemma~A, p.~183]{Serfling1980}, and calculations very similar to those in~\eqref{Eq:zeta1bound} and~\eqref{Eq:Zeta3}, we have that 
%\begin{align}
%\label{Eq:DnhatApprox}
%	\mathrm{Var} &\biggl( \hat{D}_n - \frac{1}{\binom{n}{2}} \sum_{i_1 < i_2} h_2 \bigl((X_{i_1},Y_{i_1}), (X_{i_2},Y_{i_2}) \bigr) \biggr) \nonumber \\
%	&\leq 2\mathrm{Var}\biggl( \hat{D}_n - \frac{1}{\binom{n}{2}} \sum_{i_1 < i_2} \bar{g}\bigl((X_{i_1},Y_{i_1}),(X_{i_2},Y_{i_2})\bigr)\biggr) \nonumber \\
%	&\hspace{50pt} + 2 \mathrm{Var}\biggl(\frac{1}{\binom{n}{2}} \sum_{i_1 < i_2} \bigl\{\bar{g}\bigl((X_{i_1},Y_{i_1}),(X_{i_2},Y_{i_2})\bigr) - h_2 \bigl((X_{i_1},Y_{i_1}), (X_{i_2},Y_{i_2}) \bigr)\bigr\}\biggr) \nonumber \\
%	&\lesssim \frac{A^4 |\mathcal{M}|}{n^3} + \frac{A^2 D}{n}.
%\end{align}
Having bounded the difference between $\hat{D}_n$ and an appropriate second-order degenerate $U$-statistic, we now approximate the second moment of $h_2\bigl((X_1,Y_1),(X_2,Y_2)\bigr)$, so that we may standardise this $U$-statistic.  For this we will first recall a basic fact about the trigonometric basis.  Extending the definitions of $p_{a,m}^X = p_{a,m}^Y$ in~\eqref{Eq:Fourier} in the main text to hold for all $a,m \in \mathbb{Z}$, we have for $a_1,a_2 \in \{0,1\}$, $m_1,m_2 \in \mathbb{Z}$, $Z \in \{X,Y\}$ and $x \in [0,1]$ that
\begin{align*}
  p_{a,m_1}^Z(x)p_{a,m_2}^Z(x) = \frac{1}{\sqrt{2}}\bigl\{p_{a_1+a_2,m_1+m_2}^Z(x) +p_{a_1-a_2,m_1-m_2}^Z(x)\bigr\}. 
\end{align*}
Henceforth, when there is no confusion, we will write $(X,Y)$ for a random variable with density $f$. We can now see that
\begin{align*}
	&\sum_{(j,k) \in \mathcal{M}} \sum_{(j',k') \in \mathcal{M}} \mathbb{E}^2\bigl\{p_{jk}(X,Y) p_{j'k'}(X,Y) \bigr\}  \\
	&= \sum_{m_1,\ldots,m_4=1}^M \sum_{a_1,\ldots,a_4=0}^1 \mathbb{E}^2\bigl\{p_{a_1,m_1}^X(X) p_{a_2,m_2}^X(X) p_{a_3,m_3}^Y(Y) p_{a_4,m_4}^Y(Y)\bigr\}  \\
	&= \sum_{m_1,\ldots,m_4=1}^M \sum_{a_1,\ldots,a_4=0}^1 \mathbb{E}^2\bigl\{ p_{a_1,m_1+(-1)^{a_2}m_2}^X(X) p_{a_3,m_3+(-1)^{a_4}m_4}^Y(Y)\bigr\}  \\
	&= \sum_{m_1,m_2=0}^{2M} \sum_{a_1,a_2=0}^1 (2M+1-m_1)(2M+1-m_2) a_{(a_1,m_1)(a_2,m_2)}^2 + O(MA)\\
	& = \sigma_{M,X}^2 \sigma_{M,Y}^2 + O(MA + M^2A^2D^{1/2}).
\end{align*}
Similarly, we have for example that
\begin{align*}
	\sum_{(j,k) \in \mathcal{M}} &\sum_{(j',k') \in \mathcal{M}}  \mathbb{E}^2\bigl\{ p_{jk}(X,Y) p_{j'}^X(X)a_{\bullet k'}\bigr\} \\
	& \leq A \sum_{m_1,m_2,m_3=1}^M \sum_{a_1,a_2,a_3=0}^1 \mathbb{E}^2\bigl\{ p_{a_1,m_1+(-1)^{a_2}m_2}^X(X) p_{a_3,m_3}^Y(Y)\bigr\} \\
	& \leq 2MA \sum_{m_1=1}^{2M} \sum_{m_2=1}^M \sum_{a_1,a_2=0}^1 \mathbb{E}^2\bigl\{p_{a_1,m_1}^X(X)p_{a_2,m_2}^Y(Y)\bigr\} \leq 2M A^2
\end{align*}
and $\sum_{(j,k),(j'k') \in \mathcal{M}} a_{j\bullet}^2 a_{\bullet k}^2 a_{j'\bullet}^2 a_{\bullet k'}^2 \leq A^4$. Hence, we can check that
\begin{align}
  \label{Eq:h2SecondMoment}
	\mathbb{E}\bigl\{ h_2^2 \bigl( (X_1,Y_1), (X_2,Y_2) \bigr) \bigr\} &= \sum_{(j,k) \in \mathcal{M}}  \sum_{(j',k') \in \mathcal{M}} \mathbb{E}^2\bigl\{ \tilde{p}_{jk}(X,Y) \tilde{p}_{j'k'}(X,Y) \bigr\} \nonumber \\
  &= \sigma_{M,X}^2\sigma_{M,Y}^2 + O(MA^4 + M^2A^2D^{1/2}).
\end{align}
Our calculations so far allow us to bound two of the terms that will appear in the bound when we apply Lemma~\ref{Lemma:JointNormality} below in our context, namely $\frac{\mathbb{E}\{h_2^4((X_1,Y_1),(X_2,Y_2))\}}{n \sigma_{M,X}^4 \sigma_{M,Y}^4}$ and $\frac{\mathbb{E}\{g_2^2((X_1,Y_1),(X_2,Y_2)) \}}{\sigma_{M,X}^4\sigma_{M,Y}^4}$, where
\begin{align*}
	g_2\bigl( &(x_1,y_1), (x_2,y_2) \bigr) := \mathbb{E}\bigl\{ h_2 \bigl((x_1,y_1),(X,Y) \bigr) h_2 \bigl((x_2,y_2),(X,Y) \bigr) \bigr\} \\
	&= \sum_{(j,k) \in \mathcal{M}} \sum_{(j',k') \in \mathcal{M}} \tilde{p}_{jk}(x_1,y_1) \tilde{p}_{j'k'}(x_2,y_2) \mathbb{E}\bigl\{\tilde{p}_{jk}(X,Y) \tilde{p}_{j'k'}(X,Y)\bigr\}.
\end{align*}
Specifically, we note that
\begin{align}
\label{Eq:g2expansion}
	& \mathbb{E}\bigl\{ g_2^2 \bigl((X_1,Y_1),(X_2,Y_2) \bigr) \bigr\} \nonumber \\
	&= \sum_{(j_1,k_1), \ldots, (j_4,k_4) \in \mathcal{M}} \mathbb{E}\bigl\{ \tilde{p}_{j_1k_1}(X,Y) \tilde{p}_{j_2k_2}(X,Y)\bigr\} \mathbb{E}\bigl\{ \tilde{p}_{j_1k_1}(X,Y) \tilde{p}_{j_3k_3}(X,Y)\bigr\} \nonumber \\
	& \hspace{50pt} \times \mathbb{E}\bigl\{ \tilde{p}_{j_2k_2}(X,Y) \tilde{p}_{j_4k_4}(X,Y)\bigr\} \mathbb{E}\bigl\{ \tilde{p}_{j_3k_3}(X,Y) \tilde{p}_{j_4k_4}(X,Y)\bigr\}.
\end{align}
Now, for $a \in \{0,1\}$, $m_1,m_2 \in \{1,\ldots,M\}$ and $Z \in \{X,Y\}$, define the shorthand
\[
  q_{a,m_1,m_2}^Z := p_{a,m_1+m_2}^Z - p_{a,m_1-m_2}^Z,
\]
and write $\mathcal{M}_2:= \bigl(\{0,1\} \times \{-2M,-2M+1,\ldots,2M\}\bigr)^2$.  Then
\begin{align*}
	&\biggl| \sum_{(j_1,k_1), \ldots, (j_4,k_4) \in \mathcal{M}} \mathbb{E}\{ p_{j_1k_1}(X,Y) p_{j_2k_2}(X,Y)\} \mathbb{E}\{ p_{j_1k_1}(X,Y) p_{j_3k_3}(X,Y)\} \\
	& \hspace{75pt} \times \mathbb{E}\{ p_{j_2k_2}(X,Y) p_{j_4k_4}(X,Y)\} \mathbb{E}\{p_{j_3k_3}(X,Y) p_{j_4k_4}(X,Y)\} \biggr| \\
	& \lesssim \!\!\!\! \sum_{\substack{m_1,\ldots,m_4=1 \\ n_1,\ldots,n_4=1}}^M \sum_{\substack{a_1,\ldots,a_4=0 \\ b_1,\ldots,b_4=0}}^1 \!\!\!\!\!\! \bigl| \mathbb{E}\{ q_{a_1,m_1,m_2}^X(X)q_{b_1,n_1,n_2}^Y(Y)\}\mathbb{E}\{ q_{a_2,m_1,m_3}^X(X)q_{b_2,n_1,n_3}^Y(Y)\} \\
	& \hspace{75pt} \times  \mathbb{E}\{ q_{a_3,m_2,m_4}^X(X)q_{b_3,n_2,n_4}^Y(Y)\} \mathbb{E}\{ q_{a_4,m_3,m_4}^X(X)q_{b_4,n_3,n_4}^Y(Y)\} \bigr| \\
	& \lesssim \! A \!\!\!\!\!\!\!\! \sum_{\substack{m_1,m_2,m_3=1 \\ n_1,n_2,n_3=1}}^M \sum_{\substack{a_1,a_2=0 \\ b_1,b_2=0}}^1 \!\!\! \bigl| \mathbb{E}\{q_{a_1,m_1,m_2}^X(X)q_{b_1,n_1,n_2}^Y(Y)\}\mathbb{E}\{ q_{a_2,m_1,m_3}^X(X)q_{b_2,n_1,n_3}^Y(Y)\} \bigr| \\
	& \lesssim M^2 A \!\!\!\! \sum_{\substack{m_1,m_2=-2M \\ n_1,n_2=-2M}}^{2M} \sum_{\substack{a_1,a_2=0 \\ b_1,b_2=0}}^1 \bigl| \mathbb{E}\{p_{a_1,m_1}^X(X) p_{b_1,n_1}^Y(Y)\} \mathbb{E}\{p_{a_2,m_2}^X(X_1) p_{b_2,n_2}^Y(Y)\} \bigr| \\
	& = M^2 A \biggl( \sum_{(j,k) \in \mathcal{M}_2} |a_{jk}| \biggr)^2 \leq M^2 A \biggl(  \sum_{(j,k) \in \mathcal{M}_2} |a_{j\bullet}a_{\bullet k}| + 16M D^{1/2} \biggr)^2.
\end{align*}
The other terms in the expansion of~\eqref{Eq:g2expansion} can be bounded similarly: in particular, by Parseval's identity,
\[
\sum_{(j_1,k_1), \ldots, (j_4,k_4) \in \mathcal{M}} \!\!\!\!\!\!\!\!\!\!\! a_{j_1 \bullet}^2 a_{\bullet k_1}^2 a_{j_2 \bullet}^2 a_{\bullet k_2}^2 a_{j_3 \bullet}^2 a_{\bullet k_3}^2 a_{j_4 \bullet}^2 a_{\bullet k_4}^2 \leq \|f_X\|_{L^2(\mu_X)}^4\|f_Y\|_{L^2(\mu_Y)}^4 \leq A^8. 
\]
It follows that
\begin{equation}
\label{Eq:SecondMomentg}
	\mathbb{E}\bigl\{ g_2^2 \bigl((X_1,Y_1),(X_2,Y_2) \bigr) \bigr\} \lesssim A^8 M^2 \biggl(  \sum_{(j,k) \in \mathcal{M}_2} |a_{j\bullet}a_{\bullet k}| \biggr)^2	 + A^8 M^4 D.
\end{equation}
The fourth moment of $h_2$ can similarly be bounded by writing
\begin{align}
\label{Eq:FourthMomenth}
  \mathbb{E}\bigl\{ &h_2^4 \bigl( (X_1,Y_1), (X_2,Y_2) \bigr) \bigr\} \leq A^2 \int_0^1 \int_0^1 h_2^4\bigl((x_1,y_1),(x_2,y_2)\bigr) \, dx \, dy \nonumber \\
%                      \sum_{(j_1,k_1),\ldots,(j_4,k_4) \in \mathcal{M}} \mathbb{E}^2\bigl\{\tilde{p}_{j_1k_1}(X_1,Y_1) \ldots \tilde{p}_{j_4k_4}(X_1,Y_1)\bigr\} \nonumber \\
	&= A^2 \sum_{(j_1,k_1),\ldots,(j_4,k_4) \in \mathcal{M}} \biggl( \int_0^1 \int_0^1 \tilde{p}_{j_1k_1}(x,y) \ldots \tilde{p}_{j_4k_4}(x,y) \,dx \,dy \biggr)^2 \nonumber \\
	& \lesssim A^{10} \sum_{j_1,k_1, \ldots,j_4,k_4=- M}^M \mathbbm{1}_{\{j_1+j_2+j_3+j_4=0\}} \mathbbm{1}_{\{k_1+k_2+k_3+k_4=0\}} \lesssim M^6 A^{10}.
\end{align}
%It follows now from Lemma~\ref{Lemma:Normality} that
%\begin{align*}
%	&d_\mathrm{W} \biggl( \frac{\hat{D}_n -\mathbb{E} \hat{D}_n}{\binom{n}{2}^{-1/2} \sigma_M(f_X) \sigma_M(f_Y) }  , Z \biggr)^2 \lesssim \frac{nA^2D}{M^2} + \frac{A^4}{n} + \frac{A^4}{M} + A^2 D^{1/2} \\
%	& \hspace{100pt} + A^8 M^{-2} \biggl( \sum_{ (j,k) \in \mathcal{M}_2} |a_{j\bullet}a_{\bullet k}| \biggr)^2 + A^8D + \frac{A^{12}M^2}{n} + \biggl( \frac{A^{12}M^2}{n} \biggr)^{2/3}.
%\end{align*}

The next step is to study the behaviour of the permuted statistics $\hat{D}_n^{(1)},\ldots,\hat{D}_n^{(B)}$. Many of the calculations are very similar to those already carried out in our study of $\hat{D}_n$. First, for $(j,k) \in \mathcal{J}$, define
\[
	\tilde{p}_{jk}^{(1)}(x,y):= p_{jk}(x,y)-a_{j\bullet}p_k^Y(y) - a_{\bullet k}p_j^X(x) + a_{j\bullet}a_{\bullet k}.
\]
Further, define
\[
	h_2^{(1)} \bigl((x_1,y_1),(x_2,y_2) \bigr) := \sum_{(j,k) \in \mathcal{M}} \tilde{p}_{jk}^{(1)}(x_1,y_1) \tilde{p}_{jk}^{(1)}(x_2,y_2).
\]
and
\begin{align*}
	&\tilde{g}^{(1)}\bigl( (x_1,y_1),\ldots,(x_4,y_4) \bigr) \\
	& := \bar{h} \bigl( (x_1,y_1),\ldots,(x_4,y_4) \bigr) - \frac{1}{4!} \sum_{\sigma \in \mathcal{S}_4} h_2^{(1)} \bigl( (x_{\sigma(1)},y_{\sigma(1)}),(x_{\sigma(2)},y_{\sigma(2)}) \bigr).
\end{align*}
To approximate $\hat{D}_n^{(1)}$ by a second-order $U$-statistic we first look at
\begin{align*}
	\zeta_c^{(1)}&:= \mathbb{E} \Bigl[ \tilde{g}^{(1)} \bigl( (X_1,Y_{\Pi(1)}), \ldots, (X_4,Y_{\Pi(4)}) \bigr) \\
	& \hspace{19pt} \times \tilde{g}^{(1)} \bigl( (X_1,Y_{\Pi(1)}), \ldots, (X_c,Y_{\Pi(c)}), (X_5,Y_{\Pi(5)}), \ldots, (X_{8-c},Y_{\Pi(8-c)}) \bigr) \Bigr]
\end{align*}
for $c=0,1,2,3,4$. First, as in~\eqref{Eq:CrudePerm} in the main text, for $c=3,4$ we have the simple bound that $\max(| \zeta_3^{(1)}|, |\zeta_4^{(1)}|) \lesssim A^8 M^2$. By the construction of $\tilde{g}^{(1)}$ and the null hypothesis version of~\eqref{Eq:SecondOrderU}, we have that
\[
	\mathbb{E} \tilde{g}^{(1)} \bigl( (x_1,y_1),(x_2,y_2), (X_1,Y_2),(X_3,Y_4) \bigr) =0
\]
for all $x_1,y_1,x_2,y_2 \in [0,1]$. It now follows by calculations similar to those in~\eqref{Eq:zetatilde1} and~\eqref{Eq:zetatilde0} in the main text that $|\zeta_c^{(1)}| \lesssim A^8 M^2 n^{c-3}$ for $c=0,1,2$. Hence,
\begin{align}
\label{Eq:DnPermApprox}
	\mathbb{E} \biggl[ \biggl\{ \hat{D}_n^{(1)}& - \binom{n}{2}^{-1} \sum_{i_1<i_2} h_2^{(1)} \bigl( (X_{i_1},Y_{\Pi(i_1)}),(X_{i_2},Y_{\Pi(i_2)}) \bigr) \biggr\}^2 \biggr] \nonumber \\
	&=\mathbb{E} \biggl[ \biggl\{ \binom{n}{4}^{-1} \sum_{i_1< \ldots <i_4} \tilde{g}^{(1)} \bigl((X_{i_1},Y_{\Pi(i_1)}), \ldots, (X_{i_4},Y_{\Pi(i_4)}) \bigr) \biggr\}^2 \biggr] \nonumber \\
	&= \binom{n}{4}^{-1} \sum_{c=0}^4 \binom{4}{c} \binom{n-4}{4-c} \zeta_c^{(1)} \lesssim \frac{A^8 M^2}{n^3}.
\end{align}
As in the null hypothesis version of~\eqref{Eq:h2SecondMoment}, we have that 
\begin{equation}
\label{Eq:AsymptoticPermVariance}
\mathbb{E}\bigl\{ h_2^{(1)} \bigl((X_1,Y_2),(X_3,Y_4) \bigr)^2\bigr\} = \sigma_{M,X}^2\sigma_{M,Y}^2 + O(MA^4).
\end{equation}
Now we bound
\begin{align}
\label{Eq:PermApproxIndep}
	\mathbb{E} \bigl| \mathbb{E} \bigl\{ h_2^{(1)} &\bigl( (X_1,Y_2),(X_3,Y_1) \bigr) | Y_2,X_3 \bigr\} \bigr| \nonumber \\%&= \mathbb{E} \bigl| \mathbb{E} \bigl\{ h_2^{(1)} \bigl( (X_1,Y_1),(X_3,Y_2) \bigr) | Y_2,X_3 \bigr\} \bigr| \nonumber \\
	& = \mathbb{E} \biggl| \sum_{(j,k) \in \mathcal{M}} (a_{jk}-a_{j\bullet}a_{\bullet k}) \tilde{p}_{jk} (X_3,Y_2) \biggr| \lesssim A D^{1/2},
\end{align}
where the final bound follows from Cauchy--Schwarz and~\eqref{Eq:zeta1bound} in the main text. Define
\[
	g_2^{(1)} \bigl((x_1,y_1),(x_2,y_2) \bigr):= \mathbb{E} \bigl\{ h_2^{(1)} \bigl((x_1,y_1),(X_1,Y_2) \bigr) h_2^{(1)} \bigl( (x_2,y_2), (X_1,Y_2) \bigr) \bigr\}.
\]
As in~\eqref{Eq:SecondMomentg} and~\eqref{Eq:FourthMomenth} we can also see that
\begin{align}
\label{Eq:PermFourthMoments}
	\mathbb{E}\bigl\{ g_2^{(1)} \bigl((X_1,Y_1),(X_2,Y_2) \bigr)^2 \bigr\} &\lesssim A^8 M^2 \biggl(  \sum_{(j,k) \in \mathcal{M}_2} |a_{j\bullet}a_{\bullet k}| \biggr)^2	 \nonumber \\
	\max_{\sigma \in \mathcal{S}_4} \mathbb{E} \bigl\{ h_2^{(1)} \bigl((X_1,Y_{\sigma(1)}),(X_2,Y_{\sigma(2)}) \bigr)^4 \bigr\} &\lesssim M^6 A^{10}.
	%&d_\mathrm{W} \biggl( \frac{\hat{D}_n^{(1)}}{\binom{n}{2}^{-1/2} \sigma_M(f_X) \sigma_M(f_Y) }  , Z \biggr)^2 \lesssim \frac{A^8}{n} + \frac{A}{M}+ \biggl( \frac{A^{12}M^2}{n} \biggr)^2 + \biggl( \frac{A^{12}M^2}{n} \biggr)^{2/3} \\
	%& \hspace{200pt} + A^8 M^{-2} \biggl( \sum_{ (j,k) \in \mathcal{M}_2} |a_{j\bullet}a_{\bullet k}| \biggr)^2  .
\end{align}
Using the inequality $d_{\mathcal{G}_B}^2(W,U) \leq \mathbb{E} (\|W-U\|^2)$ for $d_{\mathcal{G}_B}$ defined just above Lemma~\ref{Lemma:JointNormality}, it now follows from~\eqref{Eq:DnhatApprox},~\eqref{Eq:h2SecondMoment},~\eqref{Eq:SecondMomentg},~\eqref{Eq:FourthMomenth},~\eqref{Eq:DnPermApprox},~\eqref{Eq:AsymptoticPermVariance},~\eqref{Eq:PermApproxIndep},~\eqref{Eq:PermFourthMoments}, Lemma~\ref{Lemma:JointNormality} and the facts that $\sigma_M^2:= \sigma_{M,X}^2 \sigma_{M,Y}^2 \gtrsim M^2$ and $\sigma_M^2 \lesssim A^2M^2$ that
\begin{align}
\label{Eq:dGdistance}
	d_{\mathcal{G}_B}&\biggl( \frac{\binom{n}{2}^{1/2}}{\sigma_M} \bigl( \hat{D}_n - \mathbb{E} \hat{D}_n, \hat{D}_n^{(1)}, \ldots, \hat{D}_n^{(B)} \bigr), \bigl( Z_0, Z_1, \ldots, Z_B \bigr) \biggr) \nonumber \\
	&  \lesssim \max \biggl\{ \Bigl( \frac{\Delta_f}{M} \Bigr)^{1/2}, \Bigr( \frac{BA^4}{M} \Bigr)^{1/2}, A D^{1/4}, A^4 D^{1/2}, \nonumber \\
	& \hspace{150pt} \frac{BA^4}{M} \sum_{(j,k) \in \mathcal{M}_2} |a_{j\bullet}a_{\bullet k}|, B^2 \Bigl( \frac{ M^2 A^{10}}{n} \Bigr)^{1/2} \biggr\} \nonumber \\
	& \lesssim_A \max \Bigl\{ \Bigl( \frac{\Delta_f}{M} \Bigr)^{1/2}, \Bigl( \frac{B}{M} \Bigr)^{1/2}, D^{1/4} , \Bigl( \frac{B^4M^2}{n} \Bigr)^{1/2}, \frac{B}{M} \sum_{(j,k) \in \mathcal{M}_2} |a_{j\bullet}a_{\bullet k}| \Bigr\}.
\end{align}

%We have bounded each of the terms on the right-hand side of Lemma~\ref{Lemma:JointNormality} in order to establish a normal approximations for $(\hat{D}_n, \hat{D}_n^{(1)}, \ldots, \hat{D}_n^{(B)})$ with respect to the $d_{\mathcal{G}_B}$ metric. 
All that remains is to use this bound to approximate the rejection probability of our test by the probability of the corresponding event for the independent Gaussian random variables. We do this by smoothing the indicator function of the rejection event; this is a relatively standard technique for obtaining Berry--Esseen type bounds \citep[e.g.][]{Bentkus2005,CCK2013}, though the shapes of our rejection events can be complicated (see~\eqref{Eq:Complicated} below), so the analysis required some care. Define the function $\varphi: \mathbb{R} \rightarrow \mathbb{R}$ by
\[
	\varphi(t) := \left\{ \begin{array}{ll} e^{-1/t} & \text{if } t>0 \\ 0 & \text{otherwise,} \end{array} \right.
\]
write $\rho(x):=c_B \varphi( 1- \|x\|_2^2)$ for $x \in \mathbb{R}^{B+1}$, where $c_B$ is chosen so that $\int_{\mathbb{R}^{B+1}} \rho(x) \,dx =1$, and for $\epsilon>0$ write $\rho_\epsilon(x) := \epsilon^{-B-1} \rho(x/\epsilon)$. Given a Borel measurable $A \subseteq \mathbb{R}^{B+1}$ and $\epsilon>0$ write $\chi(x):=\mathbbm{1}_{\{x \in A\}}$, write $A^\epsilon := A+B_0(\epsilon)$ and write $\chi_\epsilon(x) := \int_{A^\epsilon} \rho_\epsilon(x-y) \,dy$.  Finally, let $B_x(\epsilon) := \{y \in \mathbb{R}^{B+1}:\|y\|_2 \leq \epsilon\}$.  If $x \in A$ then $B_x(\epsilon) \subseteq A^\epsilon$ and we have $\chi_\epsilon(x)=1$, so that $\chi(x) \leq \chi_\epsilon(x)$ for all $x \in \mathbb{R}^{B+1}$. Moreover, if $ x \not\in A^{2\epsilon}$ then $\chi_\epsilon(x)=0$ so we can only have $\chi(x) \neq \chi_\epsilon(x)$ if $x \in A^{2\epsilon} \setminus A$. Straightforward calculations show that
\begin{align*}
	\nabla \chi_\epsilon (x) & = \frac{2c_B}{\epsilon} \int_{\epsilon^{-1}(A^{\epsilon}-x)} z \varphi'(1-\|z\|^2) \,dz \\
	\nabla^2 \chi_\epsilon(x) &= \frac{2c_B}{\epsilon^2} \int_{\epsilon^{-1}(A^{\epsilon}-x)} \bigl\{ I_{B+1} \varphi'(1-\|z\|^2) - 2 zz^T \varphi''(1-\|z\|^2) \bigr\} \,dz.
\end{align*}
To bound these derivatives, we will first observe that if we write $C_B:= \int_0^1 (1-u)^{B/2} \varphi(u) \,du$, then when $B \geq \frac{6+ 2 \log 4 }{ \log (4/3)} -1 $ we have that
\begin{align*}
	%\frac{c_B}{c_{B+1}} &= \frac{ \int_0^1 r^{B+1} \varphi(1-r^2) \,dr}{\int_0^1 r^{B} \varphi(1-r^2) \,dr} 
	\frac{C_B}{C_{B-1}} &= \frac{\int_0^1 (1-u)^\frac{B}{2} \varphi(u) \,du }{\int_0^1 (1-u)^\frac{B-1}{2} \varphi(u) \,du} \geq 1 - \frac{\int_0^1 u (1-u)^\frac{B-1}{2} \varphi(u) \,du}{\int_0^1 (1-u)^\frac{B-1}{2} \varphi(u) \,du} \\
	& \geq \frac{1}{2} -  \frac{\int_{1/2}^1 (1-u)^\frac{B-1}{2} \varphi(u) \,du}{\int_0^1 (1-u)^\frac{B-1}{2} \varphi(u) \,du} \\
	&\geq \frac{1}{2} -  \frac{\int_{1/2}^1 (1-u)^\frac{B-1}{2} \varphi(u) \,du}{\int_{1/3}^1 (1-u)^\frac{B-1}{2} \varphi(u) \,du} \geq \frac{1}{2} - \frac{\varphi(1)}{\varphi(1/3)} \Bigl( \frac{3}{4} \Bigr)^\frac{B+1}{2} \geq \frac{1}{4}.
\end{align*}
Hence, when $B \geq \frac{6+ 2 \log 4 }{ \log (4/3)}$, we have
\begin{align*}
	M_1 &(\chi_\epsilon) \leq \sup_{x \in \mathbb{R}^{B+1}} \|\nabla \chi_\epsilon(x)\| \leq \frac{2c_B}{\epsilon} \int_{B_0(1)} \|z\| \varphi'(1-\|z\|^2) \,dz  \\
	&= \frac{2 \int_0^1 (1-u)^\frac{B}{2} \varphi'(u) \,du }{\epsilon \int_0^1 (1-u)^\frac{B-1}{2} \varphi(u) \,du} =  \frac{B \int_0^1 (1-u)^\frac{B-2}{2} \varphi(u) \,du }{\epsilon \int_0^1 (1-u)^\frac{B-1}{2}  \varphi(u) \,du} = \frac{B C_{B-2}}{\epsilon C_{B-1}} \leq \frac{4B}{\epsilon},
\end{align*}
where we made the substitution $u = 1 - \|z\|^2$ to obtain the first equality.  Moreover, using the fact that $|\varphi''(u)| \leq 2u^{-4} \varphi(u)$ and integrating by parts, when $B \geq \frac{6+ 2 \log 4 }{ \log (4/3)}$ we have
\begin{align*}
  M_2(\chi_\epsilon) &\leq \sup_{x \in \mathbb{R}^{B+1}} \|\nabla^2 \chi_\epsilon(x)\|_{\mathrm{F}} \\
  &\leq \frac{2c_B}{\epsilon^2} \int_{B_0(1)} \bigl\{ (B+1)^{1/2} \varphi'(1-\|z\|^2) + \|z\|^2 |\varphi''(1-\|z\|^2)| \bigr\} \,dz \\
	& = \frac{2 \int_0^1 r^B \{(B+1)^{1/2} \varphi'(1-r^2) + r^2 |\varphi''(1-r^2)| \} \,dr}{\epsilon^2 \int_0^1 r^B \varphi(1-r^2) \,dr} \\
	& \leq \frac{2 \int_0^1 \{ (B+1)^{1/2} (1-u)^\frac{B-1}{2} \varphi'(u) + 2 (1-u)^\frac{B+1}{2} u^{-2} \varphi'(u) \} \,du}{\epsilon^2 \int_0^1 (1-u)^\frac{B-1}{2} \varphi(u) \,du }\\
	& \leq \frac{7B^2 \int_0^1 (1-u)^\frac{B-3}{2} \varphi(u) \,du }{\epsilon^2 \int_0^1 (1-u)^\frac{B-1}{2} \varphi(u) \,du} = \frac{7 B^2 C_{B-3}}{\epsilon^2 C_{B-1} } \leq \frac{112 B^2}{\epsilon^2}.
\end{align*}
Therefore, there exists a universal constant $c>0$ such that, for all $B \in \mathbb{N}$ and all $\epsilon >0$ we have $ c (\epsilon/B)^2 \chi_\epsilon(\cdot) \in \mathcal{G}_B$. For any random variable $W$ taking values in $\mathbb{R}^{B+1}$ and for $Z$ a standard $(B+1)$-variate normal random vector we therefore have
\begin{align*}
	\mathbb{P}(W \in A) - \mathbb{P}( Z \in A) &= \mathbb{E} \chi(W) - \mathbb{E} \chi(Z) \\
	& \leq \mathbb{E} \chi_\epsilon(W) - \mathbb{E} \chi(Z) \leq \frac{B^2 d_{\mathcal{G}_B}(W,Z)}{c \epsilon^2} + \mathbb{P}( Z \in A^{2\epsilon} \setminus A).
\end{align*}
The rejection regions that we are interested in are of the form
\begin{equation}
\label{Eq:Complicated}
	A_{\delta,\alpha}:= \bigl\{ (w_0,w_1,\ldots,w_B) \in \mathbb{R}^{B+1} : |\{ b \in [B] : w_b > w_0 + \delta\}| \leq (1+B) \alpha -1 \bigr\}.
\end{equation}
Now if $(w_0,w_1,\ldots,w_B)^T \in A_{\delta,\alpha}$ and $(u_0,u_1,\ldots,u_B)^T \in B_0(2 \epsilon)$ then we have
\begin{align*}
  \bigl|\bigl\{ b \in [B] : w_b+u_b > w_0+u_0 + \delta + 4 \epsilon \bigr\}\bigr| &\leq \bigl|\bigl\{ b \in [B] : w_b > w_0 + \delta \bigr\} \bigr| \\
  &\leq (1+B) \alpha -1,
\end{align*}
so that $A_{\delta,\alpha}^{2\epsilon} \subseteq A_{\delta+4\epsilon,\alpha}$. Thus, writing $s = \lceil \alpha(B+1) \rceil -1$ we can use the bounds
\begin{align*}
	\mathbb{P}&(Z \in A_{\delta,\alpha}^{2 \epsilon} \setminus A_{\delta,\alpha}) \leq \mathbb{P}( Z \in A_{\delta+4 \epsilon, \alpha}) - \mathbb{P}( Z \in A_{\delta,\alpha}) \\
	& = \int_0^1 \Bigl\{ \bar{\Phi} \Bigl( \Phi^{-1}(u) - \delta - 4 \epsilon \Bigr) \!-\! \bar{\Phi} \Bigl( \Phi^{-1}(u) - \delta \Bigr) \Bigr\} \mathrm{B}_{B-s,s+1}(u) \,du \leq \frac{4 \epsilon}{(2 \pi)^{1/2}}.
\end{align*}
By considering $A_{\delta,\alpha}^c$ we can establish a matching lower bound and hence see that, for any $\alpha \in (0,1)$ and $\delta \in \mathbb{R}$ we have
\begin{align}
\label{Eq:SmoothedIndicator}
	\bigl| \mathbb{P}(W \in A_{\delta,\alpha}) - \mathbb{P}(Z \in A_{\delta,\alpha}) \bigr| &\leq \inf_{\epsilon >0} \biggl\{ \frac{B^2 d_{\mathcal{G}_B}(W,Z)}{c \epsilon^2} + \frac{4 \epsilon}{(2 \pi)^{1/2}} \biggr\} \nonumber \\
	&\leq \frac{8}{(2 \pi)^{1/2}} \Bigl( \frac{B^2 d_{\mathcal{G}_B}(W,Z)}{c} \Bigr)^{1/3}. 
\end{align}
The first claim now follows from~\eqref{Eq:dGdistance}.

We now turn to the second claim in Theorem~\ref{Thm:PowerFunction}, for which we exploit ideas in \citet{Hoeffding52}. Write $\epsilon_* := B^{-3} \vee \delta_*^3$.  We will use the shorthand
\[
	T_n:= \frac{\binom{n}{2}^{1/2} \hat{D}_n}{ \sigma_M} \quad \text{and} \quad T_n^{(b)}:= \frac{\binom{n}{2}^{1/2} \hat{D}_n^{(b)}}{ \sigma_M}
\]
for $b\in [B]$, and $F_{n,B}(z):= B^{-1} \sum_{b=1}^B \mathbbm{1}_{\{T_n^{(b)} \leq z\}}$. Write $C_{n,B}$ for the $\lceil (B+1)(1-\alpha) \rceil$th smallest element of $(T_n^{(1)},\ldots,T_n^{(B)})$, so that we reject $H_0$ if and only if $T_n > C_{n,B}$, and note that
\[
	C_{n,B} \leq z \quad \text{if and only if} \quad F_{n,B}(z) \geq B^{-1} \lceil (B+1)(1-\alpha) \rceil
\]
for all $z \in \mathbb{R}$. Now, we have by Proposition~\ref{Lemma:PermutedNormality} in the main text, together with~\eqref{Eq:DnPermApprox},~\eqref{Eq:AsymptoticPermVariance},~\eqref{Eq:PermApproxIndep} and~\eqref{Eq:PermFourthMoments} that when $Z \sim N(0,1)$,
\begin{align*}
  \sup_{z \in \mathbb{R}} &| \mathbb{E} F_{n,B}(z) - \Phi(z) | = d_\mathrm{K}( T_n^{(1)}, Z) \\
                          &\lesssim d_\mathrm{W}^{1/2}\biggl( T_n^{(1)}, \frac{1}{\sigma_{M}\binom{n}{2}^{1/2}}\sum_{i_1 < i_2} h_2^{(1)}\bigl((X_{i_1},Y_{\Pi(i_1)}),(X_{i_2},Y_{\Pi(i_2)})\bigr)\biggr) \\
  &\hspace{50pt}+ d_\mathrm{W}^{1/2}\biggl(\frac{1}{\sigma_{M}\binom{n}{2}^{1/2}}\sum_{i_1 < i_2} h_2^{(1)}\bigl((X_{i_1},Y_{\Pi(i_1)}),(X_{i_2},Y_{\Pi(i_2)})\bigr),Z\biggr) \\
  &\lesssim \epsilon_*^{1/2}.
\end{align*}
Moreover, approximating the sets $A_z:=\{(w_1,w_2,w_3) \in \mathbb{R}^3 : w_2,w_3 \leq z\}$ for $z \in \mathbb{R}$ by smoothed indicator functions similarly to~\eqref{Eq:SmoothedIndicator} above, we also have that when $Z_1,Z_2,Z_3 \stackrel{\mathrm{iid}}{\sim} N(0,1)$,
\begin{align*}
	\sup_{z \in \mathbb{R}} \mathrm{Var}& F_{n,B}(z) \leq \sup_{z \in \mathbb{R}} \Bigl\{ B^{-1} \mathrm{Var}( \mathbbm{1}_{\{T_n^{(1)} \leq z\}} ) + \mathrm{Cov}( \mathbbm{1}_{\{T_n^{(1)} \leq z\}}, \mathbbm{1}_{\{T_n^{(2)} \leq z\}}) \Bigr\} \\
	& \leq B^{-1} + \sup_{z \in \mathbb{R}} \bigl\{ \mathbb{P}(T_n^{(1)} \leq z, T_n^{(2)} \leq z) - \Phi(z)^2 \bigl\} + 2 d_\mathrm{K}(T_n^{(1)},Z) \\
	& \lesssim B^{-1} + d_{\mathcal{G}_2}^{1/3} \bigl( (T_n - \mathbb{E} T_n, T_n^{(1)}, T_n^{(2)}), (Z_1,Z_2,Z_3) \bigr) + d_\mathrm{W}^{1/2}(T_n^{(1)},Z) \lesssim \epsilon_*^{1/3}.
\end{align*}
Here, the final bound follows from~\eqref{Eq:dGdistance}.  Using the shorthand $z_{1-\alpha}=\Phi^{-1}(1-\alpha)$, taking $\epsilon = \epsilon_*^{1/9}$ and using Proposition~\ref{Lemma:Normality} in the main text, there therefore exists $c=c(\alpha) > 0$ such that when $\epsilon_* \leq c$ we have
\begin{align*}
	\bigl|& \mathbb{P}(P \leq \alpha) - \bar{\Phi} \bigl( \Phi^{-1}(1-\alpha) - \Delta_f \bigr) \bigr| \\
	& \leq \bigl| \mathbb{P}( T_n > C_{n,B}) - \mathbb{P}\bigl( T_n > z_{1-\alpha} \bigr) \bigr| + d_\mathrm{K}(T_n - \mathbb{E} T_n, Z) \\
	& \leq \mathbb{P}( | T_n - z_{1-\alpha} | < \epsilon) + \mathbb{P}(|C_{n,B} - z_{1-\alpha} | \geq \epsilon ) + d_\mathrm{K}(T_n - \mathbb{E} T_n, Z) \\
	& \leq \frac{2^{1/2} \epsilon}{\pi^{1/2}} + 3 d_\mathrm{K}(T_n - \mathbb{E} T_n, Z) + \mathbb{P}\bigl( F_{n,B}(z_{1-\alpha} + \epsilon) \leq B^{-1} \lceil (B+1)(1-\alpha) \rceil \bigr) \\
	& \hspace{100pt} + \mathbb{P}( F_{n,B}(z_{1-\alpha} - \epsilon) \geq B^{-1} \lceil (B+1)(1-\alpha) \rceil ) \\
	& \leq \frac{2^{1/2} \epsilon}{\pi^{1/2}} + 3 d_\mathrm{K}(T_n \!-\! \mathbb{E} T_n, Z) + \frac{ \mathrm{Var} \, F_{n,B}(z_{1-\alpha} + \epsilon)}{ \{ \mathbb{E} F_{n,B}(z_{1-\alpha} + \epsilon) \!-\! B^{-1} \lceil (B+1)(1-\alpha) \rceil \}^2 } \\
	& \hspace{100pt} + \frac{ \mathrm{Var} \, F_{n,B}(z_{1-\alpha} - \epsilon)}{ \{ B^{-1} \lceil (B+1)(1-\alpha) \rceil \!-\! \mathbb{E} F_{n,B}(z_{1-\alpha} - \epsilon) \}^2 } \\
	& \lesssim \epsilon_*^{1/9} + \frac{\epsilon_*^{1/3}}{\epsilon^2} = 2 \epsilon_*^{1/9}.
\end{align*}
If, on the other hand, $\epsilon_* > c$ then the bound
\[
	\bigl| \mathbb{P}(P \leq \alpha) - \bar{\Phi} \bigl( z_{1-\alpha} - \Delta_f \bigr) \bigr| \leq 2 \lesssim \epsilon_*^{1/9}
\]
is trivial.

The result follows upon noting that $|\frac{d}{dx} \bar{\Phi}(\Phi^{-1}(x) - \Delta_f)| \leq \exp(\Phi^{-1}(x)^2/2)$ and writing
\begin{align*}
	\bigl| \mathbb{E} &\bar{\Phi} \bigl( \Phi^{-1}( \mathrm{B}_{B-s,s+1}) - \Delta_f \bigr) - \bar{\Phi} \bigl( z_{1-\alpha} - \Delta_f \bigr) \bigr| \\
	& \leq \mathbb{P}( |\mathrm{B}_{B-s,s+1} - (1-\alpha)| \geq \alpha /2 ) + e^{\frac{1}{2} z_{1-\alpha/2}^2} \mathbb{E}| \mathrm{B}_{B-s,s+1} - (1-\alpha) | \\
	& \leq  \Bigl( \frac{2}{\alpha} + e^{\frac{1}{2} z_{1-\alpha/2}^2} \Bigr) \mathbb{E}| \mathrm{B}_{B-s,s+1} - (1-\alpha) | \\
	& \leq \Bigl( \frac{2}{\alpha} + e^{\frac{1}{2} z_{1-\alpha/2}^2} \Bigr) \Bigl\{ \frac{1}{B+1} + \frac{1}{(B+1)^{1/2}} \Bigr\} \lesssim \epsilon_*^{1/6}.
\end{align*}
\end{proof}

As mentioned in the main text, Proposition~\ref{Lemma:Normality} is a consequence of \citet[][Theorem~3.3]{DoblerPeccati2019}.  Nevertheless, we provide the proof below, both because the arguments simplify when specialising that result (which holds for $U$-statistics of general order), and because it will aid the understanding of the proof of Proposition~\ref{Lemma:PermutedNormality} which follows.
\begin{proof}[Proof of Proposition~\ref{Lemma:Normality}]
Let $\mathcal{G}_0$ denote the class of twice differentiable functions $G: \mathbb{R} \rightarrow \mathbb{R}$ and define
\[
	\mathcal{G}:= \bigl\{ G \in \mathcal{G}_0 : \max( \|G\|_\infty, \|G'\|_\infty) \leq 2, \|G''\|_\infty \leq (2/\pi)^{1/2} \bigr\}.
\]
The starting point of Stein's method is the bound
\begin{equation}
\label{Eq:SteinBound}
	d_\mathrm{W}(U,W) \leq \sup_{G \in \mathcal{G}} \bigl| \mathbb{E}\{ G'(U) - U G(U)\}\bigr|,
\end{equation}
which is proved, for example, in~\citet[][Theorem~3.1]{Ross11}. For $i \in [n]$, write
\[
	V_i:= \binom{n}{2}^{-1/2} \sum_{i':i' \neq i} h(Z_i,Z_{i'}),
\]
and $U_i:=U-V_i$. %Then $U=\sum_{i=1}^n V_i/2$ and, since $V_i$ contains all summands which depend on $Z_i$, we have that $U_i \perp \!\!\! \perp Z_i$.
Now
\[
  U_1 = \binom{n}{2}^{-1/2}\sum_{1 < i < i'} h(Z_i,Z_{i'}),
\]
so that $U_1$ is $\sigma(Z_2,\ldots,Z_n)$-measurable; more generally, $U_i$ is $\sigma(Z_{i'}:i' \neq i)$ measurable, so $U_i$ and $Z_i$ are independent for each $i \in [n]$.  Thus, for all $G \in \mathcal{G}$ and $i \in [n]$, we have
\[
	\mathbb{E}\{V_i G(U_i)\} = \binom{n}{2}^{-1/2} \sum_{i':i' \neq i} \mathbb{E} \bigl[ G(U_i) \mathbb{E} \bigl\{ h(Z_i,Z_{i'}) | (Z_{i'}:i'\neq i) \bigr\} \bigr] = 0,
\]
by the degeneracy of $h$. Moreover, $V_1,\ldots,V_n$ have the same distribution and $\mathbb{E}(V_1^2)= \binom{n}{2}^{-1}\sum_{i=2}^n \mathbb{E}\bigl\{h^2(Z_1,Z_i)\bigr\} = 2/n$.  Since $U = \sum_{i=1}^n V_i/2$, this allows us to write
\begin{align}
\label{Eq:SteinDecomp}
	&d_\mathrm{W}(U,W) \leq \sup_{G \in \mathcal{G}} \biggl| \mathbb{E} \biggl\{ G'(U) - \frac{1}{2} \sum_{i=1}^n V_i G(U) \biggr\} \biggr| \nonumber \\
	& = \sup_{G \in \mathcal{G}} \biggl| \mathbb{E} \biggl[ \biggl\{ \!1 \!-\! \frac{1}{2} \sum_{i=1}^n V_i^2 \biggr\} G'(U) \!-\! \frac{1}{2} \sum_{i=1}^n V_i \bigl\{ G(U) \!-\! G(U_i) \!-\! (U \!-\! U_i) G'(U) \bigr\} \biggr] \biggr| \nonumber \\
	& \leq 2 \mathbb{E} \biggl\{ \biggl| 1 - \frac{1}{2} \sum_{i=1}^n V_i^2 \biggr| \biggr\} + \frac{1}{2^{3/2} \pi^{1/2}} \sum_{i=1}^n \mathbb{E} \{ |V_i|^3 \} \nonumber \\
	& \leq \bigl\{ n\mathrm{Var}(V_1^2) + n(n-1)\mathrm{Cov}(V_1^2,V_2^2) \bigr\}^{1/2} +\frac{n}{2^{3/2} \pi^{1/2}} \mathbb{E}\{ |V_1|^3 \}.
\end{align}
It now remains to bound these moments of $V_1,\ldots,V_n$. First,
\begin{align}
\label{Eq:Moment1}
	&\mathrm{Var}(V_1^2) = \binom{n}{2}^{-2} \mathbb{E} \biggl[ \biggl\{ \sum_{i=2}^n h(Z_1,Z_i) \biggr\}^4 \biggr] - \frac{4}{n^2} \nonumber \\
	& \leq \binom{n}{2}^{-2} \bigl\{ (n-1) \mathbb{E}[ h^4(Z_1,Z_2)] + 3(n-1)(n-2) \mathbb{E}[ h^2(Z_1,Z_2) h^2(Z_1,Z_3) ] \bigr\} \nonumber \\
	& \leq \frac{12}{n^2} \mathbb{E}[ h^4(Z_1,Z_2)].
\end{align}
Now
\begin{align}
\label{Eq:Moment2}
	\binom{n}{2}^2 &\mathrm{Cov}( V_1^2, V_2^2) \nonumber \\
	& = \sum_{\substack{ i_1,i_2 \neq 1 \\ i_3, i_4 \neq 2}} \mathbb{E} \bigl\{ h(Z_1,Z_{i_1}) h(Z_1,Z_{i_2}) h(Z_2,Z_{i_3}) h(Z_2,Z_{i_4}) \bigr\} -(n-1)^2 \nonumber\\
	&=(n-2)(n-3)-(n-1)^2 +  \mathbb{E}\{ h^4(Z_1,Z_2)\}   \nonumber\\
	& \hspace{0.4cm}+ 4(n-2) \mathbb{E} \Bigl[ h(Z_1,Z_3) \bigl\{ h(Z_1,Z_2) + h(Z_1,Z_3) \bigr\} \bigl\{ h(Z_2,Z_1) + h(Z_2,Z_3) \bigr\}^2\Bigr] \nonumber\\
	& \hspace{0.4cm}+2(n\!-\!2)(n\!-\!3) \mathbb{E}\bigl\{ h(Z_1,Z_3) h(Z_1,Z_4) h(Z_2,Z_3) h(Z_2,Z_4) \bigr\}\nonumber\\
	&= -3n + 5 +  2(n-2)(n-3) \mathbb{E}\bigl\{ g^2(Z_1,Z_2)\bigr\} + \mathbb{E}\{ h^4(Z_1,Z_2)\} \nonumber \\
	& \hspace{0.4cm}+ 20(n-2) \mathbb{E} \bigl\{ h^2(Z_1,Z_2) h(Z_1,Z_3) h(Z_2,Z_3) \bigr\}  \nonumber \\
	& \hspace{0.4cm}+ 8(n\!-\!2) \mathbb{E} \bigl\{ h^2(Z_1,Z_2) h^2(Z_1,Z_3)\bigr\} \nonumber\\
	& \leq 2n(n-1) \mathbb{E}\{ g^2(Z_1,Z_2) \} + 28(n-1)\mathbb{E}\{h^4(Z_1,Z_2)\}.
\end{align}
Finally, by Cauchy--Schwarz,
\begin{align}
\label{Eq:Moment3}
\mathbb{E} (|V_1|^3) \leq \mathbb{E}^{1/2}(V_1^2) \mathbb{E}^{1/2}(V_1^4) \leq \biggl\{ \frac{2}{n} \biggl( \frac{4}{n^2} + \frac{12}{n^2} \mathbb{E}\bigl\{h^4(Z_1,Z_2)\bigr\} \biggr) \biggr\}^{1/2}.
\end{align}
%\begin{align}
%\label{Eq:Moment3}
%	&\mathbb{E}\bigl\{V_1^2( 1 \wedge |V_1| ) \bigr\} \leq \inf_{\epsilon \in (0,1)} \bigl[ \epsilon \mathbb{E}(V_1^2) + \mathbb{E}\{V_1^2 \mathbbm{1}_{\{|V_1| > \epsilon\}} \} \bigr] \leq \inf_{\epsilon \in (0,1)} \biggl\{ \epsilon \mathbb{E}(V_1^2) + \frac{\mathbb{E}(V_1^4)}{\epsilon^2} \biggr\} \nonumber \\
%	& = (2^{1/3}+2^{-2/3}) \mathbb{E}(V_1^2)^{2/3} \mathbb{E}(V_1^4)^{1/3} \leq (2^{1/3}+2^{-2/3}) (2/n)^{2/3} \Bigl[ \frac{12}{n^2} \mathbb{E}\bigl\{ h(Z_1,Z_2)^4\bigr\} \Bigr]^{1/3} \nonumber \\
%	& = \frac{3}{n} \Bigl[ \frac{12}{n} \mathbb{E}\bigl\{ h(Z_1,Z_2)^4\bigr\} \Bigr]^{1/3}.
%\end{align}
It now follows from~\eqref{Eq:SteinDecomp}, \eqref{Eq:Moment1}, \eqref{Eq:Moment2} and~\eqref{Eq:Moment3} that
\begin{align*}
	d_\mathrm{W}(U,W) \leq \biggl( 124 \frac{\mathbb{E}\{ h^4(Z_1,Z_2)\}}{n}& + 8 \mathbb{E}\{g^2(Z_1,Z_2)\}  \biggr)^{1/2} \! \! \! \\
	&+ \frac{1}{(n\pi)^{1/2}} \bigl[ 1 + 3\mathbb{E}\bigl\{ h^4(Z_1,Z_2)\bigr\} \bigr]^{1/2}  %6 \Bigl[ \frac{12}{n} \mathbb{E}\bigl\{ h(Z_1,Z_2)^4\bigr\} \Bigr]^{1/3},
\end{align*}
and the result is immediate.
\end{proof}

\begin{proof}[Proof of Proposition~\ref{Lemma:PermutedNormality}]
In the proof of this result we broadly follow the structure of Proposition~\ref{Lemma:Normality}, though there are of course extra difficulties in accommodating the random permutation $\Pi$. We first show that we may effectively ignore those $i \in [n]$ that fall in short cycles of $\Pi$ as they make up a small proportion of $[n]$. More precisely, for $m \in \mathbb{N}$, define $\mathcal{C}_m = \mathcal{C}_m(\Pi) :=\{i \in [n] : i \text{ falls in a cycle of length} \geq m\}$ and
\[
	V:= \frac{1}{2} \binom{n}{2}^{-1/2} \sum_{\substack{i_1 \neq i_2 \\ i_1,i_2 \in \mathcal{C}_m}} h \bigl( (X_{i_1},Y_{\Pi(i_1)}), (X_{i_2}, Y_{\Pi(i_2)}) \bigr).
\]
It will suffice for our purposes to take $m=6$. We now bound $\mathbb{E}\{(W-V)^2\}$. For a function $b: \mathbb{N} \rightarrow \mathbb{R}$ and $\sigma \in \mathcal{S}_n$ write $B(\sigma):=\sum_{c \in \sigma} b(|c|)$, where this sum is over the cycles of $\sigma$ and $|c|$ denotes the length of cycle $c$. Now, for $z \in \mathbb{C}$ with $|z| < 1$, and for $u > 0$, define the generating function
\[
	g(z,u) := 1 + \sum_{n=1}^\infty \biggl( \sum_{\sigma \in \mathcal{S}_n} u^{B(\sigma)} \biggr) \frac{z^n}{n!} %= \exp \biggl( \sum_{k=1}^\infty u^{b(k)} z^k/k \biggr).
\]
We may partition $\mathcal{S}_n$ by the number $m$ of cycles the permutations contain and the sizes $k_1,\ldots,k_m$ of these cycles to write
\begin{align*}
	 g(z,u) &= 1 + \sum_{n=1}^\infty \biggl( \sum_{\sigma \in \mathcal{S}_n} u^{B(\sigma)} \biggr) \frac{z^n}{n!} \\
	 &=1 \!+\! \sum_{n=1}^\infty \frac{z^n}{n!} \sum_{m=1}^n \frac{1}{m!} \!\!\! \sum_{\substack{ k_1,\ldots,k_m \in \mathbb{N} \\ k_1 + \ldots + k_m = n}} \!\!\!\!\!\!\!\!\! u^{b(k_1) + \ldots + b(k_m)} \! \binom{n}{k_1,\ldots,k_m} \! (k_1 \!-\! 1)! \ldots (k_m \!-\! 1)! \\
	& = 1 + \sum_{n=1}^\infty z^n \sum_{m=1}^n \frac{1}{m!} \sum_{\substack{ k_1,\ldots,k_m \in \mathbb{N} \\ k_1 + \ldots + k_m = n}} \frac{u^{b(k_1)}}{k_1} \ldots \frac{u^{b(k_m)}}{k_m} \\
	& = 1 + \sum_{m=1}^\infty \frac{1}{m!} \biggl( \sum_{k=1}^\infty \frac{u^{b(k)} z^k}{k} \biggr)^m  = \frac{1}{1-z} \exp \biggl( \sum_{k=1}^\infty \frac{(u^{b(k)}-1) z^k}{k} \biggr).
\end{align*}
By definition, this $g$ has the property that
\[
	\sum_{n=1}^\infty \frac{z^n}{n!} \sum_{\sigma \in \mathcal{S}_n} \frac{B(\sigma)!}{\{B(\sigma)-\ell\}!}= \frac{\partial^\ell g}{\partial u^\ell} (z,1)
\]
for $\ell \in \mathbb{N}_0$, so that the coefficient of $z^n$ in this expansion is equal to $\mathbb{E}\bigl\{B(\Pi)!/\{B(\Pi)-\ell\}!\bigr\}$. We will take $b(k)=k \mathbbm{1}_{\{k <m\}}$, so that this expectation coincides with $\mathbb{E}\{|\mathcal{C}_m^c|! /(|\mathcal{C}_m^c|-\ell)!\}$. With this choice of $b$, we have
\[
	(1-z)g(z,u) = \exp\biggl( \sum_{k=1}^{m-1} (u^k - 1) z^k/k \biggr).
\]
Moreover, by differentiation we can see that
\begin{align*}
	%(1-z)g(z,u) &= \exp\biggl( \sum_{k=1}^{m-1} (u^k - 1) z^k/k \biggr) \\ % \red{\exp\biggl( \sum_{k=1}^{m-1} u^kz^k/k - \sum_{k=1}^\infty z^k/k\biggr)}\\
	(1-z)\frac{\partial g}{\partial u} (z,1) &= \sum_{k=1}^{m-1} z^k \\
	(1-z)\frac{\partial^2 g}{\partial u^2} (z,1) &= \biggl( \sum_{k=1}^{m-1}z^k \biggr)^2 +\sum_{k=1}^{m-2} k z^{k+1} \\
	(1-z)\frac{\partial^3 g}{\partial u^3} (z,1) &=\biggl( \sum_{k=1}^{m-1}z^k \biggr)^3 + 3\biggl( \sum_{k=1}^{m-1}z^k \biggr) \sum_{k=1}^{m-2}kz^{k+1} + \sum_{k=1}^{m-3} k(k+1)z^{k+2} \\
	(1-z)\frac{\partial^4 g}{\partial u^4} (z,1) &=\biggl( \sum_{k=1}^{m-1}z^k \biggr)^4 + 6 \biggl( \sum_{k=1}^{m-1}z^k \biggr)^2  \sum_{k=1}^{m-2}kz^{k+1}  + 3\biggl( \sum_{k=1}^{m-2}kz^{k+1} \biggr)^2 \\
	&  + 4\biggl( \sum_{k=1}^{m-1}z^k \biggr) \sum_{k=1}^{m-2}k(k+1)z^{k+2} +  \sum_{k=1}^{m-4}k(k+1)(k+2)z^{k+3}.
\end{align*}
Finding the coefficients of $z^n$ in the expansions of these functions yields the facts that, when $n \geq 4(m-1)$,
\begin{align*}
	&\mathbb{E}\{|\mathcal{C}_m^c|\} = (m-1), \quad \mathbb{E}\{|\mathcal{C}_m^c| (|\mathcal{C}_m^c|-1)\} = \frac{1}{2}(m-1)(3m-4), \\
	&\mathbb{E}\biggl\{3! \binom{|\mathcal{C}_m^c|}{3}\biggr\} = \frac{1}{6}(m-1)(17m^2 - 49m + 36), \\
	&\mathbb{E}\biggl\{4! \binom{|\mathcal{C}_m^c|}{4}\biggr\} = \frac{1}{6}(m-1)(38m^3-174m^2+271m-144).
\end{align*}
In particular, $\mathbb{E}\{|\mathcal{C}_m^c|^2\} = (m-1)(3m-2)/2$ and $\mathbb{E}\{|\mathcal{C}_m^c|^4\} = (m-1)(19m^3-36m^2+20m-3)/3$. It now follows that
\begin{align}
\label{Eq:SmallCyclesBound}
	\mathbb{E}\bigl\{(U-V&)^2 \bigr\} \leq \frac{3}{2\binom{n}{2}} \mathbb{E}\biggl[ \biggl\{ \sum_{\substack{i_1 \neq i_2 \\ i_1,i_2 \in \mathcal{C}_m^c}} h \bigl( (X_{i_1},Y_{\pi(i_1)}), (X_{i_2}, Y_{\pi(i_2)}) \bigr) \biggr\}^2 \biggr] \nonumber \\
	& \hspace{50pt} + \frac{3}{\binom{n}{2}} \mathbb{E}\biggl[ \biggl\{ \sum_{\substack{i_1 \in \mathcal{C}_m \\ i_2 \in \mathcal{C}_m^c}} h \bigl( (X_{i_1},Y_{\pi(i_1)}), (X_{i_2}, Y_{\pi(i_2)}) \bigr) \biggr\}^2 \biggr] \nonumber \\
	& \leq \frac{3}{2 \binom{n}{2}} \mathbb{E}\bigl\{|\mathcal{C}_m^c|^2 (|\mathcal{C}_m^c|-1)^2 \bigr\} \max_{\sigma \in \mathcal{S}_4} \mathbb{E}\bigl\{ h^2\bigl( (X_1,Y_{\sigma(1)}), (X_2, Y_{\sigma(2)}) \bigr) \bigr\} \nonumber \\
	& \hspace{50pt} + \frac{3n}{\binom{n}{2}} \mathbb{E}\bigl(|\mathcal{C}_m^c|^2\bigr) \max_{\sigma \in \mathcal{S}_2} \mathbb{E} \bigl\{ h^2 \bigl((X_1,Y_{\sigma(1)}),(X_3,Y_4) \bigr) \bigr\} \nonumber \\
	& \leq \frac{28m^4}{n-1} \max_{\sigma \in \mathcal{S}_4} \mathbb{E}\bigl\{ h^2\bigl( (X_1,Y_{\sigma(1)}), (X_2, Y_{\sigma(2)}) \bigr) \bigr\}.
\end{align}

Now that we have bounded the difference between $U$ and $V$, we aim to establish the approximate normality of $V$. For $i \in \mathcal{C}_m$, write
\[
	Z_i:= \binom{n}{2}^{-1/2} \sum_{i' \in \mathcal{C}_m \setminus \{i\}} h \bigl((X_i,Y_{\Pi(i)}),(X_{i'},Y_{\Pi(i')}) \bigr).
\]
so that $\sum_{i \in \mathcal{C}_m} Z_i = 2V$. With this definition, for $i \in \mathcal{C}_m$, also define
\begin{align*}
	V_i & := V - Z_i - Z_{\Pi^{-1}(i)} + \binom{n}{2}^{-1/2} h \bigl((X_i,Y_{\Pi(i)}),(X_{\Pi^{-1}(i)},Y_i) \bigr) \\
	&\phantom{:} = \frac{1}{2} \binom{n}{2}^{-1/2}  \sum_{\substack{i_1,i_2 \in \mathcal{C}_m \setminus \{i,\Pi^{-1}(i)\} \\ i_1 \neq i_2}} h \bigl((X_{i_1},Y_{\Pi(i_1)}),(X_{i_2},Y_{\Pi(i_2)}) \bigr),
\end{align*}
which is $\sigma\bigl(\Pi,\{(X_{i'},Y_{i'}):i' \neq i\}\bigr)$-measurable.  With $\mathcal{G}$ as in the proof of Proposition~\ref{Lemma:Normality}, for any $G \in \mathcal{G}$ we therefore have that for $m \geq 3$,
\begin{align}
\label{Eq:VGV}
	&\biggl| \mathbb{E} \biggl[ VG(V) - \frac{1}{4} \sum_{i \in \mathcal{C}_m} (V-V_i) \bigl\{G(V) - G(V_i) \bigr\} \biggr] \biggr| \nonumber \\
	&= \biggl| \mathbb{E} \biggl[ VG(V) \! - \! \frac{1}{2} \sum_{i \in \mathcal{C}_m} Z_i G(V) \! \nonumber \\
	& \hspace{5pt} + \!\!\! \! \! \! \! \!  \sum_{ \substack{i \in \mathcal{C}_m \\ i' \in \mathcal{C}_m \setminus \{i, \Pi^{-1}(i)\}}}  \! \! \! \! \! \! \! \! \! \! \!  \frac{G(V_i)}{4\binom{n}{2}^\frac{1}{2}} \bigl\{ \! h \bigl((X_i,Y_{\Pi(i)}), \!(X_{i'},Y_{\Pi(i')}) \bigr) \! +\! h \bigl((X_{\Pi^{-1}(i)},Y_i),\!(X_{i'},Y_{\Pi(i')}) \bigr) \bigr\} \!  \nonumber \\
	& \hspace{5pt}+ \! \frac{1}{4\binom{n}{2}^{1/2}}  \sum_{i \in \mathcal{C}_m} \! \bigl\{G(V) \! + \! G(V_i)\bigr\} h \bigl((X_i,Y_{\Pi(i)}),(X_{\Pi^{-1}(i)},Y_i) \bigr) \biggr] \biggr| \nonumber \\
	& = \frac{1}{4}\binom{n}{2}^{-1/2} \biggl| \mathbb{E} \sum_{i \in \mathcal{C}_m} \bigl\{2G(V_i) + G(V)-G(V_i)\bigr\} h \bigl((X_i,Y_{\Pi(i)}),(X_{\Pi^{-1}(i)},Y_i) \bigr) \biggr| \nonumber \\
	& \leq n \binom{n}{2}^{-1/2} \mathbb{E} \bigl| \mathbb{E} \bigl\{ h \bigl( (X_1,Y_2),(X_3,Y_1) \bigr) | Y_2, X_3 \bigr\} \bigr|  \nonumber \\
	& \hspace{10pt}+ \frac{n}{2} \binom{n}{2}^{-1/2} \mathbb{E}^{1/2}\bigl\{ h^2 \bigl((X_1,Y_2),(X_3,Y_1)\bigr) \bigr\} \times \mathbb{E}\Bigl[ \max_{i \in \mathcal{C}_m} \mathbb{E}\bigl\{(V-V_i)^2 | \Pi \bigr\}^{1/2} \Bigr].
\end{align}
We will now study the behaviour of $\mathbb{E}\{(V-V_i)^2 | \Pi \}$ for $i \in \mathcal{C}_m$.  As long as $m\geq 4$, we have for all $i \in \mathcal{C}_m$ that
\begin{align*}
	&\mathbb{E}(Z_i^2 | \Pi) \\
	&= \!\! \binom{n}{2}^{-1} \!\!\!\!\!\!\!\! \sum_{i_1',i_2' \in \mathcal{C}_m \setminus \{i\}} \!\!\!\!\!\!\! \mathbb{E} \bigl\{ h \bigl( (X_i, Y_{\Pi(i)}), (X_{i_1'}, Y_{\Pi(i_1')}) \bigr) h \bigl( (X_i, Y_{\Pi(i)}), (X_{i_2'}, Y_{\Pi(i_2')}) \bigr) \! \bigm| \! \Pi\bigr\} \\
	&=\binom{n}{2}^{-1} \bigl[ (|\mathcal{C}_m| -3)  \mathbb{E}  h^2\bigl( (X_1,Y_2), (X_3,Y_4) \bigr)+ 2  \mathbb{E} h^2 \bigl( (X_1,Y_2), (X_2,Y_3) \bigr)  \bigr]
\end{align*}
Moreover, as long as $m\geq 5$,
\begin{align*}
	&\mathbb{E}(Z_i Z_{\Pi^{-1}(i)} | \Pi ) \\
	&= \binom{n}{2}^{-1} \sum_{\substack{ i_1' \in \mathcal{C}_m \setminus \{i\} \\ i_2' \in \mathcal{C}_m \setminus \{\Pi^{-1}(i)\}}} \mathbb{E} \bigl\{ h \bigl( (X_i, Y_{\Pi(i)}), (X_{i_1'}, Y_{\Pi(i_1')}) \bigr) \\
	& \hspace{175pt} \times h \bigl( (X_{\Pi^{-1}(i)}, Y_i), (X_{i_2'}, Y_{\Pi(i_2')}) \bigr) \bigm| \Pi\bigr\} \\
	&= \binom{n}{2}^{-1} \mathbb{E} \bigl\{ h^2 \bigl((X_1,Y_2),(X_2,Y_3) \bigr) \bigr\}.
\end{align*}
As a result, for any $i \in \mathcal{C}_m$ and $m \geq 5$,
\begin{align}
\label{Eq:VVi}
	\bigl|& \mathbb{E} \{(V-V_i)^2 | \Pi \} - 4/n \bigr| \nonumber \\
	& \leq \bigl| \mathbb{E}(Z_i^2 | \Pi ) - 2/n\bigr| + \bigl|\mathbb{E}(Z_{\Pi^{-1}(i)}^2 | \Pi) -2/n\bigr| \nonumber \\
	&\hspace{40pt}+ \biggl| \mathbb{E} \biggl\{ 2Z_i Z_{\Pi^{-1}(i)} + \binom{n}{2}^{-1} h^2 \bigl( (X_i,Y_{\Pi(i)}),(X_{\Pi^{-1}(i)},Y_i) \bigr) \nonumber \\
	& \hspace{40pt} -2\binom{n}{2}^{-1/2} (Z_i+Z_{\Pi^{-1}(i)}) h \bigl( (X_i,Y_{\Pi(i)}),(X_{\Pi^{-1}(i)},Y_i) \bigr) \biggm| \Pi \biggr\} \biggr| \nonumber \\
	& \leq 2(n - |\mathcal{C}_m| +2 ) \binom{n}{2}^{-1}+ 5\binom{n}{2}^{-1} \mathbb{E} \bigl\{ h^2 \bigl((X_1,Y_2),(X_2,Y_3) \bigr) \bigr\}.
\end{align}
It further follows, using the fact that $\mathbb{E}|\mathcal{C}_m|=n-m+1$, that for any $G \in \mathcal{G}$,
\begin{align}
\label{Eq:PermutedSteinDecomp}
	\biggl| \mathbb{E} \biggl[ &\frac{1}{4} \sum_{i \in \mathcal{C}_m} (V-V_i)\bigl\{G(V)-G(V_i)\bigr\} - G'(V) \biggr] \biggr| \nonumber \\
	& \leq 2 \mathbb{E} \biggl| \frac{1}{4} \sum_{i \in \mathcal{C}_m} (V-V_i)^2 -1 \biggr| + \frac{1}{4}  \mathbb{E}\biggl[ \sum_{i \in \mathcal{C}_m}  \bigl| (V-V_i)\{G(V)-G(V_i) -(V-V_i)G'(V)\} \bigr| \biggr] \nonumber \\
	& \leq \frac{1}{2} \mathbb{E}\biggl\{\mathrm{Var}^{1/2} \biggl( \sum_{i \in \mathcal{C}_m} (V-V_i)^2 \biggm| \Pi\biggr)\biggr\}  + \frac{1}{(32 \pi)^{1/2}} \mathbb{E} \biggl( \sum_{i \in \mathcal{C}_m} |V-V_i|^3  \biggr) \nonumber \\
	& \hspace{100pt} + \frac{4(m-1)+5 \mathbb{E} \bigl\{h^2\bigl((X_1,Y_2),(X_2,Y_3)\bigr)\bigr\}}{n-1}.
\end{align}
The main task in the rest of the proof is to bound the first two terms on the right-hand side of~\eqref{Eq:PermutedSteinDecomp}.  To this end, for any $i \in \mathcal{C}_m$ we have for $m \geq 6$ that
\begin{align*}
	&\mathbb{E}\{(V-V_i)^4 | \Pi\} \leq 9 \mathbb{E}(Z_i^4 | \Pi ) \\
	& \hspace{75pt} + 9 \mathbb{E}( Z_{\Pi^{-1}(i)}^4 | \Pi) + \frac{9}{\binom{n}{2}^{2}} \mathbb{E} \bigl\{ h^4 \bigl( (X_i, Y_{\Pi(i)}),(X_{\Pi^{-1}(i)},Y_i) \bigr) | \Pi \bigr\} \\
	& \leq 18 \binom{n}{2}^{-2} \mathbb{E} \biggl\{ \frac{5}{2} h^4 \bigl((X_1,Y_2),(X_2,Y_3) \bigr) + (|\mathcal{C}_m|-3) h^4 \bigl((X_1,Y_2),(X_3,Y_4) \bigr)  \\
	& \hspace{30pt} + 6 h^2\bigl((X_1,Y_2),(X_2,Y_3)\bigr)h^2\bigl((X_1,Y_2),(X_4,Y_1)\bigr) \\
	& \hspace{30pt} + 12(|\mathcal{C}_m|-3) h^2\bigl((X_1,Y_2),(X_2,Y_3)\bigr)h^2\bigl((X_1,Y_2),(X_4,Y_5)\bigr) \\
%	& \hspace{60pt} + \blue{4(|\mathcal{C}_m|-3)}\red{12(|\mathcal{C}_m|-4)} h^2 \bigl((X_1,Y_2),(X_2,Y_3) \bigr) h^2 \bigl( (X_1,Y_2),(X_4,Y_5) \bigr) \\
	& \hspace{30pt} + 12(|\mathcal{C}_m|-3) h^2 \bigl((X_1,Y_2),(X_3,Y_4) \bigr) h^2 \bigl( (X_1,Y_2),(X_4,Y_5) \bigr) \\
	& \hspace{30pt} + 3(|\mathcal{C}_m|-3)(|\mathcal{C}_m|-6) h^2 \bigl((X_1,Y_2),(X_3,Y_4) \bigr) h^2 \bigl( (X_1,Y_2),(X_5,Y_6) \bigr) \biggr\} \\
	& \leq \frac{216}{(n-1)^2} \max_{\sigma \in \mathcal{S}_4} \mathbb{E} \bigl\{ h^4 \bigl((X_1,Y_{\sigma(1)}),(X_2,Y_{\sigma(2)}) \bigr) \bigr\}.
\end{align*}
Consequently, similarly to~\eqref{Eq:Moment3} in the proof of Proposition~\ref{Lemma:Normality},
\begin{align}
\label{Eq:ThirdMomentBound}
	\mathbb{E}& \biggl(\sum_{i \in \mathcal{C}_m}|V-V_i|^3 \biggr) \leq n \mathbb{E} \Bigl[ \max_{i \in \mathcal{C}_m} \mathbb{E} \{(V-V_i)^2 | \Pi\}^{1/2} \mathbb{E}\{(V-V_i)^4 | \Pi \}^{1/2} \Bigr] \nonumber \\
	& \leq \biggl[ 4 + \frac{4(n+2)}{n-1} + \frac{10}{n-1} \mathbb{E} \bigl\{ h^2 \bigl( (X_1,Y_2),(X_2,Y_3) \bigr) \bigr\} \biggr]^{1/2} \nonumber \\
	& \hspace{100pt} \times \biggl[ \frac{216n}{(n-1)^2} \max_{\sigma \in \mathcal{S}_4}\mathbb{E} \bigl\{ h^4 \bigl((X_1,Y_{\sigma(1)}),(X_2,Y_{\sigma(2)}) \bigr) \bigr\} \biggr]^{1/2}.
\end{align}
It remains to bound the conditional variance term on the right-hand side of~\eqref{Eq:PermutedSteinDecomp}.  For $r \in \mathbb{N}$ and $i \in \mathcal{C}_m$, it is convenient to define the index set
\[
  I_r(i) := \bigl\{\Pi^{-r}(i),\ldots, \Pi^{-1}(i), i,\Pi(i),\ldots,\Pi^{r}(i)\bigr\}.
\]
We first observe that, if $i_1,i_2 \in \mathcal{C}_m$ and $i_2 \not\in I_5(i_1)$ (and if $m\geq 5$), then
\begin{align*}
	&\binom{n}{2}^{2} \bigl| \mathrm{Cov}\bigl(Z_{i_1}^2, Z_{i_2}^2\bigm|\Pi\bigr) \bigr| \\
	&= \Biggl| \sum_{\substack{i_1',i_2' \in \mathcal{C}_m \setminus \{i_1\} \\ i_3',i_4' \in \mathcal{C}_m \setminus \{i_2\}}} \!\!\!\!\!\!\!\! \mathrm{Cov}\Bigl( h \bigl((X_{i_1},Y_{\Pi(i_1)}), (X_{i_1'},Y_{\Pi(i_1')}) \bigr) h \bigl((X_{i_1},Y_{\Pi(i_1)}), (X_{i_2'},Y_{\Pi(i_2')}) \bigr), \\
	& \hspace{50pt} h \bigl((X_{i_2},Y_{\Pi(i_2)}), (X_{i_3'},Y_{\Pi(i_3')}) \bigr) h \bigl((X_{i_2},Y_{\Pi(i_2)}), (X_{i_4'},Y_{\Pi(i_4')}) \bigr) \Bigm| \Pi\Bigr) \Biggr| \\
	& \lesssim n^2 \Bigl| \mathrm{Cov} \Bigl( h \bigl( (X_1,Y_2), (X_3,Y_4) \bigr) h \bigl( (X_1,Y_2), (X_5,Y_6) \bigr), \\
	& \hspace{100pt} h \bigl( (X_7,Y_8), (X_3,Y_4) \bigr) h \bigl( (X_7,Y_8), (X_5,Y_6) \bigr) \Bigr) \Bigr| \\
	& \hspace{150pt} + n \max_{\sigma \in \mathcal{S}_4} \mathbb{E} \bigl\{ h^4 \bigl( (X_1, Y_{\sigma(1)}), (X_2,Y_{\sigma(2)}) \bigr) \bigr\} \\
	& = n^2 \mathbb{E}\bigl\{ g^2 \bigl( (X_3,Y_4),(X_5,Y_6) \bigr) \bigr\} + n \max_{\sigma \in \mathcal{S}_4} \mathbb{E} \bigl\{ h^4 \bigl( (X_1, Y_{\sigma(1)}), (X_2,Y_{\sigma(2)}) \bigr) \bigr\}.
\end{align*}
Similarly, if $i_1,i_2 \in \mathcal{C}_m$ and $i_2 \not\in I_7(i_1)$ and $m \geq 6$, then
\begin{align*}
	\binom{n}{2}^{2} \max \Bigl\{ \Bigl| \mathrm{Cov} \bigl(Z_{i_1}^2, &Z_{i_2}Z_{\Pi^{-1}(i_2)} \bigm| \Pi\bigr) \Bigr|, \Bigl|  \mathrm{Cov} \bigl(Z_{i_1}Z_{\Pi^{-1}(i_1)} , Z_{i_2}Z_{\Pi^{-1}(i_2)} \bigm| \pi\bigr) \Bigr| \Bigr\} \\
	&\lesssim n \max_{\sigma \in \mathcal{S}_4} \mathbb{E} \bigl\{ h^4 \bigl( (X_1, Y_{\sigma(1)}), (X_2,Y_{\sigma(2)}) \bigr) \bigr\}.
\end{align*}
Applying similar bounds to the remaining terms we have that %there exists a universal constant $C>0$ such that
\begin{align*}
	&\mathbb{E} \Biggl\{ \max_{\substack{i_1 \in \mathcal{C}_m \\ i_2 \in \mathcal{C}_m \setminus I_7(i_1)}} \bigl| \mathrm{Cov}\bigl( (V-V_{i_1})^2, (V-V_{i_2})^2 \bigm|\Pi\bigr) \bigr| \Biggr\} \\
	& \lesssim n^{-2} \mathbb{E}\bigl\{ g^2 \bigl( (X_3,Y_4),(X_5,Y_6) \bigr) \bigr\} + n^{-3} \max_{\sigma \in \mathcal{S}_4} \mathbb{E} \bigl\{ h^4 \bigl( (X_1, Y_{\sigma(1)}), (X_2,Y_{\sigma(2)}) \bigr) \bigr\}.
\end{align*}
Thus,
\begin{align}
\label{Eq:FourthMomentFinal}
	&\mathrm{Var} \biggl( \sum_{i \in \mathcal{C}_m} (V-V_i)^2 \biggm| \Pi\biggr) = \sum_{i_1,i_2 \in \mathcal{C}_m} \mathrm{Cov}\bigl( (V-V_{i_1})^2, (V-V_{i_2})^2\bigm|\Pi \bigr) \nonumber \\
	& \lesssim \! |\mathcal{C}_m| \! \max_{i \in \mathcal{C}_m} \mathbb{E}\bigl\{(V-V_i)^4\bigm|\Pi\bigr\} \!+\! |\mathcal{C}_m|^2 \!\!\!\!\!\!\! \max_{\substack{i_1 \in \mathcal{C}_m \\ i_2 \in \mathcal{C}_m \setminus I_7(i_1) }} \!\!\!\!\!\! \bigl| \mathrm{Cov} \bigl( (V-V_{i_1})^2, (V-V_{i_2})^2 \bigm| \Pi\bigr) \bigr| \nonumber \\
	& \lesssim \frac{1}{n} \max_{\sigma \in \mathcal{S}_4} \mathbb{E} \bigl\{ h^4 \bigl( (X_1, Y_{\sigma(1)}), (X_2,Y_{\sigma(2)}) \bigr) \bigr\} + \mathbb{E}\bigl\{ g^2 \bigl( (X_1,Y_2),(X_3,Y_4) \bigr) \bigr\}.
\end{align}
From~\eqref{Eq:SteinBound},~\eqref{Eq:SmallCyclesBound},~\eqref{Eq:VGV},~\eqref{Eq:VVi},~\eqref{Eq:PermutedSteinDecomp},~\eqref{Eq:ThirdMomentBound} and~\eqref{Eq:FourthMomentFinal}, and taking $m=6$, we deduce that
\begin{align}
\label{Eq:WassersteinBound1}
&d_{\mathrm{W}}(U,W)^2 \leq 2d_{\mathrm{W}}(U,V)^2 + 2d_{\mathrm{W}}(V,W)^2 \nonumber \\
&\lesssim \max \biggl[ \mathbb{E}\bigl\{ g^2 \bigl((X_1,Y_2),(X_3,Y_4)\bigr) \bigr\}, \Bigl(\mathbb{E} \bigl| \mathbb{E} \bigl\{ h \bigl( (X_1,Y_2), (X_3,Y_1) \bigr) | X_3,Y_2 \bigr\} \bigr|\Bigr)^2,  \nonumber \\
                      &\max_{\sigma \in \mathcal{S}_4} \biggl\{  \frac{\mathbb{E}\bigl\{ h^4 \bigl((X_1,Y_{\sigma(1)}),(X_2,Y_{\sigma(2)})\bigr) \bigr\}}{n} ,  \frac{\mathbb{E}^2 \bigl\{ h^4 \bigl((X_1,Y_{\sigma(1)}),(X_2,Y_{\sigma(2)})\bigr) \bigr\}}{n^2} \biggr\}  \biggr].
\end{align}

An alternative, cruder, bound can be found by reasoning similarly to the calculations leading up to~\eqref{Eq:Var2}:
\begin{align}
\label{Eq:WassersteinBound2}
  &d_\mathrm{W}(U,W)^2 \leq (\mathbb{E}|W| + \mathbb{E}|U|)^2 \leq 4/\pi+ 2 \mathbb{E}(U^2) \nonumber \\
  &\lesssim 1 \!+\! \frac{1}{n^2} \!\!\!\! \sum_{\substack{(i_1,i_2) \in \mathcal{I}_2\\(i_3,i_4) \in \mathcal{I}_2}} \!\!\!\!\! \mathbb{E}\bigl\{h\bigl((X_{i_1},Y_{\Pi(i_1)}),(X_{i_2},Y_{\Pi(i_2)}) \bigr) h \bigl((X_{i_3},Y_{\Pi(i_3)}),(X_{i_4},Y_{\Pi(i_4)}) \bigr)\bigr\} \nonumber \\
	& \lesssim 1 + n^2 \bigl| \mathbb{E} \bigl\{ h \bigl((X_1,Y_{\Pi(1)}),(X_2,Y_{\Pi(2)}) \bigr) h \bigl((X_3,Y_{\Pi(3)}),(X_4,Y_{\Pi(4)}) \bigr) \bigr\} \bigr| \nonumber \\
	& \hspace{50pt} + n \bigl| \mathbb{E} \bigl\{ h \bigl((X_1,Y_{\Pi(1)}),(X_2,Y_{\Pi(2)}) \bigr) h \bigl((X_1,Y_{\Pi(1)}),(X_3,Y_{\Pi(3)}) \bigr) \bigr\} \bigr| \nonumber \\	
              & \hspace{50pt} + \mathbb{E} \bigl\{ h^2 \bigl((X_1,Y_{\Pi(1)}),(X_2,Y_{\Pi(2)}) \bigr) \bigr) \bigr\} \nonumber \\
  &\lesssim 1 + \frac{1}{n}\max_{\sigma \in \mathcal{S}_4} \mathbb{E}\bigl\{h^2 \bigl((X_1,Y_{\sigma(1)}),(X_2,Y_{\sigma(2)})\bigr)\bigr\}\nonumber \\
	& \lesssim 1 + \frac{1}{n} \max_{\sigma \in \mathcal{S}_4} \mathbb{E}^{1/2} \bigl\{ h^4 \bigl((X_1,Y_{\sigma(1)}),(X_2,Y_{\sigma(2)})\bigr)\bigr\}.
\end{align}
The result follows from~\eqref{Eq:WassersteinBound1} and~\eqref{Eq:WassersteinBound2} by separately considering the two cases in which $n^{-1}\max_{\sigma \in \mathcal{S}_4} \mathbb{E}\bigl\{ h^4 \bigl((X_1,Y_{\sigma(1)}),(X_2,Y_{\sigma(2)})\bigr)\bigr\}$ is less than or greater than or equal to one.
\end{proof}

%\section{Joint normality of test statistics on original and permuted data}

Unfortunately, Propositions~\ref{Lemma:Normality} and~\ref{Lemma:PermutedNormality} are not quite strong enough for us to be able to apply in the proof of Theorem~\ref{Thm:PowerFunction} because they do not consider the joint asymptotic normality of our test statistic computed on the original data set together with the null statistics computed on the permuted data sets.  Our next aim is to build on these propositions to provide such a result.

For $B \in \mathbb{N}$, write $\mathcal{G}_{B,1}$ for the set of differentiable functions from $\mathbb{R}^{B+1}$ to $\mathbb{R}$.  For $B \in \mathbb{N}$ and $g \in \mathcal{G}_{B,1}$, define
\[
	M_1(g):= \sup_{x \neq y} \frac{|g(x)-g(y)|}{\|x-y\|} \quad \text{ and} \quad  M_2(g):= \sup_{x \neq y} \frac{\| \nabla g(x) - \nabla g(y)\|}{ \|x-y\|}.
\]
Moreover, define
\[
	\mathcal{G}_B:= \bigl\{ g \in \mathcal{G}_{B,1}: \max( \|g\|_\infty, M_1(g),M_2(g)) \leq 1 \bigr\},
\]
and, for random vectors $W$ and $Z$ taking values in $\mathbb{R}^{B+1}$, define%\footnote{Formally, this is a pseudo-metric between the distributions of $W$ and $Z$, but it is convenient to abuse notation slightly by writing it as a pseudo-metric between the corresponding random vectors.}
\[
  d_{\mathcal{G}_B}(W,Z) := \sup_{g \in \mathcal{G}_B} | \mathbb{E} g(W) - \mathbb{E} g(Z)|.
\]

\begin{lemma}
\label{Lemma:JointNormality}
Let $(X_1,Y_1),\ldots,(X_n,Y_n)$ be independent and identically distributed random elements in a product space $\mathcal{Z}=\mathcal{X} \times \mathcal{Y}$, let $B \in \mathbb{N}$, and let $\Pi_1,\ldots,\Pi_B$ be a sequence of independent, uniformly random elements of $\mathcal{S}_n$, independent of $(X_i,Y_i)_{i=1}^n$.  Let $h,h_\mathrm{p} : \mathcal{Z} \times \mathcal{Z} \rightarrow \mathbb{R}$ be symmetric measurable functions that satisfy $\mathbb{E} h^2\bigl((X_1,Y_1),(X_2,Y_2)\bigr)=1$, $\mathbb{E} h_\mathrm{p}^2\bigl((X_1,Y_2),(X_3,Y_4)\bigr) =1$, and
\[
	\mathbb{E} h \bigl( (x,y), (X_1,Y_1) \bigr) = \mathbb{E} h_\mathrm{p} \bigl((x,y),(x',Y_1) \bigr) = \mathbb{E} h_\mathrm{p} \bigl((x,y),(X_1,y') \bigr) = 0
\]
for all $(x,y),(x',y') \in \mathcal{Z}$. Write
\begin{align*}
  g\bigl((x,y),(x',y')\bigr) &:=\mathbb{E}\bigl\{h\bigl((x,y),(X_1,Y_1)\bigr) h\bigl((x',y'),(X_1,Y_1)\bigr)\bigr\}, \\
  g_\mathrm{p}\bigl((x,y),(x',y')\bigr) &:= \mathbb{E}\bigl\{ h_\mathrm{p} \bigl((x,y),(X_1,Y_2)\bigr) h_\mathrm{p}\bigl((x',y'),(X_1,Y_2)\bigr) \bigr\}.
\end{align*}                                         
Further, set 
\[
	U_0:= \frac{1}{2}\binom{n}{2}^{-1/2} \sum_{(i_1,i_2) \in \mathcal{I}_2} h\bigl((X_{i_1},Y_{i_1}),(X_{i_2},Y_{i_2}) \bigr).
\]
and, for $b=1,\ldots,B$, 
\[
	U_b:= \frac{1}{2}\binom{n}{2}^{-1/2} \sum_{(i_1,i_2) \in \mathcal{I}_2} h_\mathrm{p}\bigl((X_{i_1},Y_{\Pi_b(i_1)}),(X_{i_2},Y_{\Pi_b(i_2)}) \bigr).
\]
Then, letting $W \sim N_{B+1}(0,I_{B+1})$, there exists a universal constant $C>0$ such that
\begin{align*}
	&d_{\mathcal{G}_B} \bigl( (U_0,U_1,\ldots,U_B), W \bigr) \\
	 &\leq C \max \biggl[ \mathbb{E}^{1/2}\bigl\{ g^2 \bigl((X_1,Y_1),(X_2,Y_2)\bigr) \bigr\}, \frac{\mathbb{E}^{1/2}\bigl\{ h^4 \bigl((X_1,Y_1),(X_2,Y_2)\bigr) \bigr\}}{n^{1/2}}, \\
	& \hspace{10pt} B\mathbb{E}^{1/2}\bigl\{ g_\mathrm{p}^2 \bigl((X_1,Y_2),(X_3,Y_4)\bigr) \bigr\}, B\mathbb{E}^{1/2} \bigl| \mathbb{E} \bigl\{ h_\mathrm{p} \bigl( (X_1,Y_2), (X_3,Y_1) \bigr) | X_3, Y_2\bigr\} \bigr|,  \\
	& \hspace{10pt} \frac{B^2}{n^{1/2}} \max_{\sigma \in \mathcal{S}_4} \mathbb{E}^{1/2}\bigl\{ h_\mathrm{p}^4 \bigl((X_1,Y_{\sigma(1)}),(X_2,Y_{\sigma(2)})\bigr)\bigr\}   \biggr].
\end{align*}
\end{lemma}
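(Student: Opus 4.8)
The strategy is to run a multivariate version of Stein's method for the vector $(U_0,U_1,\ldots,U_B)$, combining the univariate analyses already carried out in the proofs of Propositions~\ref{Lemma:Normality} and~\ref{Lemma:PermutedNormality}. The starting point is the standard multivariate Stein bound: for $g \in \mathcal{G}_B$ and $W \sim N_{B+1}(0,I_{B+1})$ one has $|\mathbb{E}g(U) - \mathbb{E}g(W)| \leq |\mathbb{E}\{\langle \nabla G, U\rangle - \mathrm{tr}(\nabla^2 G)\}|$ for the solution $G$ of the Stein equation, where $G$ inherits bounds on its first and second derivatives from the bounds defining $\mathcal{G}_B$. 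The key structural fact is that $U_0$ is a degenerate $U$-statistic in the $(X_i,Y_i)$ (using $\mathbb{E}h((x,y),(X_1,Y_1))=0$), while each $U_b$ for $b \geq 1$ is a degenerate permuted $U$-statistic (using the two conditional-mean-zero conditions on $h_\mathrm{p}$), and the $B$ permutations $\Pi_1,\ldots,\Pi_B$ are mutually independent and independent of the data. So the covariance structure of $(U_0,\ldots,U_B)$ is approximately $I_{B+1}$: the diagonal entries are close to $1$ by the normalisations $\mathbb{E}h^2 = \mathbb{E}h_\mathrm{p}^2 = 1$, and the off-diagonal entries are small because distinct permutations overlap only in a controlled way.

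\textbf{Key steps.} First I would set up, for each coordinate $b$, the Stein ``leave-one-out'' decomposition exactly as in the two propositions: write $U_b = \tfrac12\sum_i V_i^{(b)}$ where $V_i^{(b)}$ is the sum of kernel evaluations involving index $i$, and $U_{b,i} := U_b - (\text{terms at }i)$ which is independent of $(X_i,Y_i)$ (for $b=0$) or measurable with respect to $\sigma(\Pi_b,\{(X_{i'},Y_{i'}):i'\neq i\})$ (for $b\geq 1$). Plugging the telescoping identity $g(x) - g(y) = \langle \nabla g(y), x-y\rangle + O(M_2(g)\|x-y\|^2)$ into the Stein expression reduces the problem, just as in~\eqref{Eq:SteinDecomp} and~\eqref{Eq:PermutedSteinDecomp}, to bounding: (i) $\mathbb{E}|1 - \tfrac14\sum_i (V_i^{(b)})^2|$-type quantities for each $b$ (variance-proxy errors), (ii) third-moment remainder terms $\sum_i \mathbb{E}|V_i^{(b)}|^3$, and (iii) — this is the genuinely new ingredient — cross terms measuring approximate independence of coordinates $b \neq b'$. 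For (i) and (ii) I would cite verbatim the moment bounds~\eqref{Eq:Moment1}--\eqref{Eq:Moment3} (for $b=0$) and~\eqref{Eq:SmallCyclesBound},~\eqref{Eq:VVi},~\eqref{Eq:ThirdMomentBound},~\eqref{Eq:FourthMomentFinal} (for $b\geq1$, including the reduction to indices in long cycles $\mathcal{C}_6$). For (iii), the cross-covariance of $U_b$ and $U_{b'}$ with $b\neq b'$ both $\geq 1$ factorises over the independent permutations after conditioning, and the overlap of $\Pi_b$ and $\Pi_{b'}$ is negligible; a separate, easy argument handles $\mathrm{Cov}(U_0, U_b)$ using that $\Pi_b$ is uniform so has few fixed points. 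Assembling these yields a bound of the form $\sqrt{B+1}$ times the univariate error in each coordinate, and tracking the factors of $B$ through the second-derivative term of $G$ (which costs a factor $B$) and through squaring produces the $B$ and $B^2$ powers exactly as stated; a complementary crude bound $\mathbb{E}\|U\|^2 \lesssim 1 + n^{-1}\max_\sigma \mathbb{E}^{1/2}\{h_\mathrm{p}^4\}$, analogous to~\eqref{Eq:WassersteinBound2}, handles the regime where the main bound exceeds $1$.

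\textbf{Main obstacle.} The hardest part will be controlling the cross terms in a way that produces only the claimed powers of $B$ rather than worse. Because the multivariate Stein solution $G$ has $\|\nabla^2 G\|$ growing with $B$, and because there are $B^2$ pairs of coordinates, a naive bound on $\sum_{b,b'}\mathrm{Cov}$ risks extra factors of $B$; the point is that for $b\neq b'$ the covariance is \emph{second-order} small (it is $O(1/n)$ after using independence of $\Pi_b,\Pi_{b'}$ and the degeneracy of $h_\mathrm{p}$, not $O(1)$), so that the $B^2$ pairs combine with the $1/n$ to give the term $B^2 n^{-1/2}\max_\sigma\mathbb{E}^{1/2}\{h_\mathrm{p}^4\}$ in the final bound. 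Making this precise requires the cycle-structure generating-function computations of Proposition~\ref{Lemma:PermutedNormality} applied simultaneously to two independent permutations, and careful bookkeeping of which index configurations survive the conditional-mean-zero cancellations; this is where essentially all the technical work lies, the remaining steps being routine adaptations of the single-coordinate arguments already in the supplement.
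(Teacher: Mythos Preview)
Your proposal is essentially the paper's proof: multivariate Stein via the solution $G$ of the Stein equation (the paper invokes Rai\v{c}'s Lemma~1 to obtain $G\in\mathcal{G}'$ with $M_1(G)\leq 1$, $M_2(G)\leq 1/2$, $M_3(G)\leq(2\pi)^{1/2}/4$), the per-coordinate leave-one-out decomposition with the long-cycle truncation $\mathcal{C}_m^{(b)}$, and then recycling the diagonal bounds from Propositions~\ref{Lemma:Normality} and~\ref{Lemma:PermutedNormality} while handling the new cross terms via the degeneracy of $h_\mathrm{p}$ and independence of the $\Pi_b$. One small correction: the $B$ and $B^2$ factors do \emph{not} arise because $\|\nabla^2 G\|$ grows with $B$ --- Rai\v{c}'s bounds on $G$ are dimension-free --- but simply from the $B$ (resp.\ $\binom{B}{2}$) summands in $\sum_{b=1}^B G_{0b}(\cdot)\cdots$ and $\sum_{1\leq b<b'\leq B} G_{bb'}(\cdot)\cdots$; each individual cross term is then bounded by the $n^{-1/2}\times(\text{fourth moment})^{1/2}$ quantity you identify, using that $\mathbb{E}\{h((X_1,Y_1),(X_2,Y_2))h_\mathrm{p}((X_1,Y_{\sigma(1)}),(X_3,Y_{\sigma(3)}))\}=0$ kills the would-be $g^2$ contribution.
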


\begin{proof}[Proof of Lemma~\ref{Lemma:JointNormality}]
For each $b=1,\ldots,B$ and $m \in \mathbb{N}$ define $\mathcal{C}_m^{(b)}:= \{i \in [n] : i \text{ falls in a cycle of length } \geq m \text{ in } \Pi_b \}$ and 
\[
	V^{(b)}:= \frac{1}{2} \binom{n}{2}^{-1/2} \sum_{\substack{i_1 \neq i_2 \\ i_1,i_2 \in \mathcal{C}_m^{(b)} }} h \bigl( (X_{i_1},Y_{\Pi_b(i_1)}), (X_{i_2}, Y_{\Pi_b(i_2)}) \bigr).
\]
Following the same arguments as in the proof of Proposition~\ref{Lemma:PermutedNormality} leading up to~\eqref{Eq:SmallCyclesBound}, we have
\[
	\mathbb{E}\{(U_{b} - V^{(b)})^2\} \leq \frac{28m^4}{n-1} \max_{\sigma \in \mathcal{S}_4} \mathbb{E}\bigl\{ h^2\bigl( (X_1,Y_{\sigma(1)}), (X_2, Y_{\sigma(2)}) \bigr) \bigr\}
\]
for each $b=1,\ldots,B$.  We will now recall and redefine various pieces of notation from the proofs of Propositions~\ref{Lemma:Normality} and~\ref{Lemma:PermutedNormality}. Write
\[
	Z_i:= \binom{n}{2}^{-1/2} \sum_{i' \in [n] \setminus \{i\}} h \bigl((X_i,Y_i),(X_{i'},Y_{i'}) \bigr)
\]
and, for $b=1,\ldots,B$ and for $i \in \mathcal{C}_m^{(b)}$, write
\[
		Z_i^{(b)}:= \binom{n}{2}^{-1/2} \sum_{i' \in \mathcal{C}_m^{(b)} \setminus \{i\}} h_\mathrm{p} \bigl((X_i,Y_{\Pi_b(i)}),(X_{i'},Y_{\Pi_b(i')}) \bigr)
\]
so that $\sum_{i=1}^n Z_i = 2U_0$ and $\sum_{i \in \mathcal{C}_m^{(b)}} Z_i^{(b)} = 2 V^{(b)}$. Also define $V^{(0)}:=U_0$, $V^{(0)}_i := V^{(0)} - Z_i$ and
\begin{align*}
	V_i^{(b)} & := V^{(b)} - Z_i^{(b)} - Z_{\Pi_b^{-1}(i)}^{(b)} + \binom{n}{2}^{-1/2} h_\mathrm{p} \bigl((X_i,Y_{\Pi_b(i)}),(X_{\Pi_b^{-1}(i)},Y_i) \bigr) \\
	& = \frac{1}{2} \binom{n}{2}^{-1/2}  \sum_{\substack{i_1,i_2 \in \mathcal{C}_m^{(b)} \setminus \{i,\Pi_b^{-1}(i)\} \\ i_1 \neq i_2}} h_\mathrm{p} \bigl((X_{i_1},Y_{\Pi_b(i_1)}),(X_{i_2},Y_{\Pi_b(i_2)}) \bigr),
\end{align*}
so that $V_i^{(0)}$ is $\sigma\bigl(\bigl\{(X_{i'},Y_{i'}):i' \neq i\bigr\}\bigr)$-measurable and, for $b=1,\ldots,B$, we have that $V_i^{(b)}$ is $\sigma\bigl(\Pi_b,\bigl\{(X_{i'},Y_{i'}):i' \neq i\bigr\}\bigr)$-measurable. 

Write $\mathcal{G}_{B,2}$ for the set of twice differentiable functions $G : \mathbb{R}^{B+1} \rightarrow \mathbb{R}$, and for $G \in \mathcal{G}_{B,2}$, write $\nabla^2 G = (G_{b_1b_2})_{b_1,b_2=0}^B$ for the Hessian matrix of $G$, and define
\[
	M_3(G):= \sup_{x \neq y}\frac{ \| \nabla^2 G(x) - \nabla^2 G(y) \|_\mathrm{op}}{\|x-y\|}.
\]
We will also write $\Delta G(x) = \sum_{b=0}^B G_{bb}(x)$ for the Laplacian of $G$. We now introduce the function class 
\[
\mathcal{G}':= \bigl\{ G \in \mathcal{G}_{B,2} : M_1(G) \leq 1, M_2(G) \leq 1/2, M_3(G) \leq (2\pi)^{1/2}/4 \bigr\}.
\]
Write $V=(V^{(0)}, V^{(1)}, \ldots, V^{(B)})$, and for $i =1,\ldots,n$, define $V_i$ taking values in $\mathbb{R}^{B+1}$ by $V_{i0}:= V_i^{(0)}$ and for $b = 1,\ldots,B$,
\[
	V_{ib} := \left\{ \begin{array}{ll} V_i^{(b)} & \text{if } i \in \mathcal{C}_m^{(b)} \\ V^{(b)} & \text{if } i \not\in \mathcal{C}_m^{(b)}, \end{array} \right.
\]
so that $V_i$ is $\sigma\bigl(\bigl\{\Pi_b:b=1,\ldots,B\bigr\},\bigl\{(X_{i'},Y_{i'}):i' \neq i\bigr\}\bigr)$-measurable.  We now seek to apply \citet[][Lemma~1]{Raic2004}, which states that, given any $g \in \mathcal{G}_B$, there exists $G \in \mathcal{G}_{B,2}$ with $M_2(G) \leq 1/2$ and $M_3(G) \leq (2\pi)^{1/2}/4$ such that
\[
  g(v) - \mathbb{E}g(W) = \Delta G(v) - v^T \nabla G(v)
\]
for all $v \in \mathbb{R}^{B+1}$.  In fact, by examining the proof of this result, we see that from Rai\v{c}'s construction, $M_1(G) \leq 1$, and hence $G \in \mathcal{G}'$.  Writing $\nabla G = (G_0,G_1,\ldots,G_B)^T$, we deduce that
\begin{align*}
	&d_{\mathcal{G}_B}( V, W) \leq \sup_{G \in \mathcal{G}'} \bigl| \mathbb{E} \bigl\{ \Delta G(V) - V^T \nabla G(V) \bigr\} \bigr| \\
	& = \sup_{G \in \mathcal{G}'} \biggl| \mathbb{E} \biggl[ G_{00}(V) \biggl\{ 1- \frac{1}{2} \sum_{i=1}^n(V^{(0)} - V_i^{(0)})^2 \biggr\} \\
	& \hspace{30pt}+ \sum_{b=1}^B G_{bb}(V) \biggl\{1- \frac{1}{4} \sum_{i \in \mathcal{C}_m^{(b)}} (V^{(b)}-V_i^{(b)})^2 \biggr\} \\
	& \hspace{30pt} - \frac{3}{4} \sum_{b=1}^B G_{0b}(V) \sum_{i \in \mathcal{C}_m^{(b)}} (V^{(b)} - V_i^{(b)})(V^{(0)} - V_i^{(0)}) \\
	& \hspace{30pt} - \frac{1}{2} \sum_{1 \leq b < b' \leq B} G_{bb'}(V) \sum_{i \in \mathcal{C}_m^{(b)} \cap \mathcal{C}_m^{(b')}} (V^{(b)} - V_i^{(b)}) (V^{(b')} - V_i^{(b')}) \\
	& \hspace{30pt} - \frac{1}{2} \sum_{i=1}^n \bigl\{G_0(V) - G_0(V_i) - (V-V_i)^T \nabla G_0(V) \bigr\}(V^{(0)} - V_i^{(0)}) \\
	& \hspace{30pt} - \frac{1}{4} \sum_{b=1}^B \sum_{i \in \mathcal{C}_m^{(b)} } \bigl\{G_b(V) - G_b(V_i) - (V-V_i)^T \nabla G_b(V) \bigr\}(V^{(b)} - V_i^{(b)}) \\
	& \hspace{30pt} +\sum_{b=1}^B G_b(V) \biggl\{ \frac{1}{4} \sum_{i \in \mathcal{C}_m^{(b)}} (V^{(b)} - V_i^{(b)}) - V^{(b)} \biggr\} \\
	& \hspace{30pt}- \frac{1}{4} \sum_{b=1}^B \sum_{i \in \mathcal{C}_m^{(b)}} G_b(V_i) (V^{(b)} - V_i^{(b)}) \biggr] \biggr|,
\end{align*}
where have used the fact that for each $i \in [n]$ we have $\mathbb{E} \bigl\{G_0(V_i)(V^{(0)}-V_i^{(0)})\bigr\}=0$.  Compared with the proofs of Propositions~\ref{Lemma:Normality} and~\ref{Lemma:PermutedNormality}, the only new terms that we need to bound are the third and fourth terms involving the interactions between $V^{(b)} - V_i^{(b)}$ and $V^{(b')} - V_i^{(b')}$ for $b \neq b' \in \{0,1,\ldots,B\}$. The third term can be bounded by very similar calculations to those leading up to~\eqref{Eq:FourthMomentFinal}, with the main difference being that $\mathbb{E} \bigl\{ h \bigl( (X_1,Y_1),(X_2,Y_2) \bigr) h_\mathrm{p} \bigl( (X_1, Y_{\sigma(1)}), (X_3, Y_{\sigma(3)}) \bigr) \bigr\} =0$ for any $\sigma \in \mathcal{S}_n$ with $\{\sigma(1),\sigma(3)\} \cap \{1,2,3\} = \emptyset$, so that the term that corresponds to $\mathbb{E}\bigl\{ g^2((X_1,Y_2),(X_3,Y_4)) \bigr\}$ does not appear.  We therefore have that
\begin{align*}
	& \sup_{G \in \mathcal{G}'} \biggl| \mathbb{E} \biggl\{ \sum_{b=1}^B G_{0b}(V) \sum_{i \in \mathcal{C}_m^{(b)}} (V^{(b)} - V_i^{(b)})(V^{(0)} - V_i^{(0)}) \biggr\} \biggr| \\
	& \lesssim B \mathbb{E} \biggl| \sum_{i \in \mathcal{C}_m^{(1)}} (V^{(1)} - V_i^{(1)})(V^{(0)} - V_i^{(0)}) \biggr| \\
	& \lesssim B \mathbb{E}^{1/2} \biggl[ \biggl\{ \!\! \sum_{i \in \mathcal{C}_m^{(1)}} \!\! Z_i \biggl( Z_i^{(1)} \!+\! Z_{\Pi^{-1}(i)}^{(1)} \!-\! \binom{n}{2}^{-1/2} \!\!\!\!\!\!\! h_\mathrm{p} \bigl( (X_i, Y_{\Pi(i)}), (X_{\Pi^{-1}(i)},Y_i) \bigr) \biggr) \biggr\}^2 \biggr] \\
	& \lesssim \frac{B}{n^{1/2}} \mathbb{E}^{1/4} \bigl\{ h^4 \bigl( (X_1,Y_1), (X_2,Y_2) \bigr) \bigr\}  \max_{\sigma \in \mathcal{S}_4}  \mathbb{E}^{1/4} \bigl\{ h_\mathrm{p}^4 \bigl( (X_1,Y_{\sigma(1)}), (X_2,Y_{\sigma(2)}) \bigr) \bigr\}.
\end{align*}
Similarly, for the fourth term,
\begin{align*}
	\sup_{G \in \mathcal{G}'} \biggl| \mathbb{E} \biggl\{ \sum_{1 \leq b < b' \leq B} G_{bb'}(V) &\sum_{i \in \mathcal{C}_m^{(b)} \cap \mathcal{C}_m^{(b')}} (V^{(b)} - V_i^{(b)}) (V^{(b')} - V_i^{(b')}) \biggr\} \biggr| \\
	& \lesssim B^2 \biggl( \frac{1}{n} \max_{\sigma \in \mathcal{S}_4}  \mathbb{E}\bigl\{ h_\mathrm{p}^4 \bigl( (X_1,Y_{\sigma(1)}), (X_2,Y_{\sigma(2)}) \bigr) \bigr\} \biggr)^{1/2}.
\end{align*}
The result follows.
\end{proof}

\section{Auxiliary results}

\begin{lemma}
\label{Lemma:Separability}
Let $(\mathcal{Z},\mathcal{C},\nu)$ be a separable measure space.  Then $L^q(\nu)$ is separable for $q \in [1,\infty)$.
\end{lemma}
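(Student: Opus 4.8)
The plan is to exhibit an explicit countable dense subset of $L^q(\nu)$ built from the countable dense subset of $\mathcal{C}$ that separability of $(\mathcal{Z},\mathcal{C},\nu)$ provides. First I would recall that, since $(\mathcal{Z},\mathcal{C},\nu)$ is separable, there is a countable family $\mathcal{C}_0 \subseteq \mathcal{C}$ that is dense for the pseudo-metric $d(A,B) = \nu(A \triangle B)$; one should be slightly careful about $\sigma$-finiteness, so I would fix a countable exhausting sequence $(\mathcal{Z}_n)$ with $\nu(\mathcal{Z}_n) < \infty$ and work with sets of finite measure, noting that $\|\mathbbm{1}_A - \mathbbm{1}_B\|_{L^q(\nu)}^q = \nu(A \triangle B)$ for sets of finite measure, so $d$-density of $\mathcal{C}_0$ immediately gives $L^q$-density of $\{\mathbbm{1}_A : A \in \mathcal{C}_0\}$ within the indicators of finite-measure sets.

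Next I would pass from indicators to simple functions: let $\mathcal{D}$ be the (countable) set of all finite rational-coefficient linear combinations $\sum_{i=1}^k q_i \mathbbm{1}_{A_i}$ with $q_i \in \mathbb{Q}$ and $A_i \in \mathcal{C}_0$. The key approximation steps are then (i) every $f \in L^q(\nu)$ is approximated in $L^q$-norm by simple functions $\sum_i c_i \mathbbm{1}_{B_i}$ with $c_i \in \mathbb{R}$ and $\nu(B_i) < \infty$ (standard, via truncation, splitting into positive and negative parts, and monotone approximation of nonnegative measurable functions, all controlled by dominated convergence in $L^q$); (ii) the real coefficients $c_i$ can be replaced by rationals $q_i$ at the cost of $\sum_i |c_i - q_i| \nu(B_i)^{1/q}$, which is small; and (iii) each $B_i$ can be replaced by some $A_i \in \mathcal{C}_0$ with $\nu(A_i \triangle B_i)$ as small as we like, so that $\|q_i \mathbbm{1}_{B_i} - q_i \mathbbm{1}_{A_i}\|_{L^q(\nu)} = |q_i|\, \nu(A_i \triangle B_i)^{1/q}$ is small. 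Chaining these three approximations through the triangle inequality shows $\mathcal{D}$ is dense in $L^q(\nu)$, and $\mathcal{D}$ is countable, so $L^q(\nu)$ is separable.

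The main obstacle, such as it is, is bookkeeping around $\sigma$-finiteness and the fact that the separability hypothesis is stated in terms of the symmetric-difference pseudo-metric rather than directly in $L^q$: one must ensure that the approximating sets $A_i \in \mathcal{C}_0$ can be taken of finite measure (or that the relevant symmetric differences have finite, small measure), and that the dense family $\mathcal{C}_0$ can be intersected with the exhausting sets $\mathcal{Z}_n$ without losing countability or density. Once that is set up, every estimate is an elementary application of the triangle inequality in $L^q$ together with the identity $\|\mathbbm{1}_A - \mathbbm{1}_B\|_{L^q(\nu)} = \nu(A \triangle B)^{1/q}$, so no step requires more than routine measure theory.
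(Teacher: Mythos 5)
Your proof is correct and follows essentially the same route as the paper's: a countable set of rational-coefficient simple functions built over the countable $d$-dense family of sets, with density established by chaining the three approximations (general function by simple function, real by rational coefficients, arbitrary sets by sets from the dense family), each controlled via $\|\mathbbm{1}_A - \mathbbm{1}_B\|_{L^q(\nu)}^q = \nu(A \triangle B)$. The only cosmetic difference is that you invoke a $\sigma$-finite exhaustion, which the lemma as stated does not assume; the paper instead handles sets of infinite measure directly by forcing the corresponding coefficients to vanish, though since every $L^q(\nu)$ function is supported on a $\sigma$-finite set this makes no practical difference.
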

\begin{proof}
  Since $(\mathcal{Z},\mathcal{C},\nu)$ is separable, there exists a sequence $(C_n)$ of sets in $\mathcal{C}$ such that, given any $A \in \mathcal{C}$, we can find a subsequence of integers $1 \leq n_1 < n_2 < \ldots$ with the property that $\nu(C_{n_k} \triangle A) \rightarrow 0$ as $k \rightarrow \infty$.  Consider the countable set $\mathcal{G}_+$ of functions in $L^q(\nu)$ of the form $g = \sum_{\ell=1}^m c_\ell \mathbbm{1}_{C_\ell}$, where $m \in \mathbb{N}$, $c_\ell \in (\mathbb{Q} \cup \{\infty\}) \cap [0,\infty]$ for all~$\ell$, and we have $c_\ell \in \mathbb{Q}$ if $\nu(C_\ell) > 0$ and $c_\ell = 0$ if $\nu(C_\ell) = \infty$.  We claim that $\mathcal{G}_+$ is dense in the set of non-negative functions in $L^q(\nu)$.  To see this, first suppose that $f = \sum_{\ell=1}^m a_\ell \mathbbm{1}_{A_\ell} \in L^q(\nu)$, where $a_\ell \in [0,\infty]$ and $A_\ell \in \mathcal{C}$.  Then we must have $a_\ell < \infty$ whenever $\nu(A_\ell) > 0$ and $a_\ell = 0$ whenever $\nu(A_\ell) = \infty$.  Given $\epsilon > 0$, for each $\ell=1,\ldots,m$, find $c_\ell \in (\mathbb{Q} \cup \{\infty\}) \cap [0,\infty]$ such that
  \[
    |a_\ell - c_\ell|^q \nu(A_\ell) < \frac{\epsilon}{(2m)^q}.
  \]
  Here, we must have $c_\ell = 0$ if $\nu(A_\ell) = \infty$.  Now, for each $\ell = 1,\ldots,m$, choose $C_\ell$ from our countable set such that
  \[
    c_\ell^q \nu(A_\ell \triangle C_\ell) < \frac{\epsilon}{(2m)^q}.
  \]
  Here, we must have $\nu(C_\ell) = 0$ if $c_\ell = \infty$.  Then $g = \sum_{\ell=1}^m c_\ell \mathbbm{1}_{C_\ell} \in \mathcal{G}_+$ and
  \begin{align*}
   & \int_{\mathcal{Z}} |f - g|^q \, d\nu \\
    &\leq 2^{q-1} \int_{\mathcal{Z}} \biggl|\sum_{\ell=1}^m a_\ell \mathbbm{1}_{A_\ell} - \sum_{\ell=1}^m c_\ell \mathbbm{1}_{A_\ell}\biggr|^q \, d\nu + 2^{q-1} \int_{\mathcal{Z}} \biggl|\sum_{\ell=1}^m c_\ell \mathbbm{1}_{A_\ell} - \sum_{\ell=1}^m c_\ell \mathbbm{1}_{C_\ell}\biggr|^q \, d\nu \\
    &\leq (2m)^{q-1} \sum_{\ell=1}^m |a_\ell - c_\ell|^q \nu(A_\ell) + (2m)^{q-1} \sum_{\ell=1}^m c_\ell^q \nu(A_\ell \triangle C_\ell) < \epsilon.
  \end{align*}
  Now suppose that $f$ is any non-negative function in $L^q(\nu)$.  Then, given $\epsilon > 0$, choose $f_* = \sum_{\ell=1}^m a_\ell \mathbbm{1}_{A_\ell} \in L^q(\nu)$ such that $a_\ell \in [0,\infty]$, $A_\ell \in \mathcal{C}$, $f_* \leq f$ and $\nu(f_*^q) > \nu(f^q) - \epsilon/2^q$.  Now, by what we have proved above, we can find $g \in \mathcal{G}_+$ such that $\int_{\mathcal{Z}} |f_* - g|^q \, d\nu < \epsilon/2^q$.  Then
  \begin{align*}
    \int_{\mathcal{Z}} |f - g|^q \, d\nu &\leq 2^{q-1} \int_{\mathcal{Z}} |f - f_*|^q \, d\nu + 2^{q-1} \int_{\mathcal{Z}} |f_* - g|^q \, d\nu \\
    &\leq 2^{q-1}\bigl\{\nu(f^q) - \nu(f_*^q)\bigr\} + 2^{q-1} \int_{\mathcal{Z}} |f_* - g|^q \, d\nu < \epsilon.
  \end{align*}
  Here, we have used the fact that $x^q + y^q \leq 1$ for $x,y \geq 0$ with $x+y = 1$.  This establishes our claim for non-negative $f \in L^q(\nu)$.  Finally, if $f \in L^q(\nu)$ is arbitrary, then we can write $f = f_+ - f_{-}$, where $f_+ := \max(f,0)$, $f_{-} := \max(-f,0)$, and given $\epsilon > 0$, find $g_+,g_{-} \in \mathcal{G}_+$ such that $\int_{\mathcal{Z}} |f_+ - g_+|^q \, d\nu < \epsilon/2^q$ and $\int_{\mathcal{Z}} |f_{-} - g_{-}|^q \, d\nu < \epsilon/2^q$.  Then, writing $g := g_+ - g_{-}$, 
  \[
\int_{\mathcal{Z}} |f - g|^q \, d\nu \leq 2^{q-1}\int_{\mathcal{Z}} |f_+ - g_{+}|^q \, d\nu + 2^{q-1}\int_{\mathcal{Z}} |f_- - g_{-}|^q \, d\nu < \epsilon.
\]
Noting that $\{g_1 - g_2: g_1,g_2 \in \mathcal{G}_+\}$ is a countable subset of $L^q(\nu)$, the result follows. 
\end{proof}

\begin{lemma}
  \label{Lemma:Separability2}
Let $(\mathcal{X},\mathcal{A},\mu_X)$ and $(\mathcal{Y},\mathcal{B},\mu_Y)$ be separable, $\sigma$-finite measure spaces.  Then, writing $\mu = \mu_X \otimes \mu_Y$ for the product measure on $\mathcal{X} \times \mathcal{Y}$, the space $L^q(\mu)$ is separable for $q \in [1,\infty)$.
\end{lemma}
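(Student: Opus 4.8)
The plan is to deduce the statement from Lemma~\ref{Lemma:Separability}: it suffices to show that the product is a separable measure space, and the work is therefore to exhibit a countable subset of $\mathcal{A}\otimes\mathcal{B}$ that is dense for the pseudo-metric $d(E,F):=\mu(E\triangle F)$. Since this is delicate for sets of infinite $\mu$-measure, I would first reduce to finite base measures. By $\sigma$-finiteness, write $\mathcal{X}=\bigsqcup_{k}X_k$ and $\mathcal{Y}=\bigsqcup_{\ell}Y_\ell$ with $\mu_X(X_k),\mu_Y(Y_\ell)<\infty$; then $\mathcal{X}\times\mathcal{Y}=\bigsqcup_{k,\ell}Z_{k\ell}$ with $Z_{k\ell}:=X_k\times Y_\ell$ of finite $\mu$-measure, and $L^q(\mu)$ is isometrically isomorphic to the $\ell^q$-direct sum $\bigoplus_{k,\ell}L^q(\mu|_{Z_{k\ell}})$. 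A countable $\ell^q$-direct sum of separable Banach spaces is separable (take finitely supported sequences whose nonzero coordinates range over countable dense subsets), so it is enough to prove that each $L^q(\mu|_{Z_{k\ell}})$ is separable. Moreover, the restriction of a separable measure space to a measurable subset is again separable: if $(A_m)$ is $d_{\mu_X}$-dense in $\mathcal{A}$, then $(A_m\cap X_k)$ is dense in the restricted $\sigma$-algebra, since $\mu_X\bigl((A\cap X_k)\triangle(A_m\cap X_k)\bigr)\le\mu_X(A\triangle A_m)$. Hence we may assume from now on that $\mu_X$ and $\mu_Y$, and therefore $\mu=\mu_X\otimes\mu_Y$, are finite (and still separable).

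In the finite case, let $(A_m)_{m\in\mathbb{N}}$ and $(B_n)_{n\in\mathbb{N}}$ be countable $d_{\mu_X}$- and $d_{\mu_Y}$-dense subsets of $\mathcal{A}$ and $\mathcal{B}$, and let $\mathcal{D}$ be the countable collection of all finite unions of rectangles of the form $A_m\times B_n$. I claim $\mathcal{D}$ is $d$-dense in $\mathcal{A}\otimes\mathcal{B}$; by the footnote definition this makes the product a separable measure space, and Lemma~\ref{Lemma:Separability} then gives the conclusion. To prove the claim, let $\mathcal{M}$ be the set of $E\in\mathcal{A}\otimes\mathcal{B}$ that can be $d$-approximated arbitrarily well by elements of $\mathcal{D}$. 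Every rectangle $A\times B$ lies in $\mathcal{M}$: picking $A_m,B_n$ with $\mu_X(A\triangle A_m)$ and $\mu_Y(B\triangle B_n)$ small gives $\mu\bigl((A\times B)\triangle(A_m\times B_n)\bigr)\le\mu_X(A\triangle A_m)\mu_Y(\mathcal{Y})+\mu_X(\mathcal{X})\mu_Y(B\triangle B_n)$, which is small because $\mu_X(\mathcal{X}),\mu_Y(\mathcal{Y})<\infty$. Since $\mu\bigl((E_1\cup E_2)\triangle(D_1\cup D_2)\bigr)\le\mu(E_1\triangle D_1)+\mu(E_2\triangle D_2)$, the class $\mathcal{M}$ is closed under finite unions, so it contains the algebra $\mathcal{A}_0$ of finite unions of measurable rectangles, which generates $\mathcal{A}\otimes\mathcal{B}$. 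Finally, $\mathcal{M}$ is a monotone class: if $E_j\uparrow E$ or $E_j\downarrow E$ with $E_j\in\mathcal{M}$, then finiteness of $\mu$ gives $\mu(E\triangle E_j)\to0$, and combining this with a good approximation of some $E_j$ yields one of $E$. By the monotone class theorem, $\mathcal{M}\supseteq\sigma(\mathcal{A}_0)=\mathcal{A}\otimes\mathcal{B}$, which proves the claim.

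I expect the main obstacle to be the bookkeeping around sets of infinite $\mu$-measure in the pseudo-metric: the monotone class argument for $\mathcal{M}$ genuinely requires $\mu$ to be finite (to control increasing limits whose limit may previously have had infinite measure), which is precisely why the $\sigma$-finite reduction in the first step — splitting into finite-measure cells and reassembling via an $\ell^q$-sum — is carried out rather than attempting to run the monotone class argument directly on $\mathcal{X}\times\mathcal{Y}$. The remaining ingredients (the rectangle estimate, stability under finite unions, and separability of a countable $\ell^q$-sum of separable spaces) are routine.
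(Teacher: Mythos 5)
Your proof is correct, and it takes a genuinely different route from the one in the paper. The paper argues at the level of functions: it invokes Lemma~\ref{Lemma:Separability} to obtain countable (orthonormal) bases of $L^q(\mu_X)$ and $L^q(\mu_Y)$, and then uses Fubini's theorem to show that any $f\in L^q(\mu)$ satisfying $\int f p_{jk}\,d\mu=0$ for all $j,k$ vanishes $\mu$-a.e., so the countable product family is total. You instead argue at the level of the measure space: you reduce to finite base measures via the $\sigma$-finite decomposition and an $\ell^q$-direct-sum identification, show by a monotone class argument that each finite-measure product cell is a separable measure space, and then apply Lemma~\ref{Lemma:Separability} as a black box to the product. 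Your route is longer but more robust: it works verbatim for every $q\in[1,\infty)$, whereas the paper's argument leans on Hilbert-space language (orthonormal bases, and the step from ``the only $f$ integrating to zero against all $p_{jk}$ is zero'' to density of the span) that is clean only at $q=2$ — for $q\neq 2$ one would need a duality argument with the annihilator living in $L^{q'}(\mu)$. The paper's route, on the other hand, is shorter and avoids both the monotone class theorem and the explicit handling of infinite-measure sets that you correctly identify as the delicate point in the measure-space formulation of separability. All the individual steps you flag as routine (the rectangle estimate, closure of $\mathcal{M}$ under finite unions, separability of a countable $\ell^q$-sum, and restriction of a separable measure space to a measurable subset) do indeed go through as stated.
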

\begin{proof}
  From Lemma~\ref{Lemma:Separability}, we know that $L^q(\mu_X)$ and $L^q(\mu_Y)$ are separable.  We can therefore find countable orthonormal bases $(p_{j}^X )$ and $(p_k^Y)$ for $L^q(\mu_X)$ and $L^q(\mu_Y)$ respectively.  Then the set of functions of the form $p_{jk}(\cdot,\ast) = p_j^X(\cdot)p_k^Y(\ast)$ for $j,k \in \mathbb{N}$ is countable.  Suppose that $f \in L^q(\mu)$ is such that 
  \[
    \int_{\mathcal{X} \times \mathcal{Y}} fp_{jk} \, d\mu = 0
  \]
  for every $j,k \in \mathbb{N}$.  Then, by Fubini's theorem,
  \[
    0 = \int_{\mathcal{X}} \biggl( \int_{\mathcal{Y}} f(x,y)p_k^Y(y) \, d\mu_Y(y)\biggr) \, p_j^X(x) \, d\mu_X(x),
  \]
  so, for every $k \in \mathbb{N}$, the function $x \mapsto \int_{\mathcal{Y}} f(x,y)p_k^Y(y) \, d\mu_Y(y)$ is zero $\mu_X$-almost everywhere.  Now, for $k \in \mathbb{N}$, set
  \[
    \Omega_k := \biggl\{x \in \mathcal{X}: \int_{\mathcal{Y}} f(x,y)p_k^Y(y) \, d\mu_Y(y) \neq 0\biggr\}.
  \]
  Since $\mu_X(\Omega_k) = 0$ for each $k \in \mathbb{N}$, we have that $\mu_X(\Omega) = 0$, where $\Omega := \cup_{k=1}^\infty \Omega_k$.  But for $x \in \mathcal{X} \setminus \Omega$,
  \[
    \int_{\mathcal{Y}} f(x,y)p_k^Y(y) \, d\mu_Y(y) = 0
  \]
  for every $k \in \mathbb{N}$, so for such $x$, we have that $y \mapsto f(x,y)$ is zero $\mu_Y$-almost everywhere.  Since $f \in L^q(\mu)$, we deduce that
  \begin{align*}
    \int_{\mathcal{X} \times \mathcal{Y}} |f|^q \, d\mu &= \int_{\mathcal{X}} \int_{\mathcal{Y}} |f(x,y)|^q \, d\mu_Y(y) \, d\mu_X(x) \\
    &= \int_{\mathcal{X} \setminus \Omega} \int_{\mathcal{Y}} |f(x,y)|^q \, d\mu_Y(y) \, d\mu_X(x) = 0.
  \end{align*}
Hence $f$ is zero, $\mu$-almost everywhere, as required.  
\end{proof}

\begin{lemma}
\label{Lemma:StoneWeierstrass}
Assume the setting of Example~\ref{Ex:InfDim}.  Then the collection of functions $\mathcal{B}:=\{p_{a,m}^X(\cdot) : (a,m) \in \mathcal{J}\}$ is an orthonormal basis of $L^2(\mu_X)$.
\end{lemma}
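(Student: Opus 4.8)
The proof splits into verifying orthonormality and completeness. Orthonormality is a direct computation: each $p_{a,m}^X$ depends only on the finitely many coordinates $\ell$ with $m_\ell\neq0$, and since $\mu_X$ is the product of copies of Lebesgue measure on $[0,1]$, Fubini's theorem expresses $\langle p_{a,m}^X,p_{a',m'}^X\rangle_{L^2(\mu_X)}$ as a finite product of one-dimensional integrals. Writing $p_{0,m}^X=\sqrt2\cos(2\pi\sum_\ell m_\ell x_\ell)$ and $p_{1,m}^X=-\sqrt2\sin(2\pi\sum_\ell m_\ell x_\ell)$, the product-to-sum identities reduce each such integral to integrals of $\cos$ and $\sin$ of a single linear combination $2\pi\sum_\ell(m_\ell\pm m_\ell')x_\ell$ over a finite-dimensional cube, which vanish unless all the coefficients are zero; this yields $\langle p_{a,m}^X,p_{a',m'}^X\rangle_{L^2(\mu_X)}=\mathbbm{1}_{\{(a,m)=(a',m')\}}$. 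For completeness it suffices to show that the closed $\mathbb{R}$-linear span $\mathcal{V}$ of $\mathcal{B}$ in $L^2(\mu_X)$ is the whole space, equivalently (after complexifying) that the closed $\mathbb{C}$-linear span $\mathcal{V}_{\mathbb{C}}$ of $\mathcal{B}$ equals $L^2_{\mathbb{C}}(\mu_X)$.

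Since $\mathcal{X}=[0,1]^{\mathbb{N}}$ is compact by Tychonoff's theorem and metrizable in the product topology, and $\mu_X$ is a finite Borel measure, $C(\mathcal{X};\mathbb{C})$ is dense in $L^2_{\mathbb{C}}(\mu_X)$; it is therefore enough to exhibit a subalgebra $\mathcal{A}\subseteq\mathcal{V}_{\mathbb{C}}\cap C(\mathcal{X};\mathbb{C})$ that is uniformly dense in $C(\mathcal{X};\mathbb{C})$. The natural candidate is the algebra $\mathcal{A}$ of trigonometric polynomials, that is, finite $\mathbb{C}$-linear combinations of the characters $\chi_n(x):=\prod_\ell e^{2\pi i n_\ell x_\ell}$ with $n$ a finitely supported integer sequence. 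This $\mathcal{A}$ is unital and closed under complex conjugation; to apply the complex Stone--Weierstrass theorem it remains to check that it separates points. If $x\neq x'$ then $x_\ell\neq x'_\ell$ for some $\ell$, and then $e^{2\pi i x_\ell}\neq e^{2\pi i x'_\ell}$ unless $\{x_\ell,x'_\ell\}=\{0,1\}$; that exceptional set is $\mu_X$-null, so we may harmlessly pass to the product of circles $\mathbb{T}^{\mathbb{N}}$, which is compact, carries the same $L^2$ space, and on which the characters genuinely separate points. Stone--Weierstrass then gives that $\mathcal{A}$ is uniformly dense in the space of continuous functions, and hence dense in $L^2_{\mathbb{C}}(\mu_X)$.

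The step I expect to be the main obstacle is the inclusion $\mathcal{A}\subseteq\mathcal{V}_{\mathbb{C}}$, i.e.\ that every $\chi_n$ lies in the closed $\mathbb{C}$-linear span of $\mathcal{B}$. From the definition, the $\mathbb{C}$-span of $\mathcal{B}$ contains the exponentials $e^{\pm 2\pi i\sum_\ell m_\ell x_\ell}$ attached to the members of $\mathcal{B}$, and one must then express an arbitrary monomial $\chi_n$ in terms of these: splitting $n$ into its nonnegative and nonpositive parts via $\chi_n=\chi_{n^+}\overline{\chi_{n^-}}$ and applying the product-to-sum identities, one reduces the number of active coordinates by induction. (This is exactly the bookkeeping underlying the tensor-product description of the Fourier basis, and it is where the computation is heaviest; the remaining ingredients are essentially soft.) Granting $\mathcal{A}\subseteq\mathcal{V}_{\mathbb{C}}$, we obtain $\mathcal{V}_{\mathbb{C}}\supseteq\overline{\mathcal{A}}^{\,L^2}=L^2_{\mathbb{C}}(\mu_X)$, so $\mathcal{B}$ is complete; combined with the orthonormality established above, $\mathcal{B}$ is an orthonormal basis of $L^2(\mu_X)$, as claimed.
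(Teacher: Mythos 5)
Your orthonormality computation is fine and agrees with the paper's. The completeness half, however, has a genuine gap, and it sits exactly at the step you flagged as the main obstacle: the inclusion $\mathcal{A}\subseteq\mathcal{V}_{\mathbb{C}}$ is false, and no bookkeeping can establish it. Writing $\chi_n(x):=e^{2\pi i\langle n,x\rangle}$ for finitely supported integer sequences $n$, each $p_{a,m}^X$ is a linear combination of $\chi_m$ and $\chi_{-m}$ with $m\in\mathbb{N}_0^{<\infty}$, so $\mathcal{V}_{\mathbb{C}}$ is contained in the closed span of $\{\chi_n : n\in\mathbb{N}_0^{<\infty}\cup(-\mathbb{N}_0^{<\infty})\}$. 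Any character with a mixed-sign frequency vector, say $n=(1,-1,0,0,\ldots)$, is orthogonal to all of these and hence to $\mathcal{V}_{\mathbb{C}}$; concretely, $(x_1,x_2,\ldots)\mapsto\cos\bigl(2\pi(x_1-x_2)\bigr)$ is a nonzero element of $L^2(\mu_X)$ orthogonal to every member of $\mathcal{B}$ (a short product-to-sum computation, using $m_\ell,m_\ell'\geq 0$, confirms this). Your proposed induction via $\chi_n=\chi_{n^+}\overline{\chi_{n^-}}$ does not reduce the number of active coordinates: expanding $\cos A\cos B$ produces a $\cos(A-B)$ term whose frequency vector is precisely the mixed-sign $n$ you started from.

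You should not conclude that the paper has a device you missed. Its proof applies the real Stone--Weierstrass theorem directly to the linear span of $\mathcal{B}$, asserting that this span ``is closed under multiplication''; that assertion fails for the same reason, since $2\cos(2\pi x_1)\cos(2\pi x_2)=\cos\bigl(2\pi(x_1+x_2)\bigr)+\cos\bigl(2\pi(x_1-x_2)\bigr)$ and the second summand lies outside the span. For completeness to hold, the index set must contain one representative of each antipodal pair $\{n,-n\}$ of nonzero finitely supported integer sequences (for instance, all those whose first nonzero entry is positive), not only the all-nonnegative ones. With that enlargement, both your route (complex Stone--Weierstrass for the character algebra, passing to the torus to handle the $\mu_X$-null set where points are not separated, then density of continuous functions in $L^2(\mu_X)$) and the paper's route go through, and your treatment of the point-separation issue is actually more careful than the paper's. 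So the obstacle you identified is real, but it is an obstruction to the statement as written rather than a repairable gap in your argument.
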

\begin{proof}[Proof of Lemma~\ref{Lemma:StoneWeierstrass}]
By Tychonoff's theorem, $\mathcal{X}$ is a compact Hausdorff space as it is  a product of compact Hausdorff spaces. The linear span of $\mathcal{B}$ is closed under multiplication, contains the constant functions and separates points, so by the Stone--Weierstrass theorem, it is dense with respect to the supremum norm in the space of real-valued continuous functions on $\mathcal{X}$. The continuous functions on $\mathcal{X}$ are dense in $L^2(\mu_X)$ and so, since the $L^2$ norm is bounded above by the supremum norm on our probability space, it follows that the linear span of $\mathcal{B}$ is dense in $L^2(\mu_X)$.

It now remains to prove that $\mathcal{B}$ is orthonormal. When $m = (m_1,m_2,\ldots) \in \mathbb{N}_0^{< \infty}$ satisfies $|m|>0$ we may write
\[
	p_{a,m}^X(x) = 2^{1/2} \mathrm{Re} \bigl( e^{-2 \pi i \langle m, x \rangle - a \pi i/2} \bigr),
\]
with $\langle m, x \rangle := \sum_{\ell=1}^\infty m_\ell x_\ell$. Then if $|m|,|m'| >0$ and $a,a' \in \{0,1\}$ we have
\begin{align*}
	&\int_\mathcal{X} p_{a,m}^X(x) p_{a',m'}^X(x) \,d \mu_X(x) \\
	&= \int_\mathcal{X} \bigl\{ \mathrm{Re} \bigl( e^{-2 \pi i \langle m + m' ,x \rangle - (a+a')\pi i/2} \bigr) + \mathrm{Re} \bigl( e^{-2 \pi i \langle m - m' ,x \rangle - (a-a')\pi i/2} \bigr) \bigr\} \,d \mu_X(x) \\
	&= \mathbbm{1}_{\{m=m'\}} \mathrm{Re}( e^{-(a-a') \pi i/2}) = \mathbbm{1}_{\{(a,m)=(a',m')\}}.
\end{align*}
Moreover, it is clear that $\int_{\mathcal{X}} p_{0,0}^X(x) p_{a,m}^X(x) \,d \mu_X(x)=0$ for any $|m|>0$ and $a \in \{0,1\}$ and that $\int_\mathcal{X} p_{0,0}^X(x)^2 \,d \mu_X(x) =1$.
\end{proof}

\begin{lemma}
\label{Prop:SquareSummableSequences}
Let $(a_j)_{j=1}^\infty$ be a sequence of real numbers such that we have $\sum_{j=1}^\infty a_j^2 =1$, and for $m \in \mathbb{N}$ write $F(m):= \sum_{j=m}^\infty a_j^2$. Then, for any $M \in \mathbb{N}$, we have
\[
	\sum_{j=1}^M |a_j| \leq \biggl( 2 \sum_{j=1}^M F(j)^{1/2} \biggr)^{1/2}.
\]
\end{lemma}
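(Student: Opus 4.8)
The plan is to exploit the identity $a_j^2 = F(j) - F(j+1)$ (which holds by definition of $F$, since $F$ is a tail sum), together with the elementary factorisation $a - b = (\sqrt a - \sqrt b)(\sqrt a + \sqrt b)$ for $a \ge b \ge 0$, and then apply the Cauchy--Schwarz inequality in a way that makes one of the resulting factors telescope. Concretely, note first that $F(1) = \sum_{j=1}^\infty a_j^2 = 1$ and that $F$ is non-increasing, so $F(j) - F(j+1) = a_j^2 \ge 0$ for all $j$.

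\textbf{Key steps.} First I would write
\[
  |a_j| = \bigl(F(j) - F(j+1)\bigr)^{1/2} = \bigl(F(j)^{1/2} - F(j+1)^{1/2}\bigr)^{1/2}\bigl(F(j)^{1/2} + F(j+1)^{1/2}\bigr)^{1/2},
\]
where both factors on the right are non-negative since $F(j) \ge F(j+1) \ge 0$. Next, applying the Cauchy--Schwarz inequality to the sum $\sum_{j=1}^M |a_j|$ with these two factors,
\[
  \sum_{j=1}^M |a_j| \le \biggl( \sum_{j=1}^M \bigl\{F(j)^{1/2} - F(j+1)^{1/2}\bigr\} \biggr)^{1/2} \biggl( \sum_{j=1}^M \bigl\{F(j)^{1/2} + F(j+1)^{1/2}\bigr\} \biggr)^{1/2}.
\]
The first bracket telescopes to $F(1)^{1/2} - F(M+1)^{1/2} \le F(1)^{1/2} = 1$. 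For the second bracket I would use that $F(j+1)^{1/2} \le F(j)^{1/2}$ for each $j$, so that $\sum_{j=1}^M F(j+1)^{1/2} \le \sum_{j=1}^M F(j)^{1/2}$, giving $\sum_{j=1}^M \{F(j)^{1/2} + F(j+1)^{1/2}\} \le 2\sum_{j=1}^M F(j)^{1/2}$. Combining the two bounds yields $\sum_{j=1}^M |a_j| \le \bigl(2\sum_{j=1}^M F(j)^{1/2}\bigr)^{1/2}$, as required.

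\textbf{Main obstacle.} There is no serious analytic difficulty here; the only ``trick'' is to spot the factorisation of $a_j^2 = F(j) - F(j+1)$ into $\bigl(F(j)^{1/2} - F(j+1)^{1/2}\bigr)\bigl(F(j)^{1/2} + F(j+1)^{1/2}\bigr)$ and then to split it as a product of two square roots so that Cauchy--Schwarz produces a telescoping factor. Once this is in place, the remaining estimates (telescoping, monotonicity of $F$, and $F(1) = 1$) are entirely routine.
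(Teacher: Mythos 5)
Your proof is correct and is essentially the same as the paper's: both factor $a_j^2 = F(j)-F(j+1)$ via the auxiliary weights $b_j^2 = F(j)^{1/2}-F(j+1)^{1/2}$, apply Cauchy--Schwarz so that one factor telescopes to $F(1)^{1/2}-F(M+1)^{1/2}\le 1$, and bound the other by $2\sum_{j=1}^M F(j)^{1/2}$ using monotonicity of $F$. The only (cosmetic) difference is that you split $|a_j|$ directly as a product of two square roots rather than writing Cauchy--Schwarz in the ratio form $\sum a_j^2/b_j^2 \cdot \sum b_j^2$, which incidentally sidesteps the degenerate case $b_j=0$ when $a_j=0$.
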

\begin{proof}[Proof of Lemma~\ref{Prop:SquareSummableSequences}]
The idea of the proof is to construct another sequence $(b_j)$ that has a heavier tail than $(a_j)$ but that is still square-summable, and then to use Cauchy--Schwarz. Indeed, for $j \in \mathbb{N}$ define
\[
	b_j:= \{ F(j)^{1/2} - F(j+1)^{1/2}\}^{1/2},
\]
for which we have $\sum_{j=m}^\infty b_j^2 =F(m)^{1/2}$. Then
\begin{align*}
	\biggl( \sum_{j=1}^M & |a_j| \biggr)^2 \leq \biggl( \sum_{j=1}^M \frac{a_j^2}{b_j^2} \biggr) \biggl( \sum_{j=1}^M b_j^2 \biggr) = \{ F(1)^{1/2} - F(M+1)^{1/2} \} \sum_{j=1}^M \frac{a_j^2}{b_j^2}  \\
	& \leq \sum_{j=1}^M \frac{a_j^2}{b_j^2} = \sum_{j=1}^M F(j)^{1/2} \biggl\{ 1+ \frac{F(j+1)^{1/2}}{F(j)^{1/2}} \biggr\} \leq 2 \sum_{j=1}^M F(j)^{1/2},
\end{align*}
as required.
\end{proof}

\end{document}